\theoremstyle{plain}
\csname@addtoreset\endcsname{equation}{section}
\def\theequation{\thesection.\arabic{equation}}
\newtheorem{theorem}[equation]{Theorem}
\newtheorem{conjecture}[equation]{Conjecture}
\newtheorem{proposition}[equation]{Proposition}
\newtheorem{lemma}[equation]{Lemma}
\newtheorem{corollary}[equation]{Corollary}
\theoremstyle{definition}
\newtheorem*{acknowledgements}{Acknowledgements}
\newtheorem{remark}[equation]{Remark}
\newtheorem*{claim}{Claim}
\newtheorem{hypothesis}[equation]{Hypothesis}
\newtheorem{definition}[equation]{Definition}
\newtheorem{notation}[equation]{Notation}
\newtheorem{example}[equation]{Example}
\newtheorem{exercise}[equation]{Exercise}
\def\O{{\mathcal O}}
\def\X{{\mathcal X}}
\def\Xp{{\mathcal X}_p}
\def\F{{\mathbb F}}
\def\N{{\mathbb N}}
\def\Q{{\mathbb Q}}
\def\Qp{{\mathbb Q}_p}
\def\Qpb{\bar{\mathbb Q}_p}
\def\R{{\mathbb R}}
\def\C{{\mathbb C}}
\def\e{{\mathfrak e}}
\def\Z{{\mathbb Z}}
\def\D{{\mathfrak D}}
\def\m{{\mathfrak m}}
\def\triv{{\mathbf 1}}
\def\vtriv#1{{\mathbf 1}_{#1}}
\def\RC{{\mathcal C}}
\def\cH{{S}}
\def\EFC{\mathbf A}
\def\cA{{\mathcal A}}
\def\cO{{\mathcal O}}
\def\Gr#1{\text{\rm gr}_#1}
\def\longinjects{\lhook\joinrel\lar}
\def\injects{\lhook\joinrel\rightarrow}
\def\wAL{\frac{w(A/K,\triv\oplus\chi)}{w(\chi)^{2\dim A}}}
\def\smallmatrix#1#2#3#4{
  \genfrac{(}{.}{0pt}{1}{#1}{#3}
  \genfrac{.}{)}{0pt}{1}{#2}{#4}
}
\def\smallchoice#1#2#3#4{
  \genfrac{\{}{.}{0pt}{1}{#1}{#3}
  \genfrac{.}{\}}{0pt}{1}{#2}{#4}
}
\def\K{{\mathcal K}}
\def\L{{\mathcal L}}
\def\T{{\mathcal T}}
\def\B{{\mathcal B}}
\def\TV{{\mathcal V}}
\def\W{{\mathcal W}}
\def\eps{{\rm(Root)}}
\def\tam{{\rm(Tam)}}
\def\grphi{\Phi}
\def\lara{\langle,\rangle}
\def\blangle{\boldsymbol{\langle}}
\def\brangle{\boldsymbol{\rangle}}
\def\blara{\blangle,\brangle}
\newcommand{\vabove}[2]{\genfrac{}{}{0pt}{}{#1}{#2}}
\def\newmathop#1{\expandafter\gdef\csname #1\endcsname{\mathop{\rm #1}\nolimits}}
\let\oldchar\char\newmathop{char}\let\vchar\char\let\char\oldchar
\def\notsubset{\not\subset}
\let\lar\longrightarrow
\let\iso\cong
\let\tensor\otimes
\let\normal\triangleleft
\def\leftsemidirect{\mathinner
  {\mathrel\triangleright\joinrel\mathrel{\raise 0.8pt\hbox{$\scriptstyle<$}}}}
\let\leftsemidirect\ltimes
\def\rightsemidirect{\mathinner
  {\mathrel{\raise 0.8pt\hbox{$\scriptstyle>$}}\joinrel\mathrel\triangleleft}}
\let\rightsemidirect\rtimes
\def\dlangle{\langle\!\langle}
\def\drangle{\rangle\!\rangle}
\def\dlara{\dlangle,\drangle}
\def\beq{$$\begin{array}{llllllllllllllll}}
\def\eeq{\end{array}$$}
\def\beqn{\begin{equation}\begin{array}{llllllllllllllll}}
\def\eeqn{\end{array}\end{equation}}
\font\tencyr=wncyr10
\def\sha{\text{\tencyr\cyracc{Sh}}}
\font\eightcyr=wncyr8
\def\smallsha{\text{\eightcyr\cyracc{Sh}}}
\def\f{\varphi}
\def\qequal{\hbox{\rlap{$\,\raise-1pt\hbox{?}$}=}}
\def\tbuildrel#1\over#2{\buildrel\text{\rm\normalsize\smaller[3]#1}\over{#2}}
\def\simf{\>\>{\tbuildrel \ref{l2:}{\rm f}\over\sim}\>\>}
\def\simd{\>\>{\tbuildrel \ref{l2:}{\rm d}\over\sim}\>\>}
\def\simr{\>\>{\tbuildrel \ref{l2:}{\rm r}\over\sim}\>\>}
\def\simef{\>\>{\tbuildrel \ref{GoverD}\over\sim}\>\>}
\def\simef{\>\>{\tbuildrel \ref{GoverD}\over\sim}\>\>}
\def\simudg{\>\>{\tbuildrel \ref{corUDG}\over\sim}\>\>}
\def\simwwe{\>\>{\tbuildrel \ref{exWWe}\over\sim}\>\>}
\def\sequal{=}
\def\sequalr{\>\>{\tbuildrel \ref{l2:}{\rm r}\over=}\>\>}
\def\sequald{\>\>{\tbuildrel \ref{l2:}{\rm d}\over=}\>\>}
\def\sequala{\>\>{\tbuildrel \ref{ladvp}\over=}\>\>}
\def\tlGDI{\raise1.4pt\hbox{$\scriptscriptstyle\!\left\lgroup\right.\!\!$}}
\def\trGDI{\raise1.4pt\hbox{$\scriptscriptstyle\!\left.\right\rgroup\!\!$}}
\def\tGDI#1{\tlGDI#1\trGDI}  
\def\lGDI{\raise0.8pt\hbox{$\scriptstyle\!\left\lgroup\right.\!\!$}}
\def\rGDI{\raise0.8pt\hbox{$\scriptstyle\!\left.\right\rgroup\!\!$}}
\def\GDI#1{\lGDI#1\rGDI}   
\def\blGDI{\raise0.0pt\hbox{$\!\left\lgroup\right.\!\!$}}
\def\brGDI{\raise0.0pt\hbox{$\!\left.\right\rgroup\!\!$}}
\def\bGDI#1{\blGDI#1\brGDI}   
\def\thincdots{\raise1.3pt\hbox{$\scriptscriptstyle
  \>\cdot\>\cdot\>\cdot\>\cdot\hskip0.3pt$}}
\def\tw#1{{\lfloor\frac{#1}{12}\rfloor}}
\def\IZ{\text{\rm I$_0$}}
\def\IZS{\text{\rm I$_0^*$}}
\def\In#1{{\text{\rm I{}$_{#1}$}}}
\def\InS#1{{\text{\rm I{}$_{#1}^*$}}}
\def\II{\text{\rm II}}
\def\IIS{\text{\rm II$^*$}}
\def\III{\text{\rm III}}
\def\IIIS{\text{\rm III$^*$}}
\def\IV{\text{\rm IV}}
\def\IVS{\text{\rm IV$^*$}}
\def\choice#1#2#3#4{{\def\arraystretch{0.7}
\Bigl\{\!\!\begin{array}{ll}
   \scriptstyle #1,\!\!\!&\scriptstyle #2\cr
   \scriptstyle #3,\!\!\!&\scriptstyle #4\end{array}}\!\!\Bigr\}\,}
\def\leftchoice#1#2#3#4{{\def\arraystretch{0.7}
\Bigl\{\!\!\begin{array}{ll}
   \scriptstyle #1,\!\!\!&\scriptstyle #2\cr
   \scriptstyle #3,\!\!\!&\scriptstyle #4\end{array}}}
\def\Step#1#2{\smallskip\par\noindent{\tt Step #1: #2}.\par\smallskip}
\def\CO{C}
\def\Cy{{\rm C}}
\def\Di{{\rm D}}
\def\Vi{{\rm V}}
\def\Qu{{\rm Q}}
\def\Sym{{\rm S}}
\def\Alt{{\rm A}}
\def\delt{\eth}
\def\neron#1{\omega_{#1}^o}
\def\Tau{{\mathbf T}}
\def\Ttp{\Tau_{\Theta,p}}
\def\subgroup{\raise0.5pt\hbox{$\,\scriptstyle<\,$}}
\def\Fr{{\rm F}}
\def\picbox#1#2#3#4#5{\begingroup\arraycolsep 2pt$\begin{array}{|c|}\hline
   \hfill {\scriptstyle #1}\,\hfill\vline\hfill\,{\scriptstyle #2}\hfill\cr\hline \hfill {\scriptstyle #3}\,\hfill\vline
   \hfill\,\scriptstyle\text{{#4}}{\scriptstyle,#5}\hfill\cr\hline\end{array}$\endgroup}
\def\pic#1(#2,#3)#4#5#6#7#8{\put(#2,#3){\boxdims{\hbox{\picbox{#4}{#5}{#6}{#7}{#8}}}{#1}{#2}{#3}}}
\def\boxdims#1#2#3#4{#1}
\def\addline#1,#2;{}
\begin{document}

\let\introdagger\dagger
\title{Regulator constants and the parity conjecture}
\author{Tim$^\introdagger$ and Vladimir Dokchitser}
\date{March 21, 2009}
\thanks{{\em MSC 2000:} Primary 11G05; Secondary 11G07, 11G10, 11G40, 19A22, 20B99}
\thanks{$^\dagger$Supported by a Royal Society University Research Fellowship}
\address{Robinson College, Cambridge CB3 9AN, United Kingdom}
\email{t.dokchitser@dpmms.cam.ac.uk}
\address{Gonville \& Caius College, Cambridge CB2 1TA, United Kingdom}
\email{v.dokchitser@dpmms.cam.ac.uk}

\begin{abstract}
The $p$-parity conjecture for twists of elliptic curves relates
multiplicities of Artin representations in $p^\infty$-Selmer groups
to root numbers. In this paper we prove this conjecture
for a class of such twists.
For example, if $E/\Q$ is semistable at 2 and 3,
$K/\Q$ is abelian and $K^\infty$ is its maximal pro-$p$ extension,
then the $p$-parity conjecture holds for twists of $E$ by all orthogonal
Artin representations of $\Gal(K^\infty/\Q)$.
We also give analogous results when $K/\Q$ is non-abelian,
the base field is not $\Q$ and $E$ is replaced by an abelian variety.
The heart of the paper is a study of relations between permutation
representations of finite groups, their ``regulator constants'',
and compatibility between local root numbers and local
Tamagawa numbers of abelian varieties in such relations.
\end{abstract}

\llap{.\hskip 10cm} \vskip -0.8cm
\maketitle
\def\introdagger{{}}
\let\oldtocsubsection\tocsubsection
\def\tocsubsection#1#2#3{#1\hbox to 25pt{#2.\hfill} #3}
\tableofcontents
\let\tocsubsection\oldtocsubsection

\section{Introduction}
\label{intro}

The emphasis of this paper is twofold: to study the
interplay between functions on $G$-sets and on
$G$-representations for a finite group $G$, and
to use it to link root numbers
and Tamagawa numbers of abelian varieties.
The main application is the parity conjecture
for classes of twists of elliptic curves and
abelian varieties by Artin representations.

\subsection{Parity conjectures}

Consider an abelian variety $A$ defined over a~num\-ber field $K\!$,
and a Galois extension $F/K\!$.
The Galois group $\Gal(F/K)$~acts~on the $F$-rational points of $A$,
and an extension of the Birch--Swinnerton-Dyer conjecture
relates the multiplicities of complex representations
in $A(F)\tensor\C$ to the
order of vanishing of the corresponding twisted $L$-functions at $s=1$:

\begin{conjecture}[Birch--Swinnerton-Dyer--Deligne--Gross; \cite{TatC,DelV}, \cite{RohV} \S2]
For every complex representation $\tau$ of $\Gal(F/K)$,
$$
  \blangle\tau,A(F)\brangle=\ord_{s=1} L(A,\tau,s).
$$
\end{conjecture}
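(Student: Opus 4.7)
The plan is to exploit additivity in $\tau$, reduce to one-dimensional characters via Brauer induction and Artin formalism, and then appeal to the underlying Birch--Swinnerton-Dyer conjecture. Both sides of the claimed equality are additive in $\tau$: the left side because $\tau\mapsto\blangle\tau,A(F)\brangle$ is (by definition) a linear functional on the representation ring of $\Gal(F/K)$, and the right side because the Artin $L$-function of $A$ twisted by $\tau$ is multiplicative in $\tau$. It therefore suffices to verify the identity on a set of virtual characters that $\Z$-linearly generates the representation ring.

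The natural generating set is provided by Brauer induction: every character of $G=\Gal(F/K)$ can be written as a $\Z$-linear combination of monomial characters $\Ind_H^G\chi$ for subgroups $H\le G$ and one-dimensional $\chi:H\to\C^\times$. For such a monomial $\tau$, Frobenius reciprocity and the behaviour of the Mordell--Weil group under restriction give
$$
\blangle\Ind_H^G\chi,\,A(F)\brangle \;=\; \blangle\chi,\,A(F)^{\ker\chi}\brangle \;=\; \blangle\chi,\,A(F^H)\brangle,
$$
while the inductivity of Artin $L$-functions yields
$$
L(A,\Ind_H^G\chi,s) \;=\; L(A/F^H,\chi,s).
$$
So both sides are compatible with induction, and the statement for $(A/K,\tau)$ reduces to the statement for $(A/F^H,\chi)$ with $\chi$ one-dimensional. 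After this reduction we may assume $\tau=\chi$ is a character of $\Gal(F/K)$ cutting out an abelian extension $K'/K$.

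In the remaining one-dimensional case, $L(A,\chi,s)$ is the $L$-function of the character-twist of $A$ over $K$, and $\blangle\chi,A(F)\brangle$ equals the multiplicity of $\chi$ in $A(K')\tensor\C$, or equivalently the rank of the $\chi$-isotypic component of the Mordell--Weil group of (a suitable twist of) $A$ over $K$. Matching this rank with $\ord_{s=1}L(A,\chi,s)$ is precisely the rank part of the classical Birch--Swinnerton-Dyer conjecture applied to the character twist. This last step is the essential obstacle: the rank part of BSD is open in general, known unconditionally only for analytic rank $\le 1$ via Gross--Zagier/Kolyvagin, and only over $\Q$ for elliptic curves under further hypotheses. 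So the plan above is really a clean reduction scheme; the deep input on which the whole conjecture rests is BSD itself for twisted abelian varieties, which is why the rest of the paper pivots to the mod-$2$ ``parity'' shadow of this statement rather than the equality on the nose.
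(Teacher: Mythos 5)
This statement is a \emph{conjecture} (Birch--Swinnerton-Dyer--Deligne--Gross): the paper states it with attributions to Tate, Deligne and Rohrlich and offers no proof, nor does anything later in the paper establish it --- the whole point of the paper is to retreat to the mod~$2$ shadow (the $p$-parity conjecture) precisely because the equality on the nose is out of reach. Your writeup correctly recognises this: what you give is a reduction scheme, not a proof, and you say so at the end. So there is no disagreement of substance with the paper, because the paper proves nothing here.

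Two caveats on the reduction itself. First, the chain $\blangle\Ind_H^G\chi,A(F)\brangle=\blangle\chi,A(F)^{\ker\chi}\brangle=\blangle\chi,A(F^H)\brangle$ has a slip: Frobenius reciprocity gives $\blangle\chi,\Res_H(A(F)\tensor\C)\brangle_H$, which equals the $\chi$-multiplicity in $A(F^{\ker\chi})\tensor\C$, not in $A(F^H)\tensor\C$ (these differ unless $\chi$ is trivial). The correct conclusion is that the conjecture for $(A/K,\Ind_H^G\chi)$ reduces to the conjecture for $(A/F^H,\chi)$ with the top field $F$ unchanged, which is what the paper's own remark about permutation representations of $K$-embeddings is driving at. Second, the additivity of $\tau\mapsto\ord_{s=1}L(A,\tau,s)$ across a virtual Brauer decomposition presupposes that each $L(A,\tau,s)$ has meromorphic continuation to $s=1$; for general Artin twists of abelian varieties this is itself conjectural, so even the right-hand side of the statement only makes sense conditionally. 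Neither caveat changes your (correct) bottom line that the irreducible one-dimensional case is the open rank part of BSD for twists.
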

\noindent
(Here and below $\blangle\tau,*\brangle$ is the usual representation-theoretic
inner product of $\tau$ and the complexification of $*$.)
When $\tau$ is self-dual, the parity of the right-hand side
is forced by the sign in the conjectural functional equation,
the global root number $w(A/K,\tau)$. Thus, we expect
$$
  (-1)^{\blangle \tau, A(F)\brangle}=w(A/K,\tau).
$$
There is an analogous picture for Selmer groups. For a prime $p$, let
$$
  \small
  \Xp(A/K)=(\text{Pontryagin dual of the $p^\infty$-Selmer group of } A/K)\>\tensor\Q_p.
$$
This is a $\Q_p$-vector space whose dimension is simply
the Mordell-Weil rank of $A/K$ plus the number of copies of $\Q_p/\Z_p$
in the Tate-Shafarevich group $\sha(A/K)$.
The conjectural finiteness of $\sha$ then suggests the following
parity statements, which are often much more accessible:

\let\oldequation\theequation
\def\theequation{1.2a}
\begin{conjecture}[$p$-Parity Conjecture]
\label{pparity}
$$
  (-1)^{\dim\Xp(A/K)}=w(A/K).
$$
\end{conjecture}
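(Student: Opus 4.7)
The strategy is to reduce the $p$-parity conjecture for $A/K$ to parity statements over the intermediate fields of a carefully chosen Galois extension, via the regulator-constant machinery developed in the body of the paper. First I would embed $K$ into a Galois extension $F/K$ with group $G$ and exhibit a \emph{Brauer relation}
\[
  \Theta=\sum_i n_i\,[G/H_i]\qquad(H_i \subgroup G,\;n_i\in\Z),
\]
that is, a $\Z$-linear combination of transitive $G$-sets whose image $\sum_i n_i \Ind_{H_i}^G \triv$ in the rational representation ring of $G$ vanishes, and in which the subgroup $H_1=G$ appears with non-zero coefficient, so that $F^{H_1}=K$ is singled out. Inductivity of root numbers in virtual characters of degree zero then gives the trivial identity
\[
  \prod_i w\!\left(A/F^{H_i}\right)^{n_i}=1.
\]

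The content is extracted from the Selmer side. The core technical step is to establish, for any Brauer relation $\Theta$, a congruence
\[
  \sum_i n_i \dim \Xp\!\left(A/F^{H_i}\right)\;\equiv\;\sum_v \Sigma_v(A,\Theta)\pmod 2,
\]
where $\Sigma_v$ is an explicit local invariant at each place $v$ of $K$, built from Tamagawa numbers, component groups and the local Galois representation of $A$ at~$v$; and, in parallel, to show that the regulator-constant and $\varepsilon$-factor recipes force a \emph{place-by-place} identity
\[
  \prod_i w_v\!\left(A/F^{H_i}\right)^{n_i}=(-1)^{\Sigma_v(A,\Theta)}.
\]
Multiplying over all $v$, both sides globalise to $1$ and so the two displays are consistent; together they yield
\[
  (-1)^{\sum_i n_i \dim \Xp(A/F^{H_i})}\;=\;\prod_i w\!\left(A/F^{H_i}\right)^{n_i},
\]
a clean parity identity linking Selmer ranks and root numbers across the subfields $F^{H_i}$.

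Given this master identity, the conjecture for $A/K$ would follow by arranging that $K=F^G$ sits inside a Brauer relation in which the $p$-parity conjecture is already known for every other field $F^{H_i}$ (for example, when these are small abelian extensions of $\Q$ over which parity is available by earlier work). Producing such relations is a pure question about subgroup structure and character theory of~$G$, independent of the arithmetic of $A$, and is what dictates the shape of the paper's hypotheses (abelian $K/\Q$, maximal pro-$p$ extensions, orthogonal twists).

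The main obstacle is the local compatibility $\prod_i w_v(A/F^{H_i})^{n_i}=(-1)^{\Sigma_v(A,\Theta)}$. At places of good reduction this reduces to a tame calculation, but at primes of bad reduction, and especially at primes above~$p$ or where $A$ acquires additive or wildly ramified reduction, one must delicately match the Deligne--Rohrlich local root-number recipes against explicit data from the N\'eron model and component groups. This comparison, carried out via the regulator constants $\RC_\Theta$, is exactly where the running hypotheses of the paper (e.g.\ semistability of $A/\Q$ at $2$ and~$3$) intervene, and it is the step I expect to be by far the most technical.
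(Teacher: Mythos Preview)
The statement you are trying to prove is labelled a \emph{conjecture} in the paper; the paper does not prove it in general, only special cases under substantial hypotheses (Theorems \ref{introthm1}--\ref{introthm3}, \ref{tamroot}). So any unconditional ``proof'' must be wrong somewhere, and indeed yours is.

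The gap is in your master identity. For a Brauer relation $\Theta=\sum_i n_iH_i$ one has
\[
  \sum_i n_i\dim\Xp(A/F^{H_i})
  =\Bigl\langle\sum_i n_i\,\C[G/H_i],\,\Xp(A/F)\Bigr\rangle
  =\langle 0,\Xp(A/F)\rangle=0,
\]
so the left-hand side of your displayed identity is always $(-1)^0=1$. You yourself note that the right-hand side is $1$ by inductivity. Your ``master identity'' is therefore the tautology $1=1$ and carries no information whatsoever about $A/K$; in particular the bootstrap in your final paragraph (isolate $K=F^G$ and assume $p$-parity for the other $F^{H_i}$) cannot work, since the contribution of every field cancels identically.

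What the regulator-constant machinery actually produces is different: Theorem~\ref{squality} computes the parity of $\blangle\tau,\Xp(A/F)\brangle$ for the special twists $\tau\in\Ttp$, and Theorem~\ref{tamroot} matches this with $w(A/K,\tau)$. The paper explicitly addresses your strategy at the end of \S\ref{ssappl}: one might hope to choose $\Theta$ with $\triv\in\Ttp$, but Theorem~\ref{tauthetapprop}(1) shows every $\tau\in\Ttp$ is even-dimensional with trivial determinant, so $\triv$ never lies in any $\Ttp$. Thus the method cannot, on its own, prove $p$-parity for $A/K$; one always needs at least one auxiliary twist for which parity is known by other means (cf.\ the discussion preceding Corollary~\ref{anticyc}). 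This is precisely why the hypotheses in Theorems~\ref{introthm1}, \ref{introthmGPtwist}, \ref{introthm3} require $p$-parity as \emph{input} over certain small fields.
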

\def\theequation{1.2b}
\noindent
Similarly, for a self-dual representation $\tau$ of $\Gal(F/K)$, we expect
\begin{conjecture}[$p$-Parity Conjecture for twists]
\label{pparitytwists}
$$
  (-1)^{\blangle\tau, \Xp(A/F)\brangle}=w(A/K,\tau).
$$
\end{conjecture}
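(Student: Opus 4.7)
The plan is to prove Conjecture~\ref{pparitytwists} in the special cases advertised in the abstract by reducing it to a purely local compatibility statement between local root numbers and local Tamagawa factors, weighted by regulator constants attached to Brauer relations in $G=\Gal(F/K)$. First I would look for a Brauer relation $\Theta=\sum_i n_i\,[G/H_i]$, i.e.\ a formal $\Z$-combination of permutation representations with $\sum_i n_i\,\C[G/H_i]=0$, that ``detects'' $\tau$: concretely, one chooses $\Theta$ so that the associated regulator constant $\mathcal{C}_\Theta(\tau)\in\Q^\times/(\Q^\times)^2$ is the quantity controlling $(-1)^{\blangle\tau,\Xp(A/F)\brangle}$ modulo parity information at the intermediate fields $F^{H_i}$. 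The construction of such $\Theta$ is a purely group-theoretic problem and is where the restriction to pro-$p$ extensions (or abelian $K/\Q$) will enter, since in those settings the subgroup lattice of $G$ is rich enough to produce relations capturing every orthogonal $\tau$.

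Second, I would assume the \emph{untwisted} $p$-parity conjecture~\ref{pparity} over each intermediate field $F^{H_i}$ (known in many cases, for instance for elliptic curves semistable at $2$ and $3$), raise to $n_i$ and multiply, obtaining
\[
 \prod_i (-1)^{n_i\,\dim\Xp(A/F^{H_i})} \;=\; \prod_i w(A/F^{H_i})^{n_i}.
\]
By Frobenius reciprocity applied to the relation $\Theta$, the left-hand side should reduce to $(-1)^{\blangle\tau,\Xp(A/F)\brangle}$ times a ``correction'' built out of $\mathcal{C}_\Theta(\tau)$ and of the local Tamagawa factors appearing in the conjectural BSD quotients over each $F^{H_i}$; the right-hand side equals $w(A/K,\tau)$ times a correction built out of products of local root numbers along the relation.

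The heart of the argument is then the local comparison: place by place $v$ of $K$, one must show that the Tamagawa/regulator-constant contribution at $v$ agrees with the local root-number contribution modulo squares. This is an entirely local statement depending only on the restriction of $\Theta$ to a decomposition group at $v$ and on the local Galois representation of $A$ at $v$. The main obstacle I expect is precisely this step, at places of additive reduction and at places above $p$, $2$, and $3$, where both the local root number and the Tamagawa number depend on wild ramification and fine local arithmetic of the N\'eron model. This is presumably why the theorems advertised assume $E$ semistable at $2$ and $3$ and restrict to pro-$p$ extensions: these hypotheses should either trivialise the local comparison at the small primes or cut the remaining casework to a finite enumeration, while at good and multiplicative places the comparison can be carried out directly in terms of Weil--Deligne representations and connected components of N\'eron models.
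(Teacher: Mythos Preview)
There is a genuine gap in your step~2. Because $\Theta=\sum_i n_i H_i$ is a relation, we have $\sum_i n_i\,\C[G/H_i]=0$, and hence by Frobenius reciprocity
\[
  \sum_i n_i\,\dim\Xp(A/F^{H_i})
  \;=\;\sum_i n_i\,\blangle\C[G/H_i],\Xp(A/F)\brangle
  \;=\;\blangle 0,\Xp(A/F)\brangle
  \;=\;0,
\]
and similarly $\prod_i w(A/F^{H_i})^{n_i}=w(A/K,0)=1$ by inductivity of root numbers. So the identity you propose to obtain from the untwisted $p$-parity conjecture over the $F^{H_i}$ is the vacuous statement $1=1$; assuming Conjecture~\ref{pparity} over the intermediate fields contributes nothing at this stage. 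The entire content lies in what you call the ``correction'', and it does \emph{not} come from Frobenius reciprocity.

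What the paper actually uses on the Selmer side is Theorem~\ref{squality} (from \cite{Selfduality}): for $\tau\in\Ttp$ one has, unconditionally,
\[
  \blangle\tau,\Xp(A/F)\brangle\;\equiv\;\ord_p\prod_i C_{A/F^{H_i}}^{\,n_i}\pmod 2.
\]
This is proved not by assuming any parity conjecture, but by computing the regulator constant $\RC_\Theta(\Xp(A/F))$ via the canonical (Cassels--Tate/height) pairing on the Selmer group; the point is that $\prod_i\Reg_{A/F^{H_i}}^{\,n_i}\cdot\prod_i C_{A/F^{H_i}}^{\,n_i}$ is a rational square, and $\prod_i\Reg^{\,n_i}=\RC_\Theta(\Xp)$ encodes the multiplicities via Definition~\ref{tauthetap}. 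Your local comparison step (root numbers versus Tamagawa numbers, Theorem~\ref{loccompat}) is indeed the other half of the argument, and your diagnosis of where the difficulties lie (additive reduction at small primes) is correct. But you should replace step~2 by an appeal to the regulator-constant formula for Selmer parities; the untwisted $p$-parity over subfields only enters later, in \S\ref{stowers}, to bootstrap from $\tau\in\Ttp$ to more general twists.
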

\setcounter{equation}{2}
\let\theequation\oldequation

\noindent
When $K\!\subset\!L\!\subset\!F$,
the second statement for
the permutation representation on the set of $K$-embeddings $L\hookrightarrow F$
is equivalent to the first one for $A/L$.
So \ref{pparitytwists} for all orthogonal twists
implies \ref{pparity} over all intermediate fields~of~$F/K$.

The main applications of this paper confirm special cases of
Conjecture~\ref{pparitytwists}. Here are two specific examples:

\begin{theorem}
\label{introthm1}
Let $E/\Q$ be an elliptic curve, semistable at 2 and 3.
Suppose $F/\Q$ is Galois and
the commutator subgroup of $G=\Gal(F/\Q)$ is a $p$-group.
Then the $p$-parity conjecture holds
for twists of $E$ by all orthogonal representations of~$G$ and,
in particular, over all subfields of~$F$.
\end{theorem}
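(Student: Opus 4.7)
Let $G = \Gal(F/\Q)$ and $N = [G,G]$, a $p$-group by hypothesis. The plan is to reduce an orthogonal Artin twist $\tau$ of $G$ first to a virtual combination of permutation representations attached to \emph{abelian} sub-extensions of $F/\Q$, then to invoke the $p$-parity conjecture already known over such subfields, and finally to control the remaining discrepancy via the compatibility between regulator constants and root numbers that is developed earlier in the paper.

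The first step is purely representation-theoretic. By an Artin/Brauer-style induction argument in the rational representation ring $R_\Q(G)$, I would write
$$
  \tau \;=\; \sum_i n_i\,\C[G/H_i] \qquad \text{in } R_\Q(G),
$$
for suitable subgroups $H_i \subset G$ and integers $n_i \in \Z$. The hypothesis that $N$ is a $p$-group is used twice here: it ensures that, up to a virtual permutation representation whose contribution to the parity can be analysed by the paper's $\Theta$-machinery, the $H_i$ may be arranged to contain $N$ (so that each $F^{H_i}/\Q$ is abelian), and that any surviving Brauer relations are relations among subgroups of a $p$-group, which the regulator-constant theory handles uniformly. For each $H_i$ with $F^{H_i}/\Q$ abelian, the $p$-parity conjecture for $E/F^{H_i}$ is already known under the semistability hypothesis at $2$ and $3$ by our earlier work, so
$(-1)^{\dim \Xp(E/F^{H_i})} = w(E/F^{H_i})$.

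The second step combines these ingredients. By Frobenius reciprocity,
$\blangle\tau, \Xp(E/F)\brangle = \sum_i n_i \dim \Xp(E/F^{H_i})$,
so Theorem~\ref{introthm1} reduces to the identity
$$
  \prod_i w\bigl(E/F^{H_i}\bigr)^{n_i} \;=\; w(E/\Q,\tau),
$$
which is exactly the content of the main compatibility theorem of the paper applied to the relation $\tau - \sum_i n_i \C[G/H_i] = 0$. The main obstacle, and the point at which the semistability of $E$ at $2$ and $3$ is critical, lies precisely in this last identity: the local comparison between Tamagawa numbers and root numbers at primes of bad reduction is delicate when wild ramification is present, and semistability at the awkward primes $2, 3$ is what restricts the N\'eron-model Kodaira types to those for which the explicit local matching carried out earlier in the paper applies.
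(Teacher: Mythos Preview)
Your plan has a genuine gap at the first step. If $\tau = \sum_i n_i\,\C[G/H_i]$ with every $H_i \supset N = [G,G]$, then $\tau$ factors through the abelianisation $G/N$; but a typical orthogonal representation of $G$ does not. Already for $G = \Di_{2p}$ (so $N = \Cy_p$) the irreducible $2$-dimensional $\sigma$ is orthogonal and faithful. One can write $\sigma = \C[G/\Cy_2] - \C[G/G]$, but $\Cy_2 \not\supset N$, and $p$-parity for $E/F^{\Cy_2}$ --- a non-normal degree-$p$ extension of $\Q$ --- is not among the cases you can cite as ``already known''. Your hedge ``up to a virtual permutation representation whose contribution can be analysed by the $\Theta$-machinery'' does not help: a $G$-relation is by definition a combination with $\sum m_j \C[G/H_j] = 0$, so adding one cannot change $\tau$ at all. (Relatedly, if $\tau = \sum n_i \C[G/H_i]$ holds exactly, your final displayed identity is simply inductivity of root numbers, not Theorem~\ref{tamroot}; the latter proves $p$-parity only for the specific $\tau\in\Ttp$ attached to a relation $\Theta$, and such $\tau$ are \emph{not} themselves virtual permutation representations.)

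The paper's route is different. Fr\"ohlich--Queyrut induction for \emph{orthogonal} representations writes $\tau = \bigoplus_i (\Ind_{H_i}^G \rho_i)^{\oplus n_i}$ with each $\rho_i$ either trivial, or $\chi\oplus\bar\chi$, or an irreducible $2$-dimensional factoring through a dihedral quotient of $H_i$. Because $[G,G]$ is a $p$-group, all such dihedral quotients are $\Di_{2p^k}$, and Theorem~\ref{tamroot} applied to the explicit relations of Examples~\ref{exdihreg2}--\ref{exdihreg3} gives $p$-parity for $\triv\oplus\det\sigma\oplus\sigma$ in each (this is Hypothesis~\ref{decent}, verified in Theorem~\ref{hypellthm}; the semistability at $2$ and $3$ enters here). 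The pieces with $\rho_i=\triv$ require $p$-parity over \emph{all} subfields of $F$, not just the abelian ones; that is the content of Theorems~\ref{thmGP} and~\ref{thmGP2}, which climb from the abelian layer $F^P$ up through the normal $p$-Sylow $P$ by repeatedly invoking the dihedral case. The seed ``$p$-parity holds for $E/\Q$ and its quadratic extensions'' (Remark~\ref{knowncases}) then finishes the argument via Theorem~\ref{thmGPtwist}.
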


\begin{theorem}
\label{introthm2}
Let $p$ be an odd prime, and suppose $F/K$ is Galois
and $P\normal \Gal(F/K)$ is a $p$-subgroup.
Let $A/K$ be a principally polarised abelian variety
whose primes of unstable reduction are unramified in $F/K$.
If the $p$-parity conjecture holds for $A$ over the subfields
of $F^P/K$, then it holds over all subfields of~$F/K$.
\end{theorem}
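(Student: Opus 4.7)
My plan is to reduce the $p$-parity conjecture over subfields of $F/K$ to $p$-parity for individual Clifford-induced Artin twists of $A/K$, and to settle these via the main theorems of the preceding sections. Write $G:=\Gal(F/K)$, and for each $H\leq G$ set
$$\epsilon_H\;:=\;\frac{(-1)^{\blangle\Q_p[G/H],\,\Xp(A/F)\brangle}}{w(A/K,\C[G/H])}\;\in\;\{\pm1\},$$
so the $p$-parity conjecture for $A/F^H$ is $\epsilon_H=+1$. By hypothesis $\epsilon_H=+1$ for every $H\supseteq P$.

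For a general $H\leq G$, transitivity of induction gives
$$\C[G/H]\;=\;\Ind_{HP}^G\,\C[HP/H]\;=\;\C[G/HP]\;\oplus\;\Ind_{HP}^G(\C[HP/H]-\triv),$$
reflecting the fibration $G/H\to G/HP$ with fibres isomorphic to $HP/H\cong P/(H\cap P)$. Decomposing the induced summand into $\C G$-irreducibles, pairing conjugate non-self-dual ones (which cancel in $\epsilon_H$ by self-duality of $\Xp(A/F)$, arising from the principal polarisation of $A$), and using $\epsilon_{HP}=+1$, one obtains
$$\epsilon_H\;=\;\prod_\tau \epsilon(\tau)^{m_\tau},\qquad \epsilon(\tau):=\frac{(-1)^{\blangle\tau,\Xp(A/F)\brangle}}{w(A/K,\tau)},$$
where the product ranges over self-dual $\C G$-irreducible constituents $\tau$ of $\Ind_{HP}^G(\C[HP/H]-\triv)$ with multiplicities $m_\tau$.

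Every such $\tau$ satisfies $\tau|_P\neq\triv$, because $(\C[HP/H]-\triv)|_P=\Ind_{H\cap P}^P\triv-\triv$ consists of non-trivial $\Q_p P$-characters. By Clifford theory $\tau=\Ind_I^G(\sigma\otimes\tilde\chi)$ for some non-trivial $\chi\in\hat P$ and $I=\mathrm{Stab}_G(\chi)\supseteq P$. The twist $p$-parity identity $\epsilon(\tau)=+1$ for such $\tau$ is exactly what the main theorems of the preceding sections establish: under the hypotheses of the theorem---$p$ odd, principal polarisation of $A$, and primes of unstable reduction unramified in $F/K$---one verifies $(-1)^{\blangle\tau,\Xp(A/F)\brangle}=w(A/K,\tau)$ place by place via the local (Root) and (Tam) compatibilities. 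The ramification hypothesis makes $\chi$ (and so $\tau$) unramified at every place of unstable reduction, trivialising the contribution there; at good-reduction places the identity is automatic for self-dual even-dimensional $\tau$; at ramified places of semistable reduction, principal polarisation combined with odd $p$ supplies the (Root) compatibility from the local lemmas.

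The principal obstacle is the local (Root) and (Tam) verification for Clifford-induced $\tau$ at ramified semistable-reduction places, which is the technical core of the preceding sections and rests on detailed analysis of the $p$-torsion of the N\'eron model of $A$ twisted by $\chi$. Once these local facts are in place, $\epsilon(\tau)=+1$ for every $\tau$ in the decomposition, hence $\epsilon_H=+1$, and the $p$-parity conjecture holds over every subfield of $F/K$.
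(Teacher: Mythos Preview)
Your proposal has a genuine gap at the crucial step. You claim that for each self-dual irreducible $\tau$ with $\tau|_P\neq\triv$, the identity $\epsilon(\tau)=+1$ ``is exactly what the main theorems of the preceding sections establish'' via local (Root)/(Tam) compatibility. It is not. The machinery of the paper (Theorem~\ref{tamroot}) proves $p$-parity only for representations lying in some $\Ttp$, and these are never individual irreducibles: by Theorem~\ref{tauthetapprop}(1) every $\tau\in\Ttp$ has even dimension and trivial determinant, and the Corollary following it states explicitly that $\triv\notin\Ttp$ and $\triv\oplus\epsilon\notin\Ttp$. In the dihedral case the computable twist is $\triv\oplus\epsilon\oplus\sigma$, not $\sigma$ alone. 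Your local argument is also loose: at good-reduction places $w(A,\tau)=w(\tau)^{2\dim A}$, which need not be $+1$ for a single irreducible $\tau$; and your remark that ``$\chi$ (and so $\tau$) is unramified at every place of unstable reduction'' conflates a global character of $P$ with local Galois data.

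The paper's proof (Theorem~\ref{thmGP}) is structurally different and avoids this obstacle. It runs an induction on $|G|$ that reduces to the affine case $G=H\ltimes\F_p^k$ with $H\hookrightarrow\GL_k(\F_p)$. In that situation $\C[G/H]\cong\bigoplus_i\Ind_{S_i}^G\tilde\chi_i$, and each self-dual summand with $\chi_i\neq\triv$ factors through a genuine $\Di_{2p}$-subquotient $M_i/\ker\tilde\chi_i$. Hypothesis~\ref{decent} then gives $p$-parity for the \emph{package} $\triv\oplus\epsilon\oplus\psi_i$; the pieces $\triv$ and $\epsilon$ are handled by the assumed $p$-parity over subfields of $F^P/K$, which isolates $\psi_i$. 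Your decomposition via $HP$ does not land you in dihedral subquotients for a general $G$, so you cannot invoke the only packages for which the paper's theorems apply. To repair your argument you would need either the inductive reduction to the affine case, or an independent proof of $p$-parity for arbitrary Clifford-induced irreducibles---which is strictly stronger than anything the paper proves.
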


\noindent The general results on the $p$-parity conjecture (Theorems \ref{tamroot},
\ref{introthmGPtwist} and~\ref{introthm3}) are given in \S\ref{ssappl}.
But first we introduce
our main tool from group theory,
which may be of independent interest.

\subsection{$G$-sets versus $G$-representations}
\label{ssGG}

Let $G$ be an abstract finite group. Suppose $\phi: H \mapsto \phi(H)$
is a function that associates to every subgroup $H\<G$ a value in some
abelian group $\cA$ (written multiplicatively), and that $\phi$ takes the same value on
conjugate subgroups.
Recall that $H\leftrightarrow G/H$ is a bijection between
subgroups of $G$ up to conjugacy and transitive $G$-sets up to isomorphism.
So $\phi$ extends to a map from all $G$-sets to $\cA$ by the rule
$\phi(X\amalg Y)\!=\!\phi(X)\phi(Y)$. Let us call 
$\phi$ ``representation-theoretic'' if $\phi(X)$
only depends on the representation $\C[X]$.

Alternatively, say that a formal combination of (conjugacy classes of)
subgroups $\Theta=\sum_i n_i H_i$ is a {\em relation between permutation representations
of~$G$}, or simply a {\em $G$-relation}, if
$$
   \bigoplus\nolimits_i \C[G/H_i]^{\oplus n_i} \iso 0,
$$
as a virtual representation, i.e. the character
$\sum_i n_i \chi_{\scriptscriptstyle \C[G/H_i]}$ is zero.
Then for $\phi$ to be representation-theoretic is equivalent to
$\prod_i \phi(H_i)^{n_i}$ being 1 for every such $G$-relation.

For example, $G=\Sym_3$ has a unique relation up to multiples,
$$\Theta=2\Sym_3 + \{1\} - 2\Cy_2 - \Cy_3\>.$$
(i.e. $\triv^{\oplus 2} \oplus \C[\Sym_3]\iso
\C[\Sym_3/\Cy_2]^{\oplus 2} \oplus \C[\Sym_3/\Cy_3]$;
clearly such a relation must~exist: $\Sym_3$ has 4 
subgroups up to conjugacy, but only 3 
irreducible representations.)

In the context of number theory, $G$ may be a Galois group of a number field
$F/\Q$, and $\phi(H)$ some invariant of the intermediate field $F^H$.
For instance, $\phi(H)$ could be the the degree of $F^H$, its discriminant,
class number or Dedekind zeta-function $\zeta_{F^H}(s)$
(with $\cA=\Z,\Q^\times,\Q^\times$
and the group of non-zero meromorphic functions on $\C$, respectively).
Of these four, all but the class number are representation-theoretic,
e.g. $[F^H\!:\!\Q]=\dim\C[G/H]$ and $\zeta_{F^H}(s)=L(\C[G/H],s)$
are visibly functions of $\C[G/H]$.
It follows, for example, that in every $\Sym_3$-extension $F/\Q$,
$$\zeta(s)^2\zeta_F(s)=\zeta_{F^{\Cy_2}}(s)^2\zeta_{F^{\Cy_3}}(s).$$
The class number formula then yields an explicit identity between
the corresponding class numbers and regulators ($\tfrac{h\cdot \Reg}{|\mu|}$
is representation-theoretic).

We are going to study extensively $G$-relations and functions on
$G$-relations, and present techniques for verifying when a function or
a quotient of two such functions is representation-theoretic
(see \S\ref{smachinery}).

\textheight 594pt
\pagebreak
\textheight 584pt

\subsubsection*{Regulator constants}
Of particular interest to us is the function
$$
\D_{\rho}:H\mapsto\det(\tfrac{1}{|H|}\lara|\rho^{H})\in \K^{\times}/\K^{\times2}
$$
that,
for a fixed self-dual $\K G$-representation $\rho$
($\K$ a field)
with a $G$-invariant pairing~$\lara$,
computes the determinant of the matrix representing $\tfrac{1}{|H|}\lara$
on any basis of the $H$-invariants $\smash{\rho^H}\!$.
Its significance will become clear~when~we
dis\-cuss functions coming
from abelian varieties.
The fundamental property
of $\D_{\rho}$ is that if $\dlara$ is another pairing on $\rho$,
then $\D_{\rho}^{\lara}/\D_{\rho}^{\dlara}$
is representation-theoretic. In other
words, for every $G$-relation $\Theta=\sum_i n_i H_i$, the quantity
$$
  \RC_\Theta(\rho) = \prod\nolimits_i \D_\rho(H_i)^{n_i} \in \K^{\times}/\K^{\times2}
$$
is independent of the pairing. Following \cite{Squarity} we call
$\RC_\Theta(\rho)$ the {\em regulator constant\/} of $\rho$.
(Their properties are discussed in \S\ref{s:regconst} and \S\ref{svanishing}.)

\begin{example}
\label{introdihA}
Suppose $G\!=\!\Di_{2p^n}$ is dihedral with $p\ne 2$, and $\K\!=\!\Q$ or $\Q_p$.
The smallest subgroups $\{1\}, \Cy_2, \Cy_p$ and $\Di_{2p}$ form
a $G$-relation 
$$
  \Theta=\{1\} + 2\>\Di_{2p} - \Cy_p - 2\>\Cy_2\>.
$$
The irreducible $\K G$-representations are $\triv$ (trivial),
$\epsilon$ (sign) and $\rho_k$ of dimension $p^k\!-\!p^{k-1}$ for
every $1\le k\le n$; they are all self-dual. An elementary computation
(see Examples \ref{exdihreg}, \ref{exdihreg2}) shows that
$$
  \RC_{\Theta}(\triv)=\RC_{\Theta}(\epsilon)=\RC_{\Theta}(\rho_n)=p, \qquad
  \RC_{\Theta}(\rho_k)=1 \quad (1\le k<n).
$$
\end{example}

\subsection{Main results and applications}
\label{ssappl}

The central result of this paper is the $p$-parity conjecture
for the following twists: 
for a 
group $G$, a prime $p$ and a $G$-relation~$\Theta$, define
$\Ttp$ to be the set of self-dual $\Qpb G$-representations $\tau$ that satisfy
$$
 \blangle\tau,\rho\brangle \equiv \ord_p \RC_\Theta(\rho) \mod 2
$$
for every self-dual $\Q_p G$-representation $\rho$
(computing $\RC_{\Theta}(\rho)$ with $\K=\Q_p$).

\let\oldequation\theequation
\def\theequation{1.6}\refstepcounter{equation}\label{tamroot}

\def\theequation{1.6(a)}
\begin{theorem}
\label{tamroot1}
Let $F/K$ be a Galois extension of number fields.
Suppose $E/K$ is an elliptic curve
whose primes of additive reduction above 2 and~3
have cyclic decomposition groups (e.g. are unramified) in $F/K$.
For every 
~$p$ and every relation $\Theta$ between
permutation representations of $\Gal(F/K)$,
$$
  (-1)^{\blangle\tau,\Xp(E/F)\brangle}=w(E/K,\tau)
  \qquad \text{for all\, $\tau\in \Ttp$}.
$$
\end{theorem}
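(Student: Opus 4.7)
The strategy is to factor both sides of the asserted equality over the places of $K$ and reduce to a local comparison in which regulator constants mediate between Tamagawa numbers and local root numbers.

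Before any calculation, I would observe that both sides are constant on $\Ttp$: if $\tau_1,\tau_2\in\Ttp$, then $\tau_1-\tau_2$ is a virtual self-dual representation with even inner product against every self-dual $\Qp G$-representation, hence (after collecting non-self-dual conjugate pairs $\chi\oplus\bar\chi$) it lies in the kernels of both $\tau\mapsto(-1)^{\blangle\tau,\Xp(E/F)\brangle}$ and $\tau\mapsto w(E/K,\tau)$. It therefore suffices to prove the identity for a single, judiciously chosen $\tau\in\Ttp$.

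Applying the defining condition of $\Ttp$ with $\rho=\Xp(E/F)$ (which is a self-dual $\Qp\Gal(F/K)$-representation via the Cassels--Tate pairing together with the Mordell--Weil height pairing) rewrites the left-hand side as
$$
  (-1)^{\blangle\tau,\Xp(E/F)\brangle} \;=\; (-1)^{\ord_p\RC_\Theta(\Xp(E/F))}.
$$
A $p$-adic BSD-type analysis, combining the $p$-part of the BSD quotient applied to each intermediate field $F^{H_i}$ with the interpretation of $\RC_\Theta$ as a determinant on $H$-invariants, then expresses this as $\prod_v(-1)^{t_v(\Theta)}$, where $t_v(\Theta)$ is a local Tamagawa-type invariant depending only on $E/K_v$, the component groups above $v$, and the decomposition group $D_v\<\Gal(F/K)$. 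In parallel, the functional equation gives $w(E/K,\tau)=\prod_v W_v(\Theta)$, for local root-number invariants $W_v(\Theta)$ which (again for $\tau\in\Ttp$) depend only on $E/K_v$ and $D_v$.

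The theorem then reduces to the local identity $(-1)^{t_v(\Theta)}=W_v(\Theta)$ at every place $v$ of $K$. At places of good reduction both sides are $1$; at places of multiplicative reduction a direct comparison with Steinberg $\epsilon$-factors and the component group of the Néron model yields the match; at additive reduction of residue characteristic $\ge 5$ the classical explicit formulas for local root numbers suffice. The main obstacle, and the reason for the hypothesis on $F/K$, is additive reduction above $2$ and $3$, where no clean universal formulas for $W_v$ and $t_v$ are available. Here the cyclicity of $D_v$ is decisive: for a cyclic group every self-dual $\Qp D_v$-representation has regulator constant in $(\Qp^\times)^2$, forcing $t_v(\Theta)\equiv 0\pmod 2$; symmetrically, inductivity of local root numbers and the triviality of the restricted $D_v$-relation on cyclic groups force $W_v(\Theta)=1$. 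The local identity thus holds for free at precisely those places where it would otherwise be intractable, and the global comparison closes.
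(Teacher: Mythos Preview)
Your overall strategy—express the Selmer parity via regulator constants, pass to Tamagawa numbers, and compare place-by-place with local root numbers—is exactly the paper's, and your handling of the cyclic case (additive primes above $2$ and $3$, and Archimedean places) matches the paper's Case~(1) of Theorem~\ref{loccompat}.

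Two points, however. The opening reduction to a single $\tau\in\Ttp$ is unnecessary and under-justified: on the root-number side, ``collecting non-self-dual pairs $\chi\oplus\bar\chi$'' does not exhaust $\Tau_{0,p}$, which also contains differences $\sigma-\sigma'$ of $\Gal(\Qpb/\Qp)$-conjugate self-dual irreducibles; handling these requires Rohrlich's Galois-invariance result (Proposition~\ref{genparity}(5)), which you do not invoke. The paper sidesteps this entirely by proving the identity uniformly for all $\tau\in\Ttp$ via Theorem~\ref{squality} and Corollary~\ref{propglobcompat}.

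More substantively, you describe the semistable and additive $\ell\ge 5$ comparisons as essentially formula-lookups. They are not. The paper's approach is to construct in each case an explicit $\Q D$-module $\TV$ (Table~\ref{roottable}) satisfying both a root-number identity $w(E,\tau)=w(\tau)^2\lambda^{\dim\tau}(-1)^{\blangle\tau,\TV\brangle}$ and the Tamagawa compatibility $C_v\sim\D_\TV$. The non-split multiplicative case already needs a nontrivial cancellation argument through wild inertia (\S\ref{ssnonsplit}); the additive $\ell\ge 5$ case (\S\ref{ss-case3}) splits according to whether good reduction is acquired over a cyclic or dihedral extension, and establishing $C_v\sim\D_\TV$ there takes the full machinery of \S\ref{smachinery} and several pages of reductions (Propositions~\ref{propcva}--\ref{omega1.5}). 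Your proposal names the right local invariants but gives no indication of how this comparison is actually carried out, and that is where essentially all the content lies.
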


\def\theequation{1.6(b)}

\begin{theorem}
\label{tamroot2}
Let $F/K$ be a Galois extension of number fields.
Suppose $A/K$ is a principally polarised abelian variety
whose primes of unstable reduction
have cyclic decomposition groups in $F/K$. Let $p$ be a prime, and assume
that either
\begin{itemize}
\item $p\ne 2$, or
\item $p=2$, the principal polarisation is induced by a $K$-rational
      divisor, and $A$ has split semistable reduction at primes $v|2$
      of $K$ which have non-cyclic wild inertia group in $F/K$.
\end{itemize}
For every relation $\Theta$ between permutation representations
of $\Gal(F/K)$,
$$
  (-1)^{\blangle\tau,\Xp(A/F)\brangle}=w(A/K,\tau)
  \qquad \text{for all\, $\tau\in \Ttp$}.
$$
\end{theorem}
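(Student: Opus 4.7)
The plan is to combine three ingredients. First, the principal polarisation on $A$ endows $\Xp(A/F)$ with the structure of a self-dual $\Q_p G$-representation via the generalised Cassels--Tate pairing; the supplementary $p=2$ hypotheses ($K$-rational polarisation, split semistable reduction at wildly ramified primes above $2$) are precisely what is needed to ensure this self-duality on the nose, not merely up to an ambiguous sign. Applying the defining property of $\Ttp$ with $\rho = \Xp(A/F)$ then yields
$$
  (-1)^{\blangle\tau,\Xp(A/F)\brangle} \;=\; (-1)^{\ord_p \RC_\Theta(\Xp(A/F))},
$$
so the theorem reduces to identifying the right-hand side with $w(A/K,\tau)$.

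Second, I would pass to a local decomposition on both sides. Combining arithmetic duality, the N\'eron--Tate height pairing on the Mordell--Weil part, and the $p$-part of the BSD formula modulo squares, one expresses $\RC_\Theta(\Xp(A/F))$ modulo $\Q_p^{\times 2}$ as a product over places $v$ of $K$ of local contributions $C_v(\Theta)\in\Q_p^\times/\Q_p^{\times 2}$, each built only from the local Galois module of $A_{K_v}$ and from the combinatorics of the relation $\Theta$. On the root-number side, $w(A/K,\tau)=\prod_v w_v(A/K,\tau)$, and the hypothesis $\tau\in\Ttp$ together with self-duality and Frobenius reciprocity let each local factor be rewritten modulo squares as $\prod_i w_v(A/K_v,\C[G/H_i])^{n_i}$, the local counterpart of a virtual permutation representation matching $\Theta$.

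The main task, and the main obstacle, is then the place-by-place compatibility
$$
  (-1)^{\ord_p C_v(\Theta)} \;=\; \prod_i w_v(A/K_v,\C[G/H_i])^{n_i}
$$
at every place $v$ of $K$. At places of good or multiplicative reduction this follows from the explicit N\'eron model together with standard root-number formulas for tame and semistable Weil--Deligne representations. At places of additive reduction, the hypothesis that decomposition groups in $F/K$ are cyclic reduces the problem to a cyclic --- hence abelian --- local extension of $K_v$, over which both Tamagawa-type determinants and root numbers can be tabulated by Kodaira or reduction type and matched case by case; the $p=2$ hypotheses similarly tame the $2$-adic subtleties to a manageable list. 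The hard part lies squarely in this local matching at additive and $2$-adic primes, since both the Tamagawa-side determinant on the N\'eron component group and the local root numbers of induced representations demand a careful, case-by-case Weil--Deligne analysis, tractable only because the cyclic decomposition hypothesis forces the relevant local Galois extension to be abelian and thus the Brauer induction on the root-number side to collapse to explicit character formulas comparable with the component-group side.
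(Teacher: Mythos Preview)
Your opening step is correct and matches the paper: self-duality of $\Xp(A/F)$ together with the defining property of $\Ttp$ (applied with $\rho=\Xp(A/F)$) and Theorem~\ref{squality} give $(-1)^{\blangle\tau,\Xp(A/F)\brangle}=(-1)^{\ord_p C(\Theta)}$, where $C(\Theta)=\prod_i C_{A/F^{H_i}}^{\,n_i}$. The $K$-rational divisor condition for $p=2$ enters exactly here.

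The gap is on the root-number side. Your proposed identity $w_v(A/K_v,\tau)=\prod_i w_v(A/K_v,\C[G/H_i])^{n_i}$ cannot hold: root numbers are multiplicative in the twist, and $\Theta$ is precisely the statement $\sum_i n_i\,\C[G/H_i]=0$, so the right-hand side is identically~$1$. There is no way to pass from $\tau$ to the permutation representations in $\Theta$ by ``self-duality and Frobenius reciprocity''. The paper's bridge is the local module $\TV$ of Theorem~\ref{loccompat}: at each place one exhibits a rational $D_v$-representation $\TV$ with $w(A/K_v,\tau)=w(\tau)^{2\dim A}(-1)^{\blangle\tau,\TV\brangle}$ for every self-dual $\tau$, and separately proves $C_v\sim\D_\TV$ on the Burnside ring of $D_v$. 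One then applies the defining property of $\Ttp$ a \emph{second} time --- now to $\TV$, not to $\Xp$ --- obtaining $(-1)^{\blangle\tau,\TV\brangle}=(-1)^{\ord_p\RC_\Theta(\TV)}=(-1)^{\ord_p C_v(\Theta)}$; the factor $w(\tau)^{2\dim A}$ disappears because $\det\tau=\triv$ (Theorem~\ref{tauthetapprop} and Lemma~\ref{detformula}).

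You have also inverted where the local difficulty lies. At primes of unstable reduction the cyclic-decomposition hypothesis means the local Galois group has \emph{no} nontrivial relations, so both $C_v(\Theta)$ and the root-number contribution are trivially~$1$; there is nothing to tabulate. For abelian varieties the substantive local case is the \emph{semistable} one (Case~(4) of Theorem~\ref{loccompat}), where $\TV=X(\T^*)\otimes\Q$ and the proof of $C_v\sim\D_\TV$ goes through the monodromy pairing on the Raynaud parametrisation and the machinery of \S\ref{smachinery}. Your second $p=2$ hypothesis (split semistable at primes above~$2$ with non-cyclic wild inertia) is needed precisely here, to exclude case~(4ex), not for the self-duality of $\Xp$.
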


\setcounter{equation}{6}
\let\theequation\oldequation

\begin{remark}
In particular, if $p$ is odd, Conjecture \ref{pparitytwists} holds
for $\tau\in\Ttp$ for all semistable principally polarised
abelian varieties over $K$.
\end{remark}

\begin{remark}
In general, the representations in $\Ttp$ simply encode the regulator
constants. For instance, if $\{\rho_j\}$ are 
the irreducible self-dual $\Q_p G$-representations with
$\ord_p \RC_\Theta(\rho_j)$ odd, then
$$
 \bigoplus\nolimits_{j} \>(\text{any $\Qpb$-irreducible constituent of $\rho_j$}) \>\>\in \>\>\Ttp\>.
$$
(This representation is automatically
self-dual by Corollary \ref{triviality}). 
A general element in $\Ttp$ differs from this one by an element of
$\Tau_{0,p}$, a \hbox{representation} for which the $p$-parity conjecture ought
to ``trivially'' hold;
it would be very interesting to have an intrinsic description of $\bigcup_{\Theta}\!\Ttp$,
\hbox{cf.} Remarks~\ref{remT0p},~\ref{intrinsic}.
\end{remark}

\begin{example}[$\Gal(F/K)\!=\!\Di_{2p^n}$, $p$ odd]
\label{introdih0}
Continuing Example \ref{introdihA}, for every $\Qpb$-irreducible
(2-dimen\-sional) constituent $\tau_n$ of $\rho_n$,
$$
  \triv\oplus\epsilon\oplus\tau_n\>\>\in\>\>\Tau_{\Theta,p}\>.
$$
If $A/K$ is an abelian variety that satisfies the assumptions of
Theorem \ref{tamroot}, e.g. $A$ is semistable
at primes that ramify in $F/K$,
the $p$-parity conjecture holds for the twist of $A$ by
$\triv\oplus\epsilon\oplus\tau_n$.
Applying this construction to the $\Di_{2p^k}$-quotients
of $\Di_{2p^n}$,
we deduce the $p$-parity conjecture for the twists of $A$
by~$\triv\oplus\epsilon\oplus\tau$ for every 2-dimensional irreducible
representation $\tau$ of $\Gal(F/K)$.
\end{example}

\noindent
As the $p$-parity conjecture is known to hold for elliptic curves over $\Q$,
and therefore for their quadratic twists as well,
we find 

\begin{corollary}[Parity conjecture in anticyclotomic towers]
\label{anticyc}
Let $E/\Q$ be an elliptic curve, $L$ an imaginary
quadratic field and $p$ an odd prime.
If $p\!=\!3$, assume that either $E$ is semistable at $3$ or that $3$ splits
in $L$.
Then for every layer $L_n$ of the $\Z_p$-anticyclotomic extension of $L$
and every representation~$\tau$ of $\Gal(L_n/\Q)$,
$$
  \ord_{s=1} L(E,\tau,s) \equiv \blangle\tau, \Xp(E/L_n)\brangle \mod 2.
$$
\end{corollary}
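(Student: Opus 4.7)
The plan is to decompose any representation of $G_n := \Gal(L_n/\Q)$ into irreducibles and handle the interesting $2$-dimensional constituents via Theorem~\ref{tamroot1} in the dihedral form worked out in Example~\ref{introdih0}, while reducing the $1$-dimensional constituents to known cases of the $p$-parity conjecture for $E/\Q$.

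First I identify $G_n \cong \Di_{2p^n}$: the anticyclotomic property makes complex conjugation act as inversion on $\Gal(L_n/L) \cong \Cy_{p^n}$. Every irreducible $\Qpb$-representation of $\Di_{2p^n}$ is self-dual and is either $\triv$, the sign character $\epsilon$ (which cuts out $L/\Q$), or one of the $2$-dimensional representations $\tau$ induced from a nontrivial character of $\Gal(L_n/L)$. The desired congruence is additive in $\tau$, so it is enough to handle each irreducible. For these $\tau$ the analytic continuation and functional equation for $L(E,\tau,s)$ are known (a Dirichlet twist when $\dim\tau=1$; and $L(E/L,\chi,s)$ for a ring class character $\chi$ when $\dim\tau = 2$, available via base change of the modular form attached to $E$ to $L$), and translate the mod-$2$ congruence into the $p$-parity statement $(-1)^{\blangle\tau,\Xp(E/L_n)\brangle} = w(E/\Q,\tau)$. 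The cases $\tau = \triv$ and $\tau = \epsilon$ are the $p$-parity conjecture for $E/\Q$ and for its quadratic twist by $L/\Q$, both of which are in the literature. For a $2$-dimensional irreducible $\tau$, Example~\ref{introdih0} provides a $G_n$-relation $\Theta$ with $\triv \oplus \epsilon \oplus \tau \in \Ttp$, and Theorem~\ref{tamroot1} then gives the $p$-parity conjecture for the twist of $E/\Q$ by $\triv \oplus \epsilon \oplus \tau$; subtracting the two $1$-dimensional cases isolates the claim for $\tau$.

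The only nontrivial task is to verify the hypothesis of Theorem~\ref{tamroot1}: decomposition groups at primes of additive reduction above $2$ and $3$ must be cyclic in $G_n$. Because $p$ is odd, $L_n/L$ is unramified outside $p$, so for $\ell \in \{2,3\}$ with $\ell \ne p$ the inertia in a decomposition group $D_v$ comes only from $L/\Q$ and has order at most $2$. If $\ell$ is inert or ramifies in $L$, the prime of $L$ above $\ell$ is self-conjugate, and anticyclotomicity forces the corresponding Frobenius in $\Gal(L_n/L)$ to equal its own inverse and therefore to be trivial; if $\ell$ splits, then $D_v \subset \Gal(L_n/L)$ is abelian and cyclic. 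So $D_v$ is always cyclic in this range. The remaining case $\ell = p = 3$ is exactly the point of the extra hypothesis: when $3$ splits in $L$ any $D_v$ above $3$ still lies in $\Gal(L_n/L)$ and is cyclic, while when $3$ is inert or ramified the semistability assumption removes all primes of additive reduction above $3$ from consideration. I expect the assembly from Example~\ref{introdih0} and Theorem~\ref{tamroot1} to be straightforward; the subtlest ingredient is this cyclicity check, which crucially exploits the anticyclotomic nature of the tower.
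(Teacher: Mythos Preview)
Your proposal is correct and follows essentially the same approach as the paper: the corollary is stated immediately after Example~\ref{introdih0} with only the one-line justification that $p$-parity is known for elliptic curves over $\Q$ (and hence for their quadratic twists), so the intended argument is precisely your decomposition into $\triv$, $\epsilon$, and the $2$-dimensional irreducibles handled via $\triv\oplus\epsilon\oplus\tau\in\Ttp$. Your explicit verification that the decomposition groups above $2$ and $3$ are cyclic---using that a self-conjugate prime of $L$ has Frobenius in $\Gal(L_n/L)$ equal to its own inverse, hence trivial for odd $p$---is exactly what the paper leaves to the reader and is what forces the special hypothesis at $p=3$.
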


In \S\ref{stowers} we generalise Example \ref{introdih0} to other groups
with a large normal $p$-subgroup.
Based on Theorem~\ref{tamroot}, and using purely group-theoretic manipulations
we obtain (see Theorems \ref{thmGPtwist}, \ref{hypellthm})

\begin{theorem}
\label{introthmGPtwist}
Suppose $F/K$ is a Galois extension of number fields and
the commutator subgroup of $G=\Gal(F/K)$ is a $p$-group.
Let $A/K$ be an abelian variety satisfying the hypotheses of
Theorem \ref{tamroot}.
If the $p$-parity conjecture holds for $A$ over $K$
and its quadratic extensions in $F$,
then it holds for all twists of $A$ by orthogonal representations of $G$.
\end{theorem}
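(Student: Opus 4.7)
The plan is to combine Theorem~\ref{tamroot} with a purely group-theoretic decomposition of orthogonal representations of $G=\Gal(F/K)$. The key observation is that both $\tau\mapsto(-1)^{\blangle\tau,\Xp(A/F)\brangle}$ and $\tau\mapsto w(A/K,\tau)$ are homomorphisms from the Grothendieck group of self-dual $\Qpb G$-representations to $\{\pm1\}$, so the self-dual virtual $\tau$ satisfying the $p$-parity conjecture form a subgroup $\mathcal S$; it suffices to show that $\mathcal S$ contains every orthogonal representation of $G$. Three families lie in $\mathcal S$: the trivial character $\triv_G$ (from the hypothesis over $K$); every quadratic character $\chi$ of $G$, which via $\Ind_{\Gal(F/K(\sqrt d))}^G\triv=\triv_G\oplus\chi$, Frobenius reciprocity and multiplicativity of root numbers follows from the hypothesis over the corresponding $K(\sqrt d)$; and every $\tau\in\Ttp$ for any $G$-relation $\Theta$, by Theorem~\ref{tamroot}.

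The theorem thus reduces to the purely group-theoretic claim that for every orthogonal irreducible $\Qpb G$-representation $\tau$ there exist a $G$-relation $\Theta$, integers $n_i$, and quadratic characters $\chi_i$ of $G$ with
$$\tau \;-\; \sum\nolimits_i n_i\chi_i \;\in\; \Ttp.$$
I would prove this by induction on $|G|$. The abelian base case is immediate: every orthogonal irreducible is itself a quadratic character. For the inductive step, note that if $\tau$ is not faithful, with kernel $N\ne 1$, then $\tau$ is inflated from $G/N$, whose commutator subgroup $G'N/N$ is a quotient of the $p$-group $G'$ and hence still a $p$-group; the inductive hypothesis applied to $G/N$ gives the required decomposition, which one lifts to $G$ by inflation of the $(G/N)$-relation (regulator constants being preserved under inflation). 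For a faithful $\tau$, construct an explicit $G$-relation $\Theta$ assembled from subgroups of $G$ containing $G'$ together with selected subgroups of $G'$, as in Example~\ref{introdihA}, and verify via the regulator-constant formalism of \S\ref{s:regconst} that $\tau$ together with a suitable $\Z$-linear combination of quadratic characters of $G$ lies in $\Ttp$.

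The main obstacle is the systematic production of such $G$-relations for faithful orthogonal irreducibles of arbitrary $G$ with $G'$ a $p$-group, together with the accompanying regulator-constant bookkeeping; Example~\ref{introdihA} and its application in Example~\ref{introdih0} model only the dihedral prototype. I expect the general argument to proceed by Clifford-theoretic analysis over the abelian quotient $G/G'$: every faithful orthogonal irreducible of $G$ should reduce to data visible on a dihedral-type section of $G$, allowing a lifted analogue of the relation $\{1\}+2\,\Di_{2p}-\Cy_p-2\,\Cy_2$ to single out $\tau$ by forcing $\RC_\Theta$ to have $p$-adic valuation $1$ exactly on the desired self-dual $\Qp G$-irreducibles, while matching $\tau+\sum n_i\chi_i$ on the complementary ones.
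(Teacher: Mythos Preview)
Your setup is correct: both sides are homomorphisms on the Grothendieck group of self-dual representations, and it suffices to show every orthogonal representation lies in the subgroup $\mathcal S$ generated by $\triv$, quadratic characters, and the various $\Ttp$. The gap is that your handling of faithful irreducibles is only a hope, not an argument. You write that you ``expect the general argument to proceed by Clifford-theoretic analysis'' and that producing the required $G$-relations is ``the main obstacle'' --- but this is precisely the content of the theorem, and you have not supplied it. The dihedral prototype of Example~\ref{introdihA} does not obviously generalise to an arbitrary $G$ with $G'$ a $p$-group, and your induction on $|G|$ via $G/\ker\tau$ gives nothing when $\tau$ is faithful.

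The paper's proof avoids this difficulty by a different reduction. Rather than trying to place $\tau$ itself (modulo quadratic characters) into some $\Ttp$, it invokes the Fr\"ohlich--Queyrut orthogonal analogue of Brauer induction to write every orthogonal $\tau$ as a virtual sum $\sum n_i\Ind_{H_i}^G\rho_i$ with each $\rho_i$ either trivial, of the form $\chi\oplus\bar\chi$, or a $2$-dimensional irreducible factoring through a dihedral quotient of $H_i$. Inductivity (Corollary~\ref{corbasic}(2)) then reduces $p$-parity for $\tau$ to $p$-parity for each $\rho_i$ over $F^{H_i}$. The $\chi\oplus\bar\chi$ case is trivial; the dihedral $2$-dimensional case is exactly Hypothesis~\ref{decent} (and here the assumption that $G'$ is a $p$-group forces all dihedral subquotients to be $\Di_{2p^k}$). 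The remaining case $\rho_i=\triv$ requires $p$-parity over \emph{all} subfields $F^{H_i}$, and this is the content of the separately proved Theorems~\ref{thmGP} and~\ref{thmGP2}, whose proofs involve a further induction reducing to affine linear groups $H\ltimes\F_p^k$ and an explicit orbit analysis. Your proposal has no analogue of this step: passing to subgroups (not just quotients) via inductivity is the idea you are missing.
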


When $A\!=\!E$ is an elliptic curve and $K\!=\!\Q$, the
assumption on the \hbox{$p$-parity} conjecture is always satisfied, as we remarked
above (in particular, we get Theorem \ref{introthm1}).
It also holds
for those $E/K$ that admit a rational $p$-isogeny
under mild restrictions on $E$ at primes above $p$;
see Remark \ref{knowncases} for precise statements, an
extension to abelian varieties and a list of references.

\textheight 594pt
\pagebreak
\textheight 584pt

The condition that the commutator of $G$ is a $p$-group is equivalent to
the Sylow $p$-subgroup being normal with an abelian quotient.
In other words, $F$ should be
a $p$-extension of an abelian extension of the ground field.
For instance, the theorem applies when
\begin{itemize}
\item $G$ is abelian (any $p$).
\item $G\iso \Di_{2p^n}$ is dihedral.
\item $G$ is a 2-group and $p=2$.
\item $G$ is an extension of $\Cy_2$ by a $p$-group.
\item $G\iso (\Z/p^n\Z)\rtimes(\Z/p^n\Z)^{\times}$, for instance $F\!=\!\Q(\mu_{p^n},\sqrt[p^n]{m})$, $K\!=\!\Q$.
\item $G\subset\smallmatrix{\,*}{\,*\,}{p*}{\,*\,}$ in $\GL_2(\Z/p^n\Z)$,
      for instance $F\!=\!K(C[p^n])$ for some elliptic curve $C/K$ that admits
      a rational $p$-isogeny.
\end{itemize}

Root numbers and parities of Selmer ranks in the last 3 cases
have recently been studied
by Mazur--Rubin \cite{MR,MR2}, Hachimori--Venjakob \cite{HV} and
one of us (V.) \cite{VD},
Rohrlich \cite{RohS} and Coates--Fukaya--Kato--Sujatha \cite{CFKS};
see also Greenberg's preprint \cite{GreP}.
This kind of extensions arise in non-commutative Iwasawa theory,
where one has a tower $F_\infty=\bigcup F_n$ with $\Gal(F_\infty/K)$ a
\hbox{$p$-adic} Lie group.
The $\Gal(F_n/K)$ all have a ``large'' normal $p$-subgroup
with a fixed ``small'' quotient.
When this quotient is {\em non-abelian\/}, we have a
weaker version of Theorem \ref{introthmGPtwist}
(Theorems \ref{thmGP}, \ref{hypellthm}; cf. also \ref{thmGP2} for $p=2$):

\begin{theorem}
\label{introthm3}
Suppose $F/K$ is Galois and $P\normal \Gal(F/K)$ is a $p$-subgroup
with $p\!\ne\! 2$.
Let $E/K$ be an elliptic curve (resp. principally polarised abelian variety)
whose primes of additive reduction above 2, 3 (resp. all primes of
unstable reduction) have cyclic decomposition groups in $F/K$.
If the \hbox{$p$-parity} conjecture holds for $E$ over the subfields
of $F^P/K$, then it holds over all subfields of~$F/K$.
\end{theorem}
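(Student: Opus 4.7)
The plan is to reduce the assertion to a group-theoretic decomposition and then invoke Theorem~\ref{tamroot}. Let $G = \Gal(F/K)$. For an intermediate field $L = F^H$, the $p$-parity conjecture for $E/L$ amounts to $(-1)^{\blangle \C[G/H], \Xp(E/F)\brangle} = w(E/K, \C[G/H])$, and both sides are multiplicative in the (virtual) self-dual representation in the argument. So it suffices to express
$$
\C[G/H] \>=\> \sum_i a_i\,\C[G/H_i] \>+\> \tau
$$
in the Grothendieck group of self-dual $\Q_p G$-representations, with each $H_i \supseteq P$ (so $F^{H_i}$ is a subfield of $F^P/K$ and $p$-parity over $F^{H_i}$ is given by hypothesis) and $\tau \in \Tau_{\Theta,p}$ for some $G$-relation $\Theta$ (so Theorem~\ref{tamroot} applies to the twist by~$\tau$).

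I would produce this decomposition by induction on $|P|$. At each step, pick $P' \normal G$ with $P' \subsetneq P$ of index $p$ in~$P$: the Frattini quotient $P/\Phi(P)$ is a non-trivial $\F_p[G/P]$-module, hence admits a $G$-invariant hyperplane, which pulls back to the required~$P'$. The inductive hypothesis applied to the pair $(G, P')$ reduces the task to subgroups $H$ with $P' \subseteq H$ but $P \not\subseteq H$.

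For such an $H$, set $Z = P/P' \cong \Cy_p$; then $H \cap P = P'$ and $[HP : H] = p$. I would construct an explicit $G$-relation $\Theta_H$ modelled on the dihedral relation of Example~\ref{introdihA} (with the central $\Cy_p$ replaced by~$Z$, living inside the relevant subquotient of~$G$), and aim to show that $\tau_H := \C[G/H] - \C[G/HP]$ lies in $\Tau_{\Theta_H, p}$. A direct computation of $\D_\rho(H')$ for the subgroups $H'$ entering $\Theta_H$, in the style of Examples~\ref{exdihreg}, \ref{exdihreg2}, should give $\ord_p \RC_{\Theta_H}(\rho)$ odd on precisely those self-dual $\Q_p G$-representations $\rho$ occurring as constituents of $\tau_H$ (those ``new'' at the level of $P$). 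Self-duality of $\tau_H$ is automatic, as it is a difference of permutation representations, and $\C[G/HP]$ is directly covered by the hypothesis since $HP \supseteq P$.

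The main obstacle is this regulator constant matching: showing that $\blangle \tau_H, \rho\brangle \equiv \ord_p \RC_{\Theta_H}(\rho) \pmod 2$ for all self-dual $\Q_p G$-representations~$\rho$. The assumption $p \ne 2$ is essential here: it eliminates non-trivial self-dual $\Qpb$-characters of $\Cy_p$, forcing the ``new'' irreducible constituents to come in complex-conjugate pairs, which is exactly the pattern the dihedral-type regulator constant of Example~\ref{introdihA} detects as a single odd power of~$p$. For $p=2$ this pattern fails because of additional real characters of order~$2$, which is why the analogous Theorem~\ref{introthmGPtwist} instead assumes $p$-parity over all quadratic subextensions as an additional input.
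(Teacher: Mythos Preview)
Your proposed $\tau_H = \C[G/H] - \C[G/HP]$ does not in general lie in $\Tau_{\Theta,p}$ for any $G$-relation $\Theta$. Take the smallest case $G=\Di_{2p}$ with $P=\Cy_p$, $P'=1$, $H=\Cy_2$: then $\tau_H=\C[G/\Cy_2]-\triv$ is, over $\Q_p$, the single irreducible $(p-1)$-dimensional representation $\rho$. The only $G$-relations are multiples of $\Theta=\{1\}-2\Cy_2-\Cy_p+2G$, and by Example~\ref{exdihreg} one has $\ord_p\RC_\Theta(\triv)=\ord_p\RC_\Theta(\epsilon)=\ord_p\RC_\Theta(\rho)=1$. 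The membership condition $\blangle\tau_H,\triv\brangle\equiv\ord_p\RC_\Theta(\triv)\pmod 2$ therefore fails, since $\blangle\rho,\triv\brangle=0$. The ``obstacle'' you identify is real, and it is fatal to this particular $\tau_H$: regulator constants of dihedral-type relations are non-trivial on $\triv$ and $\epsilon$ as well as on the ``new'' constituents, so the $\Ttp$ elements always carry a copy of $\triv\oplus\det\sigma$ along with $\sigma$.

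The paper absorbs exactly this discrepancy using the \emph{hypothesis} (parity over subfields of $F^P$). Its proof (Theorem~\ref{thmGP}) first reduces by induction on $|G|$ to the affine shape $G=H\ltimes\F_p^k$ with $H\hookrightarrow\GL_k(\F_p)$ and $P=\F_p^k$, then writes $\C[G/H]\cong\bigoplus_i\Ind_{S_i}^G\tilde\chi_i$ via the $H$-orbits on characters of $\F_p^k$. Non-self-dual summands cancel; each self-dual summand $\psi_i$ lives in a $\Di_{2p}$-subquotient $M_i/\ker\tilde\chi_i$ with $M_i\supseteq S_i\supseteq P$. Hypothesis~\ref{decent} (which is what Theorem~\ref{tamroot} supplies) gives $p$-parity for $\psi_i\oplus\triv\oplus\det\psi_i$, and the extra $\triv\oplus\det\psi_i$ is handled because $F^{M_i}$ and $F^{S_i}$ are subfields of $F^P$, where parity is assumed. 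Your decomposition $\C[G/H]=\sum a_i\,\C[G/H_i]+\tau$ does exist, but the correct $\tau$ carries these $\triv\oplus\det\sigma$ pieces and the $H_i$-terms must compensate; you cannot get away with the single term $\C[G/HP]$.
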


\begin{example}
Let $E/\Q$ be an elliptic curve, semistable at 2 and 3.
Take $p\ne 2$ and $F_n=\Q(E[p^n])$, so
$\Gal(F_n/\Q)\<\GL_2(\Z/p^n\Z)$.
If the $p$-parity conjecture holds over the subfields of
the first layer $\Q(E[p])/\Q$, then it holds over all subfields of $F_n$
for all $n$.
Incidentally, for $p=3$ the ``first layer'' assumption is always satisfied
(see Example \ref{exgl2f3}).

Using the above theorems,
it is also possible to get a lower estimate on
the growth of the $p^\infty$-Selmer group in this tower
by computing root numbers.
For example, if $p\equiv3\mod4$ and $E$ is semistable
and admits a rational $p$-isogeny, then combining Theorem \ref{introthm1}
and \cite{RohS} Cor.~2 shows that
$\dim\Xp(E/F_n)\ge ap^{2n}$ for some $a>0$ and large enough $n$.
\end{example}

Finally, let us point out some of the things that definitely can {\em not\/}
be \hbox{obtained} just from Theorem \ref{tamroot}.
It is tempting to try and prove the \hbox{$p$-parity} conjecture for $A/K$ itself by
finding a clever extension $F/K$ and a $\Gal(F/K)$-relation $\Theta$ with
$\triv\in\Ttp$. However, Theorem \ref{tauthetapprop} shows that
all $\tau\in\Ttp$ are even-dimensional (and have trivial determinant).
So, even assuming finiteness of $\sha$ and
using several primes $p$, one requires at least
one additional twist for which parity is known.
For instance, the \hbox{$p$-parity} conjecture for all elliptic curves over $\Q$
can be proved for odd $p$ by reversing the argument in
\ref{introdih0} and \ref{anticyc}: it is possible to find a suitable
anticyclotomic extension where one knows $p$-parity for the twists
by $\epsilon$ and some 2-dimensional irreducible~$\tau$, whence it is
also true for $\triv$. (This is the argument used in \cite{Squarity}.)

It is also worth mentioning that
if $\rho$ is an irreducible $\Q_pG$-representation which
is either symplectic or of the
form $\sigma\oplus\sigma^*$ over $\bar\Q_p$,
then
$(-1)^{\blangle\tau,\rho\brangle}=1$ for every $\Theta$ and $\tau\in\Ttp$,
so Theorem \ref{tamroot}
yields no information about the parity of such $\rho$ in $\Xp(A/F)$.
Also, the theorem gives no interesting $p$-parity statements
when $p\nmid |G|$ or $G$ has odd order.

For a summary of properties of $\tau\in\Ttp$ and examples
see \S\ref{svanishing}.

\subsection{Regulator constants and parity of Selmer ranks}
\label{sssquality}

To explain our approach to the parity conjecture,
let us first review the method of 
\cite{Squarity,Selfduality} which allows one
to express the Selmer parity in Theorem~\ref{tamroot}
in terms of local invariants of the abelian variety.

Suppose $F/K$ is a Galois extension of number fields.
For simplicity, consider an elliptic curve $E/K$,
and assume for the moment that the Tate-Shafarevich group
$\sha$ is finite. Define the {\em Birch--Swinnerton-Dyer quotient}
$$
  \BSD(E/K)=
  \frac{\Reg_{E/K}\>|\sha(E/K)|}{\sqrt{|\Delta_K|}\>|E(K)_{\text{tors}}|^2}\cdot
  \CO_{E/K},
$$
the conjectural leading term of $L(E/K,s)$ at $s\!=\!1$, see \cite{TatC} \S1.
Here $\Reg$~is the regulator,
$\CO_{E/K}\!=\!\prod_v C_v(E/K_v,\omega)$ the product
of local \hbox{Tamagawa} \hbox{numbers} and periods, and
$\Delta_K$ is the discriminant of $K$
(see \S\ref{ssnotation} for the \hbox{notation}).

Whenever $E_i/K_i$ are elliptic curves (or abelian varieties)
that happen to satisfy $\prod_i L(E_i/K_i,s)^{n_i}=1$,
then $\prod_i \BSD(E_i/K_i)^{n_i}=1$
as predicted by the Birch--Swinnerton-Dyer conjecture%
\footnote{%
  If $\prod L(E_i/F_i,s)=\prod L(E'_j/F'_j,s)$, the corresponding products
  of Weil restrictions to $\Q$ have the same $L$-function, hence isomorphic
  $l$-adic representations (Serre \cite{SerA} \S2.5 Rmk. (3)), and are
  therefore isogenous (Faltings \cite{Fa}). This is sufficient,
  as $\smallsha$ is assumed finite and BSD-quotients are invariant
  under Weil restriction (Milne \cite{MilO} \S1)
  and isogeny (Tate--Milne \cite{MilA} Thm. 7.3).}.
Taking the latter modulo rational squares (to eliminate $\sha$ and
torsion) yields a relation between the regulators and the local terms $\CO$.
It turns out, and has already been exploited in \cite{Isogroot,Squarity},
that this has strong implications for parities of ranks.

As a first example, if $E$ admits a $K$-rational {\em $p$-isogeny\/} $E\to E'$,
then the equality $L(E/K,s)=L(E'/K,s)$ leads to the congruence
$$
  \frac{\CO_{E/K}}{\CO_{E'/K}} \equiv \frac{\Reg_{E'/K}}{\Reg_{E/K}}
    \equiv p^{\rk(E/K)} \mod \Q^{\times2},
$$
where the second step is an elementary computation with height pairings.

As a second example,
if $E/K$ is arbitrary and $E_d/K$ is its
{\em quadratic twist\/} by $d\in K^{\times}$, then
$L(E/K,s)L(E_d/K,s)=L(E/K(\sqrt{d}),s)$,~and
$$
  \biggl|
  \frac{{\raise-2pt\hbox{$\Delta$}\scriptstyle{}^{1/2}_{K(\sqrt{d})}}}{\Delta_K}
  \biggr|\>
  \frac{\CO_{E/K}\CO_{E_d/K}}{\CO_{E/K(\sqrt d)}}
  \equiv \frac{\Reg_{E/K(\sqrt{d})}}{\Reg_{E/K}\Reg_{E_d/K}}
    \equiv 2^{\rk(E/K(\sqrt{d}))} \mod\Q^{\times2}.
$$

The main subject of this paper is another massive source
of identities between $L$-functions,
{\em relations between permutation representations.}
If $F/K$ is a Galois extension with Galois
group $G$, then a $G$-relation
$$
  \Theta:\>\> \bigoplus\nolimits_i \C[G/H_i]^{\oplus n_i}=0 \qquad (H_i\< G, \>n_i\in\Z)
$$
forces the identity $\prod L(E/F^{H_i},s)^{n_i}\!=\!1$ by Artin formalism,
which leads to
$\prod (C_{E/F^{H_i}})^{-n_i} \equiv \prod (\Reg_{E/F^{H_i}})^{n_i} \mod \Q^{\times2}.$
By definition of the regulator,
$$
 \qquad\qquad\qquad \Reg_{E/F^{H_i}}=\det(\tfrac{1}{|H_i|}\lara|\rho^{H_i})
\qquad\quad(=\D_\rho(H_i) \text{ of \S\ref{ssGG}})
$$
where $\rho=E(F)\tensor\Q$ and $\lara$ is the height pairing on $E/F$.
So the multiplicities
$\rk_\sigma(E/F)$
with which
various irreducible $\Q G$-representations $\sigma$ occur in $E(F)\tensor\Q$
satisfy
$$
  \prod_i (C_{E/F^{H_i}})^{-n_i} \equiv
  \prod_i (\Reg_{E/F^{H_i}})^{n_i} \equiv \RC_\Theta(\rho) \equiv \prod_\sigma \RC_\Theta(\sigma)^%
    {\rk_\sigma(E/F)}
  \mod \Q^{\times2}.
$$
In other words, the $p$-parts of the left-hand side determine the parities
of specific ranks:
for any $\tau_p\in\Ttp$,
$$
\prod_i (C_{E/F^{H_i}})^{n_i} \equiv \prod_p \> p^{\,\blangle\tau_p,E(F)\brangle} \mod \Q^{\times2}\>.
$$

The three procedures may be carried out without assuming that $\sha$ is finite,
at the expense of working with Selmer groups rather than Mordell-Weil
groups. In the first two cases, the outcome is
\beq
\dim\X_p(E/K) & \equiv\!\!\! & \ord_p \frac{\CO_{E/K}}{\CO_{E'/K}} \mod 2 & (\text{isogeny}),\cr
\dim\X_2(E/K(\sqrt d))\!\!\! & \equiv\!\!\! & \ord_2
\frac{\CO_{E/K}\CO_{E_d/K}}{\CO_{E/K(\sqrt{d})}}\>
  \raise2pt\hbox{$\Bigl|$}\!
  \tfrac{{\raise-2pt\hbox{$\scriptstyle\Delta$}^{1/2}_{K(\sqrt{d})}}}{\Delta_K}
  \!\raise2pt\hbox{$\Bigr|$}
  \mod 2& (\text{quad. twist}). \cr
\eeq
In the case of $G$-relations, according to \cite{Selfduality} Thms. 1.1, 1.5,
we have

\begin{theorem}
\label{squality}
Let $F/K$ be a Galois extension of number fields with Galois group $G$.
Let $p$ be a prime and
$\Theta=\sum n_i H_i$ a $G$-relation.
For every elliptic curve $E/K$,
the $\Q_p G$-representation $\Xp(E/F)$ is self-dual, and
$$
\blangle\tau,\Xp(E/F)\brangle 
\equiv
\ord_p\prod_i (C_{E/F^{H_i}})^{n_i} \mod 2\>
\qquad \text{for all $\tau\in\Ttp$}.
$$
The same is true for principally polarised abelian varieties $A/K$, except
that when $p=2$ we require that the polarisation comes from a $K$-rational
divisor.
\end{theorem}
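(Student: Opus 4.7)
The plan is to reduce the theorem to BSD-style identities over intermediate fields, following the strategy sketched in \S\ref{sssquality} but working with $p^\infty$-Selmer groups throughout so as to avoid the unproven finiteness of $\sha$.

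First, I would exploit the $G$-relation $\Theta = \sum_i n_i H_i$: Artin formalism yields the $L$-function identity $\prod_i L(A/F^{H_i}, s)^{n_i} = 1$, which via Weil restriction, Serre's comparison of $L$-functions, and Faltings' isogeny theorem implies that the corresponding Weil restrictions to $\Q$ are isogenous. Assuming $\sha$ finite momentarily, this forces $\prod_i \BSD(A/F^{H_i})^{n_i} = 1$, since BSD quotients are invariant under isogeny and Weil restriction. Reducing modulo rational squares kills the $|\sha|$, torsion, discriminant and period contributions (the squareness of $|\sha|$ at odd $p$ follows from the alternating Cassels--Tate pairing, and at $p = 2$ requires the pairing to be alternating rather than merely skew-symmetric), leaving
$$\prod_i \CO_{A/F^{H_i}}^{n_i} \equiv \prod_i \Reg_{A/F^{H_i}}^{n_i} \pmod{\Q^{\times 2}}.$$
The right-hand side is $\RC_\Theta(A(F)\tensor\Q)$ by the very definition of the regulator constant, and decomposing $A(F)\tensor\Q$ into self-dual $\Q_p G$-isotypic components rewrites it modulo squares as $\prod_\sigma \RC_\Theta(\sigma)^{\rk_\sigma(A/F)}$. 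Taking $p$-adic valuations and pairing with $\tau \in \Ttp$ using the defining property of $\Ttp$ produces the desired congruence, but with $A(F)\tensor\Q$ in place of $\Xp(A/F)$.

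Next, I would remove both the finiteness hypothesis on $\sha$ and the discrepancy between Mordell--Weil and Selmer ranks by replacing $\BSD$ with a Selmer-theoretic analogue in which $|\sha|$ is swapped for $|\sha[p^\infty]/(\text{divisible part})|$ and $\rk$ by $\dim \Xp$. To run the isogeny-invariance argument in this setting one needs (a) self-duality of $\Xp(A/F)$ as a $\Q_p G$-representation, and (b) that the Selmer BSD quotients transform predictably under the isogenies produced by the $L$-function identity. Both rely on a $G$-equivariant extension of the Cassels--Tate (or Flach) pairing on the $p$-divisible part of $\sha$, together with an analysis of how local Tamagawa numbers and periods interact with the isogeny.

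The principal obstacle is precisely that this generalised pairing must be alternating, not merely skew-symmetric, for the $|\sha[p^\infty]|$-contribution to be a square at $p = 2$; this is what forces the requirement at $p=2$ that the principal polarisation be induced by a $K$-rational divisor. Once self-duality of $\Xp(A/F)$ is secured, any $\Q_p G$-representation $\sigma$ with $\sigma\not\iso\sigma^*$ occurs with the same multiplicity as $\sigma^*$ in $\Xp(A/F)$ and in $A(F)\tensor\Q$, so the parity of $\blangle\tau,-\brangle$ for self-dual $\tau$ is unchanged when passing from $A(F)\tensor\Q$ to $\Xp(A/F)$. Combined with the congruence from the first step, this yields the theorem.
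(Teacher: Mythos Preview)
The paper does not prove this theorem here; it is imported wholesale from \cite{Selfduality} (Theorems~1.1 and~1.5 there), and the discussion in \S\ref{sssquality} that you are expanding is explicitly presented as a heuristic sketch conditional on finiteness of $\sha$. So there is no in-paper proof to compare against beyond that citation.

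Your outline is broadly the right shape, but the final paragraph contains a real gap. You conclude by saying that self-duality of $\Xp(A/F)$ makes ``the parity of $\blangle\tau,-\brangle$ for self-dual $\tau$ unchanged when passing from $A(F)\tensor\Q$ to $\Xp(A/F)$,'' and then invoke ``the congruence from the first step.'' Neither move is valid. The first-step congruence was derived assuming $\sha$ finite, so it is not available unconditionally. And self-duality of $\Xp$ says nothing about how it compares with $A(F)\tensor\Q_p$: their difference is (dual to) the divisible part of $\sha(A/F)[p^\infty]$ tensored with $\Q_p$, which is itself a self-dual $\Q_p G$-module, and a self-dual irreducible can perfectly well occur in it with odd multiplicity. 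So the two parities need not agree.

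What actually has to happen --- and what \cite{Selfduality} does --- is that the entire argument is rerun with $\Xp$ in place of $A(F)\tensor\Q$ from the start: one builds a $G$-invariant non-degenerate pairing directly on (a lattice in) $\Xp(A/F)$, proves the Selmer analogue of $\prod C^{n_i}\equiv\prod\Reg^{n_i}\pmod{\Q^{\times 2}}$ for that pairing, and reads off $\RC_\Theta(\Xp)$. Mordell--Weil never re-enters, and no comparison of $\Xp$ with $A(F)\tensor\Q$ is needed. Your identification of where the $K$-rational-divisor hypothesis at $p=2$ bites (alternating versus merely skew-symmetric Cassels--Tate pairing, \`a la Poonen--Stoll) is correct, but it is needed to get the pairing on $\Xp$ right, not to pass between $\Xp$ and Mordell--Weil.
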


\begin{remark}
In contrast to Theorem \ref{tamroot}, this result
has no constraints on the reduction types of the abelian variety.
So it always gives an expression for $\blangle\tau,\Xp(A/F)\brangle$ for
$\tau\in\Ttp$ in terms of local data.
\end{remark}

\begin{example}
\label{introdih1}
As in Example \ref{introdih0}, suppose $\Gal(F/K)=\Di_{2p^n}$.
Then for a faithful 2-dimensional 
representation $\tau$,
the parity of
$\blangle\triv\oplus\epsilon\oplus\tau,\Xp(A/F)\brangle$
is determined by local Tamagawa numbers, as
$\ord_p C_{A/F}/C_{A/F^{\Cy_p}}\mod 2$.
(Mazur and Rubin have another local expression for {\em precisely\/} the same
\hbox{parity}; see \cite{MR} Thm.\ A.)
\end{example}

\subsection{Root numbers and Tamagawa numbers}

We have explained how in three situations
($p$-isogeny, quadratic twist, $G$-relations)
the parity of some Selmer rank can be expressed in terms of
{\em local\/} Tamagawa numbers.
As root numbers are also products of {\em local\/} root numbers,
this suggests a proof of the corresponding case of
the parity conjecture by a place-by-place comparison
(cf. \cite{CFKS,Isogroot} for the isogeny case and \cite{Kra,KT} for
quadratic twists).

There are two subtle points:

First, the local terms do not always agree.
In each case, one needs a good expression for the root numbers,
separating the part that does agree with $C_v$ and an ``error term''
that provably dies after taking the product over all places.
This error term in the isogeny case is an Artin symbol
(\cite{CFKS}~Thm.~2.7, \cite{Isogroot} Thms. 3, 4),
for quadratic twists it is
a Legendre symbol (\cite{KT} p. 307), and in our case
it comes out as 
the local root number $w(\tau)^{2\dim A}$
(see Theorem~\ref{loccompat}).
In fact, for group-theoretic reasons this
contribution is trivial
(Lemma~\ref{detformula} and Theorem~\ref{tauthetapprop}(1)),
so here the local terms {\em do\/} agree.

Second, although the remaining compatibility of
the corrected local root number
and $C_v$ is a genuinely local problem,
they are computed for completely different objects --- for instance
in Example \ref{introdih1} the representation
$\triv\oplus\epsilon\oplus\tau$ bears little resemblance to
$C_{E/F}/C_{E/F^{\Cy_p}}$.
In the isogeny and quadratic twist cases, the proof of this compatibility
in \cite{CFKS,Isogroot,Kra} boils down to brutally working out an explicit
formula for each term separately. That the two formulae then agree comes
out as a miracle.
In our case, for a {\em fixed\/} Galois group $G$ and relation $\Theta$
this strategy works equally brutally,
cf.~\cite{Squarity}~Prop.~3.3 for $G=\smallmatrix1*0* \subset \GL_2(\F_p)$.

The general case occupies \S\ref{sloccompat} and relies on the theory
of $G$-relations and regulator constants from \S\ref{smachinery}. We first
reduce our ``semilocal'' problem (places can split in $F/K$) to one
about abelian varieties over local fields. If now $A/K$ is an abelian
variety over a {\em local\/} field, 
in all cases covered by Theorem \ref{tamroot} there is an explicit
$\lambda=\pm 1$ and a $\Gal(\bar K/K)$-module $V$ such that
$$
  w(A/K,\tau) = w(\tau)^{2\dim A}\,\lambda^{\dim \tau}\,
    (-1)^{\blangle\tau,V\brangle}
$$
for all self-dual $\tau$
(see Table \ref{roottable}).
For instance, when $A$ is semistable, $\lambda=1$ and $V=X(\T^*)\otimes\Q$
is the character group of the toric part of the reduction of the dual
abelian variety (Proposition \ref{rootabtwist}).
The compatibility statement reduces to proving that the function
$\tfrac{\D_V}{C_v}$
to $\Q^{\times}/\Q^{\times2}$ is representation-theoretic
in the sense of \S\ref{ssGG} (cf. Theorem \ref{loccompat}).

Let us note here that the statement of Theorem \ref{squality} is
that
${\D_{\X_p}}/{\prod C_v}$
is representation-theoretic.
Thus $V$ plays the r\^ole of a ``local version'' of the Selmer module $\X_p(A/F)$.
Curiously, $V$ is a rational representation, which is only
conjecturally true of the Selmer module.

\begin{example}
Take $K=\Q_p$ for an odd prime $p$, and $E/K$ an elliptic
curve with non-split multiplicative reduction of type $\In{n}$. In this case
the module $V$ that computes root numbers is the $1$-dimensional unramified
character of order $2$. Let us consider $\D_V$ and $C_v$
($=c_v$, the local Tamagawa number) in the unique $\Cy_2\times\Cy_2$ extension of $\Q_p$:

\begingroup\smaller[2]
$$
\begin{picture}(095,43)
\put(18,40){$2n$}
\put(00,20){$n$}
\put(20.5,20){$2$}
\put(40,20){$2$}
\put(10,00){$1\text{ or }2$}
\put(7,27){\line(1,1){10}}
\put(38,27){\line(-1,1){10}}
\put(7,18){\line(1,-1){10}}
\put(38,18){\line(-1,-1){10}}
\put(22.5,28){\line(0,1){9}}
\put(22.5,17){\line(0,-1){9}}
\put(60,20){$\buildrel{\text{\larger[2] $C_v$}}\over{\hbox to 22pt{\leftarrowfill$\joinrel\mapstochar$}}$}
\end{picture}
\begin{picture}(170,40)
\put(35,42){$\Q_p(\sqrt{u},\sqrt{p})$}
\put(00,20){$\Q_p(\sqrt{u})$}
\put(40,20){$\Q_p(\sqrt{up})$}
\put(84,20){$\Q_p(\sqrt{p})$}
\put(53,00){$\Q_p$}
\put(32,27){\line(2,1){20}}
\put(83,27){\line(-2,1){20}}
\put(32,18){\line(2,-1){20}}
\put(83,18){\line(-2,-1){20}}
\put(57.5,28){\line(0,1){9}}
\put(57.5,16){\line(0,-1){8}}
\put(133,20){$\buildrel{\text{\larger[2] $\!\D_V$}}\over{\hbox to 22pt{$\mapstochar\joinrel{}\!$\rightarrowfill}}$}
\end{picture}
\begin{picture}(50,40)
\put(20,40){$d$}
\put(00,20){$\frac{d}{2}$}
\put(20.5,20){$1$}
\put(40,20){$1$}
\put(20.5,00){$1$}
\put(7,27){\line(1,1){10}}
\put(38,27){\line(-1,1){10}}
\put(7,18){\line(1,-1){10}}
\put(38,18){\line(-1,-1){10}}
\put(22.5,28){\line(0,1){9}}
\put(22.5,17){\line(0,-1){9}}
\end{picture}
$$
\endgroup
Here $\Q_p(\sqrt{u})$ is the quadratic unramified extension of $\Q_p$,
and $E$ has split multiplicative reduction precisely in those fields that
contain it; $d$ is the determinant of a fixed pairing on $V$, used in the
definition of $\D_V$. The group $\Cy_2\times\Cy_2$ has up to multiples
just one relation (see Example \ref{exc2c2}),
$$
  \Theta=\>\{1\} - \Cy_2^a -\Cy_2^b - \Cy_2^c + 2\>\Cy_2\times\Cy_2.
$$
The corresponding values of $C_v$ and $\D_V$ are
$$
 C_v(\Theta)=\frac{2n\cdot(1\text{ or }2)^2}{n\cdot 2\cdot 2}=2\cdot\square
 \quad\qquad
 \D_V(\Theta)=\frac{d\cdot 1^2}{\frac{d}{2}\cdot 1\cdot 1}=2\>,
$$
and so \smash{$\frac{C_v}{\D_V}$} is representation theoretic
(modulo squares!), by inspection.
\end{example}

This example explains our need to understand $G$-relations,
behaviour of functions and $\D_\rho$.
To establish the compatibility of local root numbers and Tamagawa
numbers in arbitrary extensions (Theorem \ref{loccompat}),
even for elliptic curves with non-split multiplicative reduction,
requires the full force of the machinery of \S\ref{smachinery}.

\subsection{Notation}
\label{ssnotation}

\noindent
For an abelian variety $A/K$ we use the following
notation:
\vskip 2pt

\noindent
\hskip-1mm\begin{tabular}{lll}
&$\Xp(A/F)$        & $\Hom_{\scriptscriptstyle{\Z_p}}\!(\varinjlim\Sel_{p^n}(A/F), \Q_p/\Z_p)\tensor\Q_p$, the dual $p^\infty$-Selmer. \cr
&$w(A/K)$          & local root number of $A/K$ for $K$ local, or\cr
&                  & global root number, $\prod_{v} w(A/K_v)$ for $K$ a number field.\cr
&$w(A/K,\!\tau)$     & (local/global) root number for the twist of $A$ by $\tau$, see \cite{RohG}.\cr
&$c_v(A/K)$        & local Tamagawa number of $A$ at a finite place $v$ of $K$\cr
&                  & (when $K$ is local, the subscript $v$ is purely decorational). \cr
&$C_v(A/K,\omega)$ & $c_v(A/K) \!\cdot\! |{\omega}/{\neron{}}|_K^{}$ for $K$ non-Archimedean, where $|\cdot|_K$ is \cr
&   & the normalised absolute value, and $\neron{}$ a N\'eron differential;\cr
&   & $\int_{A(K)} |\omega|$ for $K=\R$;\ \ $2\int_{A(K)} \omega\wedge \bar\omega$ for $K=\C$.\cr
&   & ($K$ local; $\omega$ is a non-zero regular exterior form on $A/K$.)\cr
&$C_{A/K}$          & $\prod_{v} C_v(A/K_v,\omega)$ for any global non-zero regular exterior\cr
&   & form $\omega$; independent of $\omega$ (product formula).\\[2pt]
\end{tabular}

Notation for representations $G\to\GL_n(\K)$, $\K$ a field of characteristic 0: 
$\blara_G$, $\blara$ usual inner product of two characters of representations;
$\K[G]$ regular representation;
$\K[G/H]$ permutation representation of $G$ on the left cosets of $H$;
$\triv$ trivial representation; $\tau^*$ contragredient \hbox{representation};~$\rho^H$~the $H$-invariants of $\rho$. We call $\rho$ self-dual if
$\rho\iso\rho^*$, equivalently $\rho\tensor\bar\K\iso\rho^*\tensor\bar\K$.

For a $\K$-vector space $V$ and a non-degenerate $\K$-bilinear pairing $\lara$ with values
in $\L\supset\K$, we write $\det(\lara|V)\in\L^{\times}/\K^{\times2}$ for
$\det(\langle e_i, e_j\rangle_{i,j})$ in any $\K$-basis $\{e_i\}$ of $V$.

\newpage
\noindent
For functions on subgroups of $G$ we use the following notation:
\smallskip

\noindent
\begin{tabular}{lll}
&$\Theta$     & a $G$-relation $\sum_i n_i H_i$ between permutation representations, \cr
&             & i.e. $\sum_i\C[G/H_i]^{\oplus n_i}=0$, see Definition \ref{grelation}.\cr
&$\RC_\Theta(\rho)$ & regulator constant for a $G$-representation $\rho$,
           see Definition \ref{defregcon}.\cr
&$\f(\Theta)$  & $\prod_i\f(H_i)^{n_i}$ for $\Theta=\sum_i n_i H_i$, see \S\ref{ssfunctions}.\cr
&$\f\sim\psi$ & equivalence relation $\f(\Theta)\!=\!\psi(\Theta)$ for all $G$-relations $\Theta$, see \S\ref{ssfunctions}. \cr
&$\D_\rho$        & $H\mapsto \det(\tfrac1{|H|}\lara|\rho^H)$, see Definition \ref{D_M}.\cr
&$\tGDI{\thincdots}$ & see Definitions \ref{gdidef}, \ref{gdidef2}.
\end{tabular}

\medskip\noindent
$\Di_{2n}$ denotes the dihedral group of order $2n$
(including $\Cy_2\times\Cy_2$ for $n=2$).
Conjugation of subgroups is usually written as a superscript, $H^x=xHx^{-1}$.

By a local field we mean a finite extension of $\Q_p,\R$ or $\F_p((t))$
(the latter will never occur). We write
$e_{M/L}, f_{M/L}$ for the ramification degree and the residue degree
of an extension $M/L$ of local fields,
$\mu_n$ for the set of $n^{\text{th}}$ roots of unity and
$\ord_p$ for the $p$-valuation of a rational or a $p$-adic number.

\vfill

\section{Functions on the Burnside ring} 
\label{smachinery}

This section is dedicated to 
relations between
permutation representations, behaviour of functions on the Burnside ring
with respect to such relations, the issue whether a function is
representation-theoretic, and regulator constants.
As explained in the introduction, the applications we have in mind
relate to elliptic curves and abelian varieties.
On the other hand, the results are
self-contained, purely group-theoretic in nature, and they
may be of independent interest.

Throughout the section $G$ is an abstract finite group.

\vspace{-13pt}

\subsection{Relations between permutation representations}\rlap{\hskip 50cm.}
\par\noindent
\label{ssrelations}

\noindent
Let $G$ be a finite group and $\cH$ the set of
subgroups of $G$ up to conjugacy.
By abuse of notation, for a subgroup $H\< G$ we also write $H$ for its
class in $\cH$.
The {\em Burnside ring\/} of $G$ is the free abelian group $\Z\cH$
(we will not use its multiplicative structure).
The elements of $\cH$ are in one-to-one correspondence
with transitive $G$-sets via $H\mapsto G/H$.
This extends to a correspondence \hbox{between} elements of $\Z\cH$
with non-negative coefficients and finite $G$-sets, under which addition
translates to disjoint union.

The map $H\mapsto\C[G/H] (\,\iso \Ind_H^G\vtriv{H})$ defines a ring
homomorphism from the Burnside ring to the representation ring of $G$.
On the level of $G$-sets, this map is simply $X\mapsto\C[X]$.
Here in \S\ref{ssrelations} we consider its kernel:

\begin{definition}
\label{grelation}
We call an element of
the Burnside ring of $G$
$$
  \Theta = \sum\nolimits_i n_i H_i \qquad (n_i\in\Z,\> H_i\< G)
$$
a {\em relation between permutation representations of $G$\/}
or simply a {\em $G$-relation\/} if
$\oplus_i\C[G/H_i]^{\oplus n_i}=0$ as a virtual representation,
i.e. the character
$\sum_i n_i \chi_{\scriptscriptstyle \C[G/H_i]}$ is zero.
In other words, if $\Theta$ corresponds to a formal difference of
two $G$-sets, we require that they have isomorphic permutation
representations.
\end{definition}

\newpage

\begin{exercise}
A cyclic group $\Cy_n$ has no non-trivial relations.
\end{exercise}

\begin{example}
\label{exc2c2}
The group $G\!=\!\Cy_2\times \Cy_2$ has
five subgroups $\{1\}$, $\Cy_2^a$, $\Cy_2^b$, $\Cy_2^c$, $G$ and four
irreducible representations $\triv, \epsilon^a, \epsilon^b, \epsilon^c$.
Writing out the permutation representations, 
\begingroup
  \smaller[1]
  \let\oldoplus\oplus
  \def\oplus{\!\oldoplus\!}
  \def\iso{\text{\normalsize $\scriptstyle\cong$}}
  $$
    \C[G] \iso \triv\oplus\epsilon^a\oplus\epsilon^b\oplus\epsilon^c, \>\>\>\>
    \C[G/\Cy_2^a] \iso \triv\oplus\epsilon^a, \>\>\>\>
    \C[G/\Cy_2^b] \iso \triv\oplus\epsilon^b, \>\>\>\>
    \C[G/\Cy_2^c] \iso \triv\oplus\epsilon^c, \>\>\>\>
    \triv \iso \triv,
  $$
\endgroup
we see that, up to multiples, there is a unique $G$-relation
$$
  \Theta=\>\{1\} - \Cy_2^a -\Cy_2^b - \Cy_2^c + 2\>G.
$$
\end{example}

\begin{example}
\label{exdih}
Generally, any dihedral group
$G=\Di_{2n}$ with presentation $\langle g,h|h^n\!=\!g^2\!=\!(gh)^2\!=\!1\rangle$
has the relation
$$
  \>\{1\} - \>\langle g\rangle - \>\langle gh\rangle - \langle h\rangle + 2\>G.
$$
When $n$ is odd it can be written as
$\{1\} - 2\>\Cy_2 - \Cy_n + 2\>\Di_{2n}$, and it is
unique up to multiples when $n$ is prime. For $\Di_8$ and $\Di_{12}$,
together with
$$
\begin{array}{ll}
  \Di_8
  \left\{
  \!\!\!
  \begin{array}{l}
  \langle g\rangle \!-\!\langle gh\rangle \!-\!\langle g, h^2\rangle \!+\!\langle gh,h^2\rangle\cr
  \{1\}\!-\!\langle h^2\rangle\!-\!2\langle g\rangle \!+\!2\langle gh,h^2\rangle
  \end{array}
  \right.
  \quad
  \Di_{12}
  \left\{
  \!\!\!
  \begin{array}{l}
  \langle h^3\rangle  \!-\! \langle h\rangle  \!-\!2 \langle g,h^3\rangle   \!+\! 2G\cr
  \langle h^2\rangle \!-\!\langle h\rangle \!-\!\langle gh,h^2\rangle \!-\!\langle g,h^2\rangle \!+\!2G\cr
  \langle g\rangle \!-\!\langle gh\rangle \!+\!\langle gh,h^2\rangle \!-\!\langle g,h^2\rangle
  \end{array}
  \right.
\end{array}
$$
it forms a $\Z$-basis of all $G$-relations (cf. Table \ref{dihtable}
for the lattice of these subgroups.)
\end{example}

\begin{remark}
The number of irreducible $\Q G$-representations coincides with the number
of cyclic subgroups of $G$ up to conjugacy (\cite{SerLi} \S13.1, Cor.~1).
If~$G$ is not cyclic, this is clearly less
than the number of all subgroups up to conjugacy, so the map
$\sum n_i H_i\mapsto \oplus\C[G/H_i]^{\oplus n_i}$
must have a kernel. Hence every non-cyclic group has non-trivial relations.
\end{remark}

\begin{example}[Artin formalism]
\label{artfor}
Let $F/K$ be a Galois extension of number fields with Galois group $G$.
For $H\< G$, the Dedekind $\zeta$-function $\zeta_{F^H}(s)$ agrees with the
$L$-function over $K$ of the Artin representation $\C[G/H]$.
So a $G$-relation $\sum_i n_i H_i$ yields the identity 
$$
  \prod\limits_i \zeta_{F^{H_i}}(s)^{n_i}=1.
$$
Similarly, if $E/K$ is an elliptic curve (or an abelian variety),
$$
  \prod\limits_i L(E/F^{H_i},s)^{n_i}=1.
$$
\end{example}

\begin{notation}
For $D\<G$, define a map $\Res_D$ from the Burnside ring of $G$
to that of $D$, and a map $\Ind_D^G$ in the opposite direction by
$$
\Res_D H =\sum\limits_%
{{\rlap{\raise2pt\hbox{$\scriptstyle\!\!\! x\in H\!\backslash\! G\!/\!D$}}}}
\>\>
D \cap H^{x^{-1}}
 \qquad\text{and}\qquad \Ind_D^G H = H\>.
$$
On the level of representations (i.e. under $H\mapsto \C[G/H]$), these
are the usual restriction and induction.
On the level of $G$-sets, $\Res_D$ simply restricts the action
from $G$ to $D$ (Mackey's decomposition).
\end{notation}

\newpage

\begin{theorem}
\label{l:}
Suppose $D, H_i\< G$, $N\triangleleft G$.
\begin{enumerate}
\item\label{l:sum}
The sum and the difference of two $G$-relations is a $G$-relation.
\item\label{l:sat}
If $\Theta=\sum_i n_i\, H_i$ and $m\Theta$ is a $G$-relation, then
$\Theta$ is a $G$-relation.
\item\label{l:lif} (lifting)
If $H_i\supset N$ and
$\sum_i n_i\, H_iN/N$ is a $G/N$-relation, then
$\sum_i n_i H_i$ is a $G$-relation.
\item\label{l:ind} (induction)
Any $D$-relation is also a $G$-relation;
i.e. if\,\, $\Theta=\sum_i n_i\, H_i$ is a $D$-relation, then
$\Ind_D^G\Theta=\sum_i n_i\, H_i$ is a $G$-relation.
\item\label{l:pro} (projection)
If $\sum_i n_i H_i$ is a $G$-relation, then
$\sum_i n_i H_iN/N$ is a \linebreak\hbox{$G/N$-relation.}
\item\label{l:res} (restriction)
\begingroup
\baselineskip 100pt
If $\Theta=\sum_i n_i H_i$ is a $G$-relation, then its restriction
\hbox{$\Res_D\Theta=\sum_i n_i\>\> \sum\limits_%
{{\rlap{\raise2pt\hbox{\tiny$\scriptstyle\!\!\! x\in H_{\!i}\!\backslash\! G\!/\!D$}}}}
\>\>
D \cap H_i^{x^{-1}}$} is
\smash{\raise0pt\hbox{a $D$-relation.}}
\endgroup
\end{enumerate}
\end{theorem}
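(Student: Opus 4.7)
The plan is to reduce every part to a single principle: being a $G$-relation means the virtual character $\sum_i n_i \chi_{\C[G/H_i]}$ vanishes, so any additive operation on virtual characters that sends permutation representations to permutation representations produces new relations. Each of (3)--(6) corresponds to one standard such operation (inflation, induction, $N$-invariants, restriction), and the content of the proof is just identifying which permutation $G$- or $D$-module one obtains in each case.

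Parts (1) and (2) are immediate: the $G$-relations form the kernel of the map from the Burnside ring to the representation ring, which is a subgroup of a torsion-free abelian group, hence saturated. For (3), observe that when $H_i \supset N$, the permutation representation $\C[(G/N)/(H_iN/N)]$ of $G/N$ pulls back under $G \twoheadrightarrow G/N$ to $\C[G/H_i]$; since inflation of characters is $\C$-linear and injective on virtual characters, the relation downstairs lifts to a relation upstairs. For (4), we use $\Ind_D^G \C[D/H] = \C[G/H]$ (transitivity of induction from $H \le D \le G$) and the fact that induction is additive on characters, so any vanishing character of $D$-representations induces a vanishing character of $G$-representations.

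For (5), the key observation is that since $N$ is normal, the functor $V \mapsto V^N$ is exact on $\C G$-modules (take the averaging projector $\tfrac{1}{|N|}\sum_{n\in N} n$) and preserves the structure of a $G/N$-representation. A direct calculation shows $\C[G/H_i]^N = \C[G/(H_iN)]$ by identifying $N$-invariant functions on $G/H_i$ with functions on the quotient set $G/(H_iN)$; as a $G/N$-representation this is exactly $\C[(G/N)/(H_iN/N)]$. Applying $(-)^N$ to the character identity $\sum_i n_i \chi_{\C[G/H_i]} = 0$ yields the desired $G/N$-relation. For (6), restriction of characters is additive, and Mackey's decomposition gives
$$
  \Res_D \C[G/H_i] = \Res_D \Ind_{H_i}^G \triv = \bigoplus_{x \in H_i\backslash G/D} \C[D/(D \cap H_i^{x^{-1}})],
$$
which matches precisely the formal $D$-set $\sum_x D \cap H_i^{x^{-1}}$ in the statement.

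There is no real obstacle here; the only care needed is bookkeeping in the Mackey formula for (6) to confirm the double-coset indexing and the conjugation convention $H^{x^{-1}} = x^{-1}Hx$ agree with the displayed formula, and in (5) to verify that the identification $\C[G/H]^N \cong \C[G/(HN)]$ is $G/N$-equivariant rather than just $\C$-linear. Both are routine direct checks on basis vectors.
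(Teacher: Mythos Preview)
Your proof is correct and follows essentially the same approach as the paper: parts (1)--(3) are declared clear, (4) is transitivity of induction, (5) is the identification $\C[G/H]^N\cong\C[G/NH]$ as $G$-representations, and (6) is Mackey's formula. Your write-up is somewhat more detailed than the paper's (e.g.\ you spell out the saturation argument for (2) and the inflation argument for (3)), but the underlying ideas are identical.
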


\begin{proof}
\eqref{l:sum},\eqref{l:sat},\eqref{l:lif} Clear.
\eqref{l:ind}
Induction is transitive.
\eqref{l:pro} 
This follows from the fact that $\C[G/H_i]^N\iso \C[G/NH_i]$
as a $G$-representation.
(The invariants $\C[G/H]^N$ come from orbits of $N$ on $G/H$,
so this space has a basis $\{\sum_{x\in \Delta }xH\}_\Delta $ with $\Delta $ ranging over
the double cosets $N\backslash G/H$ (=$G/NH$).
As $N$ is normal, $G$ permutes the basis elements,
and this is the same as the action on $G/NH$.)
\eqref{l:res} 
This is a consequence of Mackey's formula,
$\Res_D\C[G/H] \iso \oplus_{\scriptscriptstyle x\in H\!\backslash\! G\!/\!D}
   \C[D/D \cap H^{x^{-1}}]$. 
\end{proof}

Properties (\ref{l:lif}) and (\ref{l:ind}) allow one to
lift relations from quotient groups and induce them from subgroups.
This is not to
suggest that relations can always be built up like that, for instance
dihedral groups have relations while cyclic groups do not.
Here is a case when this does work:

\begin{lemma}
\label{loffchop}
Let $D\normal G$, and suppose that $G$ acts on the Burnside ring of $D$
by conjugation through a quotient of order $n$.
\begin{enumerate}
\item
If $\Theta=\sum_i n_i H_i$ is a $G$-relation with $H_i\subset D$,
then $n\Theta$ is induced from a $D$-relation.
\item
Suppose that $N\normal G$ with $N\subset D$, and that
each subgroup of $G$ either contains $N$ or is contained in $D$.
Then for every $G$-relation $\Theta$, $n\Theta$ is a sum of
a relation induced from $D$ and one lifted from $G/N$.
\end{enumerate}
\end{lemma}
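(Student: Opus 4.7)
My plan for part (1) is to lift $\Theta = \sum_i n_i H_i$ to the Burnside ring of $D$, symmetrise via the action of the order-$n$ quotient, and identify the result as the desired $D$-relation. Fix a quotient $G\surjects T$ of order $n$ through which the conjugation action on the Burnside ring of $D$ factors; since $D$ acts trivially on $D$-conjugacy classes of its subgroups, $D\subset\ker(G\surjects T)$. Let $\tilde\Theta$ denote the same formal sum, now viewed in the Burnside ring of $D$ (each $H_i$ as a $D$-conjugacy class), and let $A$ be the averaging operator $x\mapsto\sum_{t\in T}t\cdot x$. The key computation is a Mackey calculation of $\Res_D\Theta$: because $H\subset D\normal G$, every double coset $HxD$ collapses to $xD$ and $D\cap H^{x^{-1}}=H^{x^{-1}}$, so
$$
\Res_D H \>=\> \sum_{x\in D\backslash G}H^x\quad\text{(as $D$-conjugacy classes of subgroups of $D$).}
$$
Grouping the $[G:D]$ right cosets by their image in $T$ (fibres of uniform size $[G:D]/n$) and using that the $D$-conjugacy class of $H^x$ depends only on the image of $x$ in $T$, this sum rewrites as $\tfrac{[G:D]}{n}A(H)$. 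Summing over $i$ gives $\Res_D\Theta = \tfrac{[G:D]}{n}\,A(\tilde\Theta)$. By Theorem~\ref{l:}(\ref{l:res}) the left-hand side is a $D$-relation, so by Theorem~\ref{l:}(\ref{l:sat}) so is $A(\tilde\Theta)$. Finally, $\Ind_D^G A(\tilde\Theta) = \sum_i n_i \sum_{t\in T} H_i = n\Theta$, since each $H_i^{g_t}$ is in the $G$-conjugacy class of $H_i$. This completes part (1).

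For part (2) I would peel off a piece lifted from $G/N$ and apply part (1) to what remains. Set $\Theta' = \sum_i n_i(H_iN)$. The projection $\bar\Theta = \sum_i n_i(H_iN/N)$ of $\Theta$ to the Burnside ring of $G/N$ is a $G/N$-relation by Theorem~\ref{l:}(\ref{l:pro}), so Theorem~\ref{l:}(\ref{l:lif}) makes $\Theta'$ a $G$-relation lifted from $\bar\Theta$. Hence $\Theta - \Theta' = \sum_i n_i(H_i - H_iN)$ is again a $G$-relation (Theorem~\ref{l:}(\ref{l:sum})). By the dichotomy hypothesis, the terms with $H_i\supset N$ cancel, while those with $H_i\subset D$ satisfy $H_iN\subset D$ as well (since $N\subset D$). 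So every subgroup appearing in $\Theta - \Theta'$ lies in $D$; part (1) supplies a $D$-relation $\Psi$ with $n(\Theta-\Theta')=\Ind_D^G\Psi$, and combining yields $n\Theta = \Ind_D^G\Psi + n\Theta'$, which is the advertised decomposition.

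The main hurdle is the bookkeeping behind the identity $\Res_D\Theta = \tfrac{[G:D]}{n}A(\tilde\Theta)$: the $G$-orbit of a single $D$-conjugacy class $[H]_D$ may be a proper divisor of $n$, and one must check that the factor $[G:D]/n$ is exactly what converts the Mackey sum over $D\backslash G$ into the full $T$-average. Once this is pinned down, the invocations of the parts of Theorem~\ref{l:} are purely formal, and part~(2) is a one-line reduction to part~(1).
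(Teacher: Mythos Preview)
Your proof is correct and follows essentially the same approach as the paper's: symmetrise $\Theta$ over the quotient $G/G_0$ (the kernel of the conjugation action), recognise this as a rational multiple of $\Res_D\Theta$ via Mackey, and invoke saturation (Theorem~\ref{l:}(\ref{l:sat})) to conclude that the symmetrised expression is a $D$-relation whose induction is $n\Theta$. Part~(2) is likewise the same decomposition: split off the piece lifted from $G/N$ by replacing each $H_i$ with $H_iN$, leaving a $G$-relation with all constituents in $D$ to which part~(1) applies.
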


\begin{proof}
(1) Let $G_0$ be the kernel of the action of $G$ on the Burnside ring of~$D$.
As a $G$-relation, we may write $n\Theta$ as
$$
  n\Theta = \sum_i n_i \Bigl(\sum_{g\in G/G_0} gH_ig^{-1}\Bigr).
$$
We claim that in this form it is a $D$-relation.
Indeed, restricting it to $D$,
on the one hand, yields a $D$-relation (Theorem \ref{l:}\eqref{l:res})
and, on the other hand,
multiplies the expression by $[G:D]$.
Hence the expression itself is a \hbox{$D$-relation}.

(2)
If $\Theta$ is a $G$-relation, write it as
$\sum_i n_i H_i + \sum_j n'_j H'_j$ with $H_i\supset N$ and $H'_j\subset D$.
Then
$$
  \Theta = (\sum\nolimits_i n_i H_i + \sum\nolimits_j n'_j NH'_j) + (\sum\nolimits_j n'_j H'_j - \sum\nolimits_j n'_j NH'_j).
$$
The first term is a relation lifted from $G/N$ (Theorem \ref{l:}\eqref{l:pro}),
and the second term is therefore a $G$-relation with
constituents in $D$, so (1) applies.
\end{proof}

\begin{example}
\label{exoff1}
Suppose $G=\Cy_{u2^m}\times \Cy_{2^k}$ with $u$ odd and $k>m$. Set
$$
  G_1=\{1\}\!\times\! \Cy_{2^{k-m}}  \>\>\subset\>\>
  G_2=\Cy_{u2^m}\!\times\! \Cy_{2^{k-1}}  \>\>\subset\>\> G,
  \qquad G/G_1\iso \Cy_{u2^m}\!\times\! \Cy_{2^{m}}.
$$
Every element outside $G_2$ generates a subgroup containing $G_1$,
so every subgroup not in $G_2$ contains $G_1$.
Since every subgroup of $G$ is normal,
Lemma \ref{loffchop}(2) shows that every $G$-relation
is a sum of a relation coming from $G_2$ and one lifted from $G/G_1$.
By induction, the lattice of $G$-relations is generated by ones coming
from subquotients $\Cy_{u2^m}\!\times\!\Cy_{2^t}\,/\,\{1\}\!\times\!\Cy_{2^{t-m}}$
for $m\le t\le k$, all isomorphic to $\Cy_u\!\times\!\Cy_{2^m}\!\times\!\Cy_{2^m}$.
\end{example}

\begin{example}
\label{exoff2}
Suppose $G=\langle x,y\,|\,x^n\!=\!y^{2^k}\!=\!1, yxy^{-1}\!=\!x^{-1} \rangle$,
a semi-direct product of $\Cy_{2^k}$ by $\Cy_n$
for some $k,n\ge 1$. Consider
$$
  G_1=\langle y^2\rangle    \subset
  G_2=\langle x,y^2\rangle  \subset G, \qquad G/G_1\iso \Di_{2n},\>\, G/G_2\iso \Cy_2.
$$
Note that if $x^ay^b\in H\< G$ with $b$ odd, then $(x^ay^b)^2=y^{2b}\in H$
implies that $H\supset G_1$. Equivalently, every
subgroup not contained in $G_2$ contains $G_1$.
By Lemma \ref{loffchop}(2), if $\Theta$ is any $G$-relation, then
$2\Theta$ is a sum of a relation induced from $G_2$ and one lifted from $G/G_1$.
If $4\nmid n$, it is easy to verify that every subgroup of $G_2$ is normal
in $G$, so $\Theta$ itself is already of this form.

Observe that $G_2\iso \Cy_n\!\times\!\Cy_{2^{k-1}}$, whose relations
were discussed in the previous example.
\end{example}

\subsection{Regulator constants}
\label{s:regconst}

Let $\K$ be a field of characteristic 0.
In this section we define regulator constants for self-dual
$\K G$-representations, first introduced in \cite{Squarity} for $\K=\Q$.
(The name ``regulator constant'' comes from regulators of elliptic curves;
see \S\ref{sssquality}.)

\begin{notation}
Suppose $V$ is a $\K$-vector space with a non-degenerate \hbox{$\K$-bilinear}
pairing $\lara$ that takes values in some extension $\L$ of $\K$.
We write $\det(\lara|V)\in\L^{\times}/\K^{\times2}$ for
$\det(\langle e_i, e_j\rangle_{i,j})$ in any $\K$-basis $\{e_i\}$ of $V$.

\end{notation}

\begin{definition}
\label{defregcon}
Let $G$ be a finite group, $\rho$ a self-dual $\K G$-representation,
and $\Theta\!=\!\sum_i n_i H_i$ a $G$-relation.
Pick a $G$-invariant non-degenerate $\K$-bilinear pairing $\lara$ on $\rho$
with values in some extension $\L$ of $\K$,
and define the {\em regulator constant\/}
$$
  \RC_\Theta(\rho)=
  \RC^\K_\Theta(\rho)={\prod_i \det(\tfrac{1}{|H_i|}\lara|\rho^{H_i})^{n_i}}
    \in \K^{\times}/\K^{\times2}.
$$
(This is well-defined, non-zero and independent of $\lara$
by Lemma \ref{invnondeg} and
Theorem \ref{regconst}.
It follows that it lies in $\K^{\times}/\K^{\times2}$ rather than $\L^{\times}/\K^{\times2}$
as the pairing can be chosen to be $\K$-valued.)
\end{definition}

\begin{exercise}
\label{c2c2regconst}
Let $G\!=\!\Cy_2\!\times\!\Cy_2$ and
$\Theta\!=\!\{1\} \!-\! \Cy_2^a \!-\!\Cy_2^b \!-\! \Cy_2^c \!+\! 2\>G$
from \hbox{Example~\ref{exc2c2}}. Then $\RC_\Theta(\chi)\!=\!2$ for all
four 1-dimensional characters~$\chi$~of~$G$.
\end{exercise}

\begin{lemma}
\label{invnondeg}
Suppose $\rho $ is a $\K G$-representation, and $\lara$ a
$G$-invariant $\K$-bilinear non-degenerate pairing on $\rho $.
For every $H\< G$, the restriction of $\lara$ to $\rho ^H$ is non-degenerate.
In other words, $\det(\lara|\rho^H)\ne 0$.
\end{lemma}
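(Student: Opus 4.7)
The plan is to exploit the averaging projector $\pi_H = \tfrac{1}{|H|}\sum_{h\in H} h \in \K[G]$, which gives a $\K$-linear idempotent $\rho \to \rho$ whose image is precisely $\rho^H$ (and which acts as the identity on $\rho^H$). The proof will proceed by showing that any element of $\rho^H$ lying in the radical of $\lara$ restricted to $\rho^H$ must actually lie in the radical of $\lara$ on all of $\rho$, and hence vanishes.

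The key observation to establish first is that $\pi_H$ is self-adjoint with respect to $\lara$. This is immediate from $G$-invariance: for $h \in H$ and $x,y\in\rho$ one has $\langle hx,y\rangle = \langle x,h^{-1}y\rangle$, so averaging over $H$ (and using that $H$ is a group, so $h\mapsto h^{-1}$ permutes $H$) gives $\langle \pi_H x, y\rangle = \langle x, \pi_H y\rangle$.

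Now suppose $v \in \rho^H$ satisfies $\langle v, w\rangle = 0$ for every $w \in \rho^H$. For an arbitrary $u \in \rho$, write
$$\langle v, u\rangle = \langle \pi_H v, u\rangle = \langle v, \pi_H u\rangle,$$
where the first equality uses $v \in \rho^H$ and the second uses self-adjointness of $\pi_H$. Since $\pi_H u \in \rho^H$, the right-hand side vanishes by hypothesis. Thus $v$ pairs trivially with all of $\rho$, and non-degeneracy of $\lara$ on $\rho$ forces $v=0$. This shows the restriction of $\lara$ to $\rho^H$ has trivial left radical, hence is non-degenerate (as $\rho^H$ is finite-dimensional over $\K$), and in particular $\det(\lara|\rho^H)\ne 0$ in any basis.

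There is essentially no obstacle here; the argument is a standard application of the averaging operator. The only point requiring a moment's care is the self-adjointness of $\pi_H$, which rests on the observation that $H \to H$, $h\mapsto h^{-1}$ is a bijection, so the $G$-invariance of $\lara$ (stated one element at a time) transfers cleanly to the averaged operator.
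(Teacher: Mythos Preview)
Your proof is correct and takes essentially the same approach as the paper: both use the averaging projector $\pi_H=\tfrac{1}{|H|}\sum_{h\in H}h$ and its compatibility with the pairing. The paper phrases this as an orthogonal decomposition $\rho=\rho^H\oplus\ker\pi_H$ (showing $\langle v,w\rangle=\langle v,\pi_H w\rangle=0$ for $v\in\rho^H$, $w\in\ker\pi_H$), while you phrase it as the left radical of $\lara|_{\rho^H}$ being contained in that of $\lara|_\rho$; these are equivalent formulations of the same self-adjointness computation.
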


\begin{proof}
Consider the projection $P: \rho \to \rho ^H$ given by
$v\mapsto\frac{1}{|H|}\sum_{h\in H} h\cdot v$. Then $\rho =\rho ^H\oplus\ker P$, and
for $v\in \rho ^H, w\in\ker P$,
$$
  \langle v,w \rangle = \tfrac{1}{|H|} \sum_{h\in H} \langle hv,hw \rangle
    = \langle v, P(w) \rangle = 0.
$$
So $\rho ^H$ and $\ker P$ are orthogonal to each other, and the pairing cannot be
degenerate on either of them.
\end{proof}

\begin{lemma}
\label{dimsaddup}
Let $\Theta=\sum_i n_i H_i$ be a $G$-relation and
$\rho$ a $\K G$-representation. Then
$$
  \sum\nolimits_i n_i\dim\rho^{H_i}=0.
$$
\end{lemma}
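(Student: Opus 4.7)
The plan is to apply Frobenius reciprocity and reduce the identity to the defining property of a $G$-relation. For any subgroup $H\<G$ and any $\K G$-representation $\rho$, the $H$-invariants admit the description
$$\rho^H \>=\> \Hom_{\K H}(\triv_H,\Res_H\rho) \>\iso\> \Hom_{\K G}(\K[G/H],\rho),$$
and passing to characters gives the familiar formula
$$\dim_\K \rho^H \>=\> \blangle \chi_{\C[G/H]},\chi_\rho\brangle_G \>=\> \tfrac{1}{|H|}\sum_{h\in H}\chi_\rho(h),$$
where $\chi_\rho$ is the character of $\rho$ (viewed as a $\C$-valued class function on $G$ by extending scalars if necessary). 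In particular, the quantity $\dim_\K\rho^H$ is $\Z$-linear in the permutation character $\chi_{\C[G/H]}$.

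Multiplying by $n_i$ and summing over $i$, one obtains
$$\sum\nolimits_i n_i \dim_\K \rho^{H_i} \>=\> \blangle \,\sum\nolimits_i n_i\, \chi_{\C[G/H_i]}\,,\,\chi_\rho\brangle_G.$$
By the very definition of a $G$-relation (Definition \ref{grelation}), the virtual character $\sum_i n_i \chi_{\C[G/H_i]}$ is identically zero, so the right-hand side vanishes. There is essentially no obstacle here; the only point to watch is that the $G$-relation condition is stated over $\C$ while $\rho$ lives over $\K$, but the inner product formula above is insensitive to this---both sides depend only on $\chi_\rho$ as a $\C$-valued function on $G$, and the permutation character is $\Q$-valued (indeed integer-valued on each conjugacy class).
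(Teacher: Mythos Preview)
Your proof is correct and takes essentially the same approach as the paper: both use Frobenius reciprocity to rewrite $\dim\rho^{H_i}$ as $\blangle\rho,\Ind_{H_i}^G\triv\brangle_G$ (equivalently $\blangle\C[G/H_i],\rho\brangle_G$), then sum and invoke the definition of a $G$-relation. Your added remark about the field $\K$ versus $\C$ is a harmless clarification not present in the paper's terse version.
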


\begin{proof}
By Frobenius reciprocity,
\smallskip

$
\qquad  \sum n_i\dim\rho^{H_i}=\sum n_i\>\blangle\!\Res_{H_i}\rho,\triv_{H_i}\brangle_{H_i}
  =\sum n_i\blangle\rho,\Ind_{H_i}^{G}\triv_{H_i}\brangle_G
$

$
\qquad \phantom{\sum n_i\dim\rho^{H_i}}
  = \blangle\rho,\oplus(\Ind_{H_i}^{G}\triv_{H_i})^{\oplus n_i}\brangle_G
  = \blangle\rho,0\brangle_G = 0.
$
\end{proof}

\smallskip
\noindent We now prove that regulator constants are independent of the pairing:

\begin{theorem}
\label{regconst}
Let $\Theta=\sum_i n_i H_i$ be a $G$-relation,
$\rho$ a self-dual $\K G$-repre\-sentation,
and $\lara_1, \lara_2$
two non-degenerate $G$-invariant $\K$-bilinear pairings on $\rho$.
Computing the determinants with respect to the same bases of $\rho^{H_i}$
on both sides,
$$
  \prod_i\det(\tfrac{1}{|H_i|}\lara_1|\rho^{H_i})^{n_i}
      =
  \prod_i\det(\tfrac{1}{|H_i|}\lara_2|\rho^{H_i})^{n_i}.
$$
(This is an actual equality, not modulo $\K^{\times 2}$.)
\end{theorem}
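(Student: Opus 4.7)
The plan is to reduce the equality to a statement about a single $G$-automorphism of $\rho$, and then kill it using the dimension identity of Lemma \ref{dimsaddup}. First I would observe that $\det(\tfrac{1}{|H_i|}\lara_j|\rho^{H_i}) = |H_i|^{-\dim\rho^{H_i}}\det(\lara_j|\rho^{H_i})$, so the scalar prefactors are identical on both sides and it suffices to show
$$\prod_i \det(\lara_1|\rho^{H_i})^{n_i} = \prod_i \det(\lara_2|\rho^{H_i})^{n_i}.$$
Each non-degenerate $G$-invariant pairing $\lara_j$ determines a $G$-equivariant isomorphism $\phi_j\colon \rho \to \rho^*$ by $v\mapsto\langle v,\cdot\rangle_j$, and the composite $A=\phi_2^{-1}\circ\phi_1 \in \Aut_G(\rho)$ satisfies $\langle v,w\rangle_1 = \langle Av,w\rangle_2$. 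Since $A$ is $G$-equivariant, it preserves each $\rho^H$, and comparing matrices in a common basis of $\rho^{H_i}$ gives
$$\det(\lara_1|\rho^{H_i}) = \det(A|\rho^{H_i})\cdot \det(\lara_2|\rho^{H_i}).$$
The theorem thus reduces to showing that for every $A\in\Aut_G(\rho)$ and every $G$-relation $\Theta = \sum_i n_i H_i$,
$$\prod_i \det(A|\rho^{H_i})^{n_i} = 1.$$

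To prove this last identity I would decompose $\rho = \bigoplus_\sigma \sigma \otimes_{D_\sigma} U_\sigma$ into $\K$-isotypic components, where $\sigma$ runs over the irreducible $\K G$-representations, $D_\sigma = \End_G(\sigma)$, and $U_\sigma = \Hom_G(\sigma,\rho)$. Since $A$ commutes with $G$, it decomposes as $A = \bigoplus_\sigma \mathrm{id}_\sigma \otimes A_\sigma$ with $A_\sigma \in \End_{D_\sigma}(U_\sigma)$, and taking $H$-invariants yields $\rho^H = \bigoplus_\sigma \sigma^H \otimes_{D_\sigma} U_\sigma$. The elementary identity $\det_\K(\mathrm{id}_V \otimes f) = \det_\K(f)^{\dim_\K V}$ then gives
$$\det_\K(A|\rho^{H}) = \prod_\sigma \det_\K(A_\sigma)^{\dim_\K \sigma^H}.$$
Raising to the $n_i$-th power and multiplying over $i$ assembles the exponent of each $\det_\K(A_\sigma)$ into $\sum_i n_i \dim_\K \sigma^{H_i}$, which vanishes by Lemma \ref{dimsaddup} applied to the $\K G$-representation $\sigma$. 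Hence the product is $1$.

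The main point I would expect to have to handle carefully is the structure of $\End_G(\sigma) = D_\sigma$, which is a (possibly non-commutative) division algebra when $\sigma$ is not absolutely irreducible over $\K$. The remedy is to take all determinants over $\K$ throughout rather than over $D_\sigma$: the tensor-product formula $\det_\K(\mathrm{id}\otimes f) = \det_\K(f)^{\dim_\K V}$ holds for any $\K$-linear $f$, so the reduction to Lemma \ref{dimsaddup} goes through uniformly and one never needs to make sense of a reduced norm.
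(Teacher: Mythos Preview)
Your reduction to $\prod_i\det(A|\rho^{H_i})^{n_i}=1$ for $A\in\Aut_G(\rho)$ is a clean repackaging of the paper's argument: the paper works with the pairing matrices directly (after extending scalars to $\bar\K$), writes the matrix on each isotypic block $\tau^{\oplus n}$ or $(\sigma\oplus\sigma^*)^{\oplus n}$ as a Kronecker product, and cancels the block-independent factor via Lemma~\ref{dimsaddup} --- exactly your final step. So the core mechanism is the same; your version is somewhat more conceptual and, incidentally, the determinant identity you isolate holds for any $\K G$-representation, not just self-dual ones.

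One point needs correcting. The identity $\det_\K(\mathrm{id}_V\otimes f)=\det_\K(f)^{\dim_\K V}$ is for tensor products over $\K$, but your decomposition tensors over $D_\sigma$. Choosing a $D_\sigma$-basis of $\sigma^H$ (possible since $D_\sigma$ is a division ring) identifies $\sigma^H\otimes_{D_\sigma}U_\sigma$ with $U_\sigma^{\oplus k}$ for $k=\dim_{D_\sigma}\sigma^H$, and then $\mathrm{id}\otimes A_\sigma$ acts as $A_\sigma^{\oplus k}$; so the correct formula is
\[
\det_\K(A|\rho_\sigma^H)=\det_\K(A_\sigma)^{\dim_{D_\sigma}\sigma^H},
\]
with $\dim_{D_\sigma}$ rather than $\dim_\K$ in the exponent. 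This does not damage your conclusion, since $\dim_{D_\sigma}\sigma^H=\dim_\K\sigma^H/[D_\sigma\!:\!\K]$ and Lemma~\ref{dimsaddup} still forces the exponents to sum to zero. The simplest way to sidestep the issue entirely is to extend scalars to $\bar\K$ at the outset, as the paper does, so that every $D_\sigma=\bar\K$ and your stated formula holds verbatim.
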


\begin{proof}
We may assume $\K$ is algebraically closed.
It is enough to prove the statement for a particular choice
of bases of $\rho^{H_i}$,
as seen from the transformation rule $X\mapsto M^t X M$ for matrices of
bilinear forms under change~of~basis.

If $\rho=\alpha\oplus\beta$ with $\alpha,\beta$ self-dual and
$\Hom_G(\alpha,\beta^*)=0$, then
$\langle a,b\rangle_1=0$ 
for $a\in\alpha$ and $b\in\beta$, and similarly for $\lara_2$.
Since $\rho^H=\alpha^H\oplus\beta^H$,
choosing bases that respect the decomposition reduces the problem to
$\alpha$ and $\beta$ separately. Thus, we may assume that
either $\rho=\tau^{\oplus n}$ with $\tau$ irreducible and self-dual,
or $\rho=\sigma^{\oplus n}\oplus(\sigma^*)^{\oplus n}$
with $\sigma$ irreducible and not self-dual.

In the first case, for each $H_i$ fix a basis of $\tau^{H_i}$ and take the
induced bases of $(\tau^{H_i})^{\oplus n}=\rho^{H_i}$. Let $\lara_\tau$ be
a non-degenerate $G$-invariant bilinear pairing on $\tau$, and let $M_i$
be its matrix on the chosen basis of $\tau^{H_i}$. As $\lara_\tau$ is unique
up to scalar ($\K$ is algebraically closed), the matrix of $\lara_1$ on $\rho^{H_i}$ is
$$
  T(\Lambda,M_i)=
  \begin{pmatrix}
     \lambda_{11} M_i & \lambda_{12} M_i & \ldots & \lambda_{1n} M_i \cr
     \lambda_{21} M_i & \lambda_{22} M_i & \ldots & \lambda_{2n} M_i \cr
     \vdots & \vdots & \ddots & \vdots \cr
     \lambda_{n1} M_i & \lambda_{n2} M_i & \ldots & \lambda_{nn} M_i \cr
  \end{pmatrix} \>,
$$
for some $n\!\times\!n$ matrix $\Lambda=(\lambda_{xy})$ not depending on $H_i$.
Hence
$$
  \det (\tfrac{1}{|H_i|}\lara_1|\rho^{H_i}) =
    (\det\Lambda)^{\dim \tau^{H_i}} (\det \tfrac{1}{|H_i|} M_i)^{n}\>.
$$
The dimensions $\dim \tau^{H_i}$ cancel in $\Theta$ by Lemma \ref{dimsaddup},
so $\prod_i\det (\tfrac{1}{|H_i|}\lara_1|\rho^{H_i})^{n_i}$
does not depend on $\Lambda$, and takes therefore the same value for
$\lara_2$.

The argument in the second case is similar. The matrix of $\lara_1$ on
$\rho^{H_i}$ is of the form
$\smallmatrix{0}{T(\Lambda,M_i)}{T(\Lambda',M'_i)}{0}$ where $M_i$ and $M'_i$
are the matrices of a fixed $G$-invariant non-degenerate
pairing $\lara_\sigma: \sigma\times\sigma^*\to\K$ and its transpose. Again
the contributions $((-1)^n\det\Lambda\det\Lambda')^{\dim\tau^{H_i}}$ cancel in
$\Theta$ and the result follows.
\end{proof}

\begin{corollary}
\label{RCproperties}
Regulator constants are multiplicative in $\Theta$ and $\rho$,
\beq
  \RC_{\Theta_1+\Theta_2}(\rho) = \RC_{\Theta_1}(\rho) \RC_{\Theta_2}(\rho), \cr
  \RC_\Theta(\rho_1\oplus\rho_2) = \RC_\Theta(\rho_1) \RC_\Theta(\rho_2).
\eeq
If $\L\supset\K$, then $\RC^\K_\Theta(\rho)=\RC^\L_\Theta(\rho\tensor\L)$ in $\L^{\times}/\L^{\times2}$.
\end{corollary}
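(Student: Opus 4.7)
The plan is to derive all three statements directly from the definition of $\RC_\Theta(\rho)$, leveraging the freedom to choose any $G$-invariant non-degenerate pairing granted by Theorem \ref{regconst}.

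For the multiplicativity in $\Theta$, I would simply observe that if $\Theta_1=\sum_i n_i H_i$ and $\Theta_2=\sum_j m_j K_j$, then writing $\Theta_1+\Theta_2$ as the formal sum of these two families and fixing a single pairing $\lara$ on $\rho$, the product defining $\RC_{\Theta_1+\Theta_2}(\rho)$ factors into the product over the $H_i$ (which is $\RC_{\Theta_1}(\rho)$) times the product over the $K_j$ (which is $\RC_{\Theta_2}(\rho)$). No subtlety here.

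For the multiplicativity in $\rho$, the key move is to choose the pairing on $\rho_1\oplus\rho_2$ to respect the decomposition. Pick $G$-invariant non-degenerate $\K$-bilinear pairings $\lara_1,\lara_2$ on $\rho_1,\rho_2$ (they exist because both representations are self-dual), and define $\lara$ on $\rho_1\oplus\rho_2$ by declaring $\rho_1$ and $\rho_2$ orthogonal and using $\lara_i$ on each summand. This is $G$-invariant and non-degenerate. Since $(\rho_1\oplus\rho_2)^H=\rho_1^H\oplus\rho_2^H$, choosing a basis of each fixed space as the union of bases of the two summands makes the Gram matrix of $\tfrac1{|H|}\lara$ block diagonal, so $\det(\tfrac1{|H|}\lara\,|(\rho_1\oplus\rho_2)^H)=\det(\tfrac1{|H|}\lara_1|\rho_1^H)\cdot\det(\tfrac1{|H|}\lara_2|\rho_2^H)$. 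Taking the product over $H=H_i$ with exponents $n_i$ splits $\RC_\Theta(\rho_1\oplus\rho_2)$ as $\RC_\Theta(\rho_1)\RC_\Theta(\rho_2)$. The only thing to be cautious of is that this identity is \emph{a priori} an equality in $\K^\times/\K^{\times 2}$ only; Theorem \ref{regconst} is what tells us it is independent of the choice of pairing.

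For the base change statement, observe that a $G$-invariant non-degenerate $\K$-bilinear pairing $\lara$ on $\rho$ extends by $\L$-linearity to a $G$-invariant non-degenerate $\L$-bilinear pairing on $\rho\tensor\L$, and the $H$-invariants behave well: $(\rho\tensor\L)^H=\rho^H\tensor\L$. Picking a $\K$-basis of $\rho^H$ and using it as an $\L$-basis of $(\rho\tensor\L)^H$, the Gram matrices are literally the same, so the determinants agree in $\L$, and hence the regulator constants agree in $\L^\times/\L^{\times 2}$.

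None of the three steps presents a real obstacle; the main point worth flagging is simply that one must invoke Theorem \ref{regconst} to justify the particular choices of pairing used in (1) and (2) (for (1), to use one pairing across both sub-relations; for (2), to use a block-diagonal pairing that might differ from an arbitrary one on $\rho_1\oplus\rho_2$). Once pairings are chosen compatibly, each assertion follows from elementary properties of determinants of block matrices.
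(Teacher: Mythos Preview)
Your proof is correct and is precisely the argument the paper has in mind: the corollary is stated without proof, as it follows immediately from Definition~\ref{defregcon} together with the freedom to choose the pairing granted by Theorem~\ref{regconst}, exactly as you spell out. Your explicit choices of a single pairing for (1), a block-diagonal pairing for (2), and the $\L$-linear extension for (3) are the natural ones.
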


\begin{example}
\label{regperm}
Suppose $\rho=\K[G/D]$ for some subgroup $D$ of $G$.
Take the standard pairing on $\rho$, making the elements of $G/D$
an orthonormal basis.
The space of invariants $\rho^H$ has a basis consisting
of $H$-orbit sums of these basis vectors.
Since $\det(\lara|\rho^H)$ is the product of lengths of these orbits,
$$
  \det(\tfrac1{|H|}\lara|\rho^H)
     =
  \prod_{w\in H\backslash G/D} \tfrac{1}{|H|} \tfrac{|HwD|}{|D|}
     =
  \prod_{w\in H\backslash G/D} \tfrac{1}{|H\cap D^w|} \>,
$$
which yields an elementary formula for the regulator constants of $\rho$.
Note that for many groups, every $\K G$-representation
is a $\Z$-linear combinations of such $\rho$, e.g.
dihedral groups $\Di_{2p^n}$ when $\K=\Q$ or $\Q_p$, or symmetric groups.
\end{example}

\begin{example}
\label{exdihreg}
For an odd prime $p$, the dihedral group $G=\Di_{2p}$
has the relation (cf. Example \ref{exdih})
$$
  \Theta=\{1\} - \>2\Cy_2 - \Cy_p + 2\>G.
$$
For $\K=\Q$ or $\Q_p$, the irreducible $\K G$-representations are
$\triv$, sign $\epsilon$ and $(p\!-\!1)$-dimensional $\rho$.
Writing them as combinations of permutation representations
$\K[G/H]=\Ind_H^G\triv_H$,
we can compute their regulator constants as in Example~\ref{regperm}:
modulo squares,
\beq
  \RC_\Theta(\triv) &=&
  \RC_\Theta(\Ind_G^G\triv) &=&
  (\tfrac{1}{1})^1 (\tfrac{1}{2})^{-2} (\tfrac{1}{p})^{-1} (\tfrac{1}{2p})^2 &
  \equiv p
  \cr
  \RC_\Theta(\triv)\RC_\Theta(\epsilon) &=&
  \RC_\Theta(\Ind_{\Cy_p}^G\triv) &=&
  (\tfrac{1}{1^2})^1 (\tfrac{1}{1})^{-2} (\tfrac{1}{p^2})^{-1} (\tfrac{1}{p})^2 &
  \equiv 1
  \cr
  \RC_\Theta(\triv)\RC_\Theta(\rho) &=&
  \RC_\Theta(\Ind_{\Cy_2}^G\triv) &=&
  (\tfrac{1}{1^p})^1 (\tfrac{1}{2\cdot 1^{(p-1)/2}})^{-2} (\tfrac{1}{1})^{-1} (\tfrac{1}{2})^2 &
  \equiv 1.
  \cr
\eeq
So $\RC_\Theta(\triv)=\RC_\Theta(\epsilon)=\RC_\Theta(\rho)=p$.
\end{example}

\begin{example}[$\Di_{2p^n}$, $p$ odd, $\K=\Q_p$]
\label{exdihreg2}
Generally, suppose $G=\Di_{2p^n}$ with $p$ odd,
and consider the $G$-relations coming from various $\Di_{2p}$ subquotients,
\beq
  \Theta_{n+1-k}=\Cy_{p^{k-1}} - 2\>\Di_{2p^{k-1}} - \Cy_{p^{k}} + 2\>\Di_{2p^{k}} &&
    (1\le k\le n).\cr
\eeq
The irreducible $\Q_pG$-representations are $\triv$,
sign $\epsilon$ and $\rho_k$ of dimension \hbox{$p^k\!-\!p^{k-1}$} for
$1\le k\le n$. A computation as in Example \ref{exdihreg} shows that
$$
  \RC_{\Theta_k}(\triv)=\RC_{\Theta_k}(\epsilon)=\RC_{\Theta_k}(\rho_k)=p,
  \qquad
  \RC_{\Theta_k}(\rho_j)=1\text{\ for $j \ne k$}.
$$
\end{example}

\begin{example}[$\Di_{2p^n}$, $p\!=\!2$, $\K\!=\!\Q_2$]
\label{exdihreg3}
For $G\!=\!\Di_{2^{n+1}}$ consider the \hbox{$G$-relations}
\beq
  \Theta_1 &=& \Cy_{2^{n-1}} - \Di_{2^n}^a - \Di_{2^n}^b
     - \Cy_{2^n} + 2\>G\cr
  \Theta_{n+1-k} &=& \Di_{2^k}^a - \Di_{2^k}^b - \Di_{2^{k+1}}^a + \Di_{2^{k+1}}^b &&
    (1\le k< n).\cr
\eeq
Here
$\Cy_{2^k}\!=\!\langle h^{2^{n-k}} \rangle$,
$\Di_{2^k}^a\!=\!\langle h^{2^{n-k}}, g \rangle$,
$\Di_{2^k}^b\!=\!\langle h^{2^{n-k}}, gh \rangle$ in terms of the generators
given in Example \ref{exdih}. In this case,
the irreducible $\Q_2G$-representations are $\triv$ (trivial),
$\rho_k$ of dimension $2^k\!-\!2^{k-1}$ for $2\le k\le n$, and
one-dimensional characters $\epsilon, \epsilon^a, \epsilon^b$ that factor
through $G/\Cy_{2^n}$, $G/\Di_{2^{n-1}}^a$ and $G/\Di_{2^{n-1}}^b$ respectively.
The regulator constants are
\beq
  \RC_{\Theta_1}(\triv)=\RC_{\Theta_1}(\epsilon)=
    \RC_{\Theta_1}(\epsilon^a)=\RC_{\Theta_1}(\epsilon^b)&=2,\cr
  \RC_{\Theta_k}(\epsilon^a)=\RC_{\Theta_k}(\epsilon^b)=
    \RC_{\Theta_k}(\rho_k)&=2 && (k>1),
\eeq
and trivial on other irreducibles.
\end{example}

\begin{example}
\label{exsl2f3}
Let $G=\SL_2(\F_3)$, which is the semi-direct product of $\Cy_3$
by the quaternion group $\Qu_8$. Denote its complex irreducible
representations by $\triv, \chi, \bar\chi$ (1-dim.),
$\tau, \chi\tau, \bar\chi\tau$ ($\tau$ symplectic 2-dim.)
and $\rho$ (3-dim.). A basis of $G$-relations and their
regulator constants for the $\Q G$-irreducibles are
$$
\begin{array}{l|ccccccc}
            & \triv & \chi\oplus\bar\chi & \rho & \tau^{\oplus 2} & \chi\tau \oplus \bar\chi\tau\cr
\hline
\Cy_4-\Cy_6-\Qu_8+G & 2 & 1 & 2 & 1 & 1 \cr
\Cy_2-3\Cy_4+2\Qu_8 & 2 & 1 & 2 & 1 & 1 \cr
\end{array}
$$
The table stays the same if $\Q$ is replaced by any $\K$
of characteristic 0, except that $\tau$ may become realisable over $\K$,
in which case $\RC^\K_\Theta(\tau)$ and not just $\RC^\K_\Theta(\tau^{\oplus 2})$
makes sense. This regulator constant will still be 1
by Corollary \ref{triviality} below, because $\tau$ is symplectic.
Observe that in the table the representations $V\oplus V^*$ also have
trivial regulator constants, which is true for all groups by the same
corollary.
\end{example}

Let us record a number of situations when the regulator constants
are trivial; other properties are discussed in \S\ref{svanishing}.
The following result, or rather Corollary \ref{triviality},
was motivated by the behaviour of root numbers of elliptic curves
(see Proposition \ref{genparity}).

\begin{theorem}
\label{vanishing}
Suppose $\rho$ is a self-dual $\K G$-representation such that
$\rho\tensor_\K\bar\K$ admits a
non-degenerate alternating $G$-invariant pairing.
Then $\RC_\Theta(\rho)=1$ for every $G$-relation $\Theta$.
\end{theorem}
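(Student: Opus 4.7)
The plan is to descend the $\bar\K$-valued alternating pairing to $\K$ and then exploit that the determinant of a skew-symmetric matrix of even order is the square of its Pfaffian.

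First I construct a non-degenerate $G$-invariant alternating $\K$-bilinear pairing on $\rho$ itself. The space of such pairings is $(\Lambda^2\rho^*)^G$, a finite-dimensional $\K$-vector space, and because $\K$ has characteristic $0$ the functors $\Lambda^2(-)$ and $(-)^G$ commute with the flat base change $\bar\K/\K$; hence $(\Lambda^2\rho^*)^G\tensor_\K\bar\K=(\Lambda^2(\rho\tensor\bar\K)^*)^G$. Non-degeneracy is the non-vanishing of the determinant of the Gram matrix of the pairing, a Zariski-open condition on this affine space. By hypothesis this open locus has a $\bar\K$-point; since $\K$ is infinite, a nonzero polynomial over $\K$ cannot vanish identically on $\K^n$, so the open locus also has a $\K$-point. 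This yields the desired $\K$-valued pairing $\lara$.

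Next I compute $\RC_\Theta(\rho)$ using $\lara$, which is permissible by Theorem \ref{regconst}. By Lemma \ref{invnondeg} (whose proof uses only $G$-invariance and non-degeneracy, not symmetry), the restriction of $\lara$ to each $\rho^{H_i}$ is non-degenerate. Being alternating as well, it forces every $\rho^{H_i}$ to have even $\K$-dimension. Choosing a $\K$-basis, the matrix $M_i$ of $\tfrac1{|H_i|}\lara$ on $\rho^{H_i}$ is a skew-symmetric matrix of even order with entries in $\K$, so $\det M_i=\mathrm{Pf}(M_i)^2\in \K^{\times 2}$. Consequently
$$
  \RC_\Theta(\rho) \;=\; \prod\nolimits_i (\det M_i)^{n_i} \;\in\; \K^{\times 2},
$$
which is trivial in $\K^\times/\K^{\times 2}$, as required.

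The one subtle point is the descent step. One might hope to shortcut the argument by reducing mod $\bar\K^{\times 2}$ via Corollary \ref{RCproperties} and using the alternating pairing on $\rho\tensor\bar\K$ directly; but this only shows that $\RC^\K_\Theta(\rho)$ is a square in $\bar\K$, which is genuinely weaker than being a square in $\K$ (for instance $2\in\Q^\times$ is a non-square in $\Q$ that becomes a square in $\bar\Q$). The density argument above is precisely what closes this gap, by producing an alternating pairing already defined over $\K$ so that each Pfaffian automatically lies in $\K$.
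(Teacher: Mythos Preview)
Your proof is correct and follows essentially the same approach as the paper's. The paper's descent step is terser---it simply notes that $\dim\Hom_G(\triv,\rho\wedge\rho)$ is unchanged under base change to $\bar\K$ and concludes that a non-degenerate alternating $G$-invariant form exists over $\K$---whereas you spell out the underlying Zariski-openness argument; and where the paper puts the restricted form into the standard block shape $\smallmatrix0A{-A^t}0$ to see the determinant is a square, you invoke the Pfaffian directly. These are cosmetic variations on the same argument, and your remark on why one cannot simply conclude from $\RC_\Theta(\rho)\in\bar\K^{\times 2}$ is a useful clarification.
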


\begin{proof}
Since $\dim\Hom_G(\triv,\rho\wedge\rho)$ is
the same over $\K$ and $\bar\K$,
there is also a non-degenerate alternating $G$-invariant pairing
$\lara$ on $\rho$ itself.
By Lemma \ref{invnondeg}, its restriction
to $\rho^H$ is non-degenerate (and alternating)
for every subgroup $H$ of $G$. In an appropriate basis for $\rho^H$
this pairing is given by a matrix $\smallmatrix0A{-A^t}0$,
so $\det(\tfrac{1}{|H|}\lara|\rho^H)$ is a square in $\K$.
\end{proof}

\newpage

\begin{corollary}
\label{triviality}
Let $\rho$ be a self-dual $\K G$-representation. Suppose either
\begin{enumerate}
\item $\rho$ is symplectic as a $\bar\K$-representation, or
\item $\rho\tensor\bar \K\iso\tau\oplus\tau$ for some $\bar \K G$-representation $\tau$, or
\item all $\bar\K$-irreducible constituents of $\rho\tensor\bar \K$ are not self-dual.
\end{enumerate}
Then $\RC_\Theta(\rho)=1$ for every $G$-relation $\Theta$.
\end{corollary}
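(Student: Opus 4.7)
The plan is to reduce all three cases to Theorem~\ref{vanishing}: in each case I only need to exhibit a non-degenerate $G$-invariant alternating pairing on $\rho\otimes_\K\bar\K$. Case~(1) is tautological, since by hypothesis $\rho\otimes\bar\K$ is symplectic, i.e.\ carries exactly such a pairing.

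For case~(2), write $\rho\otimes\bar\K\cong\tau\oplus\tau$. My first step is to observe that $\tau$ is automatically self-dual: self-duality of $\rho$ gives $\tau\oplus\tau\cong\tau^*\oplus\tau^*$, whence $2\chi_\tau=2\chi_{\tau^*}$ and so $\tau\cong\tau^*$. My second step is to fix \emph{any} $G$-invariant non-degenerate bilinear pairing $B:\tau\times\tau\to\bar\K$ (with no symmetry assumption) and put on $\tau\oplus\tau$ the form $\bigl\langle(a,b),(c,d)\bigr\rangle:=B(a,d)-B(c,b)$. A one-line check shows this is $G$-invariant and alternating, and non-degeneracy follows immediately from non-degeneracy of $B$. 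This handles (2).

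For case~(3), the plan is to write $\rho\otimes\bar\K\cong M\oplus M^*$ and then use the canonical hyperbolic pairing. Self-duality of $\rho\otimes\bar\K$ together with the absence of self-dual irreducibles forces $\sigma$ and $\sigma^*$ to occur with the same multiplicity for every irreducible constituent $\sigma$; picking one representative from each pair $\{\sigma,\sigma^*\}$ and letting $M$ be the sum of the corresponding isotypic components gives such a decomposition. The evaluation-based pairing $\bigl\langle(a,\phi),(b,\psi)\bigr\rangle:=\psi(a)-\phi(b)$ on $M\oplus M^*$ is then $G$-invariant, alternating, and non-degenerate, and Theorem~\ref{vanishing} concludes.

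There is no real obstacle here; the proof is bookkeeping of pairings. The one observation that is worth isolating is the ``antisymmetrisation'' trick in case~(2): an \emph{arbitrary} $G$-invariant non-degenerate form on $\tau$ (neither symmetric nor alternating in general, because $\tau$ may mix orthogonal, symplectic and non-self-dual irreducibles) already yields an alternating form on $\tau\oplus\tau$, so one avoids having to split $\tau$ into Frobenius--Schur types and treating each separately.
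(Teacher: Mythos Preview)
Your proof is correct and is essentially the paper's argument: in each case one exhibits a non-degenerate $G$-invariant alternating form on $\rho\otimes\bar\K$ and invokes Theorem~\ref{vanishing}. The only cosmetic difference is that the paper unifies cases~(2) and~(3) by noting that both are of the form $V\oplus V^*$ (using $\tau\cong\tau^*$ in case~(2)) and writing down the hyperbolic form $\smallmatrix{0}{P}{-P^t}{0}$, whereas you treat case~(2) separately via the antisymmetrisation $B(a,d)-B(c,b)$---but this is the same construction once one identifies $\tau$ with $\tau^*$ via $B$.
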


\begin{proof}
It suffices to check that in each case $\rho\tensor_\K\bar\K$ carries a
non-degenerate alternating $G$-invariant pairing. This holds by definition
in case (1). In cases (2) and (3), $\rho\tensor_\K\bar\K$ is of the
form $V\oplus V^*$. Writing $P$ for the matrix of the canonical
map $V\times V^*\to\bar\K$, the pairing $\smallmatrix0P{-P^t}0$
has the required properties.
\end{proof}

\begin{lemma}
\label{mustenter}
Let $\rho$ be a self-dual $\K G$-representation.
Suppose $\Theta=\sum n_i H_i$ is a $G$-relation such that no $\bar\K$-irreducible
constituent of $\rho$ occurs in any of the $\K[G/H_i]$.
Then $\RC_\Theta(\rho)=1$.
\end{lemma}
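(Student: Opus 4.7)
The plan is to show that in fact every invariant space $\rho^{H_i}$ occurring in the product defining $\RC_\Theta(\rho)$ vanishes, so that each individual factor $\det(\tfrac{1}{|H_i|}\lara|\rho^{H_i})$ is trivially equal to $1$ (empty determinant).

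First I would reinterpret the hypothesis via Frobenius reciprocity. For each subgroup $H_i$ one has the canonical isomorphism
$$
\rho^{H_i} \;\cong\; \Hom_{\K G}\bigl(\K[G/H_i],\,\rho\bigr),
$$
and after extending scalars to $\bar\K$ the right-hand side becomes $\Hom_{\bar\K G}(\bar\K[G/H_i],\,\rho\otimes_\K\bar\K)$. By Schur's lemma over the algebraically closed field $\bar\K$, the dimension of this Hom space is a sum of products of multiplicities of the common $\bar\K$-irreducible constituents of $\bar\K[G/H_i]$ and of $\rho\otimes_\K\bar\K$. The hypothesis of the lemma is precisely that no such common constituent exists, so this Hom space is zero, and hence $\rho^{H_i}\otimes_\K\bar\K=0$, giving $\rho^{H_i}=0$.

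Therefore, in the expression
$$
\RC_\Theta(\rho) \;=\; \prod_i \det\bigl(\tfrac{1}{|H_i|}\lara\big|\rho^{H_i}\bigr)^{n_i},
$$
every factor is the determinant of a bilinear pairing on the zero vector space, which is $1$ by the empty-product convention. Hence $\RC_\Theta(\rho)=1$, as required.

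There is essentially no real obstacle: the lemma reduces at once to the vanishing of the invariants $\rho^{H_i}$, which is immediate from Frobenius reciprocity once one observes that the hypothesis rules out any common constituent over $\bar\K$. The only minor subtlety worth mentioning is that the hypothesis concerns $\bar\K$-irreducible constituents, which is exactly the right formulation: a $\K$-irreducible summand of $\rho$ might split further over $\bar\K$, and it is the vanishing of $\Hom$ over $\bar\K$ (not merely over $\K$) that forces $\rho^{H_i}=0$.
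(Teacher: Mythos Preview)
Your proof is correct and is essentially the same as the paper's: both use Frobenius reciprocity to conclude that $\rho^{H_i}=0$ for every $i$, whence each factor in $\RC_\Theta(\rho)$ is $1$. The paper compresses this into one line, writing $\dim\rho^{H_i}=\blangle\triv,\Res_{H_i}\rho\brangle=\blangle\K[G/H_i],\rho\brangle=0$, but the content is identical.
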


\begin{proof}
By Frobenius reciprocity,
\hbox{$\dim\rho^{H_i}\!=\!\blangle\triv,\Res_{H_i}\rho\brangle
\!=\!\blangle\K[G/H_i],\rho\brangle\!=\!0$.}
\end{proof}

\begin{remark}
Suppose $R$ is a principal ideal domain whose
field of fractions $\K$ has characteristic coprime to $|G|$.
If $\rho$ is a free $R$-module of finite rank with a $G$-action,
then $\RC_\Theta(\rho)$ may be defined in the same way,
$$
  \RC_\Theta(\rho)=
  {\prod\nolimits_i \det(\tfrac{1}{|H_i|}\lara|\rho^{H_i})^{n_i}}
    \in \K^{\times}/ R^{\times2}\>,
$$
where the determinants are now computed on $R$-bases of $\rho^{H_i}$.
The pairing $\lara$ may take values in any extension of $\K$ as before,
and the class of $\RC_\Theta(\rho)$ in $\K^\times/R^{\times 2}$ is
independent of $\lara$.

For instance, when $R\!=\!\Z$ the group $R^{\times 2}$ is trivial, so
$\RC_{\Theta}$ associates a well-defined rational number to every $\Z G$-lattice.
Also, if $R$ is a discrete valuation ring with 
maximal ideal $\m$ and residue field $R/\m=k$ with $\vchar k\nmid |G|$,
it is not difficult to see that $\RC_\Theta(\rho\tensor\K)$ is in
$R^\times/R^{\times2}$, and
$$
  \RC_\Theta(\rho\tensor\K) = \RC_\Theta(\rho\tensor k) \mod \m.
$$
As every $\K G$-representation admits a $G$-invariant $R$-lattice, we deduce
\end{remark}

\begin{corollary}
\label{mustdivide}
Let $\K\!=\!\Q$ or $\K\!=\!\Q_p$, and let $\rho$ be a $\K G$-representation.
If $p\nmid|G|$, then
$\ord_p\RC_\Theta(\rho)$ is even for every
$G$-relation $\Theta$.
\end{corollary}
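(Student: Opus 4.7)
The plan is to deduce this directly from the remark immediately preceding the corollary, after first reducing the $\K = \Q$ case to the $\K = \Q_p$ case. The key observation is that the remark is essentially the whole content here: if we can realise $\RC_\Theta(\rho)$ as an element of $\Z_p^\times/\Z_p^{\times 2}$, then its $p$-adic valuation is automatically $0$, hence even.

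First, I would reduce to $\K = \Q_p$. When $\K = \Q$, Corollary \ref{RCproperties} gives
$$\RC^\Q_\Theta(\rho) = \RC^{\Q_p}_\Theta(\rho \otimes_\Q \Q_p) \quad \text{in } \Q_p^\times/\Q_p^{\times 2}.$$
The natural map $\Q^\times/\Q^{\times 2} \to \Q_p^\times/\Q_p^{\times 2}$ preserves the parity of the $p$-adic valuation (an element of odd $p$-adic valuation in $\Q^\times$ cannot become a square in $\Q_p^\times$), so it suffices to handle $\K = \Q_p$.

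Next, I would take $R = \Z_p$ with maximal ideal $\m = p\Z_p$ and residue field $k = \F_p$. The hypothesis $p \nmid |G|$ is exactly the condition $\text{char}(k) \nmid |G|$ required by the preceding remark. Pick any $G$-invariant $\Z_p$-lattice $L \subset \rho$ (start from an arbitrary $\Z_p$-lattice $L_0$ and replace it with $\sum_{g \in G} gL_0$, which is still a lattice and is $G$-stable). Applying the remark to $L$, we get $\RC^{\Q_p}_\Theta(\rho) \in R^\times/R^{\times 2} = \Z_p^\times/\Z_p^{\times 2}$.

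Finally, since $\RC^{\Q_p}_\Theta(\rho)$ can be represented by a $p$-adic unit, $\ord_p \RC^{\Q_p}_\Theta(\rho) = 0$, which is even. There is no real obstacle: the work is entirely contained in the preceding remark (with the averaging argument giving the $G$-stable lattice, and the statement $\RC_\Theta(\rho \otimes \K) \in R^\times/R^{\times 2}$ coming from the fact that the determinants $\det(\tfrac{1}{|H_i|}\langle,\rangle|\rho^{H_i})$ computed on an $R$-basis of the lattice are themselves in $R^\times$ because $|H_i|$ divides $|G|$ and is therefore a unit in $R$). The corollary is essentially a repackaging for later use.
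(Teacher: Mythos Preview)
Your proposal is correct and follows essentially the same approach as the paper: the corollary is deduced directly from the preceding remark about $G$-invariant $R$-lattices, which forces $\RC_\Theta(\rho)\in R^\times/R^{\times2}$ when $R$ is a DVR whose residue characteristic does not divide $|G|$. The only cosmetic difference is that you first reduce the $\K=\Q$ case to $\K=\Q_p$ via Corollary~\ref{RCproperties}, whereas one could equally well apply the remark directly with $R=\Z_{(p)}$ (whose fraction field is $\Q$); either way the argument is the same one-line consequence of the remark.
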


\subsection{Functions modulo $G$-relations}
\label{ssfunctions}

We now turn to linear functions
$$
  \f:\>\>
  \text{Burnside ring of $G$}
  \quad\hbox to 3em{\rightarrowfill}\>\>
  \begingroup
  \raise-5pt
  \hbox{\text{\def\arraystretch{0.8}\begin{tabular}{c}abelian group\cr\smaller[3] (written multiplicatively)\end{tabular}}}
  \endgroup
$$
or, equivalently, functions on $G$-sets
that satisfy $\f(X\,\amalg\, Y)\!=\!\f(X)\f(Y)$.
Our main concern is the distinction
between functions that are representation-theoretic
(i.e. only depend on $\C[X]$) and those that are not.
We say that
\begin{itemize}
\item
$\f$ is {\em trivial\/} on an element $\Psi$ of the Burnside ring of $G$
     if $\f(\Psi)=1$.
\item
$\f\sim \f'$ if $\f/\f'$ is trivial on all $G$-relations.
\end{itemize}
So, $\f$ is representation-theoretic in the sense of \S\ref{ssGG}
if and only if $\f\sim 1$.

\newpage

\begin{exercise}
\label{exHHfun}
For a constant $\lambda$, the function
$H\mapsto \lambda^{[G:H]}\>(=\lambda^{\dim\C[G/H]})$~to $\R^{\times}$ is trivial on $G$-relations.
On the other hand, $H\mapsto |H|$ in general is not.
\end{exercise}

\begin{example}
\label{constsim1}
The constant function $\f: H\mapsto\lambda$ is trivial on $G$-relations:
$$
  \f\bigl(\sum n_i H_i\bigr) = \prod \lambda^{n_i} =
    \prod \lambda^{n_i\blangle \triv, \C[G/H_i] \brangle}
    = \lambda^{\blangle \triv, \bigoplus \C[G/H_i]^{\oplus n_i} \brangle}
    = \lambda^0 = 1.
$$
\end{example}

\begin{example}
A cyclic group has no relations, so $\f\sim 1$ for every $\f$.
\end{example}

\begin{example}
\label{exellfun}
If $E/K$ is an elliptic curve and $G=\Gal(F/K)$, then
$$
  {\mathbf L}: H \longmapsto L(E/F^H,s)
$$
is a function with values in the multiplicative group of meromorphic
functions on $\Re s\!>\!\tfrac 32$. By Artin formalism,
${\mathbf L}\sim 1$ (Example \ref{artfor}).
As explained in \S\ref{sssquality}, the Birch--Swinnerton-Dyer conjecture
implies that
$$
C:\>   H \longmapsto C_{E/F^H}    \quad \text{ and } \quad
\Reg:\>  H \longmapsto \Reg_{E/F^H}
$$
satisfy $C\cdot\Reg\sim 1$ as functions to $\R^{\times}/\Q^{\times2}$.
\end{example}

\begin{definition}
\label{gdidef}
If $D\< G$, we say a linear function on the Burnside ring of $G$ is {\em local\/}
(or {\em $D$-local}) if its value on any $G$-set only depends on the $D$-set
structure.
Since $G/H=\coprod_{\scriptscriptstyle x\in H\!\backslash\! G\!/\!D} D/(H^{x^{-1}}\!\cap D)$ as a $D$-set,
this is equivalent to the following:
there is a linear function $\f_D$ on the Burnside ring of $D$ such that
$$
  \f(H) = \f_D(\Res_D H) \qquad
    \rlap{$\displaystyle
    \bigl(
    = \prod_{\scriptscriptstyle x\in H\backslash G/D} \f_D(H^{x^{-1}} \cap D)
    \>\>\bigr).$}
    \qquad\qquad
$$
\vskip-4pt
\noindent
In this case we write $\f=\tGDI{D,\f_D}_G$, or simply
$$
  \f=\GDI{D,\f_D}.
$$
\end{definition}

\begin{example}
\label{exlocfun}
Such functions arise naturally in a number-theoretic setting.
Suppose $F/K$ is a Galois extension of number fields and $v$ a place of~$K$.
Write $G=\Gal(F/K)$ and $D\!=\!\Gal(F_z/K_v)$ for the local Galois group
at $v$ (more precisely, a fixed decomposition group at $v$, so $D\<G$).
Under Galois correspondence, $\f_D$ associates something to every
extension of $K_v$, in which case $\f=\tGDI{D,\f_D}$ simply means
$$
  \f(L) = \prod_{\text{places } w|v \text{ in $L$}} \f_D(L_w)\>.
$$
(The double cosets $HxD$ correspond to the places $w$ of \smash{$L\!=\!F^H$} above $v$,
and $H \cap D^x$ are their decomposition groups in $\Gal(F/L)\!=\!H$.)
Typical local functions are those counting primes $w$ above~$v$ in $F^H$,
or primes with a given residue field \smash{$\F_q$}:
\beq
H \mapsto \lambda^{\text{\#\{$w$ above $v$ in $F^H$\}}} &
  (\,=\tGDI{D,\lambda}\>)\cr
H \mapsto \lambda^{\text{\#\{$w$ above $v$ in $F^H$ with $k(F^H_w)\iso\F_q$\}}}&
\bigl(\>=\GDI{D,H\mapsto\smallchoice{\lambda,\>}{\text{$k(F_w^H)\iso\F_q$}}{1,\>}{\text{else}\hfill}}\>\bigr),
\eeq
where $k(\cdot)$ denotes residue field.
Another example is the function
$$
  H \mapsto \prod\nolimits_{w|v} c_w(A/F^H)\>,
$$
\vskip-2pt
\noindent
that for an abelian variety $A/K$ computes the product of local Tamagawa
\newpage\noindent
numbers in $F^H$ above $v$.

Let $I\normal D$ be the inertia subgroup. If a place $w$ of $F^H$
corresponds to a double coset $HxD$, its decomposition and inertia groups in
$F/F^H$ are $H\cap D^x$ and $H\cap I^x$, respectively. Its
ramification and residue 
degree over~$v$ are $e_w\!=\!\tfrac{|I|}{|H\cap I^x|}$ and
$f_w\!=\tfrac{[D\,:\,I]}{[H\cap D^x\,:\,H\cap I^x]}$  
(the order of Frobenius in $F/K$ divided by that in $F/F^H$).
Many of the local functions that we will consider in \S\ref{sloccompat}
can be expressed through $e$ and $f$, which motivates the following
definition.
\end{example}

\begin{definition}
\label{gdidef2}
Suppose $I\triangleleft D\< G$ with $D/I$ cyclic,
and $\psi(e,f)$ is a function of two variables $e,f\in\N$.
Define
$$
  \GDI{D,I,\psi}: \quad
  H \longmapsto \prod_{x\in H\backslash G/D}
    \psi\bigl(\tfrac{|I|}{|H\cap I^x|},\tfrac{[D\,:\,I]}{[H\cap D^x\,:\,H\cap I^x]}\bigr)\>,
$$
the product being taken over any set of representatives of the double cosets.
This is a $D$-local function on the Burnside ring of $G$, to be precise
$$
\GDI{D,I,\psi}=
\GDI{D,U\mapsto\psi\bigl(\tfrac{|I|}{|U\cap I|},\tfrac{|D|}{|UI|}\bigr)}.
$$
\end{definition}

\begin{theorem}
\label{l2:}
Let $I\triangleleft D\< G$ with $D/I$ cyclic. Then
\begin{enumerate}
\item[($\ell$)]\label{l2:D}(Localisation)
If $\f=\tGDI{D,\f_D}$,
and $\f_D$ is trivial on $D$-relations, then
$\f$ is trivial on $G$-relations.
\item[(q)]\label{l2:Q}(Quotient)
If $N\triangleleft G$ and $\f(H)=\f_{G/N}(HN/N)$ for some function
$\f_{G/N}$ on the Burnside ring of $G/N$ which is trivial on $G/N$-relations,
then $\f$ is trivial on $G$-relations.
\item[(t)]\label{l2:T}(Transitivity)
If $D_1\< D_2\< G$, $\f=\tGDI{D_2,\f_2}_G$ and $\f_2=\tGDI{D_1,\f_1}_{D_2}$,
then $\f=\tGDI{D_1,\f_1}_G$.
\item[(f)]\label{l3:f} (Functions of $f$)
If $\psi(e,f)$ does not depend on $e$, then $\tGDI{D,I,\psi}\sim 1$.
\item[(r)]\label{l3:r} (Renaming)
If $I_0\< I$ is normal in $D$ with cyclic quotient, and $\psi(e,f)$ is
a function of the product $ef$, then $\tGDI{D,I,\psi}\sequal\tGDI{D,I_0,\psi}$.
\item[(d)]\label{l3:d} (Descent)
If $I\< D_0\< D$ and $\psi(e,f)=\psi(e,f/m)^m$
whenever $m$ divides $f$ and $[D:D_0]$,
then $\tGDI{D,I,\psi}\sequal\tGDI{D_0,I,\psi}$.
\end{enumerate}
\end{theorem}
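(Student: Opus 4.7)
My plan is to dispatch $(\ell)$, (q), (t) as immediate consequences of Theorem \ref{l:}, reduce (f) and (r) to bookkeeping with the local formula of Definition \ref{gdidef2}, and spend the real effort on (d).

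\emph{Parts $(\ell)$, (q), (t).} For $(\ell)$, the definition of $\tGDI{D,\f_D}$ gives $\f(\Theta)=\f_D(\Res_D\Theta)$; by Theorem \ref{l:}\eqref{l:res}, $\Res_D\Theta$ is a $D$-relation, which the hypothesis on $\f_D$ kills. Part (q) is the same argument with Theorem \ref{l:}\eqref{l:pro} in place of \eqref{l:res}. Part (t) follows from the transitivity of Mackey's double coset decomposition, $\Res_{D_1}^G H = \Res_{D_1}^{D_2}\Res_{D_2}^G H$ for $D_1 \< D_2 \< G$.

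\emph{Parts (f), (r).} For (f), write $\psi(e,f)=\bar\psi(f)$. Then the $D$-local function $U\mapsto\bar\psi(|D|/|UI|)$ depends only on $UI/I$ as a subgroup of $D/I$, so it lifts from the cyclic quotient $D/I$. A cyclic group has no non-trivial relations (exercise in \S\ref{ssrelations}), so (q) shows the $D$-local function is trivial on $D$-relations, and $(\ell)$ then finishes. For (r), a direct computation gives
$$\tfrac{|I|}{|H\cap I^x|}\cdot\tfrac{[D:I]}{[H\cap D^x:H\cap I^x]}=\tfrac{|D|}{|H\cap D^x|},$$
independent of the normal subgroup $I$. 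When $\psi(e,f)=\chi(ef)$ the two sides of (r) agree factor-by-factor in $x\in H\backslash G/D$.

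\emph{Part (d).} This is the only step needing real work. Since $D/I$ is cyclic, every subgroup between $I$ and $D$ is normal in $D$; in particular $D_0\normal D$. Fix $x\in G$ and set $D''=H^{x^{-1}}\cap D$. The right $D$-set isomorphism $H\backslash HxD\cong D''\backslash D$ (sending $Hxd\mapsto D''d$) restricts to a bijection
$$H\backslash HxD/D_0\;\cong\;D''\backslash D/D_0\;=\;D/D''D_0,$$
the last equality using $D_0\normal D$ (so that each double coset $D''dD_0$ is a right coset of $D''D_0$). Thus $HxD$ contains exactly $m:=[D:D''D_0]$ double cosets $HyD_0$. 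Writing $y=xd$ and using the normality of $I$ and $D_0$ in $D$, one computes $H\cap I^y=(H^{x^{-1}}\cap I)^x$ and $H\cap D_0^y=(D''\cap D_0)^x$, giving $e'_y=e_x$ and $f'_y=f_x/m$ (both independent of $d$). The divisibilities $m\mid f_x$ and $m\mid[D:D_0]$ are immediate ($f'_y\in\N$ and $D_0\<D''D_0\<D$, respectively), so the hypothesis on $\psi$ yields
$$\prod_{HyD_0\subset HxD}\psi(e'_y,f'_y)=\psi(e_x,f_x/m)^m=\psi(e_x,f_x).$$
Multiplying over $x\in H\backslash G/D$ gives the claimed equality $\tGDI{D_0,I,\psi}=\tGDI{D,I,\psi}$. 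The subtle part is keeping the conjugations straight; once the identification $H\backslash HxD/D_0=D/D''D_0$ is in place, everything else reduces to the one-line identity $f_x/f'_y=m$.
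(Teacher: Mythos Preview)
Your proof is correct and follows essentially the same approach as the paper's. The paper's own proof is extremely terse---it dispatches $(\ell)$ and (q) via Theorem~\ref{l:}\eqref{l:res},\eqref{l:pro}, says (t) is immediate from the $G$-set interpretation, derives (f) from $(\ell)$, (q) and the absence of relations in cyclic groups, and for (r), (d) merely says they ``follow from the definitions''---so your detailed unpacking of (d), in particular the identification $H\backslash HxD/D_0\cong D/D''D_0$ and the verification $f_x/f'_y=[D:D''D_0]$, is exactly the content the paper leaves to the reader.
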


\begin{proof}
($\ell$) and (q) follow from Theorem \ref{l:}\eqref{l:res}
and \eqref{l:pro}, respectively.
(t) is immediate from the $G$-set interpretation
in Definition \ref{gdidef}.
(f) follows from properties ($\ell$) and (q), and that
the cyclic group $D/I$ has no non-trivial relations.
(r), (d) follow from the definitions.
\end{proof}

\begin{example}
\label{vovian}
Property (f) shows that the two functions
$H\mapsto \lambda^{\#\cdots}$ counting primes in Example \ref{exlocfun}
are representation-theoretic, in other words they cancel in relations.
For instance, suppose that $F/K$ is a Galois extension of number fields and
$\sum_i H_i - \sum_j H_j'$ is a $\Gal(F/K)$-relation. Writing
\smash{$L_i=F^{H_i}$} and \smash{$L_j'=F^{H_j'}$},
$$
 \sum\nolimits_{i} \#\{\text{real places of } L_i\}
 = \sum\nolimits_{j} \#\{\text{real places of } L_j'\}\>.
$$
The same is true for complex places, or primes above a fixed prime $v$
of $K$ with a given residue degree over $v$. (This does not work when
counting primes with a given ramification degree instead.)
\end{example}

\newpage

\begin{example}
\label{exWWe}
If $W\< G$ is a subgroup of odd order,
then $\tGDI{W,W,e}\sim 1$ as functions to $\Q^{\times}/\Q^{\times 2}$.
For $W$ is solvable by the Feit-Thompson theorem, and
picking $W_0\normal W$ of prime index $p$,
$$
  \GDI{W,W,e} \sequalr \GDI{W,W_0,ef} \simf \GDI{W,W_0,e} \sequald \GDI{W_0,W_0,e}.
$$
The assertion follows by induction. (The inductive step fails for $p=2$,
e.g.
for $G=\Cy_2\!\times\!\Cy_2$ the function $\tGDI{G,G,e}$ does not
cancel in the relation of Example \ref{exc2c2}.)
\end{example}

Finally, we record a variant of the ``descent'' criterion of
Theorem \ref{l2:}d.

\begin{lemma}[Refined $p$-descent]\label{ladvp}
Let $N\normal G$ be of prime index $p$. Suppose $\phi, \psi$ are functions
on the Burnside rings of $G$ and $N$ respectively. Then $\phi=\tGDI{N,\psi}$ if and only if
\begin{itemize}
\item $\phi(H)=\psi(H\cap N)$ if $H\not\subset N$, and
\item $\phi(H)=\prod_{x\in G/N}\psi(xHx^{-1})$ if $H\subset N$, for any
choice of representatives.
\end{itemize}
\end{lemma}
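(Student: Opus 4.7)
The plan is to unwind Definition \ref{gdidef} and exploit the fact that $[G:N]=p$ is prime together with $N\normal G$. Since $HN/N$ is a subgroup of $G/N\iso\Cy_p$, every $H\<G$ falls into exactly one of two mutually exclusive cases: either $H\subset N$ (so $HN=N$) or $H\not\subset N$ (so $HN=G$). In each case I would compute the double coset decomposition $H\backslash G/N$ directly and read off the formula for $\tGDI{N,\psi}(H)$; the two resulting formulae are precisely the two bullet points, so the lemma follows.

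In the first case $H\not\subset N$, the equality $HN=G$ means $H\backslash G/N$ has exactly one double coset, which may be represented by $x=1$. Then Definition \ref{gdidef} gives $\tGDI{N,\psi}(H)=\psi(H\cap N)$, matching the first bullet.

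In the second case $H\subset N$, one has $HxN=xN$ for every $x\in G$, so the $p$ double cosets $H\backslash G/N$ correspond bijectively to the $p$ cosets of $N$ in $G$. Picking representatives $x$ for $G/N$ and noting $x^{-1}Hx\subset x^{-1}Nx=N$ since $N$ is normal,
$$\tGDI{N,\psi}(H)=\prod_{x\in G/N}\psi(x^{-1}Hx\cap N)=\prod_{x\in G/N}\psi(x^{-1}Hx),$$
and reindexing $x\mapsto x^{-1}$ yields the second bullet. Independence of the choice of coset representatives is automatic since $N$ is normal: replacing $x$ by $xn$ conjugates $x^{-1}Hx$ by an element of $N$, and $\psi$ is defined on $N$-conjugacy classes of subgroups of $N$.

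This proves the ``only if'' direction. Conversely, any $\phi$ satisfying the two bullets agrees with $\tGDI{N,\psi}$ on every $H\<G$ by running the same case analysis backwards. There is no real obstacle here: the lemma is essentially a bookkeeping exercise that exploits how the prime-index normality of $N$ degenerates the double-coset decomposition into only the two possible shapes listed.
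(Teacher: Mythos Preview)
Your proposal is correct and follows exactly the same approach as the paper: both arguments reduce to the observation that $H\backslash G/N$ consists of a single double coset when $H\not\subset N$ (since $HN=G$) and coincides with $G/N$ when $H\subset N$, after which the formula from Definition~\ref{gdidef} reads off directly. The paper's proof is simply a one-sentence version of what you wrote out in full.
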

\begin{proof}
The double cosets $H\backslash G/N$ are precisely the left cosets $G/N$
for \hbox{$H\subset N$,} and there is a unique double coset otherwise.
\end{proof}

\subsection{$\D_{\rho}$ and $\Ttp$}\rlap{\hskip 50cm.}
\par\noindent
\label{svanishing}

\noindent
We now introduce the function $\D_\rho$ that computes regulator constants,
and use it to study their properties.
Once again, $\K$ is any field of characteristic zero.
At the end of the section, we
reformulate these results for $\K=\Q_p$ in terms of the
sets $\Ttp$ of \S\ref{ssappl}. 

\begin{definition}
\label{D_M}
For a self-dual $\K G$-representation $\rho $ with a non-degenerate
$\K$-valued $G$-invariant bilinear pairing $\lara$, define
$$
  \D_\rho :\>  H \longmapsto \det(\tfrac1{|H|}\lara|\rho ^H) \> \in \K^{\times}/\K^{\times2}.
$$
By $G$-invariance of the pairing, $\D_\rho (H) = \D_\rho (xHx^{-1})$,
so this is indeed a function on the Burnside ring.
If $\Theta$ is a $G$-relation, then by definition of regulator constants
$$\D_\rho (\Theta)=\RC_\Theta(\rho ).$$
Up to $\sim$, this function is independent of the pairing:
if $\D'_\rho $ is defined in the same way but with a different pairing on $\rho $,
then \hbox{$\D_\rho\!\sim\!\D'_\rho$} by Theorem~\ref{regconst}.
In particular, $\D_{\rho\oplus\rho'}\sim\D_{\rho}\D_{\rho'}$.
\end{definition}

\begin{example}
\label{exDrho}
$\D_\triv$ is the function $H\mapsto |H|\in\K^{\times}/\K^{\times2}$,
and $\D_{\K[G]}$ is the constant function $H\mapsto 1$ (cf. Example \ref{regperm}).
\end{example}

\begin{remark}
The function $\D_\rho$ is representation-theoretic (i.e. $\sim 1$)
if and only if $\rho$ has trivial regulator constants in all $G$-relations.
For example, this happens  if $\rho$ carries a non-degenerate
$G$-invariant alternating form
(Theorem~\ref{vanishing}).
On the other hand,
$\RC_\Theta(\triv)$ need not be trivial, so $\D_\triv\not\sim 1$ in general
(cf. \ref{exdihreg}--\ref{exsl2f3}, \ref{exHHfun}).
\end{remark}

\begin{lemma}
\label{corUDG}
If $D\< G$ and $\rho$ is a self-dual $\K D$-representation,
then $$\D_{\Ind_D^G \rho}\sim \tGDI{D,\D_\rho}$$
as functions to $\K^{\times}/\K^{\times2}$.
\end{lemma}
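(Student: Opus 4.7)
The plan is to exhibit a specific $G$-invariant bilinear pairing on $\Ind_D^G\rho$ coming from $\lara$ on $\rho$, then compute $\D_{\Ind_D^G\rho}$ on each subgroup $H\<G$ using Mackey's decomposition, and show that it equals $\tGDI{D,\D_\rho}(H)$ on the nose for this pairing. Since by Theorem \ref{regconst} (applied to both sides, via Definition \ref{D_M}) the function $\D_{\Ind_D^G\rho}$ is well-defined up to $\sim$ regardless of the choice of pairing, this yields the desired $\sim$-relation in general.

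Concretely, I would realise $\Ind_D^G\rho$ as the space of functions $f\colon G\to\rho$ with $f(gd)=d^{-1}f(g)$ for $d\in D$, with the $G$-action $(h\cdot f)(g)=f(h^{-1}g)$. Then
$$
\langle f_1,f_2\rangle' \;=\; \sum_{g\in S}\,\langle f_1(g),f_2(g)\rangle,
$$
for any set $S$ of representatives of $G/D$, is a well-defined $G$-invariant pairing on $\Ind_D^G\rho$ (independence of $S$ uses $D$-invariance of $\lara$). If $\rho$ is self-dual, so is $\Ind_D^G\rho$, and the associated map $\Ind_D^G\rho\to\Ind_D^G\rho^*\iso(\Ind_D^G\rho)^*$ is the induction of $\rho\to\rho^*$; non-degeneracy on $(\Ind_D^G\rho)^H$ is then granted by Lemma \ref{invnondeg}.

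Next, I would parametrise $(\Ind_D^G\rho)^H$ by the standard Mackey recipe: an $H$-invariant $f$ is determined by its values $f(x)$ for $x$ running over representatives of the double cosets $H\backslash G/D$, and the compatibility $f(x)=d^{-1}f(x)$ for $d\in x^{-1}Hx\cap D = H^{x^{-1}}\cap D$ forces $f(x)\in\rho^{H^{x^{-1}}\cap D}$. This gives the isomorphism
$$
(\Ind_D^G\rho)^H \;\iso\; \bigoplus_{x\in H\backslash G/D}\;\rho^{H^{x^{-1}}\cap D}.
$$
I would then check that distinct double cosets give orthogonal summands (the pairing $\langle f_1(g),f_2(g)\rangle$ at $g\in HxD$ only sees $v_x,w_x$), and that on the $x$-summand the restricted pairing equals $[H:H\cap D^x]\cdot\lara|_{\rho^{H^{x^{-1}}\cap D}}$, the factor coming from summing over the coset representatives $hx$ of $G/D$ inside the double coset $HxD$.

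Finally, taking determinants and using $|H\cap D^x|=|H^{x^{-1}}\cap D|$, each block contributes
$$
\det\bigl(\tfrac{1}{|H|}\,[H\!:\!H\cap D^x]\cdot\lara\,\big|\,\rho^{H^{x^{-1}}\cap D}\bigr)
\;=\;
\det\bigl(\tfrac{1}{|H^{x^{-1}}\cap D|}\,\lara\,\big|\,\rho^{H^{x^{-1}}\cap D}\bigr)
\;=\;\D_\rho(H^{x^{-1}}\cap D),
$$
so the product over $x\in H\backslash G/D$ is exactly $\tGDI{D,\D_\rho}(H)$ by Definition \ref{gdidef}. The only genuinely delicate point is verifying the $[H:H\cap D^x]$ factor and the orthogonality of the Mackey summands under $\lara'$; once the explicit model of the induced pairing is in place this is bookkeeping, so I expect no real obstacle.
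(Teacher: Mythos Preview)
Your proposal is correct and follows essentially the same approach as the paper: construct an explicit $G$-invariant pairing on $\Ind_D^G\rho$ from $\lara$, then use Mackey's decomposition to identify $(\Ind_D^G\rho)^H$ with $\bigoplus_x \rho^{H^{x^{-1}}\cap D}$ and compute blockwise. The paper packages the same computation slightly more abstractly---it defines the pairing $(f_1,f_2)=\tfrac{1}{|D|}\sum_{x\in X}\langle f_1(x),f_2(x)\rangle$ on $\Hom(X,\rho)$ for an arbitrary $D$-set $X$, observes that on $\Hom_D(D/U,\rho)\cong\rho^U$ this is exactly $\tfrac{1}{|U|}\lara$, and then applies it to $X=G/H$ (Mackey) and $X=G$ (induction) simultaneously---but your explicit count of the $[H:H\cap D^x]$ cosets in each double coset is the same bookkeeping unwound.
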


\begin{proof}
Pick a $D$-invariant non-degenerate $\K$-bilinear pairing $\lara$ on $\rho$.
For a $D$-set $X$, define a pairing $(,)$ on $\Hom(X,\rho)$ by
$$
  (f_1,f_2)=\tfrac{1}{|D|}\sum_{x\in X}\langle f_1(x),f_2(x)\rangle,
    \qquad f_1,f_2\in \Hom(X,\rho).
$$
If $X=D/U$, the pairing $(,)$ on $\Hom_D(X,\rho)\subset\Hom(X,\rho)$ agrees
with $\tfrac{1}{|U|}\lara$ on $\rho^U\subset \rho$ under the identification
$\Hom_D(D/U,\rho)=\rho^U$ given by $f\mapsto f(1)$. So for
a general $D$-set $X=\coprod_i D/U_i$,
$$
  \textstyle
  \D_\rho\bigl(\sum_i U_i\bigr) = \det\bigl( (,) \bigm| \Hom_D(X,\rho)    \bigr).
$$
Applying this to $X=G/H$, we have
\beq
  \GDI{D,\D_\rho}(H)&=&\D_\rho(\Res_D H) = \det\bigl( (,) \bigm| \Hom_D(G/H,\rho)  \bigr)
  \cr
    &=& \det\bigl( \tfrac{1}{|H|}(,) \bigm| [\Hom_D(G,\rho)]^H  \bigr)
    = \D_{\Ind_D^G \rho}(H),
\eeq
where the last equality uses that $(,)$ is in fact a $G$-invariant pairing
on $\Hom_D(G,\rho)=\Ind_D^G \rho$.
\end{proof}

\begin{corollary}
\label{GoverD}
Let $I\triangleleft D\< G$ with $D/I$ cyclic.
As functions to $\K^{\times}/\K^{\times2}$,
$$
  \D_{\K[G/D]}
  \simudg \GDI{D,\D_\triv}
  \sim \GDI{D,H\mapsto\tfrac{1}{|H|}}
  \simf \GDI{D,H\mapsto\tfrac{|D|}{|H|}}
   = \GDI{D,I,ef}.
$$
\end{corollary}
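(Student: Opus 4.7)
The plan is to walk through the four links of the displayed chain in turn, each of which is a one-step deduction from an earlier result. Throughout, $I \triangleleft D \< G$ with $D/I$ cyclic are fixed as in the statement.

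For the first equivalence ($\simudg$), I would apply Lemma \ref{corUDG} to the trivial $\K D$-representation $\triv_D$ (with its standard pairing). Since $\Ind_D^G \triv_D \iso \K[G/D]$, the lemma directly produces $\D_{\K[G/D]} \sim \tGDI{D, \D_\triv}$. For the second equivalence ($\sim$), I would substitute the explicit formula $\D_\triv(H) = |H| \equiv 1/|H|$ in $\K^{\times}/\K^{\times 2}$ from Example \ref{exDrho} (valid since $|H|^2$ is a square); this makes $\tGDI{D, \D_\triv}$ and $\tGDI{D, H \mapsto 1/|H|}$ literally equal as functions to $\K^{\times}/\K^{\times 2}$.

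For the third equivalence ($\simf$), I would compute the ratio of the two bracket expressions to be $\tGDI{D, H \mapsto |D|}$, the bracket of the constant function $|D|$. In the notation of Definition \ref{gdidef2}, this can be rewritten as $\tGDI{D, I, \psi}$ with $\psi(e, f) = |D|$, a function trivially independent of $e$. Theorem \ref{l2:}(f) then gives that this ratio is $\sim 1$, which is the desired equivalence. For the final equality, I would verify the order identity
\[
  e \cdot f \;=\; \frac{|I|}{|U \cap I|} \cdot \frac{|D|}{|UI|} \;=\; \frac{|D|}{|U|}
\]
for every subgroup $U \< D$, using $|UI| \cdot |U \cap I| = |U| \cdot |I|$ (second isomorphism theorem). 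Substituting this into Definition \ref{gdidef2} shows that $\tGDI{D, I, ef}$ and $\tGDI{D, H \mapsto |D|/|H|}$ coincide term by term over double cosets $H\backslash G/D$.

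No single step presents a genuine obstacle: each reduces to a cited lemma, an entry in Example \ref{exDrho}, or a one-line group-theoretic calculation. The only mildly subtle bookkeeping is in Step 2, where one uses that values live in $\K^{\times}/\K^{\times 2}$ so $|H|$ and $1/|H|$ may be freely identified; and in Step 3, where one must notice that a genuinely constant function $\psi$ qualifies as ``independent of $e$'' in the sense of Theorem \ref{l2:}(f).
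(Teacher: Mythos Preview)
Your proof is correct and follows exactly the route the paper intends: the labels $\simudg$ and $\simf$ in the displayed chain are pointers to Lemma~\ref{corUDG} and Theorem~\ref{l2:}(f) respectively, and you have unpacked each step appropriately, including the group-theoretic identity $|UI|\cdot|U\cap I|=|U|\cdot|I|$ for the final equality.
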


\smallskip

Regulator constants behave as follows under lifting,
induction and restriction of relations (cf. Theorem \ref{l:}).

\begin{proposition}
\label{proprc}
Let $\rho$ be a self-dual $\K G$-representation.
\begin{enumerate}
\item
Suppose $G=\tilde G/N$ and $\Theta$ is a $G$-relation.
Lifting $\Theta$ to a $\tilde G$-relation~$\tilde\Theta$,
we have 
$\RC_{\tilde\Theta}(\rho)=\RC_\Theta(\rho)$.
\item If $G\<U$ and $\Theta$ is a $U$-relation, then
$\RC_\Theta(\Ind_G^U\rho)=\RC_{\Res_G\Theta}(\rho)$.
\item If $D\<G$ and $\Theta$ is a $D$-relation, then
$\RC_\Theta(\Res_D\rho)=\RC_{\Ind_D^G\Theta}(\rho)$.
\end{enumerate}
\end{proposition}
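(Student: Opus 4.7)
The plan is to derive all three parts directly from the definition $\RC_\Theta(\rho) = \D_\rho(\Theta) = \prod_i \det(\tfrac{1}{|H_i|}\lara|\rho^{H_i})^{n_i}$, choosing pairings compatibly on each side.

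For part (3), fix a $G$-invariant pairing on $\rho$; it is automatically $D$-invariant when $\rho$ is viewed as $\Res_D \rho$. For any $H \subset D \subset G$, the subspace of $H$-invariants is the same object and the normalising factor $|H|$ is the same. Since $\Ind_D^G \Theta = \sum_i n_i H_i$ is literally the same formal combination of subgroups, the two products defining $\RC_{\Ind_D^G\Theta}(\rho)$ and $\RC_\Theta(\Res_D \rho)$ are identical term by term.

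For part (2), Lemma~\ref{corUDG} gives $\D_{\Ind_G^U \rho} \sim \tGDI{G,\D_\rho}_U$, and since $\Theta$ is a $U$-relation, $\sim$ collapses to equality on $\Theta$:
$$
  \RC_\Theta(\Ind_G^U \rho) = \D_{\Ind_G^U\rho}(\Theta) = \tGDI{G,\D_\rho}_U(\Theta) = \D_\rho(\Res_G \Theta) = \RC_{\Res_G\Theta}(\rho),
$$
the penultimate equality being the defining property of the $\tGDI{\cdot,\cdot}$ notation (Definition~\ref{gdidef}), and the last using that $\Res_G \Theta$ is a $G$-relation by Theorem~\ref{l:}\eqref{l:res}.

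For part (1), I would fix a $G$-invariant pairing $\lara$ on $\rho$, which via inflation along $\tilde G \twoheadrightarrow G = \tilde G/N$ is also $\tilde G$-invariant. For every subgroup $\tilde H \supset N$ of $\tilde G$, with image $H = \tilde H/N \subset G$, the spaces of invariants coincide, $\rho^{\tilde H} = \rho^{H}$, but the normalisations differ: $|\tilde H| = |N|\cdot |H|$, so
$$
  \det(\tfrac{1}{|\tilde H|}\lara|\rho^{\tilde H}) = |N|^{-\dim\rho^H}\,\det(\tfrac{1}{|H|}\lara|\rho^H).
$$
Writing $\Theta = \sum_i n_i H_i$ and $\tilde\Theta = \sum_i n_i \tilde H_i$ with $\tilde H_i/N = H_i$, the discrepancy between $\RC_{\tilde\Theta}(\rho)$ and $\RC_\Theta(\rho)$ is $|N|^{-\sum_i n_i \dim\rho^{H_i}}$, and the exponent vanishes by Lemma~\ref{dimsaddup} applied to the $G$-relation $\Theta$.

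The only mildly delicate step is the bookkeeping in part (1), where one must be careful that $\rho^{\tilde H} = \rho^{H}$ holds as equality of subspaces (not merely of dimensions), so that the factors of $|N|$ arise cleanly as $\dim\rho^H$-th powers and can be cancelled using Lemma~\ref{dimsaddup}; everything else reduces to inspecting the definitions.
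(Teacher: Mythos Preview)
Your proof is correct and follows exactly the same route as the paper: part~(3) is immediate from the definition, part~(2) is a direct application of Lemma~\ref{corUDG}, and part~(1) is the $|N|$-power discrepancy cancelled via Lemma~\ref{dimsaddup}. The paper's write-up is terser but the substance is identical.
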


\begin{proof}
(1) The left- and the right-hand side are the same expression
up to a factor of $\prod_i |N|^{n_i\dim\rho^{H_i}}$,
if we write $\Theta=\sum_i n_i H_i$. This factor equals 1
by Lemma \ref{dimsaddup}.
(2) This is a reformulation of Lemma \ref{corUDG}.
(3) Clear.
\end{proof}

In view of Example \ref{exDrho}, the regular representation has
trivial regulator constants. Generally, we have

\begin{lemma}
\label{regreg}
If $H\< G$ is cyclic, $\RC_\Theta(\K[G/H])=1$ for every $G$-relation $\Theta$.
\end{lemma}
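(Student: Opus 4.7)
The plan is to reduce the computation of $\RC_\Theta(\K[G/H])$ to a regulator constant over the cyclic group $H$, and then invoke the fact that cyclic groups have no non-trivial relations.

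First, I would identify $\K[G/H] \iso \Ind_H^G \triv_H$ and apply Proposition \ref{proprc}(2) (with $U = G$ and $\rho = \triv_H$) to get
$$
  \RC_\Theta(\K[G/H]) \;=\; \RC_\Theta(\Ind_H^G \triv_H) \;=\; \RC_{\Res_H \Theta}(\triv_H).
$$
This converts the problem about $G$ and the induced representation into a problem about $H$ and the trivial representation.

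Next, by Theorem \ref{l:}(\ref{l:res}), the restriction $\Res_H \Theta$ is an $H$-relation. Since $H$ is cyclic, the exercise immediately preceding Example \ref{exc2c2} (``A cyclic group $\Cy_n$ has no non-trivial relations'') tells us that the only $H$-relation is the zero element of the Burnside ring of $H$. Hence $\Res_H \Theta = 0$, so it is a formal sum with all coefficients zero.

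Finally, applying the definition of regulator constants (or multiplicativity in $\Theta$ from Corollary \ref{RCproperties}) to the zero element yields an empty product, giving $\RC_{\Res_H \Theta}(\triv_H) = 1$. Combining these steps produces the desired identity. There is essentially no obstacle here: the content of the lemma is entirely packaged in the Frobenius-reciprocity-style base change formula of Proposition \ref{proprc}(2) and the triviality of the Burnside ring relations for cyclic groups, both of which are already available.
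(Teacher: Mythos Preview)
Your proof is correct. The paper's proof is a one-liner using the machinery of \S\ref{ssfunctions}: it invokes Corollary~\ref{GoverD} to write $\D_{\K[G/H]}\sim\tGDI{H,\{1\},f}$ (taking $I=\{1\}$, possible since $H$ is cyclic), and then applies Theorem~\ref{l2:}(f) to conclude this is $\sim 1$. Your route via Proposition~\ref{proprc}(2) and the vanishing of all relations in cyclic groups is more direct and avoids the $\tGDI{\cdots}$ formalism entirely. The two arguments share the same core input --- Proposition~\ref{proprc}(2) is itself a restatement of Lemma~\ref{corUDG}, and Theorem~\ref{l2:}(f) ultimately rests on cyclic groups having no relations --- but your version makes the logical dependence clearer, while the paper's version illustrates how the local-function machinery packages such arguments uniformly.
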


\begin{proof}
For $H$ cyclic, $\D_{\K[G/H]}\simef \tGDI{H,1,f} \simf 1$.
\end{proof}

\smallskip

\begin{theorem}
\label{oddorder}
If $G$ has odd order, then $\RC_\Theta(\rho)=1$
for every $G$-relation $\Theta$ and every self-dual
$\K G$-representation $\rho$.
\end{theorem}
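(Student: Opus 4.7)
The plan is to reduce to irreducible constituents over $\K$ and treat the trivial representation separately from the non-trivial ones. By multiplicativity of $\RC_\Theta$ in direct sums (Corollary \ref{RCproperties}), decompose
$$
  \rho = \triv^{\oplus n} \oplus \bigoplus_i \sigma_i^{\oplus n_i} \oplus \bigoplus_j (\tau_j \oplus \tau_j^*)^{\oplus m_j},
$$
where $\sigma_i$ are the non-trivial self-dual $\K G$-irreducibles occurring in $\rho$ and the $\tau_j$ are $\K G$-irreducibles with $\tau_j\not\iso\tau_j^*$, paired off with their duals (this is possible because $\rho$ is $\K$-self-dual). It suffices to show that $\RC_\Theta$ is trivial on each kind of summand.

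For the summands $\sigma_i$ and $\tau_j \oplus \tau_j^*$, the key input is the Frobenius--Schur/Burnside observation that for a group of odd order the only self-dual irreducible $\bar\K G$-representation is $\triv$ (equivalently, only the identity is conjugate to its inverse). A non-trivial $\K$-irreducible cannot contain $\triv$ as a $\bar\K$-constituent, since by Galois descent this would force $\triv$ to split off as a $\K$-summand. Hence every $\bar\K$-irreducible constituent of $\sigma_i$ and of $\tau_j \oplus \tau_j^*$ is non-trivial, and therefore non-self-dual. Corollary \ref{triviality}(3) then gives $\RC_\Theta(\sigma_i)=\RC_\Theta(\tau_j\oplus\tau_j^*)=1$.

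The remaining task is $\RC_\Theta(\triv)$, where Corollary \ref{triviality} does not apply. Here $\D_\triv(H) = |H|$. Taking $W = G$ in Example \ref{exWWe} (applicable since $|G|$ is odd), the function $\tGDI{G,G,e}$ sends $H$ to $|G|/|H|$ and satisfies $\tGDI{G,G,e}\sim 1$ as a function to $\Q^\times/\Q^{\times 2}$, and hence also to $\K^\times/\K^{\times 2}$ since $\Q$-squares remain $\K$-squares. The constant function $H\mapsto |G|$ is trivial on $G$-relations by Example \ref{constsim1}, so $\D_\triv \sim 1$ and $\RC_\Theta(\triv)=1$. The main obstacle is precisely this trivial component: it is the one case not covered by the vanishing criteria of Corollary \ref{triviality}, and handling it relies on Example \ref{exWWe}, whose inductive proof uses the Feit--Thompson theorem to produce a normal subgroup of prime index at each stage.
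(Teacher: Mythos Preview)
Your proof is correct, and for the non-trivial summands it matches the paper exactly. The difference lies in how you handle $\RC_\Theta(\triv)$.

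The paper observes that $\K[G]=\triv\oplus(\text{rest})$, where ``rest'' has no trivial $\bar\K$-constituents and hence $\RC_\Theta(\text{rest})=1$ by Corollary~\ref{triviality}(3). Thus $\RC_\Theta(\triv)=\RC_\Theta(\K[G])$, which vanishes by Lemma~\ref{regreg} (indeed $\D_{\K[G]}$ is identically $1$, Example~\ref{exDrho}). This is a short and completely elementary trick: turn the one summand you cannot handle directly into the regular representation, which you understand perfectly.

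Your route instead appeals to Example~\ref{exWWe}, whose proof invokes the Feit--Thompson theorem to guarantee a normal subgroup of prime index at each inductive step. This is correct but is enormous overkill: you are using one of the deepest results in finite group theory to show that $H\mapsto|H|$ is representation-theoretic on an odd-order group, when the regular-representation argument gives it for free. So your proof works, but the paper's is both shorter and avoids the dependence on Feit--Thompson.
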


\begin{proof}
The only self-dual irreducible $\bar\K G$-representation is the trivial one.
(Their number coincides with the number
of self-inverse conjugacy classes of $G$, but these have odd order and so,
except for the trivial class, have no self-inverse elements.)
By Corollary \ref{triviality}(3), if $\rho$ does not contain $\triv$,
then $\RC_\Theta(\rho)=1$. But then $\RC_\Theta(\triv)=\RC_\Theta(\K[G])$,
which is $1$ by Lemma~\ref{regreg}.
\end{proof}

\begin{corollary}
$\RC_\Theta(\K[G/H])=1$ if $H\<G$ has odd order.
\end{corollary}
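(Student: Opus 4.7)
The plan is to reduce the statement for the group $G$ with odd-order subgroup $H$ to the statement about the group $H$ itself, and then invoke the just-proved Theorem \ref{oddorder}.

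First I would rewrite $\K[G/H]$ as the induced representation $\Ind_H^G\triv_H$. This puts us in position to apply Proposition \ref{proprc}(2) (equivalently Lemma \ref{corUDG}): for any $G$-relation $\Theta$,
$$
\RC_\Theta(\K[G/H])=\RC_\Theta(\Ind_H^G\triv_H)=\RC_{\Res_H\Theta}(\triv_H).
$$
Here $\Res_H\Theta$ is an $H$-relation by Theorem \ref{l:}(\ref{l:res}).

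Now the right-hand side is a regulator constant for the finite group $H$ of odd order, applied to its trivial representation. Since $H$ has odd order, Theorem \ref{oddorder} gives $\RC_{\Res_H\Theta}(\triv_H)=1$. Combining, $\RC_\Theta(\K[G/H])=1$ for every $G$-relation $\Theta$.

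There is essentially no obstacle: the only substantive ingredient is the induction formula for regulator constants (which the paper has already packaged as Proposition \ref{proprc}(2)), and the rest is invocation of Theorem \ref{oddorder}. One small point worth checking is that the triviality of $\RC_{\Res_H\Theta}(\triv_H)$ really does follow even though $\Res_H\Theta$ need not be a non-zero relation for $H$; but the formula $\RC_{\Theta'}(\rho)=1$ for \emph{every} $H$-relation $\Theta'$ handles the zero case trivially, so this is not a genuine issue.
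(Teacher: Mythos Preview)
Your proof is correct and is essentially identical to the paper's: the paper's one-line argument applies Proposition~\ref{proprc}(2) to get $\RC_\Theta(\K[G/H])=\RC_{\Res_H\Theta}(\triv_H)$ and then invokes Theorem~\ref{oddorder}, exactly as you do.
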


\begin{proof}
$\RC_\Theta(\K[G/H])\,{\buildrel{\ref{proprc}(2)}\over{=}}\,\RC_{\Res_H\Theta}(\triv_H)=1$.
\end{proof}

\begin{lemma}
\label{regregp}
Suppose $\K\!=\!\Q$ or $\K\!=\!\Q_p$, and $N\normal H\< G$ with $H/N$ cyclic.
If $p\nmid |N|$,
then $\ord_p\RC_\Theta(\K[G/H])$ is even for every $G$-relation $\Theta$.
\end{lemma}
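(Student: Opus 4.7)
The plan is to reduce the computation to a one-dimensional regulator constant and then exploit the fact that $H/N$ is cyclic. By Proposition~\ref{proprc}(2) applied to $\K[G/H] = \Ind_H^G \triv_H$,
$$
  \RC_\Theta(\K[G/H]) = \RC_{\Res_H \Theta}(\triv_H),
$$
so it suffices to show that for any $H$-relation $\Theta' = \sum_i n_i U_i$ the $p$-adic valuation of $\RC_{\Theta'}(\triv_H) \in \K^\times/\K^{\times 2}$ is even. Since $\triv_H^{U_i}$ is one-dimensional, the obvious nonzero pairing gives $\det(\tfrac{1}{|U_i|}\lara \,|\, \triv_H^{U_i}) = \tfrac{1}{|U_i|}$, and hence
$$
  \RC_{\Theta'}(\triv_H) = \prod_i |U_i|^{-n_i}.
$$

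The key step is to push $\Theta' = \Res_H \Theta$ down to $H/N$. By Theorem~\ref{l:}(\ref{l:pro}) the image $\sum_i n_i\, U_iN/N$ is an $H/N$-relation, and since $H/N$ is cyclic it admits no non-trivial relations (the Exercise after Definition~\ref{grelation}). Therefore this element is zero in the Burnside ring of $H/N$; equivalently, grouping the $U_i$ by the value $V = U_iN/N \< H/N$, one has $\sum_{i:\, U_iN/N = V} n_i = 0$ for every such $V$.

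Using $|U_i| = |U_i \cap N|\cdot |V|$ whenever $U_iN/N = V$, the identity
$$
  \prod_i |U_i|^{-n_i}
  = \prod_V |V|^{-\sum_{U_iN/N=V} n_i} \cdot \prod_i |U_i \cap N|^{-n_i}
  = \prod_i |U_i \cap N|^{-n_i}
$$
follows directly from the vanishing of the inner coefficient sums. Since every $|U_i \cap N|$ divides $|N|$ and is therefore coprime to $p$, the right-hand side has $p$-adic valuation zero, which is in particular even. There is no real obstacle here: the argument is a bookkeeping check that the factors $|V|$ in $|U_i|$ cancel class-by-class, which is exactly what the cyclicity of $H/N$ provides via Theorem~\ref{l:}(\ref{l:pro}).
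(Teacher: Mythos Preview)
Your proof is correct and is essentially the same argument as the paper's, just unpacked by hand: the paper invokes Corollary~\ref{GoverD} and Theorem~\ref{l2:}f to write $\D_{\K[G/H]}\sim\tGDI{H,N,ef}\sim\tGDI{H,N,e}$, which amounts precisely to your reduction via Proposition~\ref{proprc}(2) followed by cancelling the $|U_iN/N|$ factors using that the cyclic group $H/N$ has no relations. Both routes end with the observation that the remaining factors divide $|N|$ and are hence coprime to $p$.
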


\begin{proof}
$\D_{\K[G/H]}\simef \tGDI{H,N,ef}\simf \tGDI{H,N,e}$ and the values of $e$ are divisors
of $|N|$. So $\RC_\Theta(\K[G/H]) = \D_{\K[G/H]}(\Theta)$ has even
$p$-valuation for $p\nmid|N|$.
\end{proof}

\medskip

\subsubsection*{Reformulation for $\K=\Q_p$}
We now define the sets of representations $\Ttp$ that
encode those representations whose
regulator constants are ``$p$-adicially non-trivial''.
It is for these twists that we prove the $p$-parity conjecture
(Theorem \ref{tamroot}). So let us also restate the properties of
regulator constants in this language ($\Ttp$-ese?)

\begin{definition}
\label{tauthetap}
Suppose $\K=\Q_p$. 
For a $G$-relation $\Theta$ define
$\Ttp$ to be the set of self-dual $\Qpb G$-representations
$\tau$ that satisfy
$$
 \blangle\tau,\rho\brangle \equiv \ord_p \RC_\Theta(\rho) \mod 2
$$
for every self-dual $\Q_p G$-representation $\rho$.
\end{definition}

\begin{remark}
\label{remT0p}
For instance, $\Ttp$ contains representations of the form
$$
    \bigoplus_{\vabove{\text{$\rho$ self-dual $\Q_p$-irr.}}{\ord_p\RC_\Theta(\rho)\text{ odd}}}
    \!\!\!\!\!
    (\text{any $\Qpb$-irreducible constituent of $\rho$}).
$$
These are indeed self-dual, since $\RC_\Theta(\sigma\oplus\sigma^*)=1$
by Corollary \ref{triviality}.
Note also that these particular representations
have no symplectic constituents or those with even Schur index,
by the same corollary.

A general element of $\Ttp$ differs from this one by something
in $\Tau_{0,p}$, in other words by a self-dual (virtual)
$\Qpb$-representation whose inner product with any self-dual
$\Qp$-representation is even. Concretely, $\Tau_{0,p}$ is generated by
representations of the form $\sigma\oplus\sigma^*$ (in particular
$\sigma^{\oplus 2}$ for self-dual $\sigma$), and irreducible self-dual
$\sigma$ with either an even number of $\Gal(\Qpb/\Q_p)$-conjugates or
even Schur index over $\Q_p$.

In the context of the $p$-parity conjecture, the elements of $\Tau_{0,p}$
correspond to twists $\tau$ for which the conjecture should ``trivially''
hold. The parity of $\blangle\tau,\X_p(A/F)\brangle$ is even, and we
expect $w(A,\tau)=1$ for these $\tau$. (This is indeed the case whenever
we have an explicit formula for this root number.)
\end{remark}

\begin{example}
\label{exTtp1}
If $G\!=\!\Cy_n$ is cyclic, $\Theta\!=\!0$ is the only $G$-relation.
The set $\Tau_{0,p}$ consists of $\Z$-linear combinations of
$\triv^{\oplus 2}$, sign$^{\oplus 2}$ for $n$ even, and
$\chi\oplus\chi^*$ for all the remaining $1$-dimensionals $\chi$.
\end{example}

\begin{example}
\label{exTtp2}
If $G\!=\!\Di_{2p}$ with $p$ odd, and $\Theta\!=\!\{1\} \!-\! 2\Cy_2 \!-\! \Cy_p \!+\! 2\,G$
of Example \ref{exdihreg}, then $\tau\in\Ttp$
if and only if $\tau$ contains an odd number of trivial representations,
an odd number of sign representations, and
in total an odd number of $2$-dimensional $\Qpb$-irreducibles.
\end{example}

\begin{example}
\label{exTtp3}
If $G\!=\!\Di_{2p^n}$, and $\tau$ any 2-dimensional $\Qpb G$-representation,
Examples \ref{exdihreg2}, \ref{exdihreg3} show that $\tau\oplus\triv\oplus\det\tau$
lies in $\Ttp$ for some $G$-relation~$\Theta$.
\end{example}

\begin{example}
\label{exTtp4}
If $G=\Alt_5$, its complex- (or $\Qpb$-) irreducible representations
are $\triv$, 3-dimensional $\tau_1,\tau_2$, 4-dimensional $\chi$
and 5-dimensional $\pi$. Here
$$
 \triv\!\oplus\!\pi\in\Tau_{\Theta,2}
 \>\qquad\triv\!\oplus\!\chi\!\oplus\!\pi\in\Tau_{\Theta',3}
 \>\qquad\triv\!\oplus\!\tau_i\!\oplus\!\chi\in\Tau_{\Theta'',5}\>,
$$
for some $G$-relations $\Theta,\Theta',\Theta''$ (see \cite{Squarity} Ex. 2.19).
\end{example}

\begin{theorem}[Properties of $\Ttp$]
\label{tauthetapprop}
Let $\Theta$ be a $G$-relation and $\tau\in\Ttp$.
\begin{enumerate}
\item
$\tau$ has even dimension and trivial determinant.
\item
$\tau\oplus\tau'\in\Tau_{\Theta+\Theta',p}\,$
for $\tau'\in\Tau_{\Theta',p}$.
\item
$\tilde\tau\in\Tau_{\tilde\Theta,p}$ whenever
$G=\tilde G/N$, and $\tilde\tau$, $\tilde\Theta$ are lifts of $\tau$
and $\Theta$ to $\tilde G$.

\item
If $D\<G$, then
$\Res_D\tau\in\Tau_{{\Res_D\Theta},p}$.
\item
If $G\<U$, then
$\Ind_G^U\tau\in\Tau_{{\Ind_G^U\Theta},p}$.
\item $\blangle\tau,\Q_p[G/H]\brangle$ is even whenever
$H\<G$ is cyclic, has odd order or contains a normal subgroup $N\normal H$
with $H/N$ cyclic and $p\nmid |N|$.

\item If $|G|$ is odd or coprime to $p$, then $\Ttp=\Tau_{0,p}$.
\end{enumerate}
\end{theorem}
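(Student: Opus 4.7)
The plan is to prove (2), (4)--(7) by direct application of the restriction/induction/lifting properties of regulator constants (Proposition \ref{proprc} together with Lemmas \ref{regreg}, \ref{regregp} and Corollaries \ref{RCproperties}, \ref{mustdivide}), and then to bootstrap to (1) using (6). The main obstacle will be the triviality of $\det\tau$ in part (1), which will require an eigenvalue-pairing argument.

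Part (2) is immediate from bilinearity of the inner product and multiplicativity of $\RC_\Theta$ in $\Theta$ (Corollary \ref{RCproperties}). For (4), $\Ind_D^G\rho_D$ is self-dual whenever $\rho_D$ is, and Frobenius reciprocity together with Proposition \ref{proprc}(2) yields $\blangle\Res_D\tau,\rho_D\brangle_D = \blangle\tau,\Ind_D^G\rho_D\brangle_G \equiv \ord_p\RC_\Theta(\Ind_D^G\rho_D) = \ord_p\RC_{\Res_D\Theta}(\rho_D)\mod 2$; part (5) is symmetric using Proposition \ref{proprc}(3). For (6), Frobenius reciprocity identifies $\blangle\tau,\Q_p[G/H]\brangle = \dim\tau^H$, and each of the three sub-cases ensures that $\ord_p\RC_\Theta(\Q_p[G/H])$ is even: the cyclic case by Lemma \ref{regreg}, the odd-order case by the corollary following Theorem \ref{oddorder}, and the third case by Lemma \ref{regregp}. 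For (7), either Corollary \ref{mustdivide} (when $p\nmid|G|$) or Theorem \ref{oddorder} (when $|G|$ is odd) shows that $\ord_p \RC_\Theta(\rho)$ is always even, so the defining condition of $\Ttp$ collapses to that of $\Tau_{0,p}$.

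Part (3) requires a brief computation: the key claim is $\RC_{\tilde\Theta}(\tilde\rho) = \RC_\Theta(\tilde\rho^N)$ in $\Q_p^\times/\Q_p^{\times 2}$ for every self-dual $\Q_p\tilde G$-representation $\tilde\rho$. Writing $\tilde\Theta = \sum n_i \tilde H_i$ with $\tilde H_i \supset N$ and $\tilde H_i/N = H_i$, I exploit $\tilde\rho^{\tilde H_i} = (\tilde\rho^N)^{H_i}$ and $|\tilde H_i| = |N|\cdot|H_i|$, so the ratio of the two products of determinants picks up a factor $|N|^{-\sum n_i \dim(\tilde\rho^N)^{H_i}}$, which equals $1$ by Lemma \ref{dimsaddup}. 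Since $\tilde\tau$ factors through $G$, one checks $\blangle\tilde\tau,\tilde\rho\brangle_{\tilde G} = \blangle\tau,\tilde\rho^N\brangle_G$, and the conclusion follows from $\tau \in \Ttp$ applied to the self-dual $\Q_p G$-representation $\tilde\rho^N$ (self-dual by Lemma \ref{invnondeg}).

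For (1), the even-dimension statement is immediate from the definition with $\rho = \Q_p[G]$: here $\RC_\Theta(\Q_p[G]) = 1$ by Example \ref{exDrho}, while $\blangle\tau,\Q_p[G]\brangle = \dim\tau$. The main obstacle is the triviality of $\det\tau$; my plan is the following eigenvalue argument. Fix $g \in G$, diagonalise $\tau(g)$ over $\Qpb$, and observe that a $G$-invariant non-degenerate pairing $B$ on $\tau\otimes\Qpb$ (which exists since $\tau$ is self-dual) satisfies $B(v,w) = \lambda\mu B(v,w)$ for $v \in V_\lambda$, $w \in V_\mu$, forcing $V_\lambda$ to pair non-trivially only with $V_{\lambda^{-1}}$ and hence $\dim V_\lambda = \dim V_{\lambda^{-1}}$. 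Eigenvalues $\lambda \neq \pm 1$ therefore contribute $1$ to $\det\tau(g)$ in reciprocal pairs, leaving $\det\tau(g) = (-1)^b$ with $b = \dim V_{-1}$. Writing $\dim\tau = 2k + a + b$ where $a = \dim V_1 = \dim\tau^{\langle g\rangle}$, both $\dim\tau$ (just shown) and $a$ (by (6) applied to the cyclic subgroup $\langle g\rangle$) are even, so $b$ is even and $\det\tau(g) = 1$. As $g \in G$ was arbitrary, $\det\tau = \triv$.
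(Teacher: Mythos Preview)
Your proof is correct and follows essentially the same route as the paper. Parts (2), (4)--(7) are handled identically. For (3), the paper cites Proposition~\ref{proprc}(1) together with Lemma~\ref{mustenter} (decompose $\tilde\rho$ into its $N$-invariant part plus a complement with no constituents appearing in any $\C[\tilde G/\tilde H_i]$), whereas you prove the key identity $\RC_{\tilde\Theta}(\tilde\rho)=\RC_\Theta(\tilde\rho^N)$ directly via the same $|N|$-factor computation that underlies Proposition~\ref{proprc}(1); the substance is the same. For the determinant statement in (1), the paper invokes (4) to reduce to cyclic $H=\langle g\rangle$ and then points to Example~\ref{exTtp1}, while your eigenvalue argument is exactly the explicit form of that reduction: the pairing $\dim V_\lambda=\dim V_{\lambda^{-1}}$ is self-duality of $\Res_{\langle g\rangle}\tau$, and the parity of $a=\dim\tau^{\langle g\rangle}$ is precisely the cyclic case of (6). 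So the two arguments coincide once unwound.
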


\begin{proof}
\noindent (2) Clear.

\noindent (3) Proposition \ref{proprc}(1) and Lemma \ref{mustenter}.

\noindent (4) Take any self-dual $\Q_p D$-representation $\rho$. Then modulo 2,
$$
\begin{array}{lclcll}
    \blangle\Res_D\tau,\rho\brangle= 
    \blangle\tau,\Ind_D^G\rho\brangle\equiv 
    \ord_p\RC_{\Theta}(\Ind_D^G\rho)
    \smash{{\tbuildrel\ref{proprc}(2)\over\equiv}}
    \ord_p\RC_{\Res_D\Theta}(\rho).
\end{array}
$$
\noindent (5) Same computation, using Proposition \ref{proprc}(3).

\noindent (6), (7) Reformulation of \ref{mustdivide} and \ref{regreg}--\ref{regregp}.

\noindent (1)
$\dim\tau=\blangle\tau,\Q_p[G]\brangle$ is even by (6).
Now $\det\tau(g)=1$ for all $g\in G$ if and only if
$\det\Res_H\tau=\triv$ for all cyclic $H\<G$.
So (4) reduces the problem to cyclic groups, where it is clear
(see Example \ref{exTtp1}).
\end{proof}

\begin{corollary}
$\triv\not\in\Ttp$ and $\,\triv\!\oplus\!\epsilon\not\in\Ttp$ for any
1-dimensional $\epsilon\not\iso\triv$.
\end{corollary}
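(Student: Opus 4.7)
The plan is to derive this corollary immediately from Theorem~\ref{tauthetapprop}(1), which asserts that every $\tau\in\Ttp$ is even-dimensional and has trivial determinant.

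For the first assertion, $\triv$ is one-dimensional, which is odd, so it cannot belong to $\Ttp$ by the even-dimensionality clause of \ref{tauthetapprop}(1).

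For the second assertion, $\triv\oplus\epsilon$ has dimension $2$, so even-dimensionality does not rule it out. However, its determinant is the product of the constituent characters, namely $\triv\cdot\epsilon=\epsilon$. Since by hypothesis $\epsilon\not\iso\triv$, the determinant of $\triv\oplus\epsilon$ is non-trivial, so again \ref{tauthetapprop}(1) forces $\triv\oplus\epsilon\notin\Ttp$.

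There is no real obstacle here; the only subtlety is to observe that the determinant of a direct sum is the product of determinants, so a two-dimensional representation of the form $\chi_1\oplus\chi_2$ with $\chi_i$ one-dimensional has trivial determinant if and only if $\chi_2\iso\chi_1^{-1}$, which for the pair $(\triv,\epsilon)$ amounts to $\epsilon\iso\triv$. Hence the corollary.
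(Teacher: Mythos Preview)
Your proof is correct and is exactly the intended argument: the corollary is stated immediately after Theorem~\ref{tauthetapprop} with no separate proof in the paper, so it is meant to follow directly from part~(1) just as you derive it.
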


\begin{remark}\label{intrinsic}
In view of Theorems \ref{squality} and \ref{tamroot} we may call
$\Tau_p=\bigcup_{\Theta} \Ttp$ the space of ``$p$-computable'' twists.
This set of representations is canonically associated to a finite group
$G$ and a prime number $p$. It
behaves well under restriction and induction, and is
closed under direct sums and
tensor product with permutation representations (since
 $\tau\tensor \Ind_H^G\triv =
\Ind_H^G(\Res_H\tau)$ lies in~$\Tau_{\Ind_H^G\!\Res_H\!\Theta,p}$).
It would be very nice to have an intrinsic description~of~$\Tau_p$.
\end{remark}

\section{Root numbers and Tamagawa numbers}
\label{sloccompat}

The aim of this section is to establish the following statement
about compatibility of local root numbers and local Tamagawa numbers.
The proof will occupy all of \S\S\ref{sscompatnot}--\ref{ss-case4}.
But first, we will explain how together with Theorem \ref{squality}
it implies Theorem \ref{tamroot}, the central result of this paper
on the $p$-parity conjecture.
In fact, we expect the theorem below to hold for all
principally polarised abelian varieties,
and this would imply that the restrictions on the reduction of $A$ in
Theorem \ref{tamroot} could be removed.

\begin{notation}
Let $K$ be a local field of characteristic zero, $F/K$ a Galois
extension, and $A/K$ an abelian variety.
For $H\<\Gal(F/K)$ write (cf. \S\ref{ssnotation})
$$
  C_v(H) = C_v(A/F^H) = C_v(A/F^H,\neron{})
$$
for any exterior form $\neron{}$ on $A/K$, minimal if $K$ is non-Archimedean.
(We insist on minimality only for convenience: Theorem \ref{loccompat}
below holds for any choice of $\omega$
because \hbox{$C_v(\cdot,\omega)\sim C_v(\cdot,\neron{})$},
cf. proof of Corollary \ref{propglobcompat}.)
\end{notation}

\begin{theorem}[Existence of $\TV$]
\label{loccompat}
Let $K$ be a local field of characteristic zero, $F/K$ a Galois extension
with Galois group $D$ and $A/K$ a principally polarised abelian
variety. Assume that either
\begin{itemize}
\item[(1)]
$D$ is cyclic,
\item[(2)]
$A=E$ is an elliptic curve with semistable reduction,
\item[(3)]
$A=E$ is an elliptic curve with additive reduction and
$K$ has residue characteristic $l>3$, or
\item[(4)]
$A/K$ has semistable reduction.
\end{itemize}
Then there is a $\Q D$-module $\TV $ such that
\begin{itemize}
\item[\eps]
$\tfrac{w(A,\tau)}{w(\tau)^{2\dim A}}=(-1)^{\blangle\tau,\TV \brangle}$
      for all self-dual representations $\tau$ of $D$, and
\item[\tam] $C_v \sim \D_\TV$ as functions on the Burnside ring of $D$.
      Equivalently, for every $D$-relation $\Theta=\sum_i n_i H_i$,
$$
  \prod\nolimits_i C_v(A/F^{H_i})^{n_i} \equiv
  \RC_\Theta(\TV )\mod \Q^{\times2}.
$$
\end{itemize}
In the following exceptional subcase of (4) we only claim \tam{} up to
multiples of 2:
\begin{itemize}
\item[(4ex)] $A/K$ is semistable, $K$ has residue characteristic 2,
the wild inertia group of $F/K$ is non-cyclic and $A/K$ does
not acquire split semistable reduction over any odd degree extension.
\end{itemize}
\end{theorem}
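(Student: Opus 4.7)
\medskip

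\noindent\textbf{Proof plan for Theorem \ref{loccompat}.}

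The plan is to handle the four cases separately, in each instance writing down an explicit candidate $\TV$ attached to the reduction of $A$ and then verifying (Root) and (Tam) with the machinery of \S\ref{smachinery}. Case (1) is essentially free: since a cyclic group has no non-trivial relations (the Exercise following Definition \ref{grelation}), property (Tam) is vacuous for any $\TV$, so I only have to exhibit a virtual $\Q D$-module whose character encodes $w(A,\tau)/w(\tau)^{2\dim A}$ on self-dual one-dimensional characters of $D$; this is a finite check on each of the two self-dual characters of each cyclic $D$. Similarly, if $A$ has good reduction, $w(A,\tau)=w(\tau)^{2\dim A}$ and the local term $C_v$ is a unit at every layer, so $\TV=0$ works. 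This disposes of the ``good'' subcases and reduces the remaining work to genuinely bad reduction.

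For cases (2) and (4), the candidate is $\TV = X(\T^*)\tensor\Q$, the rational character group of the toric part of the N\'eron reduction of the dual abelian variety, as previewed in \S\ref{sssquality}. Property (Root) then amounts to the known explicit formula for root numbers of semistable abelian varieties in terms of the toric and unipotent parts of the reduction; I would extract this from Rohrlich's treatment of local constants together with the compatibility $w(A^*)=w(A)$ for a principally polarised $A$. Property (Tam) is the real content: I need to show that $H\mapsto C_v(A/F^H)$ and $H\mapsto \D_{\TV}(H)=\det(\tfrac1{|H|}\lara|X(\T^*)^H\!\tensor\Q)$ agree up to $\sim$. The plan is to split $C_v$ as the Tamagawa number $c_w$ times the period ratio $|\omega/\neron{}|_{F^H_w}$, handle the period factor via the localisation and transitivity properties (Theorem \ref{l2:}, Corollary \ref{GoverD}) which produce terms of shape $\tGDI{D,I,ef}$, and then identify the remaining Tamagawa factor $c_w$ with $\D_{\TV}$ by the Grothendieck--Raynaud formula for $c_w$ (in the semistable case $c_w = [X(\T^*)^{I_w} : (1-\Frob_w)X(\T^*)^{I_w}]$ up to sign contributions, whose absolute value has the same interpretation as the determinant of the pairing on $X(\T^*)^{I_w}$). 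The key technical step is matching the exponents prime by prime across the Burnside ring; for this the refined $p$-descent Lemma \ref{ladvp} and the localisation of Theorem \ref{l2:}($\ell$) let me reduce to cyclic inertia, where everything can be computed explicitly.

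Case (3), elliptic curves with additive reduction in residue characteristic $\ell>3$, proceeds by the Kodaira classification. Since $\ell>3$, any such $E$ acquires good reduction over a tame totally ramified extension of degree $e\in\{2,3,4,6\}$, so the action of the inertia subgroup of $D$ on $H^1_\ell(E)$ factors through a cyclic group of order $e$. The plan is to take for $\TV$ the two-dimensional character of the tame inertia quotient that describes this action (twisted by an unramified quadratic character if the Frobenius eigenvalues demand it), and to verify (Root) via the standard Rohrlich--Deligne formula for root numbers of tamely ramified two-dimensional representations. Property (Tam) is then a finite case analysis over the Kodaira type together with the same $\tGDI{D,I,\psi}$ calculus used in case (4); the low ramification degree makes this entirely explicit.

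The main obstacle, and what I expect to be the hardest step, is the exceptional case (4ex): residue characteristic $2$, non-cyclic wild inertia, and no odd degree extension making $A$ split semistable. The period contribution $|\omega/\neron{}|$ in the $2$-Sylow behaves badly, and the component group of the dual abelian variety can pick up $2$-torsion from the quadratic obstruction to the polarisation being $K$-rational. My plan there is to concede the $2$-part, reducing (Tam) to a statement modulo squares away from $2$; this is achieved by applying Theorem \ref{l2:}(r) and (d) to strip off the wild $2$-contribution, and then the previous semistable argument goes through. The rest of the matching at odd primes comes from the same machinery, which is why the claim is stated ``up to multiples of $2$'' in this subcase.
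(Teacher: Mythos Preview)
Your overall architecture matches the paper: case-by-case, exhibit an explicit $\TV$, verify (Root) from known root number formulae and (Tam) via the $\tGDI{\cdots}$ calculus. Case (1) and the good-reduction subcases are indeed trivial for the reasons you give, and for (2) and (4) the candidate $\TV = X(\T^*)\otimes\Q$ is the right one (up to a copy of $\Q[D]$ when a sign $\lambda=-1$ appears in the root number formula). But there are genuine gaps in cases (3) and (4).

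\textbf{Case (3) is much harder than you indicate, and your candidate $\TV$ does not work.} First, you omit the potentially multiplicative subcase (3M), where $E$ never acquires good reduction. Second, the two-dimensional $\ell$-adic representation on $H^1_\ell(E)$ is a Weil--Deligne representation, not a $\Q D$-module: Frobenius has Weil-number eigenvalues and does not factor through the finite group $D$. The paper instead uses (Table~\ref{roottable}) $V=0$ in the cyclic potentially-good subcase (3C), $V=\triv\oplus\eta\oplus\sigma$ (a four-dimensional representation of the dihedral quotient $\Di_{2\e}$) in (3D), and $V=\chi$ in (3M), each supplemented by $\Q[D]$ when $\lambda=-1$. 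Third, and most importantly, ``low ramification degree makes this entirely explicit'' is misleading: the extension $L/K$ over which $E$ becomes semistable is small, but $F/K$ is arbitrary, with potentially large wild inertia at $l$, and (Tam) must hold for \emph{all} $D$-relations. The paper calls this ``the truly painful case'' and spends its longest subsection on it: a reduction to $2$-power residue degree (Lemma~\ref{DI2}), a separate analysis of $c_v\sim a$ and $\omega\sim d$ (Propositions~\ref{propcva}--\ref{omega1.5}), explicit tables (\ref{dihtable},~\ref{c6c2table}) for dihedral and $\Cy_6\times\Cy_2$ subquotients, and for $\e=4$ an argument that temporarily works in $\R_{>0}$ rather than $\Q^\times/\Q^{\times 2}$.

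\textbf{In case (4), two details are off.} Your formula $c_w=[X(\T^*)^{I_w}:(1-\Frob_w)X(\T^*)^{I_w}]$ is not the right one: the Tamagawa number is $|(\coker N)^{\Fr}|$ where $N:X(\T^*)\hookrightarrow\Hom(X(\T),\Z)$ is induced by the monodromy pairing (\cite{Gro}~Thm.~11.5); your expression ignores $N$ entirely. And your diagnosis of (4ex) is incorrect: there is no period contribution in the semistable case, since the N\'eron model commutes with base change and the minimal differential stays minimal ($C_v=c_v$). The actual obstruction is purely module-theoretic, about the function $\phi(e,f)=|(\coker eN)^{\Fr^f}|\,e^{-\dim V^{\Fr^f}}$: the paper proves $\phi(2^ke,f)=\phi(2^k,f)$ for odd $e$ but cannot establish $\phi(4,1)=\phi(2,1)$, and without this the argument does not go through for non-cyclic wild $2$-inertia (see the Remark closing \S\ref{ss-case4}).
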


In the setting of the theorem,
let $\Theta$ be a $D$-relation and $p$ a prime number,
odd in case (4ex). For any $\tau\in\Ttp$,
we obtain a chain of equalities:
\beq
  \tfrac{w(A/K,\tau)}{w(\tau)^{2\dim A}}
  \!\!\! &{\tbuildrel\eps\over=}\!\!\!&
    (-1)^{\blangle\tau, \TV \brangle}\!\!\! &=&
    (-1)^{\ord_p\RC_\Theta(\TV \tensor\Q_p)}\!\!\! &{\tbuildrel\tam\over=}\!\!\!&
    (-1)^{\ord_p C_{v}(\Theta)}
\eeq
(note that $C_v(\Theta)\in\Q^{\times}/\Q^{\times2}$ even for $K=\R,\C$
by property \tam, so $\ord_p C_{v}(\Theta)$ makes sense).
By the determinant formula  
$w(\tau)^2=1$,
as $\tau$ is self-dual and has trivial determinant
(Theorem \ref{tauthetapprop}(1), Lemma \ref{detformula}).
Thus,

\begin{corollary}[Local compatibility]
\label{proploccompat}
Suppose $F/K$ and $A/K$ are as in Theorem \ref{loccompat}.
Let $\Theta$ be a $D$-relation and $p$ a prime number, odd
in case~{\rm (4ex)}. Then for every $\tau\in\Ttp$,
$$
  w(A/K,\tau) = (-1)^{\ord_p C_v(\Theta)}.
$$
\end{corollary}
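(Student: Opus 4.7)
The corollary is essentially a formal consequence of Theorem \ref{loccompat}, the definition of $\Ttp$, and the determinant formula for local root numbers; the chain of equalities preceding the statement already sketches the argument, and the plan is to make each link rigorous.

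First I would invoke Theorem \ref{loccompat} to produce the $\Q D$-module $\TV$ satisfying both \eps{} and \tam{}, and apply \eps{} to rewrite the ratio $w(A/K,\tau)/w(\tau)^{2\dim A}$ as $(-1)^{\blangle\tau,\TV\brangle}$. The immediate task is then to eliminate the factor $w(\tau)^{2\dim A}$. This is where Theorem \ref{tauthetapprop}(1) enters: for $\tau\in\Ttp$, the dimension of $\tau$ is even and $\det\tau$ is trivial, so the determinant formula (Lemma \ref{detformula}) gives $w(\tau)^2=1$, whence $w(\tau)^{2\dim A}=1$.

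Second I would translate $(-1)^{\blangle\tau,\TV\brangle}$ into $(-1)^{\ord_p C_v(\Theta)}$ in two steps. Applying the defining property of $\Ttp$ (Definition \ref{tauthetap}) to the self-dual $\Q_p D$-module $\TV\tensor\Q_p$ yields
$$\blangle\tau,\TV\brangle \equiv \ord_p\RC_\Theta(\TV\tensor\Q_p) \pmod 2,$$
and the base-change invariance of regulator constants (Corollary \ref{RCproperties}) identifies the right-hand side with $\ord_p\RC_\Theta(\TV)$. Property \tam{} of Theorem \ref{loccompat} then gives $\RC_\Theta(\TV)\equiv C_v(\Theta)\pmod{\Q^{\times 2}}$, forcing $\ord_p\RC_\Theta(\TV)\equiv \ord_p C_v(\Theta)\pmod 2$. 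Note that \tam{} implicitly guarantees $C_v(\Theta)\in\Q^\times/\Q^{\times 2}$ even in the Archimedean case, so the $p$-adic valuation is well defined modulo squares.

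There is no genuine obstacle in the corollary itself: all the substance has been relocated into Theorem \ref{loccompat} (whose proof occupies the remainder of \S\ref{sloccompat}), into the group-theoretic machinery of regulator constants and $\Ttp$ built up in \S\ref{smachinery}, and into the standard determinant formula for epsilon factors. The corollary is a bookkeeping statement whose purpose is to package these ingredients into the precise local parity assertion required to deduce Theorem \ref{tamroot} from Theorem \ref{squality}.
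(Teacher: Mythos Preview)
Your proposal is correct and follows the same argument as the paper: the chain of equalities preceding the corollary is precisely the proof, and you have filled in the justifications for each step (property \eps{}, the definition of $\Ttp$, property \tam{}, and the determinant formula combined with Theorem \ref{tauthetapprop}(1)) exactly as the paper does.
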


Now let us deduce Theorem \ref{tamroot}.
Suppose $F/K$ is a Galois extension of {\em number fields},
$A/K$ an abelian variety and $v$ a place of $K$. Fix a
non-zero regular
exterior form $\omega$ on $A/K$, and define functions
on the Burnside ring of $\Gal(F/K)$ by
$$
  C_{w|v}:\>H \mapsto \prod_{w|v} C_w(A/F^H,\omega)\quad\qquad
  C:\>H\mapsto C_{A/F^H} \>\>\bigl(=\prod_v C_{w|v}(H)\bigr)\>,
$$
the first product taken over the places of $F^H$ above $v$.

\begin{corollary}
\label{propglobcompat}
Let $F/K$ be a Galois extension of number fields,
$A/K$ an abelian variety, and fix
a place $z$ of $F$ above
a place $v$ of $K$.
Suppose $A/K_v$, $F_z/K_v$
satisfy the assumptions of Theorem \ref{loccompat}, and let
$p$ be a prime number, odd in case {\rm(4ex)}.
Then for every $\Gal(F/K)$-relation $\Theta$ and $\tau\in\Ttp$,
$$
  w(A/K_v,\Res_{\Gal(F_z/K_v)}\tau) = (-1)^{\ord_p C_{w|v}(\Theta)}.
$$
If the assumptions hold at all places $v$ of $K$, then
$$
  w(A/K,\tau) = (-1)^{\ord_p C(\Theta)}.
$$
\end{corollary}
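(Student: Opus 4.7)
\emph{Strategy.} Fix the decomposition group $D = \Gal(F_z/K_v) \subseteq G = \Gal(F/K)$ at the chosen place $z \mid v$. The plan is to deduce both claims from the local compatibility statement (Corollary~\ref{proploccompat}) applied to the extension $F_z/K_v$, the $D$-relation $\Res_D \Theta$, and the representation $\Res_D \tau$. By Theorem~\ref{l:}(res), $\Res_D \Theta$ is a $D$-relation, and by Theorem~\ref{tauthetapprop}(4), $\Res_D \tau \in \Tau_{\Res_D \Theta, p}$. Since by assumption $A/K_v$ and $F_z/K_v$ satisfy the hypotheses of Theorem~\ref{loccompat} (with $p$ odd in case~(4ex)), Corollary~\ref{proploccompat} gives
$$
w(A/K_v, \Res_D \tau) \;=\; (-1)^{\ord_p \prod_i C_v(A/F_z^{\Res_D H_i})^{n_i}},
$$
where $\Theta = \sum_i n_i H_i$.

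\emph{Semilocal identification.} For each $i$, Mackey's formula writes $\Res_D H_i$ as a sum of subgroups indexed by the double cosets $H_i \backslash G / D$, and these are in canonical bijection with the places $w$ of $F^{H_i}$ above $v$; under this bijection $(F^{H_i})_w$ is the fixed field $F_z^{D \cap H_i^{x^{-1}}}$. Consequently the product above equals, up to the discrepancy between the global and local choices of exterior differential, precisely
$$
C_{w|v}(\Theta) \;=\; \prod_i \prod_{w \mid v \text{ in } F^{H_i}} C_w(A/F^{H_i}_w, \omega)^{n_i}.
$$
The discrepancy is $\prod_{i,w} |\omega/\omega^o_w|_{F^{H_i}_w}^{n_i}$; since the exponent $v_w(\omega/\omega^o_w)$ depends only on local ramification data of $(F^{H_i})_w / K_v$, the entire correction is a $D$-local function of the shape $\tGDI{D,I,\psi}$ for a $\psi$ meeting the cancellation criteria of Theorem~\ref{l2:}. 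Hence it is trivial on the $G$-relation $\Theta$ modulo $\Q^{\times 2}$, so the two $\ord_p$ values agree mod 2 and the first assertion follows.

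\emph{Passage to the global statement.} For the second claim, multiply the first identity over all places $v$ of $K$. By Artin formalism for global root numbers, $w(A/K, \tau) = \prod_v w(A/K_v, \Res_{D_v} \tau)$; and by definition $C(\Theta) = \prod_v C_{w|v}(\Theta)$. Only the finitely many places of bad reduction of $A$ and of ramification in $F/K$ produce nontrivial contributions, so the product is well-defined.

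\emph{Main obstacle.} The genuine content is packaged into Theorem~\ref{loccompat} and its Corollary~\ref{proploccompat}; once these are granted, the present deduction is essentially formal. The only delicate technical point is the verification that the ratio between the fixed global exterior form and the local N\'eron differentials $\omega^o_w$ is representation-theoretic modulo squares, which is a short application of the function calculus of \S\ref{ssfunctions} --- exactly in the spirit of Example~\ref{exlocfun} and Examples~\ref{vovian}--\ref{exWWe}.
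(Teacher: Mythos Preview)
Your argument is essentially the paper's: restrict $\Theta$ and $\tau$ to the decomposition group $D$ via Theorem~\ref{tauthetapprop}(4), invoke Corollary~\ref{proploccompat}, identify $C_v(\Res_D\Theta)$ with $C_{w|v}(\Theta)$ through the semilocal description $C_{w|v}=\tGDI{D,C_v}$, and take the product over all places. One small imprecision: the discrepancy between the two is not $\prod|\omega/\omega^o_w|$ (the N\'eron differentials $\omega^o_w$ at each $w$ appear in \emph{both} sides and cancel), but $\prod_w|\alpha|_{F^{H_i}_w}$ where $\alpha=\omega/\omega^o_{K_v}\in K_v^\times$ compares the global form to a form minimal at~$v$; this is a pure function of $f$ and the paper dispatches it directly with Theorem~\ref{l2:}(f), which is cleaner than the vaguer appeal you make.
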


\begin{proof}
Write $D=\Gal(F_z/K_v)\<\Gal(F/K)$ for the decomposition group of~$z$,
and $I$ for its inertia subgroup.
First note that $C_{w|v}(\Theta)$ is independent of the choice of the
exterior form $\omega$: if $\omega=\alpha\omega'$, then
$$
 \frac{\prod_{w|v} C_w(A/F^H,\omega)}{\prod_{w|v} C_w(A/F^H,\omega')} =
 \prod_{w|v} |\alpha|_{F^H_w} 
 = \GDI{D,I,|\alpha|_{K_v}^f}(H)\>,
$$
and this function is trivial on $\Theta$ by Theorem \ref{l2:}f.
So we may assume that $\omega$ is minimal at $v$.

Now
$\Res_D\tau\in\Tau_{\Res_D\Theta,p}$ by Theorem \ref{tauthetapprop}(4),
so
\beq
w(A/K_v,\Res_D\tau)\!\! &=\!\!\!& (-1)^{\ord_p C_v(\Res_D\Theta)} &(\text{Corollary \ref{proploccompat}})\cr
&=\!\!\!&(-1)^{\ord_p C_{w|v}(\Theta)} &(\text{as } C_{w|v}\!=\!\tGDI{D,C_v}, \text{cf. \ref{gdidef}, \ref{exlocfun}}).
\eeq
For the last claim, take the product over all places.
\end{proof}

\begin{proof}[Proof of Theorem \ref{tamroot}]

The abelian variety $A/K$
satisfies the hypothesis of Corollary \ref{propglobcompat}
at all places of $K$. So
for every $\tau\in\Ttp$
$$
  w(A/K,\tau)
  \>\>{\tbuildrel\ref{propglobcompat}\over=}\>\>
  (-1)^{\ord_p C(\Theta)}
  \>\>{\tbuildrel\ref{squality}\over=}\>\>
  (-1)^{\blangle\tau,\Xp(A/F)\brangle}.
$$
\end{proof}

\subsection{Setup}
\label{sscompatnot}

In the remainder of \S\ref{sloccompat} we prove Theorem \ref{loccompat}.
Let $A/K$ and $F/K$ be as in the theorem, in particular $K$ is again
{\em local}.
We split cases (1)-(3) into subcases
and define an extension $L$ of $K$ as follows:

\begin{notation}
\label{hypFK}
\noindent\begingroup
\begin{itemize}\itemindent -6pt
\item[(1)] $D$ is cyclic.
\begin{itemize}
  \item[(1-)] $|D|$ is odd; $L=K$.
  \item[(1+)] $|D|$ is even; $L$ is the
        unique quadratic extension of $K$ inside $F$.
\end{itemize}
\item[(2)] $A=E$ is an elliptic curve with semistable reduction.
\begin{itemize}
  \item[(2G)] $E$ has good reduction; $L=K$.
  \item[(2S)] $E$ has split multiplicative reduction; $L=K$.
  \item[(2NS)] $E$ has non-split multiplicative reduction;
                       $L/K$ is quadratic unramified.
\end{itemize}
\item[(3)] $A=E$ is an elliptic curve with additive reduction,
           $K$ has residue characteristic $l>3$.
           Write $\Delta_E$ and $c_6$ for the standard invariants
           of some model of $E/K$ and $\e=\frac{12}{\gcd(12,\ord\Delta_E)}$.
\begin{itemize}
  \item[(3C)] $E$ has potentially good reduction, $\mu_\e\subset K$; \\
       $L=K(\sqrt[\e]{\Delta_E}),$ a cyclic extension of $K$.
  \item[(3D)] $E$ has potentially good reduction, $\mu_\e\notsubset K$;\\
       $L=K(\mu_\e,\sqrt[\e]{\Delta_E})$, a dihedral extension of $K$.
  \item[(3M)] $E$ has potentially multiplicative reduction;
        $L=K(\sqrt{-c_6})$.
\end{itemize}
\item[(4)] $A/K$ has semistable reduction; $L$ is the smallest unramified extension of $K$ where
$A$ acquires split semistable reduction.
\end{itemize}
\end{notation}\endgroup

\noindent
We remind the reader that (4) has a subcase (4ex), see Theorem \ref{loccompat}.
Note that (1) includes Archimedean places, and
in (2)-(4) $L$ is a minimal Galois extension of $K$ where
$A$ acquires split semistable reduction (cf. Lemma~\ref{lemcv}).

In view of Lemma \ref{containL} below, we may and will henceforth assume

\begin{hypothesis}
$F$ contains $L$.
\end{hypothesis}

\begin{notation}
Henceforth write
\beq
D &= & \Gal(F/K),\cr
D'&= & \Gal(F/L)\normal D,\cr
I &= & \text{Inertia subgroup of $D$},\cr
W &= & \text{Wild inertia subgroup of $I$}.\cr
\eeq
We work extensively with
functions from the Burnside ring of~$D$ to~$\Q^{\times}/\Q^{\times 2}$.
For brevity, $\tGDI{\cdots}$
stands for $\tGDI{\cdots}_D$ in \S\S\ref{ss-case1}--\ref{ss-case4}
(see Definitions \ref{gdidef}, \ref{gdidef2}).
\end{notation}

\begin{lemma}
\label{containL}
Suppose $\TV $ is a $\Q G$-module and $M/K$ a Galois extension
contained in $F$.
If $\TV $ satisfies \eps{} and the $p$-part of \tam{} of Theorem~\ref{loccompat},
then $\W=\TV ^{\Gal(F/M)}$ satisfies the same conditions for the extension~$M/K$.
\end{lemma}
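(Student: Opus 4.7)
The plan is to pull both conditions back along the quotient $D\twoheadrightarrow D/N$ where $N=\Gal(F/M)$, noting that $\W=\TV^N$ is naturally a $\Q[D/N]$-module and $D/N=\Gal(M/K)$ is the Galois group of the smaller extension.

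First I would verify \eps. Any self-dual representation $\tau$ of $\Gal(M/K)$ inflates to a self-dual $D$-representation $\tilde\tau$; since twisted $L$-functions depend only on the underlying representation of the absolute Galois group, $w(A/K,\tau)=w(A/K,\tilde\tau)$ and $w(\tau)=w(\tilde\tau)$. Because $\tilde\tau$ is trivial on $N$, every $D$-equivariant map $\tilde\tau\to\TV$ factors through $\TV^N=\W$, giving $\blangle\tilde\tau,\TV\brangle_D=\blangle\tau,\W\brangle_{D/N}$. So \eps{} for $\TV$ applied to $\tilde\tau$ reads exactly as \eps{} for $\W$ applied to $\tau$.

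Next I would verify \tam. Given a $D/N$-relation $\Theta=\sum_i n_i H_i$, lift it by taking preimages in $D$ to obtain a $D$-relation $\tilde\Theta=\sum_i n_i\tilde H_i$ with $\tilde H_i\supset N$; this is a $D$-relation by Theorem \ref{l:}\eqref{l:lif}. Since $F^{\tilde H_i}=M^{H_i}$, the Tamagawa products match: $\prod_i C_v(A/M^{H_i})^{n_i}=\prod_i C_v(A/F^{\tilde H_i})^{n_i}$. On the regulator side, $\tilde H_i\supset N$ forces $\TV^{\tilde H_i}=\W^{H_i}$, and the determinants $\det(\tfrac1{|\tilde H_i|}\lara\mid\TV^{\tilde H_i})$ and $\det(\tfrac1{|H_i|}\lara\mid\W^{H_i})$ differ only by the scaling factor $|N|^{-\dim\W^{H_i}}$. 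Taking the $n_i$-weighted product, the total $|N|$-contribution is $|N|^{-\sum_i n_i\dim\W^{H_i}}=1$ by Lemma \ref{dimsaddup} applied to the $D/N$-relation $\Theta$ and the $D/N$-representation $\W$. Hence $\RC_{\tilde\Theta}(\TV)=\RC_\Theta(\W)$ in $\Q^\times/\Q^{\times 2}$, matching the general identity of Proposition \ref{proprc}(1), and \tam{} for $\TV,\tilde\Theta$ descends to \tam{} for $\W,\Theta$. The $p$-part qualifier in case (4ex) is preserved verbatim by the same argument.

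There is no substantive obstacle here: the lemma is essentially the observation that both \eps{} and \tam{} behave well under inflation and invariants, with the only non-trivial point being that the naive mismatch of factors $|N|$ in the regulator constants cancels by the dimension identity for relations. Its purpose in the paper is purely reductive, letting us impose the convenient Hypothesis $F\supset L$ throughout the ensuing case-by-case analysis.
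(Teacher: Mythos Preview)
Your proof is correct and follows essentially the same approach as the paper: inflate $\tau$ for \eps, lift the relation for \tam, and compare. The only minor difference is in the regulator constant step: you compute $\RC_{\tilde\Theta}(\TV)=\RC_\Theta(\W)$ directly from $\TV^{\tilde H_i}=\W^{H_i}$ and the cancellation of the $|N|$-factors via Lemma~\ref{dimsaddup} (i.e.\ you reprove Proposition~\ref{proprc}(1)), whereas the paper instead writes $\TV=\W\oplus(\TV\ominus\W)$ and invokes Lemma~\ref{mustenter} to kill $\RC_{\tilde\Theta}(\TV\ominus\W)$. Both routes are equally short and valid.
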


\begin{proof}
The irreducible constituents of $\W$ are precisely those of $\TV $
that factor through $\Gal(M/K)$, so $\W$ clearly satisfies \eps.
If $\Theta$ is a $\Gal(M/K)$-relation and $\tilde\Theta$ is its lift to $G$
(as in Theorem \ref{l:}\eqref{l:lif}),
then $\RC_{\tilde\Theta}(\TV \ominus \W)=1$ by Lemma~\ref{mustenter}.
So $\W$ satisfies \tam.
\end{proof}

The choice of $\TV $ is forced by formulae for the local
root numbers $w(A/K,\tau)$.
For a self-dual representation $\tau$ of $\Gal(F/K)$, we claim that
$$
  w(A/K,\tau) = w(\tau)^{2\dim A}\,\lambda^{\dim \tau}\,
    (-1)^{\blangle\tau,V\brangle}
$$
with $\lambda=\pm1$ and the representation $V$ of $D/D'$ given in Table \ref{roottable}.
Here \linebreak \noindent \vskip-13pt
\stepcounter{equation}
\begin{table}[h]
$$
\begin{array}{|c|c|c|c|}
\hline
\text{Case} & D/D' & \lambda   & V   \cr
\hline
\vphantom{\int^X}
\text{(1-)}    & 1     & w(A/K)  & 0 \cr
\vphantom{a_{\int_X}}
\text{(1+)}    & \Cy_2 & w(A/K)  & \chi^{\oplus b} \cr
\noalign{\hbox to 221pt{\leaders\hbox to 0.3em {\hss\smash{\tiny.}\hss}\hfill}}
\vphantom{\int^X}
\text{(2G)}    & 1      & 1                          & 0           \cr
\text{(2S)}    & 1      & 1                          & \triv       \cr
\vphantom{a_{\int_X}}
\text{(2NS)}   & \Cy_2  & 1                          & \eta        \cr
\noalign{\hbox to 221pt{\leaders\hbox to 0.3em {\hss\smash{\tiny.}\hss}\hfill}}
\vphantom{\int^X}
\text{(3C)}    & \Cy_2, \Cy_3, \Cy_4, \Cy_6 & \epsilon & 0 \vphantom{\int^{X}}   \cr
\text{(3D)}    & \Di_6, \Di_8, \Di_{12}   & -\epsilon & \triv\oplus\eta\oplus\sigma\cr
\vphantom{a_{\int_X}}
\text{(3M)}    & \Cy_2                    & w(\chi)^2  & \chi  \cr
\noalign{\hbox to 221pt{\leaders\hbox to 0.3em {\hss\smash{\tiny.}\hss}\hfill}}
\vphantom{\int^X}
\text{(4)}     & \text{cyclic}            & 1 & X(\T^*)\!\tensor\!\Q  \cr
\hline
\end{array}
$$
\caption{Root numbers}
\label{roottable}
\end{table}

\vskip-10pt \noindent
$\triv, \chi, \eta$ and $\sigma$ are the trivial character,
the non-trivial character of order 2, the unramified quadratic character
and the unique faithful 2-dimensional representation of $D/D'$.
The exponent $b$ is defined by $(-1)^b=\wAL$,
$\epsilon$ is as in \cite{RohG} Thm. 2 (we will not need it explicitly)
and $X(\T^*)$ is the character group of the torus in the Raynaud
parametrisation of the dual abelian variety $A^t$, see \S\ref{ss-case4}.

Granting the claim, define
$$
  \TV = V \>\>\oplus\>\>\left\{
  \begin{array}{lll}
     0, & \lambda = 1,\cr
     \Q[D], & \lambda = -1.\cr
  \end{array}
  \right.
$$
By Frobenius reciprocity, $\blangle\tau,\Q[D]\brangle=\dim\tau$,
so $\TV$ satisfies property \eps{} of Theorem \ref{loccompat}.
Moreover, $\D_{\Q[D]}\sim 1$ by Lemma \ref{regreg}, so $\D_\TV\sim\D_V$.

It remains to prove the following

\begin{proposition}
\label{propcvdv}
In each of the cases (1)--(4) and $V, \lambda$ as in Table \ref{roottable},
we have $\D_V\sim C_v$ (up to multiples of 2 in case {\rm(4ex)})
and
$$
  w(A/K,\tau) = w(\tau)^{2\dim A}\,\lambda^{\dim \tau}\,
    (-1)^{\blangle\tau,V\brangle}
$$
for every self-dual representation $\tau$ of $\Gal(F/K)$.
\end{proposition}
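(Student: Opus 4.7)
The plan is to proceed case-by-case through the subcases (1-), (1+), (2G), (2S), (2NS), (3C), (3D), (3M), (4) of Notation~\ref{hypFK}. In each one there are two essentially independent tasks: first, derive the stated root number formula and read off $V$ and $\lambda$; second, verify the compatibility $C_v \sim \D_V$ on the Burnside ring of $D$ (up to $2$-parts in case~(4ex)). Throughout, I would work entirely with the functional calculus of \S\ref{smachinery}, so that the comparison never involves more than a small number of explicit subgroups and reductions to known relations.

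For the root number formula, the approach is to match known explicit expressions for $w(A/K,\tau)$ against the factorization $w(\tau)^{2\dim A}\,\lambda^{\dim\tau}\,(-1)^{\blangle\tau,V\brangle}$. Case~(1) follows from Deligne's inductivity of $\epsilon$-factors and the fact that every self-dual $\tau$ of a cyclic $D/D'$ is either trivial or contains the quadratic character $\chi$; here the identification of the exponent $b$ in (1+) is literally built into the definition $(-1)^b = \wAL$. Case~(2) uses the known formulas for twists of semistable elliptic curves: for good reduction one has $w(E/K,\tau) = w(\tau)^2$; the split and non-split multiplicative cases produce an extra $(-1)^{\blangle\tau,\triv\brangle}$ and $(-1)^{\blangle\tau,\eta\brangle}$ respectively. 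For case~(3) I would invoke Rohrlich's tables (\cite{RohG}~Thm.~2) for additive reduction in residue characteristic $l>3$, matching the explicit epsilon-factor against the factorization with $V \in \{0,\,\triv\oplus\eta\oplus\sigma,\,\chi\}$. For case~(4), the input is the description of the local root number of a semistable abelian variety in terms of the Raynaud parametrization, with the toric part of the reduction of $A^t$ providing the character lattice $X(\T^*)$.

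For the compatibility $C_v \sim \D_V$, the plan is to express both functions in the local form $\tGDI{D,I,\psi}$ of Definition~\ref{gdidef2} and apply the manipulations of Theorem~\ref{l2:}. Case~(1) is automatic since cyclic groups have no non-trivial relations. In case~(2), the local Tamagawa number of $\In{n}$ reduction over an intermediate field $F^H$ is a simple function of the ramification and residue degrees and of whether split reduction is attained, which is exactly a $\tGDI{D,I,\psi(e,f)}$-type function; $\D_V$ for $V \in \{0,\triv,\eta\}$ is computed directly from Example~\ref{regperm}. Cases~(3C) and (3M) reduce to the small quotients $D/D' \in \{\Cy_2,\Cy_3,\Cy_4,\Cy_6\}$, while (3D) requires the regulator constant of the $2$-dimensional dihedral representation $\sigma$, computed as in Example~\ref{exdihreg}. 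For case~(4), the crucial input is Grothendieck's monodromy pairing: over every intermediate extension, the orders of the component groups of the N\'eron model are expressed in terms of the pairing on the $H$-invariants $X(\T^*)^H$, and this expression matches $\D_{X(\T^*)}(H)$ modulo $\Q^{\times 2}$.

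The main obstacle is case~(4). Unlike cases (2)--(3), here $V = X(\T^*)\otimes\Q$ can have arbitrary rank, so $C_v \sim \D_V$ cannot be checked by a finite case analysis but must be deduced from the monodromy pairing together with global control on how minimal differentials and component groups transform under base change. The machinery of \S\ref{smachinery} is essential: one first rewrites the component-group contribution as a product of pairing determinants via Grothendieck duality, then uses the functional identities (f), (r), (d) of Theorem~\ref{l2:} to cancel the contributions from good-reduction places and from the unramified part of $F/L$. Subcase~(4ex) genuinely fails at $p=2$, because when residue characteristic is $2$ and wild inertia is non-cyclic the N\'eron model acquires contributions to $c_v$ that are not detected by $X(\T^*)$; this is precisely why the theorem only claims \tam{} up to $2$-parts there.
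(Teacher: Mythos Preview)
Your overall structure (case-by-case, root-number formula plus $C_v\sim\D_V$ via the $\tGDI{D,I,\psi}$ calculus) matches the paper's. Case~(1) is correct as stated, and your descriptions of the root-number side in Cases~(2)--(4) are accurate enough.

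The serious gap is Case~(3). You write that (3C) and (3M) ``reduce to the small quotients $D/D'\in\{\Cy_2,\Cy_3,\Cy_4,\Cy_6\}$'' and that (3D) only needs the regulator constant of $\sigma$. That is precisely what does \emph{not} work, and the paper flags it explicitly (Remark~\ref{3pictures}): while $\D_V$ is lifted from $\Gal(L/K)=D/D'$, the Tamagawa number $c_v$ is \emph{not}, so you cannot apply Theorem~\ref{l2:}(q) to descend to the dihedral quotient. The paper's route is substantially harder: split $\D_V=a\cdot d$ and $C_v=c_v\cdot\omega$, reduce to $f_{F/K}$ a $2$-power (Lemma~\ref{DI2}), then reduce further to $D=\Cy_\e\rtimes\Cy_{2^k}$ using Lemma~\ref{blackmagic0} (Proposition~\ref{propcva}, Step~1). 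Even then the comparison $c_v\sim a$ is done by classifying all relations of these specific groups via Examples~\ref{exoff1}--\ref{exoff2} and checking them against explicit tables (Tables~\ref{dihtable} and \ref{c6c2table}); for $\e=4$ there is an additional trick of working in $\R_{>0}$ rather than $\Q^\times/\Q^{\times2}$. The comparison $\omega\sim d$ (Propositions~\ref{omega1}, \ref{omega1.5}) is a separate calculation. None of this is visible from the quotient $D/D'$.

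A secondary gap is Case~(2NS) at residue characteristic~$2$. Your sketch says $c_v$ is ``a simple function of the ramification and residue degrees'', but when $n$ is odd and $v\mid 2$ the leftover factor $\bGDI{D,I,\smallchoice{2}{2\nmid f,\,2|e}{1}{\text{else}}}$ is not handled by (f),(r),(d) alone; the paper reduces to the wild inertia $2$-group $W$ and then passes to its Frattini quotient to finish. For Case~(4), your outline is in the right spirit, but the actual content is an analysis of the function $\phi(e,f)=|(\coker eN)^{\Fr^f}|\,e^{-\rk M^{\Fr^f}}$: the key step is constructing an \emph{alternating} pairing on $U^{Z^p}/U^Z$ from the symmetric monodromy pairing to force certain quotients to have square order (Claim~\ref{NMM}). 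You should also note that the paper identifies Case~(3), not Case~(4), as the ``truly painful'' one.
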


The proof is a case-by-case analysis and will occupy \S\ref{ss-case1}--\S\ref{ss-case4}.

\subsection{Case (1): Cyclic decomposition group}
\label{ss-case1}

\begin{lemma}
Su\smash{ppose $D=\Gal(F/K)$ is cyclic. Then}
\beq
w(A/K,\tau) = w(\tau)^{2\dim A} w(A/K)^{\dim\tau}
   &\>\>\>\text{if}&2\nmid[F\!:\!K],\cr
w(A/K,\tau) = w(\tau)^{2\dim A} w(A/K)^{\dim\tau}
   \bigl(\wAL\bigr)^{\blangle\tau,\chi\brangle}
   &\>\>\>\text{if}&2\,|\,[F\!:\!K].\cr
\eeq
Moreover, $w(A/K)$ and \smash{$\wAL$} are $\pm 1$.
\end{lemma}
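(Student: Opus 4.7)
Since $D$ is cyclic, hence abelian, every representation of $D$ splits as a direct sum of one-dimensional characters. For a self-dual $\tau$ I would group the summands as
\[
  \tau \iso a\,\triv \oplus b\,\chi \oplus \bigoplus_i n_i(\psi_i\oplus\psi_i^{-1}),
\]
where $a=\blangle\tau,\triv\brangle$, $b=\blangle\tau,\chi\brangle$ (with $b=0$ when $|D|$ is odd, since no non-trivial self-dual character of $D$ exists then), and the $\psi_i$ run over pairwise inequivalent non-self-dual characters; in particular $\dim\tau = a+b+2\sum n_i$. By multiplicativity of local root numbers under direct sums, both for $w(A/K,\cdot)$ and for $w(\cdot)$, the identity reduces to summand-by-summand claims.

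The trivial summand is immediate since $w(\triv)=1$. For $\tau=\chi$ in the even case, the claimed formula reads $w(A/K,\chi) = w(\chi)^{2\dim A}w(A/K)\wAL$, which is exactly the definition of $\wAL$ once $w(A/K)^2=1$. The crucial input is
\[
  w(A/K,\psi\oplus\psi^{-1}) = w(\psi\oplus\psi^{-1})^{2\dim A}
\]
for every non-self-dual character $\psi$. I would derive this from the standard local $\epsilon$-factor identity $\epsilon(R)\epsilon(R^*)=(\det R)(-1)\cdot\mathrm{N}\mathfrak{f}_R$, applied to the Weil--Deligne representation $R=V_\ell(A)\otimes\psi$. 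Principal polarisation of $A$ yields a symplectic self-duality $V_\ell(A)\iso V_\ell(A)^*(1)$, so $R^*\iso V_\ell(A)\otimes\psi^{-1}$ up to a Tate twist whose contribution is matched by the analogous identity for the $1$-dimensional representation $\psi$ raised to the $2\dim A$-th power. Taking ratios and passing from $\epsilon$ to the unit-norm $w$ yields the desired equality; expanding and collecting exponents, using $w(A/K)^2=1$ to kill the squared factors of $w(A/K)$, gives the two displayed formulas.

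Finally, $w(A/K)\in\{\pm1\}$ because $V_\ell(A)$ is symplectically self-dual, so its local $\epsilon$-factor is real of the correct norm; the same argument applied to $V_\ell(A)\otimes\chi$ (still self-dual since $\chi$ is quadratic) shows $w(A/K,\chi)\in\{\pm1\}$; and $w(\chi)\in\{\pm1\}$ because $\chi$ is an orthogonal character of order $2$ (the Fr\"ohlich--Queyrut-type formula $w(\chi)^2=\chi(-1)$ forces this), so $w(\chi)^{2\dim A}=1$, whence $\wAL = w(A/K)\,w(A/K,\chi)\in\{\pm1\}$. The \textbf{main obstacle} is the $\epsilon$-factor identity for the pairs $\psi\oplus\psi^{-1}$; once granted, the rest is direct-sum bookkeeping and standard facts about root numbers of self-dual representations.
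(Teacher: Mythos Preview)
Your approach is essentially the paper's: decompose the self-dual $\tau$ into copies of $\triv$, $\chi$, and pairs $\psi\oplus\psi^{-1}$, then verify the formula on each piece. The paper is terser only because it cites the determinant formula (Lemma~\ref{detformula}), which already gives $w(A/K,\rho\oplus\rho^*)=1$ for any $\rho$; you are essentially rederiving that lemma from the $\epsilon$-factor identity. After that, both arguments reduce to the tautological checks at $\tau=\triv$ and $\tau=\chi$.

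One genuine slip in your ``Moreover'' paragraph: it is \emph{not} true in general that $w(\chi)\in\{\pm1\}$ for a quadratic character $\chi$. The identity $w(\chi)^2=\chi(-1)$ only pins $w(\chi)$ down to a fourth root of unity, and indeed $w(\chi)=\pm i$ occurs (e.g.\ for ramified quadratic characters of $\Q_p$ with $p\equiv 3\pmod 4$). Fr\"ohlich--Queyrut requires trivial determinant, which $\chi$ does not have. What you actually need, and what the paper uses, is the weaker statement $w(\chi)^2\in\{\pm1\}$: then $w(\chi)^{2\dim A}=(w(\chi)^2)^{\dim A}\in\{\pm1\}$, and combined with $w(A/K),\,w(A/K,\chi)\in\{\pm1\}$ this gives $\wAL\in\{\pm1\}$. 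So your conclusion survives, but the justification needs this adjustment.
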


\pagebreak

\begin{proof}
For the last claim, local root numbers of abelian varieties are $\pm 1$,
see e.g. \cite{SabR} \S1.1. So $w(A/K)=\pm 1$, and the same holds for
the quadratic twist of $A$ by $\chi$.
By the determinant formula, 
$w(\chi)^2=\pm 1$ as well (Lemma \ref{detformula}).

By Lemma \ref{detformula},
$w(A/K,\rho\oplus\rho^*)=1$ for every representation $\rho$, so it suffices
to check the formulae for $\tau=\triv$ and for
$\tau=\chi$ when $[F:K]$ is even.
But this is clear as $w(\triv)=1$.
\end{proof}

As $D$ is cyclic and has therefore no relations, we trivially have
\hbox{$C_v\!\sim\! 1\!\sim\!\D_V$}.
This proves Proposition \ref{propcvdv} in Case (1).

\subsection{Case (2): Semistable elliptic curves}
\label{ss-case2}

The root number formula follows from \cite{RohG} Thm. 2
and the determinant formula (Lemma \ref{detformula}).

We now prove that $\D_V\sim C_v$.
Note that the differential $\omega$ remains minimal in all
extensions of $K$, so $C_v(E/F^H)=c_v(E/F^H)$ for all $H\<D$.
By Tate's algorithm (\cite{Sil2}, IV.9), in terms of $e=e_{F^H/K}$ and $f=f_{F^H/K}$
these Tamagawa numbers are:
$$
\begin{array}{c|ccc}
  \text{Reduction of $E/K$} &
  \text{Good} & \text{Split $\In{n}$} & \text{Non-split $\In{n}$} \cr
\hline
     c_v(E/F^H)
     & 1 & ne & \biggl\{\smaller[2]
     \begin{array}{ll}
       1,  & 2\nmid f,2\nmid ne,\cr
       2,  & 2\nmid f,2\,|\,ne,\cr
       ne, & 2\,|\,f.\cr
     \end{array}
     \cr
\end{array}
$$

\subsubsection{(Case 2G) $E$ has good reduction}

$\D_V=C_v=1$.

\subsubsection{(Case 2S) $E$ has split multiplicative reduction}
\label{sssplit}

If $E/K$ has type~I$_n$, 
$$
  \D_V=\D_{\Q[D/D]} \simef
  \GDI{D,I,ef} \simf \GDI{D,I,ne} =  C_v.
$$

\subsubsection{(Case 2NS) $E$ has nonsplit multiplicative reduction}
\label{ssnonsplit}

If $E/K$ has type~I$_n$, then
$$
  \D_V=\D_\eta\sim\D_{\Q[D/D']}/\D_{\Q[D/D]}
  \simef { \GDI{D',I,ef} }\,/{ \GDI{D,I,ef} }
$$
$$
  \sequala { \bGDI{D,I,\choice{ef}{2\nmid f}{(ef/2)^2}{2|f}}}\,/{ \GDI{D,I,ef} }
  = { \bGDI{D,I,\choice{1}{2\nmid f}{ef}{2|f}}}
$$
$$
  \simf { \bGDI{D,I,\choice{1}{2\nmid f}{en}{2|f}}}
  =  C_v \cdot
       { \bGDI{D,I,\choice{2}{2\nmid f, 2|ne}{1}{\text{else}}}}.
$$
It remains to show that the last factor is $\sim 1$.
If $n$ is even, it is a function of~$f$ and therefore $\sim 1$ by
Theorem \ref{l2:}f.

Now suppose $n$ is odd. Then
$$
  { \bGDI{D,I,\choice{2}{2\nmid f, 2|e}{1}{\text{else}}}}
    \sequald
  { \bGDI{I,I,\choice{2}{2|e}{1}{2\nmid e}}}
    \sequalr
  { \bGDI{I,W,\choice{2}{2|ef}{1}{2\nmid ef}}}.
$$
If $v\nmid 2$, then $W$ has odd order, so this is a function of $f$,
hence $\sim 1$ again.
Henceforth assume $v|2$, so $W$ is a 2-group and $[I:W]$ is odd. Then
$$
  { \bGDI{I,W,\choice{2}{2|ef}{1}{2\nmid ef}}}
    \sequald
  { \bGDI{W,W,\choice{2}{2|e}{1}{2\nmid e}}}
    \simf
  { \bGDI{W,W,\choice{1}{2|e}{2}{2\nmid e}}}.
$$
By Theorem \ref{l2:}$\ell$, it suffices to prove that
the function on subgroups of~$W$
$$
  \f_W: H \longmapsto \choice{1}{H\ne W}{2}{H=W}
$$
is trivial on $W$-relations.

Let $W/\Phi=\bar W$ be the maximal exponent 2 quotient of $W$
(so $\Phi\normal W$ is its Frattini subgroup.)
Since proper subgroups of $W$ cannot have full image in~$\bar W$,
we have $\f_W(H)=\f_{\bar W}(\bar H)$.
By Theorem \ref{l2:}q, it is enough to verify that $\f_{\bar W}$
is trivial on $\bar W$-relations. But every $\bar W$-relation has an even
number of terms with $H=\bar W$ (only these have $\C[\bar W/H]$ of odd
dimension), so this is clear.

\subsection{Case (3): Elliptic curves with additive reduction}
\label{ss-case3}

We now come to the truly painful case of additive reduction.
Thus $l\ne 2,3$ is a fixed prime, $K$ a finite extension of $\Q_l$ and
$E/K$ has additive reduction. We write $q$ for the size of the
residue field of $K$ and
$\delta$ for the valuation of the minimal discriminant of $E/K$.
The asserted root number formula again comes from \cite{RohG} Thm. 2,
and it remains to show $\D_V\sim C_v$.

Decompose the functions
$$
  \D_V = a\cdot d, \quad C_v = c_v\cdot \omega
$$
with
\beq
  a(H)&=&\det(\lara|V^H) &&&&
  d(H)&=&|H|^{-\dim V^H}\\[2pt]
  c_v(H)&=&c_v(E/F^H)&&&&
  \omega(H)&=&\Bigl|\frac{\neron{{\scriptscriptstyle E/K}}}{{\neron{E/F^{\text{\smaller[3]$\scriptscriptstyle H$}}}}}\Bigr|_{\scriptscriptstyle{F^H}}.\cr
\eeq
These are well-defined on conjugacy classes of subgroups of $D$ and take
values in $\Q^{\times}/\Q^{\times2}$. The pairing $\lara$ on $V$ may be chosen
arbitrarily, and we picked one that seemed natural to give explicit values
of $a$ in Tables~\ref{dihtable},~\ref{c6c2table} (write $V$ as a sum
of permutation modules and use Example \ref{regperm}).

The function $\omega$ may be expressed in terms of $e=e_{F^H/K}$
and $f=f_{F^H/K}$ as follows. The minimal discriminant of $E/K$ has
valuation $\delta e$ over $F^H$, so
$\omega(H)=q^{\lfloor (\delta e-\delta_H)/12\rfloor f}$
where $\delta_H$ is the valuation of the minimal discriminant of $E/F^H$
(cf. \cite{Sil1} Table III.1.2). If $E$ has potentially good reduction,
then $0\!\le\!\delta_H\!<\!12$. If the reduction is potentially
multiplicative of type \InS{n} (so $\delta=6+n$),
it becomes $\InS{ne}$ over $F^H$ if $e$ is odd ($\delta_H=6+ne$),
and $\In{ne}$ if $e$ is even ($\delta_H=ne$). It follows easily that
$$
  \omega = \biggl\{ \begin{array}{ll}
    \tGDI{D,I,q^{\tw{\delta e}f}}, & \text{if $E$ has potentially good reduction}\cr
    \tGDI{D,I,q^{{{\lfloor\frac e2\rfloor}}f}}, & \text{if $E$ has potentially multiplicative reduction.}
  \end{array}
$$

\subsubsection{Reduction to 2-power residue degree}

\begin{lemma}
\label{cvunch}
If $K''/K'$ is a subextension of $F/K$, and is unramified of odd degree,
then $C_v(E/K'')\equiv C_v(E/K')\mod\Q^{\times2}$.
\end{lemma}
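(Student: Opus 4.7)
The plan is to decompose $C_v(E/K')=c_v(E/K')\cdot|\omega/\neron{}|_{K'}$, using a fixed non-zero exterior form $\omega$ on $E/K$, and argue that each of the two factors changes by a rational square when we pass from $K'$ to $K''$.

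For the differential factor, I will use that unramified base change preserves minimality of a N\'eron differential, so I may take $\neron{}$ over $K''$ to equal the one chosen over $K'$. Writing $f=[K'':K']$ (odd) and noting that $|x|_{K''}=|x|_{K'}^{f}$ for $x\in K'$ (since the norm in an unramified degree-$f$ extension is just $x\mapsto x^{f}$), the ratio $|\omega/\neron{}|_{K''}/|\omega/\neron{}|_{K'}$ equals $|\omega/\neron{}|_{K'}^{\,f-1}$, an even power, hence a square in~$\Q^{\times}$.

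For the Tamagawa numbers, I will use the standard formula $c_v(E/K')=|\Phi(k')|$ where $\Phi$ is the component group of the special fibre of the N\'eron model and $k'$ the residue field. Unramified base change of degree $f$ replaces the Frobenius $\sigma$ by $\sigma^{f}$ on~$\Phi$, so the problem reduces to showing that $|\Phi^{\sigma^{f}}|/|\Phi^{\sigma}|$ is a square for odd $f$. Since $E/K'$ has additive reduction, $|\Phi|\le 4$, and $\Aut\Phi$ is a subgroup of~$S_3$.

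The easy cases are those in which $\Aut\Phi$ has order at most $2$ (types $\II,\IIS,\III,\IIIS,\IV,\IVS$, and $\InS{n}$ with $n$ odd, where $\Aut\Phi$ is $1$ or $\{\pm 1\}$): for odd $f$, $\sigma^{f}$ and $\sigma$ generate the same cyclic subgroup of $\Aut\Phi$, so $\Phi^{\sigma^{f}}=\Phi^{\sigma}$ and the ratio is~$1$. The main (and only) obstacle will be the case $\Phi\iso(\Z/2)^{2}$ (types $\IZS$ or $\InS{n}$ with $n$ even) where $\sigma$ can act as a $3$-cycle, giving $|\Phi^{\sigma}|=1$; for odd $f$ with $3\mid f$ we have $\sigma^{f}=\mathrm{id}$ and $|\Phi^{\sigma^{f}}|=4$, so the ratio jumps to $4=2^{2}$, while for $3\nmid f$ the ratio is $1$. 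In every case the ratio is a perfect square, which concludes the argument.
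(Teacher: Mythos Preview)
Your decomposition $C_v=c_v\cdot|\omega/\neron{}|$ and the treatment of the differential factor are exactly what the paper does (its proof is the two-line ``For $c_v$ this follows from Lemma~\ref{lemcv}. For $\omega$ this is clear.''). For the Tamagawa number, the paper simply cites the explicit formulas in Lemma~\ref{lemcv}, whereas you give a more conceptual argument via the Frobenius action on the component group $\Phi$; that is a nice alternative and avoids Tate's algorithm.

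There is, however, a genuine gap in your case analysis. You write ``since $E/K'$ has additive reduction, $|\Phi|\le 4$'', but this need not be true: we are in Case~(3), so $E/K$ has additive reduction, yet $K'$ is an arbitrary intermediate field of $F/K$ and $E$ may well have acquired good or (in Case~3M) multiplicative reduction over $K'$. Your list of types therefore omits $\IZ$ and $\In{n}$. The fix is easy: for $\IZ$ the group $\Phi$ is trivial, and for $\In{n}$ we have $\Phi\cong\Z/n\Z$ with Frobenius acting as $\pm 1$ (split versus non-split), so $\sigma$ and $\sigma^f$ again generate the same subgroup of $\Aut\Phi$ for odd $f$ and the ratio is $1$. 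Once you add these two lines, your argument is complete and matches the paper's.
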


\begin{proof}
For $c_v$ this follows from Lemma \ref{lemcv}.
For $\omega$ this is clear.
\end{proof}

\begin{lemma}
\label{DI2}
It suffices to prove Case (3) of Proposition \ref{propcvdv} when $f_{F/K}$ is a power of~2.
\end{lemma}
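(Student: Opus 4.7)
Plan. Let $n$ be the odd part of $f_{F/K}$, $K'$ the unique unramified subextension of $F/K$ of degree $n$, and $D_0:=\Gal(F/K')\normal D$; thus $D/D_0$ is cyclic of odd order $n$ and $f_{F/K'}$ is a power of $2$. The assumption is that the proposition holds for $F/K'$. I will deduce it for $F/K$ via Theorem~\ref{l2:}($\ell$) applied to $D_0\<D$, by establishing the two $D_0$-locality statements
$$
C_v\;\sim\;\GDI{D_0,C_{v'}}\qquad\text{and}\qquad \D_V\;\sim\;\GDI{D_0,\D_{V'}}
$$
as functions on the Burnside ring of $D$ with values in $\Q^{\times}/\Q^{\times2}$, where the primed symbols denote the analogues for the extension $F/K'$.

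A short preliminary check confirms the subcase of~(3), the constant $\lambda$, and the representation $V$ are all preserved when passing from $K$ to $K'$: in (3C) and (3D), $L\cap K'=K$ because the unramified part of $L/K$ is either trivial or $K(\mu_\e)$ of degree $2$, coprime to~$n$; in (3M), a nonsquare $-c_6$ in odd residue characteristic stays a nonsquare in the odd-degree unramified extension~$K'$. Consequently the natural surjection $D_0/D_0'\twoheadrightarrow D/D'$ is an isomorphism, and under it $V'$ is identified with $\Res_{D_0}V$.

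The first locality exploits that $c_v$ (inertia-determined via Tate's algorithm) and $\omega$ (by the $\tGDI{D,I,\cdot}$ formula of~\S\ref{ss-case3}) are both $I$-local, hence $D_0$-local since $I\<D_0$. Normality of $D_0$ collapses every intersection $H\cap D_0^x$ to $H\cap D_0$, and the number of double cosets $m=|H\backslash D/D_0|=n/[HD_0:D_0]$ is an odd divisor of~$n$; moreover $F^{H\cap D_0}=F^HK'$ is unramified over $F^H$ of odd degree $k=[HD_0:D_0]$, so Lemma~\ref{cvunch} gives $c_v(E/F^HK')\equiv c_v(E/F^H)\bmod\Q^{\times2}$, while the minimal differential $\omega_{E/K}$ remains minimal up the unramified tower and yields $\omega^{K'}(H\cap D_0)=\omega(H)^k$. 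Hence $\GDI{D_0,C_{v'}}(H)=C_{v'}(H\cap D_0)^m\equiv C_v(H)^{mk}\equiv C_v(H)\bmod\Q^{\times2}$ since $mk$ is odd. For the second locality, Frobenius reciprocity identifies $\Ind_{D_0}^DV'$ with $V\tensor\Q[D/D_0]=V\oplus\bigoplus_{\chi\ne\triv}V\tensor\chi$, where $\chi$ ranges over the nontrivial characters of the odd-order cyclic group $D/D_0$; these pair into $\{\chi,\chi^{-1}\}$, and self-duality of $V$ makes each pair of the form $W\oplus W^*$ with $W=V\tensor\chi$. By Corollary~\ref{triviality}(3), $\D_{W\oplus W^*}\sim1$, so $\D_V\sim\D_{\Ind_{D_0}^DV'}\sim\GDI{D_0,\D_{V'}}$ by Lemma~\ref{corUDG}.

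Combining the two localisations with the inductive hypothesis $\D_{V'}\sim C_{v'}$ and Theorem~\ref{l2:}($\ell$) yields~\tam. The root-number identity~\eps{} follows by the same kind of reduction: inductivity of $\epsilon$-factors gives $w(A/K',\tau|_{D_0})=w(A/K,\tau)\prod_{\chi\ne\triv}w(A/K,\tau\tensor\chi)$, whose last product equals $1$ by pairing $\chi$ with $\chi^{-1}$ and applying Lemma~\ref{detformula} to $\rho\oplus\rho^*$, and similarly for the $w(\tau)^{2\dim A}$ and $\lambda^{\dim\tau}$ factors; Frobenius reciprocity together with the symmetry $\blangle\tau,V\tensor\chi\brangle=\blangle\tau,V\tensor\chi^{-1}\brangle$ (reality of self-dual characters) gives $\blangle\tau|_{D_0},V'\brangle\equiv\blangle\tau,V\brangle\bmod2$. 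The main obstacle is the $C_v$-locality: it needs the two independent oddness inputs---of $m$ and of $k$---to work in tandem so that Lemma~\ref{cvunch} applies and the extra powers are absorbed in $\Q^{\times2}$; the $\D_V$-locality and the root-number reduction then follow formally from the machinery of~\S\ref{smachinery}.
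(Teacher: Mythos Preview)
Your argument is correct and follows essentially the same route as the paper for the main point \tam: both show $C_v=\tGDI{D_0,C_v}$ and $\D_V\sim\tGDI{D_0,\D_{\Res_{D_0}V}}$, then invoke Theorem~\ref{l2:}($\ell$). The only notable difference is in how the $C_v$-locality is checked: the paper obtains $C_v=\tGDI{N,C_v}$ by iterating the refined $p$-descent Lemma~\ref{ladvp} one odd prime at a time, whereas you compute $\tGDI{D_0,C_{v'}}(H)=C_{v'}(H\cap D_0)^m$ directly using normality of $D_0$ and then appeal to Lemma~\ref{cvunch} together with the oddness of $m$ and $k$. Your direct computation is arguably cleaner.

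One remark: your reduction for \eps{} is correct but unnecessary. The paper establishes the root-number formula in Case~(3) for \emph{all} $F/K$ directly from Rohrlich's theorem at the start of \S\ref{ss-case3}, so Lemma~\ref{DI2} really only needs to reduce $\D_V\sim C_v$. Your ``similarly for the $w(\tau)^{2\dim A}$ factors'' hides a small class-field-theory check ($w(\Res_{D_0}\tau)^2=\det\tau(\theta_K(N_{K'/K}(-1)))=\det\tau(\theta_K(-1))=w(\tau)^2$ since $[K'\!:\!K]$ is odd), which is fine but could be made explicit if you keep that part.
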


\begin{proof}
Let $N\normal D$ correspond to the maximal odd degree unramified extension of
$K$ in $F$. As $C_v$ is unchanged in odd degree unramified extensions
(Lemma~\ref{cvunch}), a repeated application of
Lemma \ref{ladvp} with $\phi=\psi=C_v$ shows that $C_v=\tGDI{N,C_v}$.
We claim that $\D_V\sim \tGDI{N,\D_{\Res_N V}}$. Then,
by Theorem~\ref{l2:}$\ell$, it suffices to show that
$\D_{\Res_N V}\sim C_v$ as
functions on the Burnside ring of $N$, as asserted.

By Lemma~\ref{corUDG},
$\tGDI{N,\D_{\Res_N V}}\sim \D_{\Ind_N^G\Res_N V}$. But
$$
  \Ind_N^G\Res_N V \>\iso\> V\tensor\Ind_N^G\triv_N \>\iso\>
    V \oplus J
$$
with $J$ of the form ${\mathcal J}\!\oplus\!{\mathcal J}^*$ over $\bar\Q$. By
Corollary \ref{triviality}, \hbox{$\D_J(\Theta)\!=\!\RC_\Theta(J)\!=\!1$}
for any $D$-relation $\Theta$.
Thus $\D_J\!\sim\! 1$, whence $\D_{\Ind_N^G\Res_N V}\!\sim\!\D_V$, as required.
\end{proof}

\subsubsection{(Cases 3C, 3D) $E/K$ has potentially good reduction}
\label{sssredtyp}

By Lemma~\ref{DI2}, it suffices to prove the following

\begin{claim}
Suppose $f_{F/K}$ is a power of 2.
Then $c_v\sim a$ and $\omega\sim d$, and hence $C_v\sim\D_V$.
\end{claim}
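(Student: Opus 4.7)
The plan is to prove the two equivalences $\omega\sim d$ and $c_v\sim a$ separately, whence $C_v=c_v\,\omega\sim a\,d=\D_V$ follows immediately. By Lemma~\ref{DI2} we may assume $f_{F/K}$ is a power of $2$; in particular, for every subgroup $H\<D$, both $f_{F^H/K}=[D\!:\!I]/[H\!:\!H\cap I]$ and $[H\!:\!H\cap I]$ are powers of $2$. Combined with the hypothesis $l>3$, this means $|W|$ is odd and prime to $6$, and the tame quotient $I/W$ is cyclic of order coprime to~$2l$, so the only ``$2$-ramification'' in sight comes from $D/D'$.

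For $\omega\sim d$: In Case~(3C) one has $V=0$, so $d\equiv 1$, and it suffices to check $\omega\sim 1$. Since the reduction is potentially good, $\delta\le 10$; writing $\omega=\tGDI{D,I,q^{\lfloor\delta e/12\rfloor f}}$, we observe that modulo squares the exponent vanishes whenever $f_{F^H/K}\ge 2$ (as it is then even), while for $f_{F^H/K}=1$ it depends only on $e_{F^H/K}\bmod\e$. Applying the renaming and descent criteria of Theorem~\ref{l2:}(r,d) to collapse $I$ down to a subgroup on which this dependence becomes a pure function of $f$, property~(f) then gives $\omega\sim 1$. In Case~(3D) the same argument yields $\omega\sim 1$; for $d(H)=|H|^{-\dim V^H}$, the decomposition $V=\triv\oplus\eta\oplus\sigma$ through $D/D'$ together with Lemma~\ref{dimsaddup} rewrites $d$ modulo squares as a $D$-local function of $f$, which is again $\sim 1$ by Theorem~\ref{l2:}(f).

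For $c_v\sim a$: Because $l>3$, Tate's algorithm gives $c_v(E/F^H)$ purely in terms of the reduction type of $E$ over $F^H$, which is determined by $\delta e_{F^H/K}\bmod 12$, together (for types IV, IV$^*$, $I_0^*$) with whether $F^H$ contains the relevant roots of unity. All of this factors through $D/D'$, so both $c_v$ and $a$ are $D'$-local in the sense of Definition~\ref{gdidef}. In Case~(3C), the induced function on the cyclic group $D/D'\in\{\Cy_2,\Cy_3,\Cy_4,\Cy_6\}$ is trivial on $D/D'$-relations (there are none) and hence $c_v\sim 1=a$ by Theorem~\ref{l2:}($\ell$,q). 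In Case~(3D), the regulator constants of $\triv,\eta,\sigma$ on the dihedral groups $\Di_{2\e}$ with $\e\in\{3,4,6\}$ are computed exactly as in Examples~\ref{exdihreg}, \ref{exdihreg2}, \ref{exdihreg3}; we then match them subgroup-by-subgroup against the Tamagawa values from Tate's algorithm, a computation organised by Tables~\ref{dihtable} and~\ref{c6c2table}.

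The principal obstacle is the detailed matching in Case~(3D), specifically for $D/D'=\Di_6$ corresponding to types IV/IV$^*$, where $c_v\in\{1,3\}$ and the factor $3$ is non-trivial modulo squares. One must identify the quadratic character $\eta$ with the character picking out the presence of $\mu_3$ in the residue field of $F^H$ and identify the $2$-dimensional $\sigma$ with the Galois action on the cube roots of $\Delta_E$ appearing in the algorithm; the contributions from $\triv$ and $\eta$ then absorb the $q$-part of $\omega$ and the $3$-part of $c_v$, while $\sigma$ controls the ramification of $L/K$. Once this identification is made explicit, the $\sim$-equivalences reduce to a finite check in the tables.
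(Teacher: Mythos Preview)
Your proposal contains a genuine gap at the heart of the $c_v\sim a$ argument. You assert that ``the reduction type of $E$ over $F^H$ \ldots\ factors through $D/D'$'', and then reduce to checking relations in the dihedral quotient. But $c_v$ is \emph{not} lifted from $\Gal(L/K)=D/D'$; this is precisely the point of Remark~\ref{3pictures}. A concrete counterexample: in Case~(3D) with $\e=4$ and $D=\Cy_4\rtimes\Cy_{2^k}$ (after the paper's Step~1 reduction), take $H_1=\langle x^2\rangle$ and $H_2=\langle x^2 y^2\rangle$. Both satisfy $H_iD'=\langle x^2,y^2\rangle$, yet $H_1\cap I=\langle x^2\rangle$ gives $e_{F^{H_1}/K}=2$ (type~$\IZS$) while $H_2\cap I=\{1\}$ gives $e_{F^{H_2}/K}=4$ (type~$\IZ$), so $c_v$ differs. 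The paper therefore cannot simply quotient by $D'$; instead it first lifts from $\Gal(L^u/K)$ (a much larger quotient), reducing to $D=\Cy_\e\rtimes\Cy_{2^k}$, and then analyses relations of this group via its $\Cy_\e\!\times\!\Cy_2$ or $\Cy_4\!\times\!\Cy_4$ subquotients (Steps~2--3 of Proposition~\ref{propcva}). Your argument bypasses this entirely, and the gap cannot be patched without essentially reproducing those steps.

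There is a second problem in your treatment of $\omega\sim d$ in Case~(3D). You claim both $\omega\sim 1$ and $d\sim 1$ separately, but the paper does not prove this and your justification does not hold. For $d$, you say Lemma~\ref{dimsaddup} ``rewrites $d$ modulo squares as a $D$-local function of $f$''; but $\dim V^H$ depends on the image of $H$ in $D/D'=\Di_{2\e}$, not merely on $f_{F^H/K}$ --- for instance in $\Di_{12}$ the subgroups $\Cy_2^a$ and $\Sym_3^a$ both have $f=1$ yet $\dim V^H$ has different parity (see Table~\ref{dihtable}). The paper instead shows (Propositions~\ref{omega1},~\ref{omega1.5}) that $\omega$ and $d$ are each $\sim$ the same explicit function $\tGDI{I,W,\smallchoice{e,}{\e\nmid f}{1,}{\e|f}}$, and it is this matching, not triviality of either side, that yields $\omega\sim d$.
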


Let $\e=e_{L/K}$ be the ramification degree of $L$ over $K$.
The extension is either cyclic or dihedral (cf. Table \ref{roottable}),
to be precise
\beq
  \text{(Case 3C)}\quad & \Gal(L/K) \iso \Cy_\e    & \e=2,3,4,6, \cr
  \text{(Case 3D)}\quad & \Gal(L/K) \iso \Di_{2\e} & \e=3,4,6.   \cr
\eeq
As $E/L$ has good reduction, $\delta\e\equiv 0\mod 12$,
and $\e$ is the smallest such integer. Moreover,
$q\equiv (-1)^t\mod\e$ with $t=0$ in Case 3C and $t=1$ in Case 3D
(see e.g. \cite{RohG} Thm. 2).

As $V$ is a $\Gal(L/K)$-representation,
both $a(H)$ and the exponent $\dim V^H$ in $d(H)$
depend only on $F^H \cap L$.
In Case 3C, $V\!=\!0$ and $a(H)\!=\!d(H)\!=\!1$.
For Case 3D, we summarise in Table \ref{dihtable}
the subgroups $H$ (up to conjugacy) of
$$
  \Gal(L/K)=D/D'=\Di_{2\e}=\langle g,h|h^\e\!=\!g^2\!=\!(gh)^2\!=\!1\rangle
$$
with the following data:
\stepcounter{equation}%
\begin{table}[h]
\caption{Dihedral quotient}
\label{dihtable}

\begingroup
\baselineskip 8pt
\def\arraystretch{0.8}
\noindent
\begin{picture}(370,155)(25,0)
\put(35,150){$\e\!=\!6$}
\put(345,150){$\e\!=\!4$}
\put(255,0){$\e\!=\!3$}
\pic11(100,38){{\bf D}_{\bf 12}}{g,h}{3\ast}{\II}{1}
\pic12(030,70){\Cy_6}{h}{1}{\II}{1}
\pic13(065,70){\Sym_3^a}{g,h^2}{3\ast}{\IV}{1(3)}
\pic14(126,70){\Sym_3^b}{gh,h^2}{3\ast}{\IV}{3(1)}
\pic15(180,81){\Vi}{g,h^3}{3\ast}{\IZS}{?}
\pic16(030,100){\Cy_3}{h^2}{1}{\IV}{3}
\pic17(077,115){\Cy_2^a}{g}{2}{\IZ}{1}
\pic18(130,115){\Cy_2^b}{gh^3}{6}{\IZ}{1}
\pic19(180,115){\Cy_2}{h^3}{1}{\IZS}{\scriptstyle\square}
\pic20(100,147){1}{}{3}{\IZ}{1}

\pic21(300,55){{\bf D}_{\bf 8}}{g,h}{2*}{\III}{2}
\pic22(245,85){\Vi^a}{g,h^2}{2*}{\IZS}{2(4)}
\pic23(300,85){\Cy_4}{h}{1}{\III}{2}
\pic24(340,85){\Vi^b}{gh,h^2}{2*}{\IZS}{4(2)}
\pic25(253,115){\Cy_2^a}{g}{1}{\IZ}{1}
\pic26(300,115){\Cy_2}{h^2}{1}{\IZS}{4}
\pic27(347,115){\Cy_2^b}{gh}{2}{\IZ}{1}
\pic28(300,145){1}{}{1}{\IZ}{1}

\pic31(210,5){{\bf D}_{\bf 6}}{g,h}{6*}{\IV}{?}
\pic32(170,20){\Cy_3}{h}{1}{\IV}{3}
\pic33(253,30){\Cy_2}{g}{2}{\IZ}{1}
\pic34(210,45){1}{}{3}{\IZ}{1}

\addline 11,12 raise=-4;
\put(100.00,44.86){\line(-5,2){44.73}}
\addline 11,13;
\put(107.35,51.74){\line(-1,1){10.00}}
\addline 11,14;
\put(127.85,51.74){\line(1,1){10.00}}
\addline 11,15 raise=-8;
\put(135.96,41.87){\line(5,3){51.44}}
\addline 12,16;
\put(43.40,83.74){\line(0,1){8.00}}
\addline 12,19 tilt=4 raise=0.5;
\put(46.48,83.74){\line(6,1){138.00}}
\addline 13,16;
\put(71.45,83.74){\line(-3,2){13.11}}
\addline 13,17;
\put(88.40,83.74){\line(0,1){23.00}}
\addline 14,16;
\put(126.00,80.22){\line(-4,1){67.66}}
\addline 14,18;
\put(146.80,83.74){\line(0,1){23.00}}
\addline 15,17;
\put(180.00,88.28){\line(-3,1){76.86}}
\addline 15,18;
\put(180.00,94.42){\line(-3,2){18.48}}
\addline 15,19;
\put(194.75,94.74){\line(0,1){12.00}}
\addline 16,20 raise=5;
\put(52.37,113.74){\line(3,2){47.63}}
\addline 17,20;
\put(97.82,128.74){\line(3,4){7.50}}
\addline 18,20;
\put(134.47,128.74){\line(-1,1){10.00}}
\addline 19,20 raise=3;
\put(180.00,126.22){\line(-5,2){53.86}}
\addline 21,22;
\put(300.00,67.37){\line(-5,3){15.61}}
\addline 21,23;
\put(315.35,68.74){\line(0,1){8.00}}
\addline 21,24;
\put(332.56,68.74){\line(3,2){12.00}}
\addline 22,25;
\put(265.89,98.74){\line(0,1){8.00}}
\addline 22,26;
\put(283.04,98.74){\line(5,3){16.96}}
\addline 23,26;
\put(314.00,98.74){\line(0,1){8.00}}
\addline 24,26;
\put(343.21,98.74){\line(-3,2){15.79}}
\addline 24,27;
\put(360.75,98.74){\line(0,1){8.00}}
\addline 25,28;
\put(279.14,125.78){\line(3,2){20.86}}
\addline 26,28;
\put(313.39,128.74){\line(0,1){8.00}}
\addline 27,28;
\put(347.00,126.70){\line(-5,3){20.86}}
\addline 31,32;
\put(210.00,13.63){\line(-3,1){11.66}}
\addline 31,33;
\put(242.35,17.98){\line(5,3){10.65}}
\addline 32,34;
\put(198.34,31.72){\line(3,2){11.66}}
\addline 33,34;
\put(253.00,37.43){\line(-3,1){16.86}}
\end{picture}
\endgroup
\vskip 3mm

\end{table}%
in the top row is $H$ and its generators;
in the bottom left corner $a(H)$, followed by $*$ when $\dim V^H$ is odd;
in the bottom right corner the Kodaira symbol and $c_v$ for $E/L^H$.
The functions $a$ and $d$ are elementary to compute, and
$c_v$ come from Lemma \ref{lemcv}; $\square$ denotes a square value, $?$
an undetermined value, $\Vi=\Cy_2\!\times\!\Cy_2$.
The entries $1(3)$ and $3(1)$ for the Tamagawa numbers in the table for $\e=6$
mean that there are actually two tables, one with $1$ and $3$ and one with
$3$ and $1$. (Similarly for $2(4)$ and $4(2)$ when $\e=4$.)
There are also identical tables, but with
\II, \III, \IV{} replaced everywhere by \IIS, \IIIS{} and \IVS, respectively.

\begin{remark}
\label{3pictures}
Note from the pictures that $c_v\sim a$ in $\Gal(L/K)$-relations
in Case~D; see Example \ref{exdih} for the list of relations. This is
also true in Case~C, as $\Gal(L/K)$ is cyclic and has no relations.
Now, $a$ is a function lifted from $\Gal(L/K)$. If $c_v$ were such a function
as well, we would have $c_v\sim a$ in general by Theorem \ref{l2:}q.
Unfortunately, life is not that simple, and we will use the full
force of the machinery in \S\ref{smachinery} to establish the result.
\end{remark}

\begin{proposition}
\label{propcva}
$c_v\sim a$.
\end{proposition}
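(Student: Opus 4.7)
The plan is to extend the equivalence $c_v \sim a$ already observed for subgroups of $\bar D = \Gal(L/K) = D/D'$ (Remark \ref{3pictures}) to all of $D$, exploiting that $a$ is by construction lifted from $\bar D$ while $c_v$ genuinely depends on the finer structure of $H \subset D$ through the Frobenius action on the component group of the N\'eron model.

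First, by Lemma \ref{DI2} one may assume $f_{F/K}$ is a power of $2$. I would then stratify $H \subset D$ by the image $\bar H = HD'/D' \subset \bar D$: when $H \subset D'$ we have $F^H \supset L$, so $E/F^H$ has good reduction and $c_v(H) = 1$; otherwise the Kodaira type of $E/F^H$ depends only on $\bar H$ (via $F^H \cap L$), while $c_v$, being the number of $\Frob$-orbits on the geometric component group, depends additionally on $e_{F^H/K}$ and $f_{F^H/K}$. Using Tate's algorithm I would express $c_v$ on each stratum in the form $\tGDI{D_0,I_0,\psi(e,f)}$ of Definition \ref{gdidef2}, and then apply the localization, quotient, renaming, descent and ``functions of $f$'' properties of Theorem \ref{l2:} to reduce verification of $c_v/a \sim 1$ to the Remark \ref{3pictures} identity on $\bar D$, together with small corrections supported on $D'$ and on the inertia subgroup.

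Strata whose component group has order $1$ or $3$ (Kodaira types II, II$^*$, IV, IV$^*$) should be manageable: the odd-order contributions either cancel against $a$ modulo squares or produce functions of $f$ alone, which die by Theorem \ref{l2:}f. The principal obstacle is type I$_0^*$ arising in Case 3D when $\bar H$ is a Klein four-group inside $\bar D \iso \Di_{2\e}$ with $\e\in\{4,6\}$: here the component group is $(\Z/2\Z)^2$ and $c_v \in \{1,2,4\}$, depending on a genuinely $2$-adic Frobenius action. Handling this step will rely on the reduction to $2$-power residue degree in an essential way, combined with the descent criterion Theorem \ref{l2:}d and the $\Cy_2 \times \Cy_2$ relation of Example \ref{exc2c2} (accounting for the factor of $2$ computed in Exercise \ref{c2c2regconst}) to match $c_v$ against $a$ on these strata, analogously to the multiplicative-reduction analysis in \S\ref{ssnonsplit}.
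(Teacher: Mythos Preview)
Your outline misses the paper's key structural reduction, and the ``stratum-by-stratum $\tGDI{}$'' plan does not close as stated. The paper's first step is to show that \emph{both} $c_v$ and $a$ are lifted from the quotient $\Gal(L^u/K)$, where $L^u$ is the maximal unramified extension of $L$ in $F$; this uses Lemma~\ref{blackmagic0} to check that $\gcd(e_{F^H/K},\e)=\gcd(e_{F^H\cap L^u/K},\e)$, and then Lemma~\ref{lemcv}. Your assertion that the Kodaira type depends only on $\bar H$ is therefore not quite right (it depends on $F^H\cap L^u$, not merely $F^H\cap L$) and in any case is unjustified without this lemma. Once both functions are lifted, Theorem~\ref{l2:}q replaces $D$ by the explicit group $C_\e\rtimes C_{2^k}$. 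The paper then does \emph{not} try to write $c_v$ as a $\tGDI{D_0,I_0,\psi(e,f)}$; your plan to do so ``on each stratum'' has a structural problem, since the $\tGDI{}$ formalism produces functions on the whole Burnside ring, not on the locus of $H$ with a prescribed image in $D/D'$, so Theorem~\ref{l2:} cannot be applied stratum by stratum. Instead the paper uses Lemma~\ref{loffchop} and Examples~\ref{exoff1},~\ref{exoff2} to decompose every relation of $C_\e\rtimes C_{2^k}$ into relations supported on small subquotients ($\Di_{2\e}$, $C_\e\times C_2$, or $C_4\times C_4$), and then checks $c_v/a$ on a finite basis by direct inspection using Table~\ref{dihtable} and Table~\ref{c6c2table}.

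Two specific points where your sketch would fail: the IV/IV$^*$ strata contribute $c_v\in\{1,3\}$ according to whether $\sqrt{B}\in F^H$, an unramified condition that is neither a square nor a pure function of $f$, so these do not dismiss via Theorem~\ref{l2:}f as you suggest. More seriously, the case $\e=4$ (Step~3 in the paper) requires a trick you have not anticipated: one introduces an auxiliary $\tilde c_v$ with values in $\R_{>0}$ rather than $\Q^\times/\Q^{\times2}$, precisely so that after invoking Lemma~\ref{loffchop}(1) (which only gives that a \emph{multiple} of the relation comes from the subgroup) one may take roots. The relevant subquotients are $C_4\times C_4$ with five basis relations to check, not $C_2\times C_2$; the \S\ref{ssnonsplit} template does not apply here.
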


\begin{proof}
We proceed as follows

\Step{1}{Reduction to $D=\Cy_{\e}\rightsemidirect \Cy_{2^k}$}

We claim that $a$ and $c_v$ are both lifted from $\Gal(L^u/K)$,
where we use~${}^u$~to denote the maximal unramified extension in $F$.
Then by Theorem \ref{l2:}q we may replace $F$ by $L^u$,
and we will be left with the case $e_{F/K}=\e$ and
$D=\Cy_{\e}\rightsemidirect \Cy_{2^k}$.

That $a$ lifts from $\Gal(L^u/K)$ is clear. In view of Lemma \ref{lemcv},
to see that $c_v$ has this property it is enough to check that
for every intermediate field $M$ of $F/K$, the extension $M/M\cap L^u$ is
totally ramified with $\gcd(e_{M/K},\e)=\gcd(e_{M\cap L^u/K},\e)$.
By Lemma \ref{blackmagic0}, there is a subfield $K\subset M_\e\subset M$ with
$M/M_\e$ totally ramified and $e_{M_\e/K}=\gcd(e_{M/K},\e)$; so it
suffices to show that $L^u\cap M$ contains $M_\e$,
equivalently that $M_\e\subset L^u$.
But $M_\e^u/K^u$ and $L^u/K^u$ sit inside the (cyclic) maximal tame extension
$F^W/K^u$, so $M_\e^u\subset L^u$ by comparison of degrees.

We now have
$$
  D\>=\>\Cy_{\e}\rightsemidirect \Cy_{2^k}\>=\>\langle x,y\>|\>x^\e=y^{2^k}=1, yxy^{-1}=x^{\pm 1} \rangle
$$
with $x^{+1}$ in Case 3C and $x^{-1}$ in Case 3D,
$I=\Cy_{\e}=\langle x\rangle$ and $\e=2,3,4,6$.

\Step{2}{Proof for $\e=2,3,6$}
Suppose $\e=2, 3$ or $6$. 

By Examples \ref{exoff1}, \ref{exoff2} (note that $4\nmid\e$),
it is enough to prove that
$c_v/a$ is trivial on relations of all the subquotients $H_t/N_t$ where
$$
  H_t=\langle x, y^{2^{t}}\rangle, \qquad N_t = \langle y^{2^{t+1}}\rangle.
$$
In Case 3D, the $t\!=\!0$ quotient is $D/D'\iso \Di_{2\e}$,
where $c_v/a$ does cancel in relations by Remark \ref{3pictures}.
We consider the remaining subquotients \hbox{($\iso \Cy_{\e}\!\times\!\Cy_2$)}
in Cases C and D, according to the value of $\e$.
If $\e=3$, these are cyclic and have no relations, so the result is trivial.
If $\e=6$, $H_t/N_t$ has the following lattice of subgroups.
Here we specify the group name, its generators
($h$ is the image of $x$, and $g$ a suitable element of order $2$),
the value of $a$ ($X\!=\!1$~in Case~3C, and $X\!=\!3$ in Case 3D),
the reduction type and
Tamagawa numbers over the corresponding fields:

\stepcounter{equation}
\begin{table}[h]
\begingroup
\baselineskip 8pt
\def\arraystretch{0.8}
\noindent
\begin{picture}(340,125)(30,-16)
\pic1(242,-04){{\bf C_2\times C_6}}{g,h}{1}{\II}{1}
\pic2(305,027){\Cy_6^c}{gh^2}{1}{\IV}{1(3)}
\pic3(242,027){\Cy_6^b}{gh}{1}{\IV}{3(1)}
\pic4(200,027){\Cy_6^a}{h}{1}{\II}{1}
\pic5(255,060){\Cy_3}{h^2}{1}{\IV}{3}
\pic6(120,035){\Vi}{g,h^3}{1}{\IZS}{?}
\pic7(170,068){\Cy_2^c}{g}{X}{\IZ}{1}
\pic8(120,068){\Cy_2^b}{gh^3}{1}{\IZ}{1}
\pic9(070,068){\Cy_2^a}{h^3}{1}{\IZS}{\square}
\pic10(120,098){1}{}{X}{\IZ}{1}
\addline 1,2;
\put(286.02,9.74){\line(2,1){18.98}}
\addline 1,3 tilt=2;
\put(263.52,9.74){\line(0,1){9.00}}
\addline 1,4;
\put(246.53,9.74){\line(-5,3){21.26}}
\addline 2,5;
\put(306.05,40.74){\line(-5,3){22.71}}
\addline 3,5;
\put(264.01,40.74){\line(1,4){2.75}}
\addline 4,5;
\put(225.27,36.86){\line(5,3){29.73}}
\addline 6,7;
\put(150.29,47.80){\line(3,2){19.71}}
\addline 6,8;
\put(135.51,48.74){\line(0,1){11.00}}
\addline 6,9;
\put(120.00,47.40){\line(-3,2){21.33}}
\addline 7,10;
\put(170.00,79.59){\line(-5,3){20.50}}
\addline 8,10;
\put(135.31,81.74){\line(0,1){8.00}}
\addline 9,10;
\put(98.67,79.21){\line(5,3){21.33}}
\addline 1,6;
\put(242.00,4.33){\line(-3,1){91.71}}
\addline 2,7;
\put(305.00,33.46){\line(-3,1){105.50}}
\addline 3,8;
\put(242.00,35.82){\line(-3,1){90.25}}
\addline 4,9;
\put(200.00,33.07){\line(-3,1){101.33}}
\addline 5,10;
\put(255.00,68.48){\line(-4,1){105.50}}
\end{picture}
\endgroup
\caption{$\Cy_6\!\times\!\Cy_2\,$-$\,$subquotient}
\label{c6c2table}
\vskip-20pt
\end{table}

\noindent The lattice of relations is generated by
$\Cy_3-\Cy_6^a-\Cy_6^b-\Cy_6^c+2(\Cy_6\!\times\!\Cy_2)$
and
$1-\Cy_2^a-\Cy_2^b-\Cy_2^c+2\Vi$, on each of which
$a/c_v$ is trivial by inspection.
Finally if $\e=2$, the subquotients are isomorphic to $\Cy_2\times \Cy_2$,
and the data for their subgroups is the same as for the subgroups of
$\Vi\subset\Cy_6\!\times\!\Cy_2$ in Table~\ref{c6c2table} with $X\!=\!1$.
Once again, $a/c_v$ is trivial on the unique $\Cy_2\!\times\!\Cy_2$ relation, which
completes the proof that $c_v\sim a$ for $\e\ne 4$.

\Step{3}{Proof for $\e=4$}
Suppose $\e=4$ and we are in Case 3D.
By Example \ref{exoff2} and the proof of Lemma \ref{loffchop}(1),
every $D$-relation is a sum of one lifted from $D/D'\iso \Di_8$
and a $D$-relation with terms in $U=\langle x, y^2\rangle$.
By Remark \ref{3pictures}, $c_v/a$ cancels in relations of $D/D'$, so it
suffices to prove cancellation in $D$-relations whose terms lie in $U$.
Subgroups of $U$ project to subgroups of $\Cy_4$ in $D/D'$, so
$a=1$ for these (see Table \ref{dihtable}). Hence
it is enough to show that the following function
cancels in $D$-relations with terms in $U$:
$$
  \tilde c_v(H) = c_v(E/F^H) \cdot \left\{\begin{array}{ll}
     |H|^2, & H \subset \langle y\rangle,\cr
     4, & \langle x^2 \rangle \subset H \subset \langle x^2,y\rangle,\cr
     1, & \text{otherwise}.\cr
  \end{array}\right.
$$
As $\langle x^2\rangle$, $\langle y\rangle$ and $\langle x^2,y\rangle$
are normal in $D$, the ``correction terms'' are the same for conjugate
subgroups, so this is a function on the Burnside ring of~$D$.
Exceptionally,
we view $\tilde c_v$ as a function to $\R_{>0}$, {\em not\/} (!) to $\Q^{\times}/\Q^{\times2}$.
The point is that now it suffices to check that it cancels in $U$-relations,
since a multiple of a $D$-relation with terms in $U$ is an $U$-relation
by Lemma \ref{loffchop}(1),
and taking roots is perfectly ok in $\R_{>0}$.

By Example \ref{exoff1}, $U$-relations are generated by those coming from
the subquotients $H_t/N_t$ ($1\le t\le k-2$) with
$$
  H_t=\langle x, y^{2^t}\rangle, \qquad N_t = \langle y^{2^{t+2}}\rangle,
  \qquad H_t/N_t\,\iso\,\Cy_4\!\times\!\Cy_4\>.
$$
In such a subquotient, let $g$ and $h$ be the images of $x$ and $y^{2^t}$
respectively.
Note that the field $F^{H_t}(\sqrt{\Delta_E})$
corresponds to $\langle g^2,h\rangle$.
(By Lemma \ref{lemcv}, it distinguishes between $c_v=2$ and $c_v=4$
in the \IZS-case; $c_v=1$ cannot occur because $E/K$ has $c_v=2$ and so has
non-trivial 2-torsion in every extension of $K$, see Remark \ref{cv=tors}.)
The subquotient $H_t/N_t$ has a basis of 5 relations, given below with
the corresponding $\tilde c_v$:
\begingroup
\smaller[1]
\def\fourkt{(4^{\text{\tiny $k$-$t$}}){}}
\def\fourktm#1{(4^{\text{\tiny $k$-$t$-$#1$}}){}}
\def\powm{^{-1}}
\def\powm{^{\text{\tiny--}1}}
\beq
\langle g^2\rangle \!-\!\langle g h^2\rangle \!-\!\langle h^2,g^2\rangle \!-\!\langle g\rangle \!+\!2\langle g,h^2\rangle  & \to & 4^{1} 2\powm 4\powm 2\powm 2^{2} =1\cr
\langle h^2,g^2\rangle \!-\!\langle g^2,h\rangle \!-\!\langle g^2,g h\rangle \!-\!\langle g,h^2\rangle \!+\!2\langle g,h\rangle  & \to & 4^{1} 4\powm 2\powm 2\powm 2^{2} =1\cr
\langle h^2 g^2\rangle \!-\!\langle g^3 h\rangle \!-\!\langle h^2,g^2\rangle \!-\!\langle g h\rangle \!+\!2\langle g^2,g h\rangle  & \to & 1^{1} 1\powm 4\powm 1\powm 2^{2} =1\cr
\{1\} \!-\!\langle g^2\rangle \!-\!\langle h^2 g^2\rangle \!-\!\langle h^2\rangle \!+\!2\langle h^2,g^2\rangle  & \to & \fourktm2^{1} 4\powm 1\powm \fourktm1\powm 4^{2} =1\cr
\langle h^2\rangle \!-\!\langle h^2,g^2\rangle \!-\!\langle h\rangle \!-\!\langle g^2 h\rangle \!+\!2\langle g^2,h\rangle  & \to & \fourktm1^{1} 4\powm \fourkt\powm 1\powm 4^{2} =1.\cr
\eeq
\endgroup
As the $\tilde c_v$ cancel in all relations, this proves the claim in Case 3D.

Finally, in Case 3C the $a$ and the $c_v$ are the same as for the subgroup
$H_1$ of Case 3D, so they again cancel in relations.
\end{proof}

\begin{proposition}
\label{omega1}
In Case 3C, $\omega\sim 1$. In Case 3D,
$\omega\sim \tGDI{I,W,\smallchoice{e,}{\e\nmid f}{1,}{\e|f}}$.
\end{proposition}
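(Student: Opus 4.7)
The plan is to peel $\omega=\tGDI{D,I,q^{\tw{\delta e}f}}$ down to a manageable form using the formalism of Theorem \ref{l2:} and then carry out a case analysis. First I would apply the descent property (Theorem \ref{l2:}d) with $D_0=I$: the function $\psi(e,f)=q^{\tw{\delta e}f}$ trivially satisfies $\psi(e,f/m)^m=\psi(e,f)$ for every $m\mid f$, so $\omega\sim\tGDI{I,I,q^{\tw{\delta e}}}$ (with $f=1$ implicit). Since $I/W$ is cyclic (the tame quotient) and $q^{\tw{\delta e}}$ is a function of the product $ef$ in the trivial sense ($f=1$), renaming (Theorem \ref{l2:}r) yields
\[
   \omega \;\sim\; \tGDI{I,W,q^{\tw{\delta\cdot ef}}},
\]
where now $e=|W|/|H\cap W^x|$ is the wild, and $f=[I\!:\!W]/[H\cap I^x\!:\!H\cap W^x]$ the tame, ramification at the place corresponding to each double coset.

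Next, I would exploit the identity $\delta\e=12k$ with $\gcd(k,\e)=1$, so that $\tw{\delta\cdot ef}=\lfloor k\cdot ef/\e\rfloor$. Using localisation (Theorem \ref{l2:}$\ell$), proving Case 3C reduces to showing the Burnside-ring function $\f_I(H)=q^{\lfloor k\cdot e_Hf_H/\e\rfloor}$ on $I$-subgroups is trivial on all $I$-relations; in Case 3D, one needs the analogous statement after dividing by $\psi_0(e_H,f_H)=e_H$ if $\e\nmid f_H$ and $1$ otherwise. Because $\gcd(|W|,|I/W|)=1$ (as $W$ is an $l$-group and $l>3\ge\e$), Schur--Zassenhaus gives $I=W\rtimes(I/W)$, and Lemmas \ref{loffchop}, \ref{ladvp} reduce the $I$-relations to the subquotients $W\rtimes\Cy_{\e'}$ for divisors $\e'$ of $|I/W|$, where the parity computation is tractable.

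The last step is the case-by-case verification. Both $q=l^d$ and $e_H=l^{w_H}$ are powers of the residue characteristic, so triviality of the ratio becomes a numerical congruence $\sum_i n_i\bigl(d\lfloor k\cdot e_if_i/\e\rfloor-w_i[\e\nmid f_i]\bigr)\equiv0\pmod2$ for each $I$-relation $\sum n_iH_i$. In Case 3C, $\mu_\e\subset K$ gives $q\equiv1\pmod\e$, and combined with the fact that $q^{e_Hf_H}=q^{[I:H]}$ is trivial on $I$-relations (Exercise \ref{exHHfun}) one can adjust the exponent of $q$ modulo $ef$ to a multiple of $\e$, making it an integer multiple of $k$, whose parity then always cancels. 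In Case 3D, $q\equiv-1\pmod\e$ means $K(\mu_\e)/K$ is unramified quadratic, so $\mu_\e\subset F^H$ iff $\e\mid f_H$; the two regimes $\e\mid f_H$ and $\e\nmid f_H$ produce respectively a trivial contribution and an excess of exactly $e_H$ modulo squares, which is precisely what $\psi_0$ compensates. Across the four values $\e\in\{2,3,4,6\}$ (and the two choices of $k$ per $\e$ coming from the two admissible valuations of $\Delta_E$) the check is then a finite, explicit parity computation.

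The main obstacle is the Case 3D analysis, where the target is nontrivial and one must verify that the compensation is exactly $\psi_0$ rather than something else. The subtle point is the interaction between the floor $\lfloor k\cdot ef/\e\rfloor$ and the residue $ef\bmod\e$: for $\e\nmid f$ one needs the ``fractional part'' contribution $k\cdot(ef\bmod\e)/\e$ to account for exactly $\ord_l e\bmod2$, no more and no less, across every $I$-relation. Carrying this out will require invoking the residue-characteristic hypothesis $l>3$ (which forces $\gcd(l,\e)=1$, so $e\bmod\e$ is always a unit) and working separately through the subquotient lattices of $W\rtimes\Cy_{\e'}$ in a manner analogous to the tables in the proof of Proposition \ref{propcva}.
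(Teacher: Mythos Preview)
Your opening reduction is exactly right and matches the paper: descend from $D$ to $I$ via Theorem~\ref{l2:}d, then rename to get $\omega\sim\tGDI{I,W,q^{\tw{\delta ef}}}$. From there, however, your plan diverges into a ``finite, explicit parity computation'' across ``subquotient lattices of $W\rtimes\Cy_{\e'}$'', and this is where there is a genuine gap: $W$ is an \emph{arbitrary} $l$-group (the wild inertia of whatever finite Galois extension $F/K$ one starts with), not a fixed group, so there is no finite case analysis to do. The tables in Proposition~\ref{propcva} work because Step~1 there first collapses $D$ to the specific shape $\Cy_\e\rtimes\Cy_{2^k}$; you have no analogous reduction for $W$, and Lemmas~\ref{loffchop},~\ref{ladvp} as invoked will not produce one. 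Your Case~3C sketch (``adjust the exponent of $q$ modulo $ef$'') is also not correct as written: subtracting multiples of $ef$ from $\lfloor k\,ef/\e\rfloor$ does not turn it into a function of $f$ alone.

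What the paper does instead is replace the case analysis by a single congruence. After reducing to $q$ an odd power of $l$ (else $\omega$ is a square and there is nothing to prove), one has $l\equiv(-1)^t\pmod\e$, whence $l\delta\equiv(-1)^t\delta+12n\pmod{24}$ for some $n\in\{0,1\}$, and by iteration $l^k\delta\equiv(-1)^{tk}\delta+12nk\pmod{24}$. Since the wild variable is $e=l^k$, this gives
\[
  \tGDI{I,W,l^{\tw{\delta ef}}}\;=\;\tGDI{I,W,l^{\tw{(-1)^{tk}\delta f}}}\cdot\tGDI{I,W,e^{fn}}.
\]
The second factor is handled uniformly for all $W$: descend to $\tGDI{W,W,e^n}$ and invoke Example~\ref{exWWe}, which says $\tGDI{W,W,e}\sim1$ whenever $|W|$ is odd (here $W$ is an $l$-group with $l>3$). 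The first factor is then a function of $f$ alone when $t=0$ (Case~3C), and when $t=1$ (Case~3D) the identity ``$\tw{m}\equiv\tw{-m}\bmod2$ iff $12\mid m$'' isolates exactly the $\tGDI{I,W,\smallchoice{e,}{\e\nmid f}{1,}{\e|f}}$ term. The point is that Example~\ref{exWWe} is what absorbs the arbitrary wild inertia; without it (or an equivalent device) your argument cannot close.
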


\begin{proof}
As before, write $\delta$ for the valuation of the minimal discriminant
of $E/K$ and $q$ for the size of the residue field of $K$.
So $q\equiv (-1)^t\mod\e$ with $t=0$ in Case 3C and $t=1$ in Case 3D.
If $q$ is an even power of the residue characteristic $l$, then $t=0$
and $\omega=q^{\ldots}=1\in\Q^{\times}/\Q^{\times2}$ as asserted.
Suppose from now on that $q$ is an odd power of $l$, so
$$
   \omega = \GDI{D,I,q^{\tw{\delta e}f}} = \GDI{D,I,l^{\tw{\delta e}f}}
   \sequald \GDI{I,I,l^{\tw{\delta e}}}
   \sequalr \GDI{I,W,l^{\tw{\delta ef}}}.
$$
The order of $W$ is a power of $l$, so let us define $k$ by
$e=l^k$.

Define $n\in\{0,1\}$ by
$$
  l \delta \equiv q \delta \equiv (-1)^t \delta + 12n \mod 24,
$$
so
$$
  l^k \delta \equiv (-1)^{tk} \delta + 12nk \mod 24.
$$
Then
$$
  \omega\sequal
  \GDI{I,W,l^{\tw{\delta l^kf}}} \sequal
  \GDI{I,W,l^{\lfloor\frac{(-1)^{tk}\delta f}{12}+fkn\rfloor}} \sequal
  \GDI{I,W,l^{\tw{(-1)^{tk}\delta f}}} \GDI{I,W,e^{fn}}.
$$
The second term is trivial:
$$
  \GDI{I,W,e^{fn}} \sequald \GDI{W,W,e^n} \simwwe 1.
$$
As for the first term, for $t=0$ it is a function of $f$ so it is $\sim 1$
by \hbox{Theorem~\ref{l2:}f}, as required.
Finally, $\tw{m}\equiv\tw{-m}\mod 2$ if and only if~$12|m$, so
for $t=1$ we have
$$
  \GDI{I,W,l^{\tw{(-1)^{k}\delta f}}} \sequal
  \GDI{I,W,l^{\tw{\delta f}}}
  \bGDI{I,W,{\Bigl\{\!\textstyle\tiny\begin{array}{ll}
    \!\!1, & 2|k \cr
    \!\!1, & 12|\delta f \cr
    \!\!l, & \text{else}
  \end{array}}\!\!\Bigr\}}
  \simf
  \bGDI{I,W,{\Bigl\{\!\textstyle\tiny\begin{array}{ll}
    \!\!1, & 2|k \cr
    \!\!1, & 12|\delta f \cr
    \!\!l, & \text{else}
  \end{array}}\!\!\Bigr\}}
$$
$$
  \sequal
  \bGDI{I,W,{\Bigl\{\!\textstyle\tiny\begin{array}{ll}
    \!\!l^k, & 2|k \cr
    \!\!1,   & \e|f \cr
    \!\!l^k, & \text{else}
  \end{array}}\!\!\Bigr\}}
  \sequal
  \GDI{I,W,\smallchoice{e,}{\e\nmid f}{1,}{\e|f}},
$$
as asserted.
\end{proof}

\begin{proposition}
\label{omega1.5}
In Case 3C, $d\sim 1$. In Case 3D,
$d\sim \tGDI{I,W,\smallchoice{e,}{\e\nmid f}{1,}{\e|f}}$.
\end{proposition}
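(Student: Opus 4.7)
In Case 3C, the module $V$ is zero by Table \ref{roottable}, so $d(H)=|H|^{-\dim V^H}=1$ identically, and the conclusion $d\sim 1$ is trivial.

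In Case 3D, the plan is to work modulo squares, where $d(H)\equiv|H|^{\dim V^H}$. Since $V=\triv\oplus\eta\oplus\sigma$ is lifted from $D/D'=\Di_{2\e}$, the parity of $\dim V^H$ is determined by the image $\bar H:=HD'/D'\subseteq\Di_{2\e}$. A routine character computation on $\Di_{2\e}$ (matching the starred entries of Table \ref{dihtable}) will show that $\dim V^{\bar H}$ is odd exactly when $\bar H$ is a dihedral subgroup of order $\geq 4$, i.e., contains both a reflection and a non-trivial rotation; otherwise $\dim V^{\bar H}\in\{2,4\}$ is even. Hence, modulo $\Q^{\times 2}$, one has $d(H)\equiv|H|$ when $\bar H$ is dihedral of order $\geq 4$, and $d(H)\equiv 1$ otherwise.

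The remainder of the argument will parallel Proposition \ref{omega1}. I would write $e=|I|/|H\cap I|$ and $f=[D:HI]$ for the ramification and residue degrees of $F^H/K$, decompose $e=e'f_t$ into the wild part $e'=|W|/|H\cap W|$ and the tame part $f_t=[I:W]/[H\cap I:H\cap W]$, and set $I_0=I\cap D'$, so that $[I:I_0]=\e$ and $W\subseteq I_0$ (since $\e$ is coprime to the residue characteristic $l$). The key group-theoretic translation is that $\bar H$ is dihedral of order $\geq 4$ iff both (i) $H\not\subseteq ID'$, i.e., $\bar H$ contains a reflection, and (ii) the image of $H\cap I$ in $I/I_0\cong\Cy_\e$ is non-trivial; condition (ii) is equivalent to $\e\nmid f_t$ via case analysis on the divisor $[H\cap I:H\cap I_0]$ of $\e$.

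A systematic application of Theorem \ref{l2:}(f), (r), (d) and ($\ell$), exactly as in the proof of Proposition \ref{omega1}, will then absorb the residue-only factors, descend from $D$ to $I$, and reduce $d$ to the asserted form $\tGDI{I,W,\psi}$ with $\psi(e,f)=e$ if $\e\nmid f$ and $\psi(e,f)=1$ otherwise. The main obstacle will be the bookkeeping in this final translation step: matching the reflection condition (i) with the double-coset exponent $[D:HI]$ arising in the GDI, matching the rotation condition (ii) with the $\psi=e'$ branch, and using the $\sim$-triviality of functions of $f$ alone (Theorem \ref{l2:}f) together with the odd-order wild trick of Example \ref{exWWe} to discard the leftover terms.
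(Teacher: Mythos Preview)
Your Case 3C treatment and your identification that $\dim V^{\bar H}$ is odd precisely when $\bar H\le\Di_{2\e}$ is dihedral of order $\ge 4$ are both correct, and match the starred entries in Table~\ref{dihtable}. The group-theoretic conditions (i), (ii) you propose are also correct characterisations (under the standing assumption that $f_{F/K}$ is a $2$-power from Lemma~\ref{DI2}, which you should state explicitly), and they give an alternative to the paper's field-theoretic route via Lemmas~\ref{blackmagic0} and~\ref{blackmagic1}.

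However, there is a genuine gap in your plan: you never explain how the expression $d(H)\equiv|H|$ (on the relevant $H$) gets rewritten as a function of the ramification data $e,f$ of $F^H/K$, which is what the GDI formalism requires. The paper handles this by invoking Lemma~\ref{dimsaddup}: since $\sum_i n_i\dim V^{H_i}=0$ in any relation, one has $|H|^{\dim V^H}\sim [D:H]^{\dim V^H}=(ef)^{\dim V^H}$, and only then does $d$ become $\tGDI{D,I,\psi}$ for an explicit $\psi(e,f)$. Without this step, your ``systematic application of Theorem~\ref{l2:}'' cannot begin, because $|H|$ is not a $D$-local function in the sense of Definition~\ref{gdidef2}. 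Your sketch jumps from ``$d(H)\equiv|H|$ or $1$'' directly to manipulating $\tGDI{I,W,\psi}$, and this is precisely the transition that needs the $|H|\to ef$ replacement.

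A minor point: the reference to Example~\ref{exWWe} is a red herring here. The paper's proof of Proposition~\ref{omega1.5} does not use the odd-order wild trick; once $d$ is in the form $\tGDI{D,I,\psi}$, the reduction to $\tGDI{I,W,\ldots}$ goes through Theorem~\ref{l2:}(d),(r),(f) directly, using only that $[D:I]$ is a $2$-power and $2,3\nmid|W|$.
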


\begin{proof}

In Case 3C, the module $V$ is zero and the result is trivial, so suppose we
are in Case 3D.
By definition, $d(H)$ is either 1 or $|H|$ (up to squares),
depending
on the intersection of $F^H$ with the dihedral extension $L/K$. We
\newpage
\noindent
may replace $|H|$ by $[D:H]=e_{F^H/K}f_{F^H/K}$
by Lemma \ref{dimsaddup}. 
Inspecting $\ast$ in Table \ref{dihtable}, we find that for $H\supset D'$
(i.e. corresponding to subfields of $L$),
$$
  (ef)^{\dim V^H} \equiv {\Bigl\{\!\textstyle\small\begin{array}{ll}
    \!\!1,  & 2|f \text{\ or}\>\e|e \cr
    \!\!ef,  & \text{else}
  \end{array}}
  \!\!\!\mod\Q^{\times2}
  \qquad (\text{with\ }e\!=\!e_{F^H/K},\>\>f\!=\!f_{F^H/K}).
$$
By Lemmas \ref{blackmagic0}, \ref{blackmagic1},
the condition ``$2|f \text{\ or}\>\e|e$'' holds for a general $F^H$
if and only if it holds for $F^H\cap L$. Therefore
$$
  d \>\>\sim\>\> (H\mapsto (e_{F^H/K}f_{F^H/K})^{\dim V^H})
  \>\>\sequal\>\>
  \bGDI{D,I,{\Bigl\{\!\textstyle\tiny\begin{array}{ll}
    \!\!1,  & 2|f \cr
    \!\!1,  & \e|e \cr
    \!\!ef,  & \text{else}
  \end{array}}\!\!\Bigr\}}.
$$
Recall that $2,3\nmid|W|$ and that $[D:I]$ is a power of 2
(Lemma \ref{DI2}). Thus
\beq
  \bGDI{D,I,{\Bigl\{\!\textstyle\tiny\begin{array}{ll}
    \!\!1,  & f\!\!\ne\!\!1 \cr
    \!\!1,  & \e|e \cr
    \!\!e,  & \text{else}
  \end{array}}\!\!\Bigr\}}\!\!\!\!\!
    &\sequald\!\!\!\!\!&
  \bGDI{I,I,\choice{1}{\e|e}{e}{\e\nmid e}}
\cr
  &\sequalr\!\!\!\!\!&
  \bGDI{I,W,\choice{1}{\e|f}{ef}{\e\nmid f}}
  \simf
  \bGDI{I,W,\choice{1}{\e|f}{e}{\e\nmid f}}\>.
\eeq
\vskip -2mm
\end{proof}

Combining Propositions \ref{propcva}, \ref{omega1} and \ref{omega1.5}
proves Proposition \ref{propcvdv} in Cases (3C) and (3D).

\subsubsection{(Case 3M) $E/K$ has potentially multiplicative reduction}

In view of Lemma~\ref{DI2}, it suffices to prove the following

\begin{claim}
Suppose $f_{F/K}$ is a power of 2.
Then $\omega\sim 1\sim a$ and $c_v\sim d$, and hence $C_v\sim\D_V$.
\end{claim}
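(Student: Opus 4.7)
The plan is to establish the three separate assertions $a\sim 1$, $\omega\sim 1$, and $c_v\sim d$, whose product gives $C_v\sim\D_V$. First I would dispose of $a\sim 1$: since $V=\chi$ is one-dimensional, for any $H\<D$ either $V^H=0$ (giving the empty determinant $1$) or $V^H=V$, and in the latter case $G$-invariance of the pairing forces $\langle v,v\rangle=\chi(g)^2\langle v,v\rangle=\langle v,v\rangle$, so we may normalize $\langle v,v\rangle=1$. Thus $a(H)=1$ identically.

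For $\omega\sim 1$: starting from $\omega=\tGDI{D,I,q^{\lfloor e/2\rfloor f}}$, we may assume $q\not\equiv 1\pmod{\Q^{\times2}}$ (else $\omega$ is already constant $1$). Since $[D:I]=f_{F/K}$ is a power of $2$ by hypothesis and $\psi(e,f/m)^m=\psi(e,f)$ for $\psi(e,f)=q^{\lfloor e/2\rfloor f}$ and every $m\mid f$, Theorem~\ref{l2:}(d) gives $\omega\sim\tGDI{I,I,q^{\lfloor e/2\rfloor}}$. By Theorem~\ref{l2:}($\ell$) it then suffices to check that $H\mapsto q^{\lfloor[I:H]/2\rfloor}$ is trivial on $I$-relations. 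Here I would exploit the structure $I=W\rtimes\Cy_m$ (Schur--Zassenhaus, since $\gcd(|W|,[I:W])=1$) with $|W|$ odd (as $l>3$) and $m$ even (because the ramified quadratic $L/K$ is tame and factors through $I/W$): for any $I$-relation $\Theta=\sum n_iH_i$, its projection to $\Cy_m=I/W$ is trivial (cyclic groups have no relations), giving $\sum_{H_iW/W=U}n_i=0$ for each subgroup $U\<\Cy_m$. Using the factorisation $[I:H_i]=(m/|H_iW/W|)\cdot[W:H_i\cap W]$, the index $[I:H_i]$ is odd precisely when $H_iW/W$ contains the $2$-Sylow of $\Cy_m$; combined with $\sum n_i[I:H_i]=0$ (Lemma~\ref{dimsaddup} applied to $\rho=\Q[I]$), a short computation yields $\sum_i n_i\lfloor[I:H_i]/2\rfloor = -\tfrac{1}{2}\sum_{H_iW/W\supseteq\Cy_2}n_i = 0$.

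The main obstacle is $c_v\sim d$. Tate's algorithm splits $H\<D$ into three regimes according to the reduction of $E/F^H$: (i) $H\subset D'$: split multiplicative of type $\In{n\,e_{F^H/K}}$, so $c_v(H)=n\cdot e_{F^H/K}$ and $d(H)=|H|^{-1}\equiv|H|$; (ii) $H\not\subset D'$ and $H\cap I\subset D'$: $L\cdot F^H/F^H$ is unramified quadratic, $E/F^H$ is non-split $\In{*}$, $c_v(H)\in\{1,2\}$ and $d(H)=1$; (iii) $H\cap I\not\subset D'$: $L\cdot F^H/F^H$ is ramified, $E/F^H$ has additive reduction of type $\InS{*}$, $c_v(H)\in\{1,2,4\}$ and $d(H)=1$. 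In regime~(i), $c_v(H)/d(H)\equiv n\cdot|HI|\pmod{\Q^{\times2}}$; the factor $n^{\sum_{H_i\subset D'}n_i}$ cancels because combining Lemma~\ref{dimsaddup} applied to $\rho=\triv$ and to $\rho=\Q[D/D']$ yields $\sum_{H_i\subset D'}n_i=0$, and the residual $|HI|$ assembles into a $\tGDI{\,\cdot\,}$-type function on $D$ which, via Theorem~\ref{l2:} and Corollary~\ref{GoverD}, matches the joint contributions of regimes~(ii) and~(iii), in parallel with the bookkeeping used for Cases~(2S) and (2NS). The crux lies in regime~(iii), where the $c_v\in\{2,4\}$ dichotomy is governed by square conditions on specific elements in the residue field of~$F^H$; here I would follow the strategy of Proposition~\ref{propcva} for Case~3D, descending to the wild inertia $W$ via Lemma~\ref{loffchop}(2) and applying the Frattini-quotient argument at the end of~\S\ref{ssnonsplit}. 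The hypothesis $l>3$ guarantees that $|W|$ is coprime to~$2$, so the final parity arguments close.
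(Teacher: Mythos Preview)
Your treatments of $a\sim 1$ and $\omega\sim 1$ are both correct, and in each case somewhat different from the paper's. For $a$, you in fact establish the stronger statement $a=1$ identically by normalising the pairing on the one-dimensional $V=\chi$; the paper instead observes that $a$ is lifted from $D/D'\cong\Cy_2$, which has no relations, and invokes Theorem~\ref{l2:}q. Your $\omega$ argument (descend to $I$ via Theorem~\ref{l2:}(d), project the relation to the cyclic quotient $I/W$, and count parities) is a clean self-contained computation; the paper simply cites the $\e=2$, $\delta=6$ case of Proposition~\ref{omega1}. One small correction: your ``$\Cy_2$'' in the final line should be $\Syl_2(\Cy_m)$, and your aside that $m$ is even, while true (since $L/K$ is totally ramified quadratic), is not actually used.

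The argument for $c_v\sim d$, however, has a real gap. Your regime decomposition matches the paper's, and your treatment of regime~(i), including the cancellation of $n^{\sum_{H_i\subset D'}n_i}$ via Lemma~\ref{dimsaddup}, is correct. But your proposed strategy for regime~(iii) does not work. The dichotomy $c_v\in\{2,4\}$ there is controlled by whether a certain \emph{unramified} quadratic extension of $F^H$ is trivial (Lemma~\ref{lemcv}: the extension is $F^H(\sqrt{B})$ or $F^H(\sqrt{\Delta})$), so ``descending to the wild inertia $W$'' throws away precisely the information that distinguishes the two values. Moreover, the Frattini-quotient argument at the end of \S\ref{ssnonsplit} was specific to the case $v\mid 2$ where $W$ is a $2$-group; here $|W|$ is an odd power of $l$, and that argument has no analogue. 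The vague appeal to ``the strategy of Proposition~\ref{propcva} for Case~3D'' is likewise misplaced: that proposition exploits a very particular $\Cy_\e\rtimes\Cy_{2^k}$ structure unrelated to the potentially multiplicative setting.

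The idea you are missing is the paper's: by Lemma~\ref{lemcv} the values of $c_v$ depend only on $e_{F^H/K}\bmod 2$ and on which of a fixed finite list of unramified quadratic extensions $F^H$ contains, and $d$ depends only on whether $H\subset D'$. Via Lemma~\ref{blackmagic0} this means $c_v/d$ factors through the quotient $\Gal(L^u/K)\cong\Cy_2\times\Cy_{2^k}$, where $L^u$ is the maximal unramified extension of $L$ in $F$. Theorem~\ref{l2:}q then reduces the problem to checking the relations of this small abelian group, which by Example~\ref{exoff1} all come from $\Cy_2\!\times\!\Cy_2$-subquotients and can be verified by hand.
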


\begin{proposition}
\label{omega2}
$\omega\sim 1\sim a$.
\end{proposition}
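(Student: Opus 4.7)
My plan is to prove the two assertions separately.

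\textbf{Triviality of $a$.} Since $V = \chi$ is the $1$-dimensional character of $D/D'$ cutting out $L/K$, any non-degenerate $G$-invariant pairing on $V$ is determined up to a scalar. Normalising $\langle v,v\rangle = 1$ on a basis vector $v$ forces $a(H) = \det(\tfrac{1}{|H|}\lara \mid V^H) = 1$ for every $H$: either as the empty determinant (when $V^H = 0$) or as the $(1)$ determinant (when $V^H = V$). Thus $a \equiv 1$ literally on the nose, and in particular $a \sim 1$.

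\textbf{Setup for $\omega \sim 1$.} The structural input I exploit is that $L = K(\sqrt{-c_6})/K$ is necessarily a ramified quadratic extension: if it were unramified then the Kodaira reduction type of $E/L$ would equal that of $E/K$, contradicting the multiplicative reduction of $E/L$ that defines $L$. Hence $I$ surjects onto $D/D' = \Cy_2$, and the tame quotient $I/W$ has even order. By Lemma \ref{DI2} I may further assume that $f_{F/K}$ is a power of $2$, so that $f_H$ is a $2$-power for every $H \leq D$.

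\textbf{Main reduction.} Starting from $\omega = \tGDI{D,I,q^{\lfloor e/2\rfloor f}}$, the observation modulo squares is that $\lfloor e_H/2\rfloor f_H$ is automatically even whenever $f_H \geq 2$ (an even $2$-power), so $\omega(H) \sim 1$ in that range; the only residual contribution comes from subgroups with $f_H = 1$, where $\omega(H) \sim q$ iff $e_H \equiv 2, 3 \pmod 4$. The descent identity $\psi(e, f/m)^m = q^{m\lfloor e/2\rfloor (f/m)} = \psi(e,f)$ holds for every $m$, so by Theorems \ref{l2:}d and \ref{l2:}$\ell$ the problem reduces to showing that the $I$-function $\omega_I(H') = q^{\lfloor [I:H']/2\rfloor}$ is trivial on every $I$-relation. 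Since $|W|$ is odd (because $l>3$) and $\gcd(|W|,|T|)=1$ where $T = I/W$ is the cyclic tame quotient, Schur--Zassenhaus gives $I = W \rtimes T$, and a $2$-Sylow $S$ of $I$ is a cyclic subgroup of $T$. Combining the renaming criterion (Theorem \ref{l2:}r) with Example \ref{exWWe} to absorb the odd $W$-contribution, I then reduce the triviality of $\omega_I$ on $I$-relations to the same statement on $S$-relations; since $S$ is cyclic, there are no non-trivial $S$-relations, and the claim follows.

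\textbf{Main obstacle.} The floor function $\lfloor e/2 \rfloor$ is not a ``function of $ef$'' or a ``function of $f$'' in the sense of Theorem \ref{l2:}r, f, so it does not fit the available machinery for free: concretely, for an $I$-relation $\sum n_j H'_j$ the required congruence $\sum_j n_j \lfloor [I:H'_j]/2\rfloor \equiv 0 \pmod 2$ is equivalent to $\sum_{H'_j \supset S} n_j \equiv 0 \pmod 4$, which is more subtle than a parity condition. Executing the reduction to $S$ cleanly will require combining localisation, descent and the Schur--Zassenhaus splitting of $I$ carefully, and possibly a bespoke argument exploiting that $L/K$ is ramified (not merely quadratic); I expect this last step, together with the boundary case $e_H \equiv 3 \pmod 4$, to be the technical heart of the proof.
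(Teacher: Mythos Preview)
Your treatment of $a \sim 1$ is fine: choosing the pairing so that $\langle v,v\rangle = 1$ makes $a$ identically $1$, which is if anything more direct than the paper's argument (which observes that $a$ factors through $\Gal(L/K)\cong\Cy_2$ and invokes Theorem~\ref{l2:}q).

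For $\omega \sim 1$ you correctly descend from $D$ to $I$ and can then rename to $W$, arriving at $\tGDI{I,W,l^{\lfloor ef/2\rfloor}}$ (assuming $q$ is an odd power of $l$; otherwise $\omega$ is already a square). But your proposed further reduction to a cyclic $2$-Sylow $S\subset I$ is not how the paper proceeds and, as you yourself flag, does not fit the available machinery: the floor function genuinely couples the odd and even parts of $e$, so peeling off the $2$-part is the wrong direction. The key observation you are missing is this: after renaming, the ``$e$'' in $\tGDI{I,W,\cdot}$ is an index inside the $l$-group $W$, hence $e = l^k$ for some $k$. Writing $l \equiv 1 + 2n \pmod 4$ with $n\in\{0,1\}$, one gets $\lfloor l^k f/2\rfloor \equiv \lfloor f/2\rfloor + nkf \pmod 2$, which splits
\[
  \tGDI{I,W,l^{\lfloor ef/2\rfloor}} \;=\; \tGDI{I,W,l^{\lfloor f/2\rfloor}} \cdot \tGDI{I,W,e^{nf}}.
\]
The first factor is a function of $f$ and is trivial by Theorem~\ref{l2:}f; the second descends to $\tGDI{W,W,e^n}$ by Theorem~\ref{l2:}d and vanishes by Example~\ref{exWWe} since $|W|$ is odd. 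This is precisely the $\e=2$, $\delta=6$ instance of Proposition~\ref{omega1}, which the paper simply cites. Your observation that $L/K$ is ramified is correct but plays no role here; the obstruction you identify in the floor function is real, and its resolution is this mod-$4$ congruence trick rather than a reduction to the $2$-Sylow.
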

\begin{proof}
As $V$ is a $\Cy_2$-representation and $\Cy_2$ has no relations, $a\sim 1$
by \hbox{Theorem}~\ref{l2:}q.
Also $\omega=\tGDI{D,I,q^{\lfloor\frac e2\rfloor f}}\sim 1$,
by the proof of $\e=2, \delta=6$ case
of Proposition \ref{omega1}.
\end{proof}

\begin{proposition}
$c_v\sim d$.
\end{proposition}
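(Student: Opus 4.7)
The plan is to mirror the argument of the non-split multiplicative case (\S\ref{ssnonsplit}), since $V = \chi$ plays here the same formal role as the unramified quadratic character $\eta$ did there. The differences are that $L = K(\sqrt{-c_6})$ may now be ramified and that $E$ is only potentially multiplicative, so the Tamagawa numbers over $F^H$ depend on whether or not $F^H$ already contains $L$.

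First I would compute $d$. Since $V = \chi$ is the non-trivial character of $D/D' \iso \Cy_2$, we have $\dim V^H = 1$ exactly when $H \subset D'$, and $0$ otherwise. Using Lemma \ref{dimsaddup} to trade $|H|$ for $[D:H] = e_{F^H/K} f_{F^H/K}$ modulo squares, and the decomposition $\D_\chi \sim \D_{\Q[D/D']}/\D_{\Q[D/D]}$, Corollary \ref{GoverD} yields
$$d \sim \GDI{D', I\cap D', ef}/\GDI{D, I, ef}.$$
Next I would read off $c_v(E/F^H)$ from Tate's algorithm. Write $E/K$ as type $\InS{n}$ and set $e = e_{F^H/K}$, $f = f_{F^H/K}$. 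If $H \not\subset D'$ (i.e.\ $L \not\subset F^H$), then $E/F^H$ is still additive of type $\InS{ne}$, with $c_v \in \{1,2,4\}$ and the value determined by a mod-$2$ condition on the residue data. If $H \subset D'$, then $E/F^H$ is multiplicative of type $\In{N_H}$ (split or non-split), where $N_H$ is $n e_{F^H/L}$ or $n e_{F^H/K}$ according to whether $L/K$ is ramified; $c_v$ is then given by the formulae of \S\ref{sssplit}--\S\ref{ssnonsplit}.

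Forming the quotient $c_v/d$, the ``bulk'' part arising from $N_H$ on $H\subset D'$ cancels $\tGDI{D', I\cap D', ef}/\tGDI{D, I, ef}$ up to a function of $f$ alone, which is trivial on relations by Theorem \ref{l2:}f. This leaves a $\{1,2\}$-valued ``$2$-torsion correction'', a $\tGDI{D,?,\psi}$-type factor structurally identical to the residual factor appearing at the end of \S\ref{ssnonsplit}, and supported at residue characteristic $2$.

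The main obstacle is precisely this $2$-torsion correction when $v\mid 2$. I would handle it exactly as in the closing paragraph of \S\ref{ssnonsplit}: reduce via properties ($\ell$), (d), (r) of Theorem~\ref{l2:} to a function on subgroups of the wild inertia $W$, descend to the elementary abelian Frattini quotient $\bar W = W/\Phi(W)$ (where the correction is supported only on $H = \bar W$), and conclude using that every $\bar W$-relation contains an even number of copies of $\bar W$, since these are the only cosets giving $\C[\bar W/H]$ of odd dimension. A minor bookkeeping issue is to keep the two subcases ``$L/K$ ramified'' and ``$L/K$ unramified'' straight in the expression for $N_H$; but in both the $\tGDI{\cdots}$ manipulation is a direct analogue of \S\ref{ssnonsplit}.
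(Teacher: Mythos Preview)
Your plan has two factual errors that undermine the analogy with \S\ref{ssnonsplit}.

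First, in Case (3M) the residue characteristic is $l>3$, so there is no ``$v\mid 2$'' subcase: the wild inertia $W$ always has odd order, and the Frattini argument from the end of \S\ref{ssnonsplit} is never needed. If a residual correction did reduce to $W$, it would be killed immediately by the $v\nmid 2$ line of \S\ref{ssnonsplit} (or Example~\ref{exWWe}).

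Second, your description of the reduction types is wrong. Since $E/K$ is additive but becomes multiplicative over $L$, the extension $L/K$ is necessarily \emph{ramified} (unramified base change preserves additive reduction); there is no ``$L/K$ unramified'' subcase. Moreover, when $H\not\subset D'$ but $2\mid e_{F^H/K}$, the curve $E/F^H$ already has \emph{non-split multiplicative} reduction $\In{ne}$ with $c_v=2$, not additive $\InS{ne}$ (Lemma~\ref{lemcv}: once $2\mid e$ the curve is multiplicative, and $\sqrt{-6B}\notin F^H$ makes it non-split). So $c_v$ depends on the three conditions ($L\subset F^H$?, $2\mid e$?, value of $f$?), not merely on whether $H\subset D'$.

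This is also why the direct mirror of \S\ref{ssnonsplit} fails structurally: there $D'\supset I$ (since $L/K$ was unramified), so ``$H\subset D'$'' translated into ``$2\mid f$'' and everything fit into a single $\tGDI{D,I,\psi(e,f)}$. Here $D'\not\supset I$, and ``$L\subset F^H$'' cannot be read off from $(e,f)$ alone, since $F$ may contain other ramified quadratic extensions of $K$.

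The paper takes a different route. It tabulates $c_v$, $d$, $c_v/d$ according to the three conditions above, then uses Lemma~\ref{blackmagic0} to show that $c_v/d$ (to $\Q^\times/\Q^{\times 2}$) factors through the quotient $\Gal(L^u/K)\cong\Cy_2\times\Cy_{2^k}$, where $L^u$ is the maximal unramified extension of $L$ in $F$. By Theorem~\ref{l2:}q it then suffices to check triviality on $\Cy_2\times\Cy_{2^k}$-relations; by Example~\ref{exoff1} these all come from $\Cy_2\times\Cy_2$-subquotients, where the cancellation is a direct inspection.
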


\begin{proof}

For $H\< D$, the expression for $c_v(E/F^H)$ is given in
Lemma \ref{lemcv} (note that $L=K(\sqrt{-6B})$ in its notation).
Writing $e=e_{F^H/K}$, $f=f_{F^H/K}$, we have
$$
\begin{array}{l|cccc}
  & E/F^H & c_v & d & c_v/d \cr
\hline
L\subset F^H\vphantom{\int^X}       & \In{ne}\text{ split}     & ne       & \frac{{\scriptscriptstyle|D|}}{ef} & \frac{n}{{\scriptscriptstyle|D|}}e^2f  \cr
L\notsubset F^H, 2|e                & \In{ne}\text{ non-split}  & 2        & 1  & 2         \cr
L\notsubset F^H, 2\nmid e, f\!=\!1  & \InS{ne}                 & c_v(E/K) & 1  & c_v(E/K)  \cr
L\notsubset F^H, 2\nmid e, 2|f      & \InS{ne}                 & 4        & 1  & 4         \cr
\end{array}
$$
Write $L^u$ and $K^u$ for the maximal unramified extensions of $L$ and
$K$ \hbox{inside}~$F$,
respectively.
By Lemma \ref{blackmagic0}, the function $c_v/d$ (to $\Q^{\times}/\Q^{\times2}$) factors through
$$
  \Gal(L^u/K) \,=\, \Gal(L^u/K^u)\times\Gal(L^u/L) \,=\, \Cy_2 \times \Cy_{2^k}.
$$
By Theorem \ref{l2:}q, it suffices to prove that $c_v/d$
is trivial on $\Gal(L^u/K)$-relations.
A list of generating relations is given in
Example \ref{exoff1} (with $u\!=\!1$ and $m\!=\!1$).
These come from $\Cy_2\!\times\!\Cy_2$\hskip 1pt-\hskip 1pt subquotients,
each with one relation (Example \ref{exc2c2})
and it is elementary to verify that $c_v/d$ is trivial on these.
\end{proof}

\subsubsection{Appendix: Local lemmas}

\begin{lemma}
\label{lemcv}
Let $K'/K/\Q_l$ be finite extensions with $l\!\ge\!5$,
and let $E/K$ be an elliptic curve with additive reduction,
$$
  E: y^2 = x^3 + A x + B, \qquad A,B\in K.
$$
Write $\Delta=-16(4A^3+27B^2)$ for the discriminant of this model,
$\delt$ for its $K$-valuation, and $e\!=\!e_{K'/K}$.

If $E$ has potentially good reduction,
then
\beq
\gcd(\delt e,12)=2 & \implies & c_v(E/K')=1 & (\II,\IIS{})\cr
\gcd(\delt e,12)=3 & \implies & c_v(E/K')=2 & (\III,\IIIS)\cr
\gcd(\delt e,12)=4 & \implies & c_v(E/K')=\leftchoice{1}{\sqrt{B}\not\in K'}{3}{\sqrt{B}\in K'} & (\IV,\IVS)\cr
\gcd(\delt e,12)=6 & \implies & c_v(E/K')=\leftchoice{2}{\sqrt{\Delta}\not\in K'}{1\>\text{\rm or}\>4}{\sqrt{\Delta}\in K'} & (\IZS)\cr
\gcd(\delt e,12)=12 & \implies & c_v(E/K')=1 & (\IZ).\cr
\eeq
The extension $K'(\sqrt{B})/K'$ in the \IV, \IVS{} cases and
$K'(\sqrt{\Delta})/K'$ in the \IZS{} case is unramified.
In particular if $K''/K'$ has odd residue degree and
$\gcd(\delt e_{K'/K},12)=\gcd(\delt e_{K''/K},12)$, then $c_v(E/K')=c_v(E/K'')$.

If $E$ has potentially multiplicative reduction of type \InS n over $K$, then
\beq
  2\nmid e, 2\nmid n & \implies &
    c_v(E/K')=\leftchoice{2}{\sqrt{B}\not\in K'}{4}{\sqrt{B}\in K'} & (\InS{ne})\cr
  2\nmid e, 2|n & \implies &
    c_v(E/K')=\leftchoice{2}{\sqrt{\Delta}\not\in K'}{4}{\sqrt{\Delta}\in K'} & (\InS{ne})\cr
  2|e, \sqrt{-6B}\not\in K'  & \implies & c_v(E/K')=2 & (\text{\In{ne} non-split})\cr
  2|e, \sqrt{-6B}\in K'  & \implies & c_v(E/K')=ne & (\text{\In{ne} split}).\cr
\eeq
The extension $K'(\sqrt{B})/K'$, $K'(\sqrt{\Delta})/K'$ or $K'(\sqrt{-6B})/K'$
corresponding to the case is unramified.
\end{lemma}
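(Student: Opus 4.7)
My plan is to derive the lemma as a direct book-keeping application of Tate's algorithm (\cite{Sil2} IV.9), exploiting the simplifications available in residue characteristic $l\ge 5$.

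\medskip

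\emph{Potentially good reduction.} Because $l\ge 5$, the only Weierstrass changes of coordinates that affect minimality over an extension $K'$ are the scalings $(x,y)\mapsto(\pi^{-2a}x,\pi^{-3a}y)$, under which $\Delta\mapsto \pi^{12a}\Delta$, $B\mapsto\pi^{6a}B$. Hence the minimal $K'$-discriminant has valuation $\delta'=\delta e-12\lfloor\delta e/12\rfloor\in\{0,2,3,4,6,8,9,10\}$, and by the standard Kodaira dictionary (which for $l\ge 5$ is read off from $\delta'$ alone, as $v(j)\ge 0$) the reduction type over $K'$ is one of $\IZ,\II,\III,\IV,\IZS,\IVS,\IIIS,\IIS$ in these cases. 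The value $\gcd(\delta',12)=\gcd(\delta e,12)$ pairs $\II\leftrightarrow\IIS$, $\III\leftrightarrow\IIIS$, $\IV\leftrightarrow\IVS$, and is the classifier in the lemma. The entries $c_v=1$ for $\II,\IIS$ and $c_v=2$ for $\III,\IIIS$ are immediate from Tate's algorithm.

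\medskip

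\emph{Fine cases $\IV/\IVS$ and $\IZS$.} I will run Tate's algorithm on the $K'$-minimal model. In type $\IV/\IVS$ we have $v_{K'}(B)=2+6a$ for the scaling exponent $a=\lfloor\delta e/12\rfloor/(\text{something})$, so after normalising, $B':=B/\pi^{v_{K'}(B)}$ is a unit; the component group is $\Cy_3$ iff the quadratic $T^2-B'$ (coming from Tate's step at type IV with our $a_3=0$, $a_6=B$) splits over the residue field, equivalently iff $\sqrt{B'}\in K'$, which by the scaling rule is iff $\sqrt{B}\in K'^\times$. In type $\IZS$, Tate's step requires counting roots in the residue field of the cubic whose coefficients come from the reduced $a_2,a_4,a_6$; using $a_2=0$ and the short Weierstrass form, its discriminant is proportional to $\Delta$ up to a unit and an explicit power of $\pi$ dividing out with $\delta'=6$. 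Thus the cubic has at least one root in $K'$ (giving $c_v\ge 2$, the split $\IZS$ case) iff $\sqrt{\Delta}\in K'$, and $c_v\in\{1,4\}$ within that subcase is distinguished by whether all three roots are rational (which is an unramified refinement). Unramifiedness of $K'(\sqrt{B})/K'$ and $K'(\sqrt{\Delta})/K'$ in these statements follows because $B'$ and $\Delta/\pi^{\delta'}$ are units of $K'$ and $l\ne 2$; this immediately implies the last claim about odd residue-degree extensions $K''/K'$.

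\medskip

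\emph{Potentially multiplicative reduction.} Here $j\in K$ has negative valuation, so $E$ is a quadratic twist of a Tate curve. The twist character is cut out by $L=K(\sqrt{-c_6})$, which modulo squares in $K^\times$ (recalling $c_6=-864B$ and $l\ge 5$) gives exactly the extension appearing in the $\sqrt{-6B}$ condition of the statement (after matching signs via the reduction type mod $l$). Over any $K'\supset L$, $E/K'$ is a split Tate curve of type $\In{ne}$, so $c_v(E/K')=ne$. Over $K'\not\supset L$ with $e$ even, the twist stays non-trivial but the ramification of $F/K$ forces the reduction to become (non-split) $\In{ne}$ with $c_v=2$ since $ne$ is even. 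Over $K'\not\supset L$ with $e$ odd and $f=1$ (respectively $2\mid f$) the reduction type remains $\InS{ne}$ and Tate's algorithm in this additive case gives $c_v\in\{2,4\}$ decided by the rationality of the 2-torsion, which translates (as in the $\IZS$ analysis above) to $\sqrt{B}\in K'$ when $n$ is odd and $\sqrt{\Delta}\in K'$ when $n$ is even; the unramifiedness of the relevant quadratic extensions is again automatic from unitness of $B,\Delta,-6B$ up to even powers of $\pi$.

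\medskip

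The main obstacle is purely bookkeeping: keeping track of which step of Tate's algorithm produces which quadratic/cubic, verifying that its split/irreducible behaviour is indeed governed by the classes of $B$, $\Delta$ or $-6B$ modulo $K'^{\times 2}$, and checking the unramifiedness. There is no conceptual difficulty beyond the standard algorithm once the hypothesis $l\ge 5$ has removed all wild phenomena.
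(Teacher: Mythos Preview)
Your approach is exactly the paper's: the authors' entire proof reads ``This follows from Tate's algorithm (\cite{Sil2}, IV.9)'', and you are simply expanding that citation. So there is no divergence in method.

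That said, your expanded bookkeeping contains a genuine slip in the $\IZS$ paragraph. For the cubic $P(T)=T^3+a_{4,2}T+a_{6,3}$ arising in Tate's algorithm, the discriminant is (up to units and squares) $\Delta/\pi^6$. A monic cubic over a field has exactly one rational root when its discriminant is a \emph{non}-square, and $0$ or $3$ rational roots when the discriminant is a square. Hence $c_v=2$ (one root) corresponds to $\sqrt{\Delta}\notin K'$, and $c_v\in\{1,4\}$ to $\sqrt{\Delta}\in K'$, which is what the lemma asserts; your sentence ``the cubic has at least one root \ldots\ iff $\sqrt{\Delta}\in K'$, and $c_v\in\{1,4\}$ within that subcase'' has the dichotomy reversed and is internally inconsistent. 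A couple of smaller points: in the potentially multiplicative case you introduce an unexplained residue degree $f$ that is not part of the statement, and the justification ``rationality of the $2$-torsion \ldots\ translates to $\sqrt{B}\in K'$ when $n$ is odd'' is not the right mechanism in Tate's algorithm for $\InS{n}$ --- the split between $c_v=2$ and $c_v=4$ comes from whether a specific auxiliary quadratic (depending on the parity of $n$) factors over the residue field, and one has to trace through the substeps to see why it is governed by $B$ versus $\Delta$. None of this affects the strategy, but the details need tightening before the sketch becomes a proof.
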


\begin{proof}
This follows from Tate's algorithm (\cite{Sil2}, IV.9).
\end{proof}

\begin{remark}
\label{cv=tors}
Let $K$ be a finite extension of $\Q_l$ with $l\ge 5$, and $E/K$
an elliptic curve.
Recall that $c_v(E/K)$ is the size of the N\'eron component group
$E(K)/E_0(K)$. When $E/K$ has additive reduction, this group
is of order at most 4, and is isomorphic to the prime-to-$l$ torsion
in $E(K)$. (Use the standard exact sequence
\cite{Sil1} VII.2.1 and note that multiplication-by-$l$ is an isomorphism
both on the formal group of $E$ and on the reduced curve.)
In particular, in the $\IZS$ case of the lemma,
$c_v(E/K')=1$ if and only if $E/K'$ has trivial 2-torsion.
\end{remark}

\begin{lemma}
\label{blackmagic0}
Suppose $L/K/\Q_l$ are finite extensions. For every divisor
$m|e_{L/K}$ with $l\nmid m$,
there exists a subfield $M$ of $L/K$
with $e_{M/K}=m$ and $L/M$ totally ramified.
\end{lemma}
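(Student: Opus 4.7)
The plan is to pass to the Galois closure and use the structure theorem for the inertia group: the wild inertia is normal and has prime-to-$l$ cyclic quotient. Concretely, let $F/K$ be the Galois closure of $L/K$, set $G=\Gal(F/K)$, $H=\Gal(F/L)$, let $I\triangleleft G$ be the inertia subgroup and $W\triangleleft G$ the wild inertia. Then $W$ is the $l$-Sylow of $I$, and $I/W$ is cyclic of order prime to $l$. Since $K,L$ are local, $e_{L/K}=[I:I\cap H]$; write $J_0=I\cap H$.

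The key step is to produce an intermediate subgroup $H\subset H'\subset G$ with $[I:I\cap H']=m$ and $[H':H]=[I\cap H':I\cap H]$, which will translate exactly into $e_{F^{H'}/K}=m$ and $L/F^{H'}$ totally ramified. Because $l\nmid m$ and $W$ is an $l$-group, any subgroup $J\subset I$ of index $m$ must contain $W$, so finding $J$ with $J_0\subset J$ and $[I:J]=m$ amounts to finding a subgroup of index $m$ in the cyclic group $I/W$ containing $J_0W/W$; this exists (and is unique) precisely because $m$ divides the prime-to-$l$ part $[I/W:J_0W/W]$ of $e_{L/K}$. Let $J$ be its preimage in $I$.

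The crucial point I want to highlight as the main subtlety is that $H'=HJ$ must actually be a subgroup, i.e.\ $J$ must be normalized by $H$. This is where cyclicity pays off: $H$ normalizes $I$ and $W$ (both normal in $G$) and fixes $J_0=I\cap H$ under conjugation, hence acts on $I/W$ fixing $J_0W/W$; but in a cyclic group there is a unique subgroup of a given index containing a given subgroup, so $H$ must fix $J/W$, and therefore $J$ itself. Thus $H'=HJ=JH$ is a subgroup of $G$, and one checks directly that $H'\cap I=J$ (if $hj\in I$ with $h\in H$, $j\in J\subset I$, then $h\in H\cap I=J_0\subset J$).

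It then remains to verify the two numerical conditions for $M=F^{H'}$. One has $e_{M/K}=[I:I\cap H']=[I:J]=m$ by construction. For total ramification, $[L:M]=[H':H]=[HJ:H]=[J:J\cap H]=[J:J_0]$ (using $J\cap H\subset I\cap H=J_0\subset J$), and $[J:J_0]=|I|/(m|J_0|)=e_{L/K}/m=e_{L/M}$, so $L/M$ is totally ramified. No obstacle beyond the normalization argument above; everything else is bookkeeping with $e=|I|/|I\cap\cdot|$ in local Galois extensions.
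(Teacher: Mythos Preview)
Your proof is correct and follows essentially the same route as the paper's: pass to the Galois closure, take $J$ to be the unique index-$m$ subgroup of inertia (which contains wild inertia and is characteristic in $I$, hence normal in $G$), and set $M=F^{HJ}$. The paper simplifies slightly by first reducing to $L/K$ totally ramified (so the condition ``$L/M$ totally ramified'' becomes automatic) and observes directly that $J$ is characteristic in $I$ rather than arguing via the $H$-action on $I/W$; otherwise the arguments coincide.
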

\begin{proof}
Replacing $K$ by its maximal unramified extension in $L$,
we may assume that $L/K$ is totally ramified. Let $F$ be the Galois closure
of $L/K$, and write $G=\Gal(F/K)$, $H=\Gal(F/L)$ and $I$ for the inertia
subgroup of $G$.
Let $N$ be the unique index $m$ subgroup of $I$. We claim that
$M\!=\!F^{NH}$~will~do.

Since $e_{M/K}=[I:I\cap NH]$ and $[I:N]=m$,
it is enough to prove that $N=I\cap NH$. ``$\subset$'' is clear.
Observe that every subgroup $U$ of $I$ whose index is divisible by $m$ is
contained in $N$. (This is clear if $U$ contains the wild inertia
subgroup $W\normal I$, as $I/W$ is cyclic; otherwise replace $U$ by $UW$.)
In particular, $m|e_{L/K}=[I:I\cap H]$
implies that $I\cap H\subset N$. Since $N$ is characteristic in $I\normal G$
and therefore normal in $G$, it follows that $I\cap NH \subset N$ as asserted.
\end{proof}

\begin{lemma}
\label{blackmagic1}
Suppose $K/\Q_l$ is finite, $\e=2,3,4,6$, and the size of the residue field
of $K$ is congruent to $-1$ modulo $\e$.
Then the compositum $F$ of all
totally ramified extensions of $K$ of degree $\e$ is a dihedral extension of
degree $2\e$. Specifically, $F=K'(\sqrt[\e]{\pi})$ with $\pi$ a uniformiser
of $K$ and $K'/K$ quadratic unramified.
\end{lemma}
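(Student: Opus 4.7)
The plan is to prove $F=E:=K'(\sqrt[\e]{\pi})$ by showing both inclusions, and then verify that $\Gal(E/K)\cong\Di_{2\e}$. Since $q\equiv-1\pmod\e$ rules out $l\mid\e$, every totally ramified degree $\e$ extension of $K$ is tame and hence, by the standard classification, of the form $K(\sqrt[\e]{u\pi})$ for some $u\in\mathcal O_K^\times$.

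For the inclusion $F\subseteq E$, the main point is that every unit of $\mathcal O_K$ is an $\e$-th power in $K'$. The $1$-units of $K'$ are automatically $\e$-th powers (raising to the $\e$-th power is bijective on $U^{(1)}$ since $l\nmid\e$), so it remains to check that $\F_q^\times\subseteq(\F_{q^2}^\times)^\e$. As $\F_{q^2}^\times$ is cyclic, this amounts to $\e\mid(q^2-1)/(q-1)=q+1$, which is exactly $q\equiv-1\pmod\e$. Hence $K(\sqrt[\e]{u\pi})\subseteq K'(\sqrt[\e]{\pi})=E$ for every unit $u$, so $F\subseteq E$.

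For $E\subseteq F$, I would exhibit two totally ramified degree $\e$ extensions whose compositum already fills up $E$. When $\e=2$, the compositum of $K(\sqrt\pi)$ and $K(\sqrt{u_0\pi})$ (for $u_0$ a non-square unit) contains $\sqrt{u_0}$, so equals $K'(\sqrt\pi)=E$. When $\e\in\{3,4,6\}$, I would first verify $K(\mu_\e)=K'$: the order of $q$ in $(\Z/\e)^\times$ is $2$ (because $q\equiv-1\not\equiv1\pmod\e$), so $K(\mu_\e)/K$ is quadratic, and it is unramified since $x^\e-1$ has distinct reductions modulo $l$. Then $K(\sqrt[\e]{\pi})$ and its Galois conjugate $K(\zeta\sqrt[\e]{\pi})$ are two distinct totally ramified degree $\e$ extensions of $K$ whose compositum is $K(\mu_\e,\sqrt[\e]{\pi})=E$.

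It remains to identify the Galois group. The polynomial $x^\e-\pi$ is Eisenstein over $K'$, so $[E:K']=\e$ and $[E:K]=2\e$; and $E/K$ is Galois since $\mu_\e\subset K'\subset E$ contains all $\e$-th roots of $\pi$, and $\pm\sqrt{u_0}\in E$. Let $\tau\in\Gal(E/K')$ send $\sqrt[\e]{\pi}\mapsto\zeta\sqrt[\e]{\pi}$ for a fixed primitive $\e$-th root of unity $\zeta\in K'$, and let $\sigma\in\Gal(E/K)$ be the lift of the nontrivial element of $\Gal(K'/K)$ that fixes $\sqrt[\e]{\pi}$; then $\sigma^2=1$. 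Since that nontrivial element is Frobenius, it acts on $\mu_\e\hookrightarrow\F_{q^2}^\times$ by $\zeta\mapsto\zeta^q=\zeta^{-1}$, and a short calculation gives $\sigma\tau\sigma^{-1}=\tau^{-1}$, proving $\Gal(E/K)\cong\Di_{2\e}$ (with the convention $\Di_4=\Cy_2\!\times\!\Cy_2$ when $\e=2$). The main obstacle is really just the two ingredients $K(\mu_\e)=K'$ and the unit-is-$\e$-th-power-in-$K'$ lemma; once those are in hand, the dihedral relation falls out of the Frobenius computation.
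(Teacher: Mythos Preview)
Your proof is correct and takes a genuinely different route from the paper's. The paper argues case by case: for $\e=3$ it shows a totally ramified cubic cannot be Galois (local class field theory: the units of $K$ have no index-$3$ subgroup), deduces its Galois closure is $\Sym_3$ and hence contains $K'=K(\mu_3)$, then uses Kummer theory over $K'$ together with a group-theoretic uniqueness argument to force it inside $K'(\sqrt[3]\pi)$; for $\e=4$ it invokes the preceding lemma to produce a quadratic subfield, then counts (at most four totally ramified quartics, and $K'(\sqrt[4]\pi)$ already contains four, as fixed fields of the non-normal involutions in $\Di_8$); for $\e=6$ it reduces to $\e=2,3$. Your argument is uniform: the standard classification of tame totally ramified extensions writes each one as $K(\sqrt[\e]{u\pi})$, and the single observation that $\O_K^\times\subset(\O_{K'}^\times)^\e$ (from $\e\mid q+1$ on the residue-field part and $l\nmid\e$ on $1$-units) immediately gives $F\subseteq E$; the reverse inclusion and the dihedral relation then fall out of the Frobenius action on $\mu_\e$. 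This is cleaner and avoids the case split entirely; the paper's approach, by contrast, is more explicit about how the subfields sit inside each $\Di_{2\e}$. One small remark: your phrase ``this amounts to $\e\mid q+1$'' states the implication you actually need, though the literal equivalence would read $\gcd(\e,q^2-1)\mid q+1$.
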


\begin{proof}

For $\e=2$ this is elementary. Otherwise, $K(\mu_\e)=K'$ and
it suffices to prove that every totally ramified degree $\e$ extension
of $K$ is contained in $K'(\sqrt[\e]{\pi})$.

For $\e=3$ suppose $L/K$ is cubic totally ramified.
It cannot be
Galois by local class field theory, since the units of $K$ have no index
3 subgroups. So its Galois closure is an $\Sym_3$-extension,
which is tame and so contains $K'=K(\mu_3)$. By Kummer theory, $LK'/K'$
is contained in the $\Cy_3\!\times\!\Cy_3$\hskip 1pt-\hskip 1pt extension $M$ of $K'$
obtained by adjoining cube roots of all elements of $K'$.
But it is easy to see that $\Gal(M/K)\iso \Cy_6\leftsemidirect\Cy_3$
has a unique $\Sym_3$-quotient, so $LK'=K'(\sqrt[\e]{\pi})$ and
$L\subset K'(\sqrt[\e]{\pi})$ as asserted.

For $\e=4$, every totally ramified quartic extension of $K$ has a
quadratic subfield by Lemma \ref{blackmagic0}, so there are at most 4 of them by
the $\e=2$ case. Since in this case $K'(\sqrt[\e]{\pi})$ has 4 totally ramified subfields
corresponding to the non-normal subgroups of order 2 in $\Di_8$, they are all
contained in it.

For $\e=6$ the assertion follows from the $\e=2$ and $\e=3$ cases
(apply Lemma~\ref{blackmagic0} for $m=2$ and $m=3$).
\end{proof}

\subsection{Case (4): Semistable abelian varieties}
\label{ss-case4}

\subsubsection{Review of abelian varieties with semistable reduction}
\label{sssavreview}

Let $K$ be a finite extension of $\Q_l$, let $A/K$ be an abelian variety
and take a prime $p\ne l$.
Fix a finite Galois extension $L/K$ where $A$ acquires split
semistable reduction.
By the work of Raynaud (\cite{Ray}, \cite{Gro} \S9),
there is a smooth commutative group scheme $\cA/\O_L$, which
is an extension
$$
  0 \lar \T \lar \cA \lar \B \lar 0,
$$
with $\T/\cO_L$ a split torus and $\B/\cO_L$ an abelian scheme,
and such that $\cA\tensor(\O_L/m_L^i)$ is the identity component
of the N\'eron model of $A$ over $\O_L$ base changed to $\O_L/m_L^i$.
These properties characterise $\cA$ up to a unique isomorphism.
In particular, the group $\Gal(L/K)$ acts naturally on $\T, \cA$ and~$\B$.
The character group of $\T$ is a finite free $\Z$-module
with an action of $\Gal(L/K)$, and we denote it $X(\T)$.

The dual abelian variety $A^t/K$ also has split semistable reduction over $L$,
and there is a sequence as above with $\T^*, \cA^*$ and
$\B^*\iso \B^t$ (\cite{Gro}~Thm.~5.4).
Raynaud constructs a canonical map $X(\T^*)\hookrightarrow\cA(L)$,
inducing an isomorphism of $\Gal(\bar K/K)$-modules
$$
  A(\bar K) \iso \cA(\bar K)/X(\T^*),
$$
which generalises Tate's parametrisation for elliptic curves.
(The $\Gal(\bar K/K)$-action on $\cA(\bar K)$ comes from the
Galois action of $\Gal(\bar K/L)$ and the geometric action of
$\Gal(L/K)$; see \cite{CFKS} \S2.9.)
 From this description, there are exact sequences
for the $p$-adic Tate modules of the generic fibres over $L$,
\beq
  0 \lar T_p(\T_L) \lar T_p(\cA_L) \lar T_p(\B_L) \lar 0 \cr
  0 \lar T_p(\cA_L) \lar T_p(A) \lar X(\T^*)\tensor\Z_p \lar 0. \cr
\eeq
In particular, $T_p(A)$ has a filtration with graded pieces
$$
  \Gr0 = X(\T^*)\tensor\Z_p, \quad
  \Gr1 = T_p(\B_L), \quad
  \Gr2 = \Hom(X(\T),\Z_p(1)).
$$

\smallskip

Now suppose $A/K$ has {\em semistable\/} reduction.
The reduction becomes split semistable over some finite unramified extension
of $K$, and we take $L$ to be the smallest such field;
so now $\Gal(L/K)$ is cyclic, generated by Frobenius.
To describe the Tamagawa number $c_v(A/K)$ and
the action of inertia on $T_p(A)$ we
use the monodromy pairing
$$
  X(\T^*) \times X(\T) \lar \Z.
$$
This is a non-degenerate $\Gal(L/K)$-invariant pairing, and induces
a Galois-equivariant inclusion of lattices
$$
  N: X(\T^*) \longinjects \Hom(X(\T),\Z).
$$
These have the same $\Z$-rank, so $N$ has finite cokernel.
Moreover, $N$ is covariantly functorial with respect to isogenies
of semistable abelian varieties. 
Any polarisation on $A$ gives a map
$X(\T^*) \to X(\T)$, and the induced pairing
$$
  X(\T^*) \times X(\T^*) \lar \Z
$$
is symmetric (\cite{Gro} \S10.2).
In particular, if $A$ is principally polarised, we get a perfect
Galois-equivariant symmetric pairing 
$$
  \coker N \times \coker N \lar \Q/\Z.
$$
If $K'/K$ is a finite extension, then $X(\T)$ and $X(\T^*)$
remain the same modules (restricted to $\Gal(LK'/K')\subset \Gal(L/K)$)
by uniqueness of Raynaud parametrisation.
The map $N$ becomes $e_{K'/K}N$, see \cite{Gro} 10.3.5.

The $\Gal(\bar K/K)$-module $\Gr2\oplus \Gr1\oplus \Gr0$ is unramified
and semisimple, so it is a semisimplification of $T_pA$.
With respect to this filtration, the inertia group acts on $T_pA$ by
$$
  I_{\bar K/K}\ni\sigma\quad\longmapsto\quad
  \begin{pmatrix}
     1 & 0 & t_p(\sigma)N \cr
     0 & 1 & 0 \cr
     0 & 0 & 1 \cr
  \end{pmatrix}
  \in\Aut T_p(A)
$$
with $t_p: I_{\bar K/K}\to \Z_p(1)$ defined by
$\sigma\mapsto \sigma(\pi_K^{1/p^n})/\pi_K^{1/p^n}\in\mu_{p^n}$
for any uniformiser $\pi_K$ of $K$
(\cite{Gro} \S\S 9.1--9.2).

Let $\grphi$ be the group scheme of connected components
of the special fibre of the N\'eron model of $A/\O_K$.
It is an \'etale group scheme over the residue field $k$ of $K$,
so
$\grphi(k)=\grphi(\bar k)^{\Gal(\bar k/k)}$
consists of components defined over $k$.
As $K$ is complete and $k$ is perfect, by \cite{BL} Lemma 2.1
the natural reduction map $A(K)\to\grphi(k)$ is onto, so
$c_v(A/K)=|\grphi(k)|$.
Finally, by \cite{Gro}~Thm.~11.5, $\grphi=\coker N$ as groups with
$\Gal(\bar k/k)=\Gal(K^{un}/K)$-action, so
$$
   c_v(A/K) = |(\coker N)^{\Gal(L/K)}|.
$$

\subsubsection{Local root numbers for twists of semistable abelian varities}

\begin{proposition}
\label{rootabtwist}
Suppose $A/K$ is semistable, let
$F/K$ be a finite Galois extension containing $L$,
and $\tau$ a complex representation of $\Gal(F/K)$. Then
$$
  w(A/K,\tau) =
    w(\tau)^{2\dim A} (-1)^{\blangle \tau,X(\T^*)\brangle}.
$$
\end{proposition}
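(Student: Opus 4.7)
My plan is to compute $w(A/K,\tau)$ via the Weil--Deligne representation $\sigma_\ell(A) = (V, N)$ attached to $A$ for a prime $\ell$ coprime to the residue characteristic of $K$. Semistability of $A/K$ together with the unramifiedness of $L/K$ forces $V$ to be an unramified Weil group representation and $N$ to be nilpotent with $N^2 = 0$. By the filtration recalled in \S\ref{sssavreview}, $V^{\mathrm{ss}} = \Gr0 \oplus \Gr1 \oplus \Gr2$ with $\Gr0 \cong X(\T^*)\otimes\Q_\ell$, $\Gr1 \cong V_\ell\B$, and $\Gr2 \cong X(\T)^\vee\otimes\Q_\ell(1)$; the monodromy pairing yields a Weil-equivariant isomorphism $\Gr2 \cong \Gr0\otimes\Q_\ell(1)$ after extending scalars, and $\ker N = \Gr0 \oplus \Gr1$.

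The core of the argument combines Deligne's formula for the $\epsilon$-factor of a Weil--Deligne representation,
\[
\epsilon(V\otimes\tau,\, N\otimes 1,\, \psi) = \epsilon(V\otimes\tau,\psi)\cdot\det\bigl(-\Frob\bigm| (V\otimes\tau)^I / \ker(N\otimes 1)^I\bigr),
\]
with the standard identity for twisting an unramified $W$,
\[
\epsilon(W\otimes\tau, \psi) = \det W(\Frob)^{a(\tau) + n(\psi)\dim\tau}\cdot \epsilon(\tau,\psi)^{\dim W}.
\]
Applied to $W = V$, the factor $\epsilon(\tau,\psi)^{\dim V} = \epsilon(\tau,\psi)^{2\dim A}$ normalises to $w(\tau)^{2\dim A}$, while $\det V = \chi_{\mathrm{cyc}}^{\pm\dim A}$ (from the Weil pairing on $A$) contributes only a positive real power of $q$ that disappears under normalisation. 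Unramifiedness of $V$ simultaneously simplifies the quotient in Deligne's correction to $\Gr2\otimes\tau^I$, so
\[
w(A/K,\tau) = w(\tau)^{2\dim A}\cdot \det\bigl(-\Frob\bigm| \Gr2\otimes\tau^I\bigr)\big/\bigl|\,\cdot\,\bigr|.
\]

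It remains to identify the normalised determinant with $(-1)^{\langle\tau, X(\T^*)\rangle}$. Using $\Gr2 \cong X(\T^*)\otimes\Q_\ell(1)$, the Tate twist contributes only a power of $q$ (killed on normalisation), so the root-number content is determined by the action of $\Gal(L/K)$ on $X(\T^*)$ and on $\tau^I$. Decomposing both as sums of characters of the cyclic group $\Gal(L/K)$, using multiplicativity of the determinant on tensor products, and invoking Frobenius reciprocity to compare $\langle\tau, X(\T^*)\rangle_{\Gal(F/K)}$ with the corresponding inner product on $\Gal(L/K)$ (legitimate since $X(\T^*)$ is pulled back from $\Gal(L/K)$), the combined sign collapses to $(-1)^{\langle\tau, X(\T^*)\rangle}$. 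The main obstacle lies precisely in this final bookkeeping: carefully separating the $\pm 1$-valued contributions (which survive normalisation) from the positive-real and complex-unit contributions (which must cancel out). The cyclicity of $\Gal(L/K)$ and the symmetry forced by the monodromy pairing---in particular $\det X(\T^*) = \det X(\T)^\vee$ as characters of $\Gal(L/K)$---are what make the cancellation go through.
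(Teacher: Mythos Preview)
Your approach mirrors the paper's: both use the unramified twist formula and the correction term relating the $\epsilon$-factor of a Weil--Deligne representation to that of its semisimplification (the paper cites these as \cite{TatN} 3.4.6 and 4.2.4), reducing to $w(\tau)^{2\dim A}\sgn\det(-\Fr\mid V\otimes\tau^I)$ with $V=X(\T^*)\otimes\Q$. A small slip: with the paper's indexing $\ker N$ is the filtered piece with graded pieces $\Gr2$ and $\Gr1$, not $\Gr0\oplus\Gr1$; this is harmless since $\Gr0\cong\Gr2(-1)$ via the monodromy pairing, and the Tate twist disappears on normalisation as you say.

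Your final step misidentifies why the cancellation works. The monodromy-pairing symmetry you invoke is not the point; what matters is that both $V$ and $\tau^I$ are \emph{self-dual} representations of the cyclic group $D/I$---the first because it is rational, the second because $\tau$ is self-dual. Self-duality forces non-real characters to occur in conjugate pairs, so writing $\eta$ for the unramified quadratic character one is left with
$$\sgn\det(-\Fr\mid V\otimes\tau^I)=(-1)^{\blangle\triv,V\brangle\blangle\triv,\tau^I\brangle+\blangle\eta,V\brangle\blangle\eta,\tau^I\brangle}=(-1)^{\blangle\tau^I,V\brangle}=(-1)^{\blangle\tau,V\brangle}.$$
Note that the proposition is only used, and only valid as stated, for self-dual $\tau$: otherwise $\det(\Fr\mid\tau^I)^{\dim V}$ need not lie in $\{\pm1\}$, and no symmetry of the monodromy pairing will rescue this.
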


\begin{proof}
Let $H^1(A)=H^1_{\text{\'et}}(A,\Z_p)\tensor_{\Z_p}\C=\Hom(T_pA\tensor\C,\C)$
for some $p\ne l$, and let $H^1(A)_{\text{ss}}$ be its semisimplification.
Write $V=X(\T^*)\tensor\Q$, and $\sgn z=z/|z|$ for $z\in\C^{\times}$.
By the unramified twist formula \cite{TatN} 3.4.6,
$$
  w((H^1(A)_{\text{ss}})\tensor\tau) =
    w(\tau)^{2\dim A} \sgn\det(\Fr|H^1(A)_{\text{ss}})^{\nu}
$$
for some integer $\nu$ and $\Fr=\Frob_{\bar K/K}^{-1}$.
Since $\det(H^1(A))$ is a power of the cyclotomic character,
this expression is just $w(\tau)^{2\dim A}$. By \cite{TatN} 4.2.4,
\beq
  w(A/K,\tau)=w(H^1(A)\tensor\tau) \cr
  = w((H^1(A)_{\text{ss}})\tensor\tau)
    \frac
    {\sgn\det(-\Fr| ((H^1(A)_{\text{ss}})\tensor\tau)^I )}
    {\sgn\det(-\Fr| ((H^1(A)\tensor\tau)^I )} \cr
  = w(\tau)^{2\dim A}\sgn\det(-\Fr| \Gr2^*\tensor\tau^I ) \cr
  = w(\tau)^{2\dim A}\sgn\det(-\Fr| V\tensor\tau^I ) \cr
  = w(\tau)^{2\dim A}
    (-1)^{\dim V\dim\tau^I}
    {\det(\Fr| V)^{\dim\tau^I}}
    {\det(\Fr| \tau^I)^{\dim V}}, \cr
\eeq
where the penultimate equality again comes from \cite{TatN} 3.4.6.
If $\eta$ denotes the unramified character $\Fr\mapsto -1$, we have
$$
  (-1)^{\dim V} = (-1)^{{\blangle \triv, V\brangle}+{\blangle \eta, V\brangle}},
    \qquad
  \det(\Fr|V)=(-1)^{\blangle \eta, V\brangle}
$$
and similarly for $\tau^I$ in place of $V$, as they are both self-dual and
unramified. Now a trivial computation shows that
$$
  (-1)^{\blangle \tau,V\brangle} =
  (-1)^{\blangle \tau^I,V\brangle} =
  (-1)^{\blangle \triv,\tau^I\brangle \blangle \triv, V\brangle +
        \blangle \eta,\tau^I\brangle \blangle \eta, V\brangle
        }
$$
coincides with $w(A/K,\tau)/w(\tau)^{2\dim A}$, as asserted.
\end{proof}

\subsubsection{Tamagawa numbers for semistable abelian varieties}

\begin{proposition}
\label{avlcv2}
Suppose $A/K$ is semistable and principally polarised,
and set $V=X(\T^*)\tensor\Q$.
Let $F/K$ be a finite Galois extension containing~$L$, and write
$D=\Gal(F/K)$, $I\normal D$ for its inertia subgroup and
$\Fr=\Frob_{L/K}^{-1}$.
As functions from the Burnside ring of $D$ to $\Q^{\times}/\Q^{\times2}$,
$$
  C_v = c_v \sim \GDI{D,I,e^{\dim V^{\Fr^f}}} \sim \D_V
$$
(up to factors of 2 in Case {\rm(4ex)}).
\end{proposition}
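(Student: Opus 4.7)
The plan is to break the proposition into three assertions---$C_v = c_v$, $\D_V \sim \GDI{D,I,e^{\dim V^{\Fr^f}}}$, and $c_v \sim \GDI{D,I,e^{\dim V^{\Fr^f}}}$---and handle them in increasing order of difficulty. The first is immediate from semistability: for a semistable abelian variety the N\'eron model commutes with arbitrary base change, so any N\'eron differential $\omega$ over $K$ remains N\'eron over every $F^H$. Hence $|\omega/\neron{}|_{F^H} = 1$ and $C_v(A/F^H,\omega) = c_v(A/F^H)$.

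For $\D_V \sim \GDI{D,I,e^{\dim V^{\Fr^f}}}$, the key point is that $V$ is an unramified $D$-representation, factoring through $D/I = \Gal(L/K)$. For $H \leq D$ with $e = e_{F^H/K}$ and $f = f_{F^H/K}$, the invariants $V^H$ coincide with $V^{\Fr^f}$, because $\Gal(LF^H/F^H)$ is the cyclic subgroup of $\Gal(L/K)$ generated by $\Fr^f$. Picking a $D$-invariant pairing $\lara$ on $V$,
$$\D_V(H) = |H|^{-\dim V^H}\det(\lara|V^H) \equiv (ef)^{\dim V^{\Fr^f}}\det(\lara|V^{\Fr^f}) \mod \Q^{\times 2},$$
using $|H| = |D|/(ef)$ and Lemma \ref{dimsaddup} to cancel $|D|^{\dim V^H}$. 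Since $\det(\lara|V^{\Fr^f})$ and $f^{\dim V^{\Fr^f}}$ depend only on $f$, they are $\sim 1$ by Theorem \ref{l2:}f, leaving the required $e^{\dim V^{\Fr^f}}$.

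The main work is $c_v \sim \GDI{D,I,e^{\dim V^{\Fr^f}}}$. By the Raynaud description of \S\ref{sssavreview},
$$c_v(A/F^H) = |(\coker eN)^{\Fr^f}|,$$
where $N\colon X(\T^*) \hookrightarrow \Lambda := \Hom(X(\T),\Z)$ is the monodromy map, rescaled by $e$ over $F^H$. The short exact sequences
$$0 \to M'/eM' \to \coker eN \to \coker N \to 0, \qquad 0 \to X(\T^*) \to X(\T^*) \to X(\T^*)/eX(\T^*) \to 0$$
(where $M' = N(X(\T^*)) \iso X(\T^*)$ as $\Gal$-modules, the second arrow in each sequence being multiplication by $e$) give, via the long exact sequence of $\Fr^f$-cohomology,
$$|(X(\T^*)/eX(\T^*))^{\Fr^f}| = e^{\dim V^{\Fr^f}}\cdot |H^1(\langle\Fr^f\rangle, X(\T^*))[e]|.$$
Taking $\Fr^f$-invariants of the first sequence and using triviality of Herbrand quotients on finite cyclic modules, $c_v(A/F^H)$ factorises as $e^{\dim V^{\Fr^f}}\,\psi(e,f)$, with $\psi$ built from $|(\coker N)^{\Fr^f}|$ together with cohomology of $\langle\Fr^f\rangle$ on $X(\T^*)$ and its $e$-torsion.

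It remains to show that $\GDI{D,I,\psi}\sim 1$. The factor $|(\coker N)^{\Fr^f}|$ depends only on $f$ and is thus $\sim 1$ by Theorem \ref{l2:}f. For the remaining cohomological contribution the principal polarisation is decisive: by \cite{Gro}~\S10.2 it induces a $\Gal(L/K)$-invariant perfect symmetric pairing on $\coker N$ (and compatible duality on the associated $\langle\Fr^f\rangle$-cohomology), which combined with the descent and renaming criteria of Theorem \ref{l2:}d,r reduces $\psi$ modulo squares to a function of $f$ alone. The main obstacle is precisely this last reduction---tracking how the symmetric pairing forces $|H^1(\langle\Fr^f\rangle, X(\T^*))[e]|$ and $|(\coker N)^{\Fr^f}|$ to combine into a square after cancellation in $D$-relations. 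This step is expected to fail exactly in subcase~(4ex), where residue characteristic $2$ together with non-cyclic wild inertia obstructs the $2$-part of the descent in Theorem~\ref{l2:}d (compare the analogous phenomenon in \S\ref{ssnonsplit}), accounting for the factor-of-$2$ ambiguity in that case.
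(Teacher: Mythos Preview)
Your treatment of $C_v=c_v$ and of $\D_V\sim\GDI{D,I,e^{\dim V^{\Fr^f}}}$ is correct and matches the paper. The gap is in the third step. You correctly identify the quantity to control,
$$
  \psi(e,f)=\bigl|(\coker eN)^{\Fr^f}\bigr|\,e^{-\dim V^{\Fr^f}}\in\Q^\times/\Q^{\times2},
$$
but the argument that $\GDI{D,I,\psi}\sim1$ is not there. Your exact-sequence factorisation does not actually express $\psi$ as $|(\coker N)^{\Fr^f}|$ times a cohomological term: the map $(\coker eN)^{\Fr^f}\to(\coker N)^{\Fr^f}$ need not be surjective, so you are controlling $|H^1(\langle\Fr^f\rangle,X(\T^*))[e]|$ times the order of an \emph{image} in $(\coker N)^{\Fr^f}$, not a clean product. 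More seriously, your claim that ``the symmetric pairing \ldots reduces $\psi$ modulo squares to a function of $f$ alone'' is not true in general and in any case is not proved; invoking Theorem~\ref{l2:}d,r without saying what you descend to or rename does not constitute an argument.

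The paper proceeds differently and more concretely. It establishes five explicit properties of $\psi$ as a function of $(e,f)$, most notably $\psi(e,pf)=\psi(e,f)$ for odd $p$ and $\psi(e,4f)=\psi(e,2f)$. These are proved by constructing, from the perfect \emph{symmetric} pairing $\lara_e$ on $M/eM'$, an \emph{alternating} pairing $(m,n)=\langle Zm-Z^{-1}m,n\rangle_e$ on the relevant quotient (with $Z=\Fr^f$), whose non-degeneracy forces that quotient to have square order. This trick---turning symmetric into alternating via $Z-Z^{-1}$---is the missing idea. With those properties in hand, the paper runs a specific chain of reductions through $\GDI{D,I,\cdot}\to\GDI{I,I,\cdot}\to\GDI{I,W,\cdot}\to\GDI{W,W,\cdot}$ using \ref{l2:}d,f,r, and it is precisely at the last step, when $W$ is a non-cyclic $2$-group, that the argument stops and the (4ex) exception appears.
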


\begin{proof}
The N\'eron model of $A/\O_K$ commutes with base change as $A$ is semi\-stable,
so the minimal exterior form $\omega$ remains minimal in
extensions of $K$, and $C_v=c_v$.

As $V$ is unramified,
$$
  \D_V = (H \mapsto \det(\tfrac1{|H|}\lara|V^H))
           = \GDI{D,I,(\tfrac{|D|}{ef})^{\dim V^{\Fr^f}}\det(\lara|V^{\Fr^f})}
$$
which is $\sim\tGDI{D,I,e^{\dim V^{\Fr^f}}}$ by Theorem \ref{l2:}{\rm f}.
This proves the last $\sim$.

As explained above,
$c_v = \tGDI{D,I,(\coker eN)^{\Fr^f}}$.
So it remains to prove that $\tGDI{D,I,\phi(e,f)}\sim 1$, where
$$
  \phi(e,f) = \bigl| (\coker eN)^{\Fr^f} \bigr| \> e^{-\dim V^{\Fr^f}}
    \>\>\in\Q^{\times}/\Q^{\times2}\>.
$$

\begin{claim} The function $\phi$ satisfies:
\begin{enumerate}
\item $\phi(e,pf)=\phi(e,f)$ for $p$ odd.
\item $\phi(e,4f)=\phi(e,2f)$.
\item $\phi(e,2)=|\coker N|$.
\item $\phi(2^k e,f)=\phi(2^k,f)$ for $e$ odd.
\item $\phi(e,f)=2^{\lambda_{e,f}}\phi(1,f)$ for some $\lambda_{e,f}\in \Z$.
\end{enumerate}
\end{claim}

Before verifying the claim, let us use these properties to complete our proof.
Note that the asserted formula already holds up to multiples of 2 by (5) and
Theorem \ref{l2:}f.

Let $W\normal I$ be the wild inertia subgroup. Then
$$\begin{array}{rcl}
  \tGDI{D,I,\phi(e,f)}
     &\tbuildrel{(1,2)}\over{=}&
   \bGDI{D,I,\choice{\phi(e,2)}{2|f}{\phi(e,1)}{2\nmid f}}
     \>\tbuildrel{(3)}\over{=}\>
   \bGDI{D,I,\choice{|\coker N|}{2|f}{\phi(e,1)}{2\nmid f}} \cr
     &\simf&
   \bGDI{D,I,\choice{1}{2|f}{\phi(e,1)}{2\nmid f}}
    \cr
   &\simd&
   \tGDI{I,I,\phi(e,1)}  \simr
   \tGDI{I,W,\phi(ef,1)}.
\end{array}$$
If $K$ has odd residue characteristic,
then the wild inertia group $W$ has odd order and
$\tGDI{I,W,\phi(ef,1)}=\tGDI{I,W,\phi(f,1)}\sim 1$ by (4) and Theorem \ref{l2:}f.
Suppose $K$ has residue characteristic 2. Then $[I:W]$ is odd, and
$$
  \tGDI{I,W,\phi(ef,1)}\>\tbuildrel{(4)}\over{=}\>\tGDI{I,W,\phi(e,1)} \simd \tGDI{W,W,\phi(e,1)}.
$$
If $W$ is cyclic, this is $\sim 1$ as asserted, since $W$ has no relations.
If $2\nmid [L\!:\!K]$, then $\Fr$ and $\Fr^2$ generate the same group, so
$$
\tGDI{W,W,\phi(e,1)}=\tGDI{W,W,\phi(e,2)}
\>\tbuildrel{(3)}\over{=}\> \tGDI{W,W,|\coker N|} \simf 1\>.
$$
Otherwise we are in case (4ex) and we claim nothing about factors of 2.
\end{proof}

\begin{proof}[\refstepcounter{equation}{\bf \theequation.}\label{NMM}\,Proof of claim]

This is a purely module-theoretic statement about the $G$-modules
$X(\T^*)\!\injects\! \Hom(X(\T),\Z)$ and the monodromy pairing.
\hbox{Suppose}
$G\!=\!\langle \Fr\rangle$ is a finite cyclic group, and $N: M'\injects M$
is an inclusion of \hbox{$\Z G$-lattices} of the same (finite) rank.
Furthermore, suppose for every $e\ge 1$ there is a perfect symmetric
$G$-invariant pairing
$$
  \langle,\rangle_e:\>\> M/eM' \times M/eM' \lar \Q/\Z.
$$
Then the function
$$
  \phi(e,f) = \bigl| (\coker eN)^{\Fr^f} \bigr| \> e^{-\rk M^{\Fr^f}}
    \>\>\in\Q^{\times}/\Q^{\times2}
$$
satisfies (1)--(5):

(1)
As $M\tensor\Q$ is a self-dual representation (every rational representation
is self-dual),
$\rk M^{\Fr^f}\!\equiv\!\rk M^{\Fr^{pf}}\!\!\mod 2$.
Write $U\!=\!M/eM'$ and $Z\!=\!\Fr^f$. Then
$$
  (m,n) = \langle Z m - Z^{-1} m, n \rangle_e
$$
defines an alternating pairing $U^{Z^p} \times U^{Z^p} \lar \Q/\Z$
whose left kernel is $U^{Z^2}\cap U^{Z^p}=U^Z$. So it is a perfect
alternating pairing on $U^{Z^p}/U^Z$, hence this group has square order.

(2)
Again $M\otimes\Q$ is self-dual, so
$\rk M^{\Fr^{4f}}\!\!\equiv\! \rk M^{\Fr^{2f}}\!\!\mod 2$.
Next, the above formula for $(,)$ with $Z\!=\!\Fr^f$ defines an
alternating pairing
$U^{\Fr^{4f}}\!\times\! U^{\Fr^{4f}}\! \to\! \Q/\Z$
whose left kernel is $U^{Z^2}\cap U^{Z^4}=U^{Z^2}$, so
$U^{Z^4}/U^{Z^2}$ has square order.

(3)
By (1) and (2), $\phi(e,2)=\phi(e,|G|)=|M/eM'|e^{\rk M}=|M/M'|$.

(4)
Replacing $\Fr$ by $\Fr^f$ we may assume $f=1$.
We may also suppose that $\Fr$ has order a power of 2 by (1).
It suffices to show that in the exact sequence
$$
  0 \lar \Bigl(\frac{{2^kM'}}{{2^keM'}}\Bigr)^\Fr \lar
  \Bigl(\frac{{M}}{{2^keM'}}\Bigr)^\Fr \lar
  \Bigl(\frac{{M}}{{2^kM'}}\Bigr)^\Fr \lar
  H^1\Bigl(G, \frac{{2^kM'}}{{2^keM'}}\Bigr)
$$
the first term has order $e^{\rk M^\Fr}$ and the last one is 0.
Because \hbox{$\frac{{2^kM'}}{{2^keM'}}\!\iso\!\frac{{M'}}{{eM'}}$},
it has odd order and hence has trivial $H^1$.
Next, from the long exact cohomology sequence
for the multiplication by $e$ map on $M'$ we extract
$$
  0 \lar \frac{{M'}^\Fr}{{eM'}^\Fr} \lar
    \Bigl(\frac{{M'}}{{eM'}}\Bigr)^\Fr \lar
    H^1(G,M')[e].
$$
The last term is killed by $|G|$ and $e$, and is therefore 0. So
$$
  \Bigl|\Bigl(\frac{{2^kM'}}{{2^keM'}}\Bigr)^\Fr\Bigr|=\Bigl|\Bigl(\frac{{M'}}{{eM'}}\Bigr)^\Fr\Bigr|=
    \Bigl|\frac{{M'}^\Fr}{{eM'}^\Fr}\Bigr|=e^{\rk M^\Fr},
$$
as required.

(5) By (4), we only need to show that $\phi(2^k,f)$ differs from
$\phi(1,f)$ by a power of 2. But the first and the last term in the
exact sequence
$$
  \Bigl(\frac{{M'}}{{2^kM'}}\Bigr)^{\Fr^f}
    \lar
  \Bigl(\frac{{M}}{{M'}}\Bigr)^{\Fr^f}
    \lar
  \Bigl(\frac{{M}}{{2^kM'}}\Bigr)^{\Fr^f}
    \lar
  H^1\Bigl(\langle F^f\rangle, \frac{{2M'}}{{2^kM'}}\Bigr)
$$
are killed by $2^k$, and the result follows from the definition of $\phi$.
\end{proof}

\begin{remark}
If we could also prove that $\phi(4,1)=\phi(2,1)$ for $\phi$ as in~\ref{NMM},
we would be able to deal with the exceptional case (4ex) of
Theorem \ref{loccompat} using an argument similar to that in \S\ref{ssnonsplit}.
It would then remove the ugly restriction on the reduction type for $p=2$ in
Theorem \ref{tamroot2}. (Embarassingly, this is purely a problem about
$\Z\Cy_{n}$-modules.)
\end{remark}

\noindent Combining Propositions \ref{rootabtwist} and \ref{avlcv2} completes
Case (4) of Proposition~\ref{propcvdv} and the proof of
Theorem \ref{loccompat}.

\section{Applications to the parity conjecture}
\label{stowers}

We now have a machine that, when supplied with a relation $\Theta$
between permutation representations,
confirms the $p$-parity conjecture for the twists of $A/K$
by the representations $\tau\in\Ttp$
coming from regulator constants.
We turn to a class of Galois groups where these are enough
to say something about essentially all twists for some $p$.

Specifically, we concentrate on Galois groups $G=\Gal(F/K)$
that have a normal $p$-subgroup $P$. The type of results that we aim for
is that knowing $p$-parity for all $G/P$-twists is sufficient to
establish it for all $G$-twists.
In particular, we prove Theorems \ref{introthmGPtwist} and \ref{introthm3}.

Apart from the machine itself (Theorem \ref{tamroot})
the proofs 
rely only on group theory and
basic parity properties of Selmer ranks and
root numbers.
Roughly speaking, we may consider any functions, such as
$\tau\mapsto w(A/K,\tau)$ or  $\tau\mapsto(-1)^{\blangle\tau,\Xp(A/F)\brangle}$
that satisfy
``self-duality'' and ``inductivity'' as in Proposition \ref{genparity}(1,2).
If two such functions agree on $G/P$-twists and
on the $\tau\in\Ttp$ for those $\Theta$ that come from dihedral subquotients,
this sometimes forces them to agree on all orthogonal $G$-twists, or
at least on those twists that correspond to intermediate fields.

We will not formulate the results of this section in this language. However,
to be able in principle to extend them to
a larger class of abelian varieties,
we axiomatise the minimal compatibility requirements:

\begin{hypothesis}[Compatibility in dihedral subquotients]
\label{decent}
Let $F/K$ be a Galois extension of number fields,
$A/K$ an abelian variety and $p$ a prime number.
We demand
the following\footnote{
  For odd $p$, the hypothesis may be relaxed
  to subquotients $U/N\iso \Di_{2p}$; this follows from the recent
  invariance result of Rohrlich (Prop. \ref{genparity}(5) or \cite{RohI} Thms. 1,2).
}$\,$:
whenever $N\normal U$ are subgroups of $\Gal(F/K)$ with $U/N\iso \Di_{2p^n}$
and $\tau=\sigma\oplus\triv\oplus\det\sigma$ for some 2-dimensional
representation $\sigma$ of $U/N$,
$$
  (-1)^{\blangle\tau, \Xp(A/F^N)\brangle_U} = w(A/F^U, \tau).
$$
In other words, the $p$-parity conjecture holds for the twists by all such
$\tau$.
(Recall that we regard $\Cy_2\times\Cy_2$ as a dihedral group as well.)
\end{hypothesis}

\pagebreak

\begin{theorem}
\label{hypellthm}
Hypothesis \ref{decent} holds for
\begin{enumerate}
\item (any $p$)
all elliptic curves over $K$
whose primes of additive reduction
above 2 and 3 have cyclic decomposition groups in $F/K$
(e.g. are unramified). 
\item ($p\neq2$)
all principally polarised abelian varieties over $K$
whose primes of unstable
reduction have cyclic decomposition groups in $F/K$
(e.g. all semistable principally polarised abelian varieties). 
\item ($p=2$)
abelian varieties over $K$ with a principal polarisation coming from a $K$-rational
divisor, whose primes of unstable reduction have cyclic decomposition groups in $F/K$,
and with split semistable reduction at those primes above $2$ that have
non-cyclic wild inertia groups in $F/K$.
\end{enumerate}
\end{theorem}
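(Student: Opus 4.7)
The plan is to reduce Hypothesis \ref{decent} to a direct application of Theorem \ref{tamroot} to the sub-extension $F^N/F^U$ of number fields, whose Galois group is precisely $U/N \cong \Di_{2p^n}$, with the abelian variety $A/F^U$.

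First I would verify that the reduction hypotheses transfer from $F/K$ down to $F^N/F^U$. Let $w$ be a place of $F^U$, $w'$ a place of $F^N$ above it, and $z$ a place of $F$ above $w'$, with decomposition group $D_z\<\Gal(F/K)$ and wild inertia $W_z\<D_z$. The corresponding decomposition (resp.\ wild inertia) group of $w$ in $\Gal(F^N/F^U)=U/N$ is the subquotient $(D_z\cap U)/(D_z\cap N)$ (resp.\ $(W_z\cap U)/(W_z\cap N)$) of $D_z$ (resp.\ $W_z$). Since cyclicity is preserved under subquotients, cyclic decomposition groups in $F/K$ yield cyclic decomposition groups in $F^N/F^U$; contrapositively, any place of $F^U$ with non-cyclic wild inertia in $F^N/F^U$ lies over a place of $K$ with non-cyclic wild inertia in $F/K$. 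Additive and unstable reduction can only improve on base change, so places of $F^U$ of additive (resp.\ unstable) reduction of $A$ lie over places of $K$ of the same type, and the split semistable condition and the $K$-rational polarisation both persist under base change to $F^U$. Hence $A/F^U$ satisfies the hypothesis of Theorem \ref{tamroot1} in case (1), and of Theorem \ref{tamroot2} in cases (2) and (3), relative to $F^N/F^U$.

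Next I would invoke Example \ref{exTtp3}: for $G=U/N\iso\Di_{2p^n}$ and any two-dimensional $\Qpb G$-representation $\sigma$, the explicit regulator-constant computations of Examples \ref{exdihreg2} (odd $p$) and \ref{exdihreg3} ($p=2$) produce a $G$-relation $\Theta$ with
$$
  \tau \>:=\> \sigma\oplus\triv\oplus\det\sigma \>\in\> \Tau_{\Theta,p}.
$$
Applying Theorem \ref{tamroot} to the extension $F^N/F^U$, the abelian variety $A/F^U$, the $U/N$-relation $\Theta$ and the representation $\tau$ yields
$$
  (-1)^{\blangle\tau,\Xp(A/F^N)\brangle_{U/N}} \>=\> w(A/F^U,\tau).
$$
Because $\tau$ factors through $U/N$ and $\Xp(A/F^N)$ is naturally a $U/N$-module, the inner product over $U$ coincides with the one over $U/N$, so this is exactly the equality required by Hypothesis \ref{decent}.

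The main piece of bookkeeping is in case (3) for $p=2$, where one has to confirm that the ``split semistable reduction at primes $v\mid 2$ with non-cyclic wild inertia'' hypothesis of Theorem \ref{tamroot2} descends to $F^N/F^U$. This is precisely the subquotient observation above: any place of $F^U$ above some $v\mid 2$ whose wild inertia in $F^N/F^U$ is non-cyclic necessarily lies over a $v$ whose wild inertia in $F/K$ is non-cyclic, at which by hypothesis $A/K_v$ already has split semistable reduction --- a property stable under base change to $F^U_w$. With this in hand the three cases follow uniformly.
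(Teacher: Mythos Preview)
Your proof is correct and follows essentially the same approach as the paper: apply Theorem~\ref{tamroot} to the dihedral extension $F^N/F^U$ using the relations of Examples~\ref{exdihreg2} and~\ref{exdihreg3} (equivalently, Example~\ref{exTtp3}). The paper's proof is a one-liner that leaves the transfer of the reduction hypotheses from $F/K$ to $F^N/F^U$ implicit; you have spelled out this bookkeeping carefully via the subquotient observation on decomposition and wild inertia groups, which is exactly what is needed.
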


\begin{proof}
Apply Theorem \ref{tamroot} to the relations
in Examples \ref{exdihreg2} and \ref{exdihreg3}.
\end{proof}

Throughout the section we implicitly use that $\Xp$ behaves in an
``\'etale'' fashion: for $K\subset L\subset F$ an intermediate field,
$\Xp(A/L)\!=\!\Xp(A/F)^{\Gal(F/L)}$ (see e.g. \cite{Squarity} Lemma~4.14).
We occasionally say that ``$p$-parity holds'' for $A/L$ or for a twist of
$A$ by $\tau$, referring to Conjectures \ref{pparity}, \ref{pparitytwists}.

\subsection{Parity over fields}

\begin{theorem}
\label{thmGP}
Let $A$, $p\!\ne\!2$ and $F/K$ satisfy Hypothesis \ref{decent}.
Suppose $P\normal \Gal(F/K)$ is a $p$-subgroup.
If the $p$-parity conjecture holds for $A$ over all subfields
of $F^P/K$, then it holds over all subfields of $F/K$.
\end{theorem}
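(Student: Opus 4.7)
The plan is to prove, by induction on $|P|$, that the $p$-parity conjecture holds for $A/F^H$ for every subgroup $H\leq G=\Gal(F/K)$; since every intermediate field of $F/K$ is some $F^H$, this gives the theorem. The base case $|P|=1$ is trivial, and I first reduce to the case where $P$ is abelian by choosing a $G$-normal filtration $\{1\}=P_0 \normal P_1 \normal \cdots \normal P_s = P$ with elementary abelian quotients (e.g.~the lower central series, whose terms are characteristic in $P$ and hence normal in $G$), applying the theorem inductively to each pair $(F/F^{P_{i-1}},\,P_i/P_{i-1})$.

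Now fix $H\leq G$ with $H\not\supseteq P$ (otherwise parity is given by hypothesis); set $H^+=HP$, for which parity of $A/F^{H^+}$ is already known. From
\[
  \C[G/H]-\C[G/H^+] \;=\; \Ind_{H^+}^{G}\bigl(\C[H^+/H]-\triv\bigr),
\]
it suffices to show parity for the induction to $G$ of the augmentation $H^+$-representation on $H^+/H$. Since $P$ is abelian, $H\cap P\normal H^+$ and $H^+/(H\cap P)\iso Q\leftsemidirect R$, where $Q=H/(H\cap P)$ acts on the abelian $p$-group $R=P/(H\cap P)$. The augmentation, inflated from this quotient, decomposes as $\bigoplus_{O}\Ind_{Q_O\leftsemidirect R}^{Q\leftsemidirect R}\chi_O$, summed over the $Q$-orbits $O$ of nontrivial characters of $R$ with representative $\chi_O$ and $Q$-stabilizer $Q_O$.

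I handle the orbits case-by-case. If $\chi_O^{-1}\notin O$, I pair $O$ with $O^{-1}$: their combined contribution is an irreducible $\tau_O$ and its dual, whose sum lies in $\Tau_{0,p}$ (Remark \ref{remT0p}), and parity for this combination is automatic. If $\chi_O^{-1}\in O$, choose $q\in Q$ with $q\chi_O=\chi_O^{-1}$; since $p\neq 2$, $\chi_O$ is never self-inverse, so $q^2\in Q_O$ but $q\notin Q_O$. Letting $p^m$ denote the order of $\chi_O$, the subquotient of $Q\leftsemidirect R$ obtained from $\langle Q_O,q\rangle\cdot R$ modulo $Q_O\cdot\ker\chi_O$ is isomorphic to $\Di_{2p^m}$, and this lifts to genuine subgroups $N\normal U\leq G$ with $U/N\iso\Di_{2p^m}$. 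A direct computation identifies the contribution of $O$ with $\Ind_U^G\sigma$ for a 2-dimensional faithful $\sigma$ of $U/N$, and Hypothesis \ref{decent} provides parity for $\sigma\oplus\triv\oplus\det\sigma$. Since $\Ind_U^G\triv=\C[G/U]$ and $\Ind_U^G\det\sigma=\C[G/NP]-\C[G/U]$ with $U,NP\supseteq P$, parity for these permutation twists is given by the inductive hypothesis, and subtracting yields parity for $\Ind_U^G\sigma$.

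The main obstacle is the bookkeeping in the dihedral case: verifying that the subquotient $U/N\iso\Di_{2p^m}$ lifts to $G$ (not merely to the auxiliary group $H^+/(H\cap P)$), and that the representation $\sigma$ of $U/N$ genuinely matches the $O$-contribution to $\C[H^+/H]-\triv$. This uses the identity $\sigma=\Ind_{NP}^U\chi_O$ together with the hypothesis $p\neq 2$, which ensures that the order-$2$ element $q$ defining the dihedral structure does not collide with the $p$-power structure. Once all orbits are collected, $\C[G/H]-\C[G/H^+]$ is expressed as a $\Z$-combination of elements in $\Tau_{0,p}$, dihedral twists provided by Hypothesis \ref{decent}, and permutation representations $\C[G/H']$ with $H'\supseteq P$; parity for $A/F^H$ then follows by linearity, closing the induction.
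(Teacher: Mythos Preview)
Your argument is correct and uses the same core idea as the paper: decompose the relevant permutation representation via Clifford theory for a semidirect product with abelian normal $p$-group, dispose of non-self-dual orbits by Corollary~\ref{corbasic}(1), and treat self-dual orbits through dihedral subquotients and Hypothesis~\ref{decent}. The organization differs somewhat---the paper inducts on $|G|$ and reduces all the way to $G=H\ltimes\F_p^k$ with $H\hookrightarrow\GL_k(\F_p)$ (so only $\Di_{2p}$ arises), while you stop at $P$ abelian and work inside $H^+=HP$ (encountering $\Di_{2p^m}$ for general $m$); your reduction is more direct and avoids the paper's argument about $HU=G$.

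Two minor corrections: in the filtration step you presumably mean $(F^{P_{i-1}}/K,\,P_i/P_{i-1})$, not $(F/F^{P_{i-1}},\,P_i/P_{i-1})$, since $P_i/P_{i-1}$ is a normal $p$-subgroup of $G/P_{i-1}=\Gal(F^{P_{i-1}}/K)$; and the lower central series of a $p$-group has abelian but not necessarily \emph{elementary} abelian quotients---this is harmless since your argument only needs $P$ abelian.
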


\begin{proof}
Write $G=\Gal(F/K)$
and $V$ for $Z(P)[p]$,
the $p$-elementary part of the centre of $P$.
We may assume $P\ne 1$, so $V$ is non-trivial.
As $V$ is characteristic in $P$, it is normal in $G$.
We need to prove $p$-parity for $A/F^H$ for all subgroups $H$ of $G$,
and it holds when $P\subset H$ by assumption.
We use induction on $|G|$ to reduce $G$ and $H$ to small explicit
groups. Thus, assume the theorem holds for all
proper subquotients $|\Gal(F'/K')|<|\Gal(F/K)|$.

Fix $H\raise1.7pt\hbox{$\scriptstyle\,\lneq\,\,$}G$.
Suppose there is a subgroup $1\ne U\normal G$ with $U\subset P$ and $HU\ne G$.
Applying the theorem to $P/U\normal\,\Gal(F^U\!/K)$,
$p$-parity holds over all subfields of $F^U/K$, including the
intermediate fields of $F^U/F^{HU}$. Applying it again to
$U\normal\Gal(F/F^{HU})$ shows that $p$-parity holds over the subfields
of $F/F^{HU}$, in particular $F^H$.

Hence we may assume that whenever $U\normal G$ is a subgroup of $P$, either
\hbox{$U\!=\!1$} or $HU=G$. In particular, $HV=G$ as $V\normal G$ is non-trivial.
Furthermore, $H\cap V=1$ because it is normal in $HV=G$ and
$H(H\cap V)=H\ne G$. It follows that $G\iso H\ltimes V$.

Moreover, $P=(P\cap H)\ltimes V$, as $P$ contains $V$.
The two constituents commute, so this is a direct product and
$V=Z(P)[p]=Z(P \cap H)[p]\times V$. So $P\cap H=1$, and hence $P=V$.

Finally, we may assume that the action of $H$ on $V$ by conjugation is
faithful. Otherwise let $W=\ker(H\to\Aut V)$ and note that $W\normal H$,
so $W\normal HV=G$. By induction, $p$-parity holds over all subfields
of $F^W/K$, in particular over $F^H$.

We are reduced to the case
$$
  G = H \ltimes \F_p^k\quad\text{with}\quad H\<\GL_k(\F_p)
    \qquad (\text{an affine linear group}),
$$
where we need to show that $p$-parity for $A/F^H$
follows from $p$-parity over the subfields of $F^{\smash{\F_p^k}}/K$.

The group $G$ acts on the one-dimensional complex characters of $\F_p^k$ by
conjugation. Let $\{\chi_i\}$ be a set of representatives for the orbits,
and let $S_i\< G$ be the stabiliser of $\chi_i$.
Extend $\chi_i$ to a character $\tilde\chi_i$ of $S_i$
by $\tilde\chi_i(hv)=\chi_i(v)$ for $h\in S_i\cap H$ and $v\in\F_p^k$.
The representations $\Ind_{S_i}^G \tilde\chi_i$ are irreducible
and distinct (\cite{SerLi} \S8.2), and we observe that
$$
  \C[G/H] \>\iso\> \bigoplus_i \Ind_{S_i}^G \tilde\chi_i.
$$
Indeed, both have dimension $p^k$, so it is enough to check that each term
on
the right is a consituent of $\C[G/H]$; but
\beq
  \blangle\C[G/H] , \Ind_{S_i}^G \tilde\chi_i\brangle\!\! &=\!\!&
  \blangle\triv_H , \Res_H\Ind_{S_i}^G \tilde\chi_i\brangle & \text{(Frobenius reciprocity)} \cr
  &\ge\!\!& \blangle\triv_H , \Ind_{S_i\cap H}^H \Res_{S_i\cap H}\tilde\chi_i\brangle & \text{(Mackey's formula)} \cr
  &=\!\!&\blangle\triv_H , \Ind_{S_i\cap H}^H \triv_{S_i\cap H}\brangle & \text{(definition of $\tilde\chi_i$)} \cr
  &=\!\!&\blangle\Res_{S_i\cap H}\triv_H ,\triv_{S_i\cap H}\brangle = 1 & \text{(Frobenius reciprocity).} \cr
\eeq
Now consider
$$
  \Sigma=\bigl\{i\bigm|\Ind_{S_i}^G \tilde\chi_i\text{\ self-dual}\bigr\}
        =\bigl\{i\bigm|\chi_i^{\pm 1}\text{\ belong to the same $H$-orbit}\bigr\}.
$$
For $i\in\Sigma$ let $M_i$ consist of those elements of $G$
that take $\chi_i$ to $\chi_i^{\pm 1}$,
and let $\psi_i=\Ind_{S_i}^{M_i}\tilde\chi_i$. Computing modulo 2,
\beq
  \blangle\triv_H, \Xp(A/F)\brangle &=&
  \blangle\C[G/H], \Xp(A/F)\brangle  & \text{(Frobenius reciprocity)} \cr
  &\equiv& \sum_{i\in\Sigma} \blangle\Ind_{S_i}^G \tilde\chi_i , \Xp(A/F)\brangle
     & \text{(Self-duality of $\Xp$)} \cr
  &\equiv& \sum_{i\in\Sigma} \blangle\psi_i, \Xp(A/F)\brangle
     & \text{(Frobenius reciprocity)}. \cr
\eeq
The same computation for the root numbers (using \ref{genparity}(1,2))
shows that $ w(A/F^H)=\prod_i w(A/F^{M_i},\psi_i)$.
So, it suffices to prove that
$$
  (-1)^{\blangle\psi_i, \Xp(A/F)\brangle} = w(A/F^{M_i},\psi_i).
$$
If $\chi_i\!=\!\triv$, then $S_i\!=\!G$, $\psi_i\!=\!\triv$ and this
$p$-parity holds by assumption.
Otherwise, $\chi_i\ne\chi_i^{-1}$ as $p$ is odd, and
$\psi_i$ factors through the $\Di_{2p}$-subquotient $M_i/\ker\tilde\chi_i$.
In this case we know
$p$-parity over its two bottom fields $F^{S_i}$ and $F^{M_i}$,
so it also holds for the twist of $A/F^{M_i}$ by $\psi_i$
(Hypothesis \ref{decent}).
\end{proof}

\pagebreak

\begin{theorem}
\label{thmGP2}
Let $A, p=2$ and $F/K$ satisfy Hypothesis \ref{decent}.
Suppose that the Sylow 2-subgroup $P$ of $\Gal(F/K)$ is normal.
If the $2$-parity conjecture holds
for $A$ over $K$ and its quadratic extensions in $F$,
then it holds over all subfields of $F/K$.
\end{theorem}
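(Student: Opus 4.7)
The plan is to parallel the proof of Theorem~\ref{thmGP}, adapting the argument for $p=2$, where every non-trivial character $\chi$ of the elementary abelian 2-group $V$ satisfies $\chi=\chi^{-1}$. The orbit-pairing that simplifies the odd-prime case therefore disappears, and every summand of the Mackey decomposition is self-dual and must be treated individually.

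I would induct on $|G|$, with base $|G|=1$ trivial. If $P=1$ then $|G|$ is odd and the only self-dual complex irreducible of $G$ is $\triv$, so for any $U\le G$ the self-dual part of $\C[G/U]$ is $\triv$; hence $\dim\X_2(A/F^U)\equiv\dim\X_2(A/K)\pmod 2$ and $w(A/F^U)=w(A/K)$ by Artin formalism, and the hypothesis at $K$ propagates. If $P\neq 1$, fix the target subgroup $H\le G$ and set $V=Z(P)[2]\neq 1$. The same normal-subgroup reductions as in Theorem~\ref{thmGP}---eliminating in turn any $G$-normal $W\subset P$ contained in $H$, and any kernel of the action of $H$ on $V$---reduce us to the case $G=H\ltimes V$ with $V=P$ elementary abelian, $H$ of odd order acting faithfully on $V$. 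The hypotheses transfer at every step: quadratic extensions of the base inside a smaller Galois extension remain quadratic extensions inside $F$, and Hypothesis~\ref{decent} is preserved in subquotients.

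In this reduced situation, decompose $\C[G/H]\iso\bigoplus_\Omega \Ind_{S_\Omega}^G\tilde\chi_\Omega$ by Mackey over $H$-orbits $\Omega\subset\hat V$; every summand is self-dual. The trivial orbit contributes 2-parity at $K$. For a singleton non-trivial orbit (i.e.\ $\chi$ is $H$-invariant), $\tilde\chi$ is a quadratic character of $G$ and $F^{\ker\tilde\chi}$ is a quadratic extension of $K$ in $F$, handled by hypothesis. For a non-singleton orbit $\Omega$, the Burnside-ring identity
\[
\Ind_{S_\Omega}^G \tilde\chi_\Omega\,=\,\C[G/\ker\tilde\chi_\Omega]-\C[G/S_\Omega]
\]
together with 2-parity for $F^{S_\Omega}$ (the odd-quotient argument, since $S_\Omega\supseteq P$) reduces the problem to 2-parity for $F^{\ker\tilde\chi_\Omega}$; and since $\ker\tilde\chi_\Omega/\ker\chi\iso H_\chi$ is of odd order, this is equivalent to 2-parity for $F^{\ker\chi}$.

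The main obstacle is this last step, because $\ker\chi\le V$ is not normal in $G$ when $\chi$ is not $H$-invariant. I would apply Hypothesis~\ref{decent} to the $V_4$-subquotient $V/N'$ of $V$, where $N'=\ker\chi\cap\ker(g\chi)$ for some $g\in H$ with $g\chi\neq\chi$: taking $\sigma=\chi\oplus g\chi$ one gets $\tau=\sigma\oplus\triv\oplus\det\sigma=\C[V_4]$, and Hypothesis~\ref{decent} furnishes 2-parity for $A/F^{N'}$, which lies in a quadratic extension of $F^{\ker\chi}$. Combining several such applications, as $g$ varies in $H$, with the 2-parity results already secured at the intermediate fields containing $V$, should extract 2-parity at $F^{\ker\chi}$ via Artin-formalism identities between the associated permutation representations. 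The combinatorial bookkeeping to carry this out uniformly---ensuring the chosen $g$'s provide enough relations to pin down 2-parity at $F^{\ker\chi}$---is the delicate part.
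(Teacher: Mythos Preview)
Your terminal case $G = H \ltimes V$ with $V = \F_2^k$ and $H$ of odd order is the right target, but there are two real gaps.

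First, the reductions. Directly transplanting the Theorem~\ref{thmGP} induction to $p=2$ does not go through. In the step where you pass from $G$ to $HU$ (with $1\neq U \triangleleft G$, $U \subset P$, $HU \neq G$), the inductive hypothesis you need for $HU$ is $2$-parity over $F^{HU}$ and its quadratic extensions in $F$; but an index-$2$ subgroup $M \le HU$ need not contain $U$, so the conclusion from the $G/U$ step (which only covers subfields of $F^U$) does not supply it. Your parenthetical summary of the reductions is also not what Theorem~\ref{thmGP} actually does. The paper avoids this entirely by a different route: first prove the $2$-group case by walking down an index-$2$ chain $G = U_1 \supset \cdots \supset U_n = H$, showing inductively that $2$-parity holds over $F^{U_i}$ and all its quadratic extensions in $F$ (the Galois closure of any quadratic-over-quadratic is $C_4$, $C_2\times C_2$, or $D_8$, each covered by the hypotheses together with Hypothesis~\ref{decent}); then for general $G$, replace $H$ by $H \cap P$ (odd degree, Corollary~\ref{corbasic}(3)) and $F$ by $F^{\Phi(P)}$ to reach $G \cong U \ltimes \F_2^k$ with $U$ odd.

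Second, and more to the point you flag, the ``delicate combinatorics'' you leave open has a one-line resolution. Writing $d_\psi \in \F_2$ for the $2$-parity defect of the twist of $A/F^V$ by $\psi$, the $V_4$-relations from Hypothesis~\ref{decent} say exactly that $d_{\phi} + d_{\psi} + d_{\phi\psi} = 0$, i.e.\ $\psi \mapsto d_\psi$ is a \emph{homomorphism} $\hat V \to \F_2$. It is $H$-invariant (conjugate fields over $K$ have the same invariants) and vanishes on $(\hat V)^H$ (those $\psi$ correspond, after an odd-degree climb, to quadratic extensions of $K$). The paper packages this as: the $H$-orbit $\{\chi_i\}$ has \emph{odd} size $n$, so $d_\chi = n\,d_\chi = \sum_i d_{\chi_i} = d_{\prod_i \chi_i}$, and $\prod_i \chi_i$ is $H$-invariant, hence $d_\chi = 0$. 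This is the missing idea; no further bookkeeping is needed.
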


\begin{proof}
Write $G=\Gal(F/K)$ and pick $H\< G$. We prove $p$-parity for $A/F^H$.

\Step1{Suppose $G$ is a 2-group}

There is a descending chain of subgroups
$G=U_1\supset\ldots\supset U_n=H$ with all inclusions
of index 2.
We show by induction that $2$-parity holds over all quadratic
extensions of $F^{U_i}$ in $F$. For $i=1$ this is true by assumption.
Suppose this is true for $i-1$, and let
$L/F^{U_i}$ be a quadratic extension inside~$F$.
The Galois closure of the quartic extension $L/F^{U_{i-1}}$ has
Galois group $\Cy_4$, $\Cy_2\!\times\!\Cy_2$ or $\Di_8$, as it is a 2-group.
In all cases, 2-parity over quadratic extensions of $F^{U_{i-1}}$
implies 2-parity for all orthogonal twists of this Galois group, in
particular parity over $L$
(for $\Cy_4$ see Corollary~\ref{corbasic}(1,2); for $\Di_8$
this is Hypothesis \ref{decent}.)

\Step2{General case}

As $F^H/F^{H\cap P}$ is Galois of odd degree,
2-parity for $A/F^H$ is equivalent to that for $A/F^{H\cap P}$
by Corollary \ref{corbasic}(3).
Since $P$ is a 2-group, by Step~1 it suffices to establish 2-parity
over $F^P$
and its quadratic extensions in $F$.

Let $\Phi\normal P$ be its Frattini subgroup, so $P/\Phi\iso\F_2^k$ is its
largest 2-elementary quotient. As $\Phi$ is characteristic in $P$,
it is normal in $G$, and $F^\Phi/K$ is Galois.
($F^\Phi$ is the compositum of all quadratic extensions of $F^P$ in $F$.)
Replacing $F$ by $F^\Phi$ we may assume
that $\Phi=0$ and $P=\F_2^k$, so by the Schur-Zassenhaus theorem
$G\iso U\ltimes\F_2^k$ with $U$ of odd order.

We want to prove 2-parity for all twists of $A/F^P$ by characters
\hbox{$\chi: \F_2^k\to \C^{\times}\!\!.$} Write $L_\chi$ for $F^{\ker\chi}$ for such $\chi$;
so $[L_\chi:F^P]\le 2$.

As $F^P/K$ is Galois of odd degree,
2-parity holds over $L_\triv=F^P$,
equivalently for the twist of $A/F^P$ by $\triv$.
More generally,
$G$ acts on characters of~$\F_2^k$ by conjugation,
and if $\chi\ne\triv$ is $G$-invariant,
then $L_\chi/K$ is Galois with Galois group $U\!\times\!\Cy_2$.
In this case, $L_\chi$ is an odd degree Galois extension of a quadratic
extension of $K$ in $F$, so again 2-parity holds over $L_\chi$ and hence
for the twist of $A/F^P$ by $\chi$.

Now pick a general non-trivial $\chi=\chi_1$ and
let $\{\chi_i\}_{1\le i\le n}$ be the complete set of
its conjugates under $G$.
The $L_i$ are conjugate fields, so the 2-parity conjecture
for the twist by $\chi$ is equivalent to that for any of the $\chi_i$.
As the orbit size $n$ is odd, it suffices to check 2-parity for the twist
of $A/F^P$ by~$\oplus_i\chi_i$.

Applying Hypothesis \ref{decent} in $\Cy_2\!\times\!\Cy_2$\hskip 1pt-\hskip 1pt extensions of $F^P$,
2-parity holds for the twist by $\triv\oplus\phi\oplus\psi\oplus\phi\psi$
for any characters $\phi, \psi$ of $\F_2^k$.
Taking a sum of such twists shows that 2-parity for $\oplus_i\chi_i$ is equivalent
to 2-parity for $\triv^{\oplus n}\oplus\prod\chi_i$.
But this is a sum of $G$-invariant characters, for which 2-parity has already
been established.
\end{proof}

\pagebreak

\subsection{Parity for twists}

\begin{theorem}
\label{thmGPtwist}
Let $A, p$ and $F/K$ satisfy Hypothesis \ref{decent}.
Assume that the Sylow $p$-subgroup $P$ of $G=\Gal(F/K)$ is normal
and $G/P$ is abelian.
If the $p$-parity conjecture holds for $A$ over $K$
and its quadratic extensions in $F$,
then it holds for all twists of $A$ by orthogonal representations of $G$.
\end{theorem}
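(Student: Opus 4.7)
The plan is to prove the theorem by induction on $|G|$, combining three ingredients: Theorem~\ref{thmGP} or Theorem~\ref{thmGP2}, which upgrade the hypothesis to $p$-parity over every subfield of $F/K$; Hypothesis~\ref{decent}, which handles $2$-dimensional irreducibles of $\Di_{2p^n}$-subquotients of $G$; and Taketa's monomiality theorem, which writes every irreducible representation of $G$ as $\Ind_H^G\chi$ for some $H\<G$ and $1$-dimensional $\chi$ (applicable since $[G,G]\subset P$ is a $p$-group, hence nilpotent).

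First I would establish $p$-parity over every subfield of $F/K$. Because $G/P$ is abelian, $F^P/K$ is abelian, and every subfield is $F^H$ for some $H\supset P$; using self-duality of $\Xp$ and the corresponding property of global root numbers, parity over $F^H$ is congruent modulo $2$ to a sum of parities for twists of $A/K$ by self-dual characters of $G/H$, all of order $\leq 2$. Such characters cut out trivial or quadratic subextensions of $K$ in $F$, where parity is given by assumption. Theorem~\ref{thmGP} (for odd $p$) or \ref{thmGP2} (for $p=2$) then promotes parity over subfields of $F^P/K$ to parity over every subfield of $F/K$.

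Next I would run the induction on $|G|$. Given an orthogonal irreducible $\tau=\Ind_H^G\chi$, if $H=G$ then $\tau$ is a $1$-dimensional character of order $\leq 2$, handled by the first step. Otherwise pick $M$ with $H\<M\<N_G(H)$ minimal so that $\Ind_H^M\chi$ is self-dual: either $M=H$ (when $\chi$ itself is self-dual on $H$) or $[M:H]=2$ (when some $g\in N_G(H)\setminus H$ satisfies $\chi^g=\chi^{-1}$). In either case $\Ind_H^M\chi$ is an orthogonal irreducible of $M$ of dimension $1$ or $2$. The subgroup $M$ inherits the theorem's hypotheses ($M\cap P\normal M$ is Sylow with abelian quotient, and $p$-parity over $F^M$ and its quadratic extensions in $F$ is available from the first step), so if $M\subsetneq G$ the inductive hypothesis applied to $(M,\Gal(F/F^M),A)$ gives parity for $\Ind_H^M\chi$ over $F^M$, and Frobenius reciprocity yields parity for $\tau=\Ind_M^G\Ind_H^M\chi$ over $K$.

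The main obstacle is the degenerate case $M=G$: $\tau$ is $2$-dimensional, $H$ is normal of index~$2$ in $G$, and $\chi$ has order $>2$ with $\chi^g=\chi^{-1}$ for $g\in G\setminus H$. Then $\tau$ factors through the dihedral subquotient $G/\ker\chi\iso \Di_{2|\chi|}$. When $|\chi|=p^n$, Hypothesis~\ref{decent} directly gives parity for $\tau\oplus\triv\oplus\det\tau$, and subtracting the known parities of $\triv$ and the order-$\leq 2$ character $\det\tau$ yields parity for $\tau$. For general $|\chi|=p^ab$ with $\gcd(b,p)=1$, I would factor $\chi=\chi_p\cdot\chi_{p'}$, extend the prime-to-$p$ part $\chi_{p'}$ to a character $\tilde\chi_{p'}$ of $G$ (possible because $H/(H\cap P)$ embeds in the abelian $G/P$, so characters extend by Pontryagin duality), and write $\tau=\tilde\chi_{p'}\otimes\Ind_H^G\chi_p$; self-duality of $\tau$ together with Schur's lemma forces $\tilde\chi_{p'}^2=\triv$, parity for $\Ind_H^G\chi_p$ follows from Hypothesis~\ref{decent} in the $\Di_{2p^a}$-subquotient $G/\ker\chi_p$, and the tensor by the order-$\leq 2$ character $\tilde\chi_{p'}$ is absorbed using $G$-relations combined with parity over subfields from the first step.
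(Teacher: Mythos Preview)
Your reduction via monomiality has a genuine gap. The claim that every irreducible of $G$ is induced from a one-dimensional character is not true for groups with nilpotent commutator subgroup. Take $G=\SL_2(\F_3)$ and $p=2$: the Sylow $2$-subgroup is $Q_8$, it is normal, and $G/Q_8\cong C_3$ is abelian, so $G$ satisfies the hypotheses of the theorem; but the faithful $2$-dimensional representation of $G$ is not monomial, since $G$ has no subgroup of index~$2$. (Also, Taketa's theorem is the statement that $M$-groups are solvable, not the converse.) In this particular example the non-monomial irreducibles happen to be symplectic rather than orthogonal, so the theorem is not contradicted --- but you would need to prove that every \emph{orthogonal} irreducible of such $G$ is monomial, and you neither state nor argue this.

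The paper bypasses the issue entirely by invoking the Fr\"ohlich--Queyrut orthogonal induction theorem: every orthogonal representation of any finite group is a $\Z$-linear combination of $\Ind_{H_i}^G\rho_i$ with each $\rho_i$ either trivial, of the form $\chi\oplus\chi^{-1}$, or a $2$-dimensional irreducible with dihedral image. Inductivity then reduces to these three building blocks: the first is parity over subfields (your Step~1, which is correct and the same as the paper's), the second is automatic by self-duality, and the third lands in a dihedral subquotient where Hypothesis~\ref{decent} applies. This avoids any need for induction on $|G|$ or for the somewhat delicate twist-by-$\tilde\chi_{p'}$ argument in your final paragraph.
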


\begin{proof}
Let $\tau$ be an orthogonal representation of $G$.
By the analogue of Brauer's induction theorem for orthogonal representations
\cite{FQ} (2.1),
$$
  \tau = \bigoplus_i \Ind_{H_i}^G \rho_i^{\oplus n_i}
$$
for some $H_i\< G$, $n_i\in\Z$, and with $\rho_i$ either (a) trivial or
(b) $\chi\oplus\bar\chi$ with $\chi\ne\bar\chi$ one-dimensional or
(c) a 2-dimensional irreducible that factors through a dihedral quotient
of $H_i$.

By inductivity (Corollary \ref{corbasic}(2)), it suffices to prove that
$$
  (-1)^{\blangle\rho_i, \Xp(A/F)\brangle_{H_i}} = w(A/F^{H_i},\rho_i).
$$
We distinguish between the three possibilities for $\rho_i$ as above:

Case (a). As $G/P$ is abelian, its only irreducible self-dual
representations are those that factor through a $\Cy_2$\hskip 1pt-\hskip 1pt quotient.
By ``self-duality'' and ``inductivity'' (Corollary~\ref{corbasic}(1,2)),
the assumed parity over $K$ and its quadratic extensions
implies parity in all subfields of $F^P/K$.
By Theorems \ref{thmGP}~and~\ref{thmGP2}, it implies parity in all subfields
of $F/K$, in particular for $A/F^{H_i}$.

Case (b). The formula holds by Corollary \ref{corbasic}(1).

Case (c). Since the commutator of $G$ is a $p$-group, the only dihedral
subquotients it has are $\Di_{2p^k}$.
By case (a), we know parity over $F^{H_i}$ and its quadratic extensions
in $F$, so Hypothesis \ref{decent} implies parity for all
irreducible $2$-dimensional representations of this subquotient.
\end{proof}

\begin{remark}
\label{knowncases}
For elliptic curves, the assumption that the $p$-parity conjecture
holds for $E$ over $K$ and its quadratic extensions in $F$
is known in a number of cases. In particular
(\cite{BS,Gre,Guo,Mon,Nek,Kim,Squarity}, \cite{Isogroot,CFKS})
\begin{enumerate}
\item if $K=\Q$;
\item if $E/K$ admits a rational $p$-isogeny, and
for every prime $v|p$ of $K$,
\begin{itemize}
\item ($p>3$) $E$ is semistable, potentially multiplicative or potentially
ordinary at $v$, or acquires good supersingular reduction over an abelian
extension of $K_v$.
\item ($p=3$) $E$ is semistable at $v$,
\item ($p=2$) $E$ is semistable, and not supersingular at $v$.
\end{itemize}
\end{enumerate}
There are also results for modular abelian varieties over totally real fields
\cite{Nek,Kim2,NekIV} and a generalisation of (2) to abelian varieties with a
suitable $p^g$-isogeny \cite{CFKS}.
\end{remark}

\pagebreak

\begin{remark}
The assumption in Theorem \ref{thmGPtwist} that $G/P$ is abelian
was only used to ensure that (a) $p$-parity holds in all intermediate
fields of $F^P/K$, and (b) dihedral subquotients of $G$
have the form $\Di_{2p^n}$. So the theorem extends to other extensions
that satisfy (a) and (b), e.g. $G$ nilpotent with $p=2$, or
$G/P\>\iso\>$(odd)$\times$(abelian 2-group) with $p\ne 2$.
\end{remark}

\begin{example}
\label{exgl2f3}

Let $E/\Q$ be an elliptic curve, semistable at 2 and 3, and let $F=\Q(E[3])$.
We claim that the $3$-parity conjecture holds
for $E$ over all subfields of $F$, and consequently over all subfields
of $F_n=\Q(E[3^n])$ by Theorems \ref{hypellthm} and \ref{thmGP}. 

If either $F/\Q$ is abelian or $E/\Q$ admits a rational 3-isogeny,
this is true by \cite{Selfduality} Thm. 1.2
and \cite{Isogroot} Thm. 2 respectively.
Otherwise, $G=\Gal(F/\Q)$ is one of the following subgroups of $\GL_2(\F_3)$:
$$
  \GL_2(\F_3), \quad
  \Di_8
  \quad\text{or}\quad
  \Sy_{16}=\text{2-Sylow of\ }\GL_2(\F_3)
  .
$$
It is not hard to verify that in all three cases,
the representations $\Ind_H^G\triv_H$ for subgroups
$H\subset \Cy_2\!\times\!\Cy_2$ (these correspond to fields where $E$ acquires a 3-isogeny)
and those with $G/H$ abelian generate all orthogonal representations. Again,
as the 3-parity conjecture is known for $E/F^H$ for such $H$,
this implies 3-parity for all intermediate fields.

The question whether 3-parity holds for all {\em twists\/} by self-dual
representations of $\Gal(F_n/\Q)$ is more subtle, as we do not have
an analogue of Theorem~\ref{thmGPtwist} in this case. In fact, suppose
that $G_2=\Gal(F_2/\Q)\iso\GL_2(\Z/9\Z)$, i.e. as large as possible.
Then there are precisely two irreducible orthogonal Artin
representations $\tau_1, \tau_2: G_2\to\GL_6(\C)$
that can be realised over $\Q_3(\sqrt{3})$ but not over $\Q_3$.
It turns out that $\RC_\Theta^{\Q_3}(\tau_1\oplus\tau_2)=1$ for
every $G_2$-relation $\Theta$, so the parity of
$\blangle\tau_i, \X_3(E/F_2)\brangle$ cannot be computed
from regulator constants. (It can be computed for all other
$\C G_2$-irreducible orthogonals.)
\end{example}

\def\theequation{A.\arabic{equation}}
\section*{Appendix A: Basic parity properties}
\setcounter{equation}{0}

For the convenience of the reader, we record a few basic facts related to
root numbers and the $p$-parity conjecture.

\begin{lemma}[Determinant formula]
\label{detformula}
Let $K$ be a local field and $\tau$ a continuous representation
of the Weil group of $K$. Then
$$
  w(\tau\oplus\tau^*) = \det(\tau)(\theta(-1))\>,
$$
where $\theta$ is the local reciprocity map on $K^{\times}$.
For an abelian variety $A/K$,
$$
  w(A,\tau\oplus\tau^*) = 1 \>.
$$
\end{lemma}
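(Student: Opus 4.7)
My plan for the first identity combines two standard facts about local $\varepsilon$-factors. Root numbers are multiplicative in direct sums, so $w(\tau \oplus \tau^*) = w(\tau) \cdot w(\tau^*)$. The local functional equation for Deligne's $\varepsilon$-factors (see e.g.\ \cite{TatN} 3.4.7) gives $\varepsilon(\tau,\psi) \cdot \varepsilon(\tau^*, \psi) = \det(\tau)(-1)$, where $\det\tau$ is viewed as a character of $K^\times$ via local class field theory; after normalising to root numbers (absolute values cancel) this reads $w(\tau) \cdot w(\tau^*) = \det(\tau)(\theta(-1))$. Combining these two gives the first formula.

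For the abelian variety statement, I would apply the first part to $\rho = \sigma_A \otimes \tau$, where $\sigma_A$ is the Weil--Deligne representation associated to $A$ so that $w(A,\cdot) = w(\sigma_A \otimes \cdot)$ (cf.\ \cite{RohG}). The Weil pairing composed with a polarisation produces a non-degenerate alternating $\Gal$-equivariant pairing on $V_\ell(A)$; in the conventions under which $L(A,\tau,s)$ has functional equation relating $s \leftrightarrow 2-s$, this exhibits $\sigma_A$ as literally self-dual, with trivial determinant and even dimension $2\dim A$. Hence $\rho^* \cong \sigma_A \otimes \tau^*$, and
$$w(A, \tau \oplus \tau^*) = w(\rho) \cdot w(\rho^*) = \det(\rho)(\theta(-1)) = \det(\sigma_A)^{\dim\tau}(\theta(-1)) \cdot \det(\tau)^{2\dim A}(\theta(-1)),$$
both factors of which are trivial: the first because $\det\sigma_A = \triv$, and the second because $\det(\tau)^{2\dim A}(\theta(-1)) = \det(\tau)(\theta((-1)^{2\dim A})) = \det(\tau)(\theta(1)) = 1$.

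The main subtlety, and the likely obstacle if one wants a self-contained proof, is pinning down the convention so that $\sigma_A$ is \emph{literally} self-dual with trivial determinant, rather than merely self-dual up to a Tate twist: the raw Weil pairing lands in $\Q_\ell(1)$, so one has $V_\ell(A)^* \cong V_\ell(A)(-1)$ rather than $V_\ell(A)$, and one must track the half-Tate-twist implicit in the normalisation of $w(A, \tau)$. Once this bookkeeping is made explicit (or one appeals to the fact, built into \cite{RohG}, that the definition has been chosen precisely so that the symplectic form gives a literal self-duality), the calculation above becomes mechanical.
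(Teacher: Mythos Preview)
Your approach is essentially the same as the paper's. The paper simply cites \cite{RohE} p.~145 or \cite{TatN} 3.4.7 for the first formula, and for the second says it is ``an elementary computation using \cite{TatN} 3.4.7, 4.2.4''; your write-up is a reasonable expansion of what that computation amounts to, and you correctly flag the one genuine subtlety, namely that $\sigma_A$ is only self-dual up to a Tate twist (equivalently $\det\sigma_A$ is a power of the cyclotomic character rather than literally trivial), which is precisely what \cite{TatN} 4.2.4 together with the symplectic Weil pairing sorts out.
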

\begin{proof}
For the first statement see \cite{RohE} p.145 or \cite{TatN} 3.4.7.
The second is an elementary computation using \cite{TatN} 3.4.7, 4.2.4.
\end{proof}

\pagebreak

\begin{proposition}
\label{genparity}
Let $F/K$ be a Galois extension of number fields,
and $A/K$ an abelian variety. Write $\EFC=A(F)\tensor\C$ and $\X=\Xp(A/F)$.
For an Artin representation $\tau: \Gal(F/K)\to\GL_n(\C)$,
\begin{enumerate}
\item (self-duality)
$$
  \blangle\tau,\EFC\brangle = \blangle\tau^*,\EFC\brangle, \qquad
  \blangle\tau,\X\brangle = \blangle\tau^*,\X\brangle, \qquad
  w(A,\tau)=\overline{w(A,\tau^*)}.
$$
\item (inductivity)
If $K\subset L\subset F$ and
$\tau=\Ind_{\Gal(F/L)}^{\Gal(F/K)}\rho$ then
$$
  \blangle\tau,\EFC\brangle =
     \blangle\rho,\EFC\brangle, \quad
  \blangle\tau,\X\brangle =
     \blangle\rho,\X\brangle, \quad
  w(A/K,\tau)=w(A/L,\rho).
$$
\item (odd degree base change)
If $F/K$ is Galois of odd degree, then
\beq
  \rk(A/K)&\equiv&\rk(A/F)&\mod 2, \cr
  \dim\Xp(A/K)&\equiv&\dim\Xp(A/F)& \mod 2, \cr
  w(A/K)&=&w(A/F).
\eeq
\item (orthogonality)
If $\tau$ is symplectic, then $\blangle\tau,\!\EFC\brangle$ is even and
$w(A,\tau)\!=\!1$.
\item (equivariance)
If $\tau$ is self-dual and $\tau'$ is a Galois conjugate of $\tau$
(i.e. their characters are Galois conjugate), then
$$
  \blangle\tau,\EFC\brangle =
     \blangle\tau',\EFC\brangle, \quad
  w(A/K,\tau)=w(A/K,\tau').
$$
\end{enumerate}
\end{proposition}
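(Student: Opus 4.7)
The plan is to dispatch each of the five parts in turn, using standard character theory combined with a few deeper facts about root numbers. Parts (1) and (2) are essentially formal. The module $\EFC=A(F)\tensor\C$ has rational character because it is the complexification of a $\Z G$-module, so $\blangle\tau,\EFC\brangle=\blangle\tau^*,\EFC\brangle$ is immediate, and the analogous statement for $\X$ is the self-duality of $\Xp$ proved in \cite{Selfduality}. The identity $w(A,\tau^*)=\overline{w(A,\tau)}$ holds place-by-place from the standard property $w(\rho^*)=\overline{w(\rho)}$ of local root numbers. Part (2) then follows from Frobenius reciprocity for $\EFC$ and $\X$, combined with Artin formalism $L(A/K,\Ind_H^G\rho,s)=L(A/L,\rho,s)$ and the accompanying $\epsilon$-factor identity.

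For (3), the key input is that in a group of odd order no non-trivial conjugacy class is self-inverse, so the only self-dual complex irreducible is $\triv$ (the observation used already in the proof of Theorem \ref{oddorder}). Decomposing $\EFC$ and $\X$ into irreducibles, the non-trivial summands pair as $\sigma\oplus\sigma^*$ of matching dimensions, while the $\triv$-isotypic parts are $A(K)\tensor\C$ and $\Xp(A/K)$ respectively, yielding the parity equalities. For the root number, apply Artin formalism $w(A/F)=\prod_\sigma w(A/K,\sigma)^{\dim\sigma}$ over the complex irreducibles of $G$ and pair $\sigma$ with $\sigma^*$; each pair contributes $w(A,\sigma\oplus\sigma^*)=1$ by Lemma \ref{detformula}, leaving only $w(A/K)$.

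Part (4) needs two separate ingredients. For the inner product, $\EFC$ carries a $G$-invariant non-degenerate symmetric $\C$-bilinear form obtained by complexifying any averaged positive-definite $\Q$-bilinear form on $A(F)\tensor\Q$; its restriction to the $\tau$-isotypic component, balanced against the symplectic form on $\tau$, forces a non-degenerate alternating form on $\Hom_G(\tau,\EFC)$, so the multiplicity $\blangle\tau,\EFC\brangle=\dim\Hom_G(\tau,\EFC)$ is even. The identity $w(A,\tau)=1$ rests on the Weil pairing, which makes $H^1_{\text{et}}(A,\Q_l)$ symplectic; hence $H^1(A)\tensor\tau$ is orthogonal for symplectic $\tau$, and Deligne's theorem on root numbers of orthogonal representations arising from abelian varieties yields the value $+1$. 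This last fact is the only genuinely non-formal step and constitutes the main obstacle.

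Finally, for (5), the rational character of $\EFC$ gives $\blangle\tau,\EFC\brangle=\blangle\tau',\EFC\brangle$ whenever $\tau,\tau'$ are Galois conjugate, and the same holds for $\X$ via its $\Q_p$-rational structure. For the root number, the discussion in (4) implies $w(A/K,\tau)\in\{\pm1\}$ for every self-dual $\tau$ (since $H^1(A)\tensor\tau$ is then self-dual with known symplectic or orthogonal structure); combined with $\Aut(\C)$-equivariance of local $\epsilon$-factors and the fact that $\pm1$ is fixed by any automorphism of $\C$, this gives $w(A/K,\tau)=w(A/K,\tau')$.
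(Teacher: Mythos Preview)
Your treatment of (1)--(3) matches the paper's approach. In (4), the paper observes more directly that symplectic irreducibles occur with even multiplicity in any rational representation (your form-theoretic argument is a valid unpacking of this), and for $w(A,\tau)=1$ it cites \cite{RohG} Prop.~8(iii) and \cite{SabT} Prop.~3.2.3; attributing this to ``Deligne'' is imprecise, though the mechanism you sketch (symplectic $\otimes$ symplectic $=$ orthogonal) is the right one.

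The real gap is in (5). Your argument hinges on ``$\Aut(\C)$-equivariance of local $\epsilon$-factors'', meaning $\sigma\bigl(w(A/K_v,\tau)\bigr)=w(A/K_v,\tau^\sigma)$ for $\sigma\in\Aut(\C)$. This is not a freely available fact. Local $\epsilon$-factors depend on a choice of additive character $\psi:K_v\to\C^\times$; applying $\sigma$ to the defining construction produces $\epsilon(\rho^\sigma,\sigma\circ\psi)$ rather than $\epsilon(\rho^\sigma,\psi)$, and the change-of-$\psi$ formula introduces a factor involving $\det\rho$. Controlling this for $\rho=H^1(A)\otimes\tau$ is precisely the content of Rohrlich's theorem in \cite{RohI}, which was new at the time the paper was written. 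The paper accordingly splits (5) into the symplectic case, where both root numbers equal $1$ by (4), and the orthogonal case, where it cites \cite{RohI} Thms~1,~2. As written, your argument is circular: the equivariance you invoke \emph{is} the theorem that must be cited.
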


\begin{proof}
(1) $\EFC$ is a rational representation, hence self-dual; $\X$ is self-dual
as well by \cite{Selfduality} Thm. 1.1. 
The root number formula follows from Lemma \ref{detformula}.\\ 
(2) For $\EFC$ and $\X$ this is Frobenius reciprocity.
The last formula is well-known; it is a consequence of inductivity in
degree 0 of local $\epsilon$-factors for Weil groups,
\cite{TatN} 4.2.4 and a simple determinant computation.\\
(3) Follows from (1), (2) and the fact that the only self-dual irreducible
representation of $\Gal(F/K)$ is trivial. \\
(4) $\EFC$ is a rational representation, so $\blangle\tau,\EFC\brangle$ is even;
$w(A,\tau)=1$ by \cite{RohG} Prop. 8(iii) for elliptic curves,
and by \cite{SabT} Prop. 3.2.3 for abelian varieties.\\
(5) The statement for $\EFC$ is clear, as it is a rational representation.
That for root numbers is clear from (4) in the symplectic case, and follows
from \cite{RohI} Thms 1,2 in the orthogonal case.
\end{proof}

\begin{corollary}
\label{corbasic}
Suppose $F/K$ is a Galois extension, and $A/K$ an abelian variety. Then
the $p$-parity conjecture
\begin{enumerate}
\item holds for twists of $A$ by representations of the form $\tau\oplus\tau^*$.
\item holds for the twist of $A/K$ by $\Ind_{\Gal(F/L)}^{\Gal(F/K)}\rho$
      if and only if it holds for the twist of $A/L$ by $\rho$,
      if $K\subset L\subset F$.
\item holds for $A/F$ if and only if it holds for $A/K$, if $[F:K]$ is odd.
\end{enumerate}
\end{corollary}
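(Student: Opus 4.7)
The plan is to deduce each of the three parts directly and separately from the corresponding item of Proposition \ref{genparity}, using the single equality $(-1)^{\blangle\tau,\Xp(A/F)\brangle}=w(A/K,\tau)$ as the defining form of the $p$-parity conjecture for a twist by $\tau$.

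For (1), I would simply verify that both sides of this equality are unconditionally $+1$ after replacing $\tau$ by $\tau\oplus\tau^*$. On the Selmer side, self-duality of $\Xp(A/F)$ (Proposition \ref{genparity}(1)) gives $\blangle\tau\oplus\tau^*,\Xp(A/F)\brangle=2\blangle\tau,\Xp(A/F)\brangle$, which is even. On the root number side, Lemma \ref{detformula} gives $w(A,\tau\oplus\tau^*)=1$ at every place, hence globally. Thus the congruence is automatic.

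For (2), I would apply inductivity (Proposition \ref{genparity}(2)) to both sides in parallel: with $\tau=\Ind_{\Gal(F/L)}^{\Gal(F/K)}\rho$, that item supplies both $\blangle\tau,\Xp(A/F)\brangle_{\Gal(F/K)}=\blangle\rho,\Xp(A/F)\brangle_{\Gal(F/L)}$ and $w(A/K,\tau)=w(A/L,\rho)$, so the two $p$-parity statements become literally the same equality. Here one picks $F$ large enough that $\rho$ factors through $\Gal(F/L)$; the \'etale property $\Xp(A/L)=\Xp(A/F)^{\Gal(F/L)}$ recorded at the top of the section then certifies that $\blangle\rho,\Xp(A/F)\brangle_{\Gal(F/L)}$ is indeed the correct quantity to pair against in the $p$-parity conjecture for $A/L$. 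For (3), the same strategy applied to Proposition \ref{genparity}(3) yields $\dim\Xp(A/K)\equiv\dim\Xp(A/F)\pmod 2$ and $w(A/K)=w(A/F)$, so the two untwisted $p$-parity statements are the same congruence.

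There is no real obstacle: each of the three equivalences collapses to a one-line deduction, the substantive content having been concentrated in Proposition \ref{genparity}. The only point requiring any care, and only in part (2), is to remember that ``the $p$-parity conjecture for the twist of $A/L$ by $\rho$'' is to be interpreted via a Galois closure $F\supset L$ through which $\rho$ factors, in keeping with the formulation of Conjecture \ref{pparitytwists}.
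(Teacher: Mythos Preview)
Your proposal is correct and is exactly the intended argument: the paper states this result as an immediate corollary of Proposition~\ref{genparity} (and Lemma~\ref{detformula} for part (1)) with no separate proof, and your deduction of each part from the corresponding item of that proposition is precisely what is meant.
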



\begin{acknowledgements}
We would like to thank the referee for carefully reading the manuscript
and for helpful comments.
\end{acknowledgements}

\pagebreak

\begingroup

\endgroup

\end{document}